\theoremstyle{plain}
\newtheorem{fact}{Fact}[section]
\newtheorem{theo}[fact]{Theorem}
\newtheorem{lem}[fact]{Lemma}
\newtheorem{defi}[fact]{Definition}
\newtheorem{prop}[fact]{Proposition}
\newtheorem{rmk}[fact]{Remark}
\newtheorem{coro}[fact]{Corollary}
\newtheorem{ex}[fact]{Example}
\newcommand{\cor}[1]{\textcolor{black}{#1}}
\newcommand{\coor}[1]{\textcolor{black}{#1}}
\newcommand{\rad}{\mathrm{rad}}
\title{A geometric realization of the $m$-cluster categories of type $\tilde{D_n}$}
\author{Lucie JACQUET-MALO}
\begin{document}

\maketitle

\begin{center}
\large{\sc{Lucie Jacquet-Malo}} \\
lucie.jacquet.malo@u-picardie.fr\\
\end{center}

\begin{abstract}
We show that a (non-full) subcategory of the $m$-cluster category of type $\tilde{D_n}$ is isomorphic to a category consisting of arcs in an $(n-2)m$-gon with two central $(m-1)$-gons inside of it. We show that the mutation of colored quivers and $m$-cluster-tilting objects is compatible with the flip of an $(m+2)$-angulation. In the final part of this paper, we detail an example of a quiver of type $\tilde{D_7}$.
\end{abstract}

\textbf{
Keywords: Cluster algebras, $m$-cluster categories, tame quivers, $\tilde{D_n}$.}

\textbf{MSC classification: Primary: 18E30 ; Secondary: 13F60, 05C62
}

\tableofcontents

\section*{Introduction}

Cluster algebras, defined by Fomin and Zelevinsky in \cite{FZ}, are commutative rings which have many connections with various fields of mathematics, such as Calabi-Yau algebras, integrable systems, Poisson geometry and quiver representations. Cluster categories were invented in order to get a better understanding of the link between quiver representations and cluster algebras. For a gentle introduction to cluster algebras, see \cite{F} and \cite{FZ2}.

\vspace{20pt}

Let $Q$ be an acyclic quiver and let $m$ be an integer. The cluster category ${\mathcal{C}}_Q$ associated with $Q$ was introduced by Buan, Marsh, Reineke, Reiten and Todorov in \cite{BMRRT} in order to categorify cluster algebras. The higher cluster category ${\mathcal{C}}^{(m)}_Q$ of $Q$ defined by Keller in \cite{Kel03} and Thomas in \cite{Tho} plays a role similar to that of cluster categories but with respect to the combinatorics of $m$-clusters, defined by Fomin and Reading in \cite{FR}. More precisely, there is a bijection between the $m$-clusters of a root system and the $m$-cluster-tilting objects of the $m$-cluster category. Wraalsen, and Zhou, Zhu in their respective papers \cite{W} and \cite{ZZ} also generalized many of the properties of cluster-tilting objects in cluster categories to the case of $m$-cluster-tilting objects in higher cluster categories. In particular, any $m$-rigid object $X$ having $n-1$ nonisomorphic indecomposable summands has exactly $m+1$ complements (i.e. nonisomorphic indecomposable objects $Y$ such that $X \bigoplus Y$ is an $m$-cluster-tilting object).

\vspace{20pt}

%Cluster categories helped a lot in understanding the link between quiver representations and cluster algebras. Unfortunately they are often difficult to understand. However, in order to visualise the properties we gave above, a way has been found to simplify the study of these categories. The idea was to build a new simple category, from geometric pictures, and to show that it was equivalent to the cluster category.

Acyclic cluster categories, as defined in \cite{BMRRT} are orbit categories of bounded derived categories of representations of hereditary algebras. It is thus remarkable that some of those categories can be described combinatorially. This fact is largely due the theory of Auslander and Reiten. Such a geometric description first appeared in the paper \cite{CCS} by Caldero, Chapoton and Schiffler, for a quiver $Q$ of type $A_n$, and for $m=1$. It was later generalized to all Dynkin types: in the article of Schiffler \cite{Sch} for type $D_n$ and in the article of Lamberti \cite{L}, in type $E$. Then Baur and Marsh generalized this method to higher cluster categories of type $A_n$ (in \cite{BM01}), and of type $D_n$ (in \cite{BM02}). They proved that the Auslander-Reiten quiver of the higher cluster category could be realized considering elementary moves of so-called $m$-arcs in a polygon for type $A$, and a punctured polygon for type $D$.

Similar geometric descriptions for Euclidean quivers are not so well-behaved. Indeed, the components of the Auslander-Reiten quiver are infinite so that the infinite radical of the $m$-cluster category does not vanish.

Unfortunately, drawing a quiver from an $(m+2)$-angulation did not give coherent results in terms of compatibility. To be precise, if $m>1$, then the Gabriel quiver of the $(m+2)$-angulation cannot determine the quiver of the flip of this $(m+2)$-angulation. Thus, in order to find back the new quiver corresponding to the flip, Buan and Thomas (cf \cite{BT}) introduced additionnal arrows, and built colored quivers, which means quivers where is associated a number with each arrow. This, with the new mutation of colored quivers, can be explicitly described from the mutation of $m$-cluster-tilting objects.

The case of $m$-cluster categories of type $\tilde{A_n}$ has been treated by Torkildsen in \cite{Tor}. Recently, Baur and Torkildsen in \cite{BauTor} have given a complete geometric realization of the cluster category of type $\tilde{A_n}$ including the geometric description of the infinite radical.

Our aim in this paper is to give a geometric description in type $\tilde{D_n}$. Let $P$ be an $(n-2)m$-gon with two $(m-1)$-gons inside of it. We define $m$-admissible arcs, $(m+2)$-angulations and their flip. We associate, in the spirit of Buan and Thomas in \cite{BT}, a colored quiver with an $(m+2)$-angulation of $P$. We prove in Theorem \ref{theo:corresp} that the flip of an $(m+2)$-angulation induces Buan-Thomas mutation of the associated colored quiver (see \cite{BT}). \cor{Moreover, we build a geometric category (consisting of arcs in $P$), which is equivalent to a (non-full) subcategory of the $m$-cluster category of type $\tilde{D}$ (see Theorem \ref{th:iso}) consisting of transjective and regular non homogeneous objects. The morphisms belonging to the infinite radical do not appear in the category.} Each $m$-rigid indecomposable object of ${\mathcal{C}}^{(m)}_{\tilde{D}}$ corresponds to some arc with no self-crossing in our geometric description.

\cor{In the companion paper \cite{JM}, we build a bijection between $(m+2)$-angulations and $m$-cluster-tilting objects of the $m$-cluster category of type $\tilde{D}_n$. We prove that, under this bijection, flips of $(m+2)$-angulations correspond to mutations of $m$-cluster-tilting objects. Our proofs apply to types $A$, $D$, $\tilde{A}$ as well.}

\vspace{20pt}

The paper is organized as follows.

In section $1$ we recall some important notions on $m$-cluster categories, rigid objects and colored quivers.

Let $P$ be an $(n-2)m$-gon with two $(m-1)$-gons inside of it. In section $2$, we define the notion of $m$-diagonals in $P$.

The definitions given in section $2$ lead us to build a new category of $m$-diagonals in section $3$.

In section $4$, we associate a colored quiver with an $(m+2)$-angulation of $P$. In Theorem \ref{theo:corresp}, we show the compatibility between flips of $(m+2)$-angulations and mutations of colored quivers.

Section $5$ is devoted to the study of the category of $m$-diagonals. We construct a bijection between arcs in $P$ and $m$-rigid indecomposable objects in the $m$-cluster category.

In section $6$ we focus on the indecomposable $m$-rigid objects in the $m$-cluster category ${\mathcal{C}}^{(m)}_Q$. We show that the bijection above induces a bijection between $m$-rigid objects and arcs which do not cross themselves. We give a bijection between $m$-cluster-tilting objects and $(m+2)$-angulations in \cite{JM}.

Finally, we give a detailed example illustrating some of our results, where $Q$ is a quiver of type $\tilde{D_7}$, and $m=2$. This corresponds to the case of a decagon with two inner boundary components inside of it.

\subsection*{Acknowledgements}
This article is part of my PhD thesis under the supervision of Yann Palu and Alexander Zimmermann. I would like to thank Yann Palu warmly for introducing me to the subject of cluster categories, and for his patience and kindness. \cor{Finally, I would like to thank the anonymous reviewer for his kind report and for his many advises that help improving the first version of this article.}

\section{Preliminaries}

Notations:

Throughout this paper, the letter $K$ denotes a field. We fix a positive integer $m$.

%If $A$ is an object in a category ${\mathcal{C}}$, $A^\perp$ is the class of all objects $X$ such that \[{\mathrm{Ext}}^i(A,X)=0 \text{ for all } i \in \{1,\cdots,m \}.\]

We denote by ${\mathrm{mod}}(KQ)$ the category of finitely generated right modules over the path algebra of a finite acyclic quiver $Q$. For an object $T$, we denote by $\mathrm{add}(T)$ the additive category spanned by $T$. For any further information about representation theory of associative algebras, see \cite{ASS}.

\subsection{Higher cluster categories}
In 2006, in order to categorify the notion of cluster algebras, Buan, Marsh, Reineke, Reiten and Todorov in \cite{BMRRT} defined the cluster category of an acyclic quiver in the following way:

If $Q$ is an acyclic quiver, let ${\mathcal{D}}^b(KQ)$ be the bounded derived category of the category of ${\mathrm{mod}}~KQ$. The category ${\mathcal{C}}_Q$ is the orbit category of the derived category under the functor $\tau^{-1}[1]$, where the letter $\tau$ stands for the Auslander-Reiten translation and the functor $[1]$ is the shift functor.

Cluster categories give a categorification of clusters in a cluster algebra in terms of tilting objects. To be precise, the cluster variables of the algebra are in $1-1$ correspondence with the indecomposable rigid objects in ${\mathcal{C}}_Q$, and the clusters are in $1-1$ correspondence with the basic cluster-tilting objects in ${\mathcal{C}}_Q$.

It is known from \cite{BMRRT} that ${\mathcal{C}}_Q$ is Krull-Schmidt. Keller in \cite{Kel03} has shown that it is a triangulated category. Since $\tau$ and $[1]$ become isomorphic in ${\mathcal{C}}_Q$, we have that ${\mathcal{C}}_Q$ is $2$-Calabi-Yau.

We can also define the higher cluster category
\[ {\mathcal{C}}_Q^m={\mathcal{D}}^b(KQ)/\tau^{-1}[m]. \]
Again, the higher cluster category is Krull-Schmidt, $(m+1)$-Calabi-Yau, and triangulated.

\begin{defi}
Let $T$ be an object in the category ${\mathcal{C}}^m_Q$. Then $T$ is $m$-rigid if
\[ {\mathrm{Ext}}^i(T,T)=0~\forall i \in \{1,\cdots,m \}. \]
\end{defi}

\begin{defi}\cite{KR}
Let $T$ be an object in the category ${\mathcal{C}}^m_Q$. Then $T$ is $m$-cluster-tilting when we have the following equivalence: \[ X \text{ is in } {\mathrm{add}}~T \iff {\mathrm{Ext}}^i_{\mathcal{C}_Q^{(m)}}(T,X)=0~\forall i \in \{ 1,\cdots,m \}. \]
\end{defi}

\begin{defi}\cite{ZZ}
\cor{Let $T$ be an object in the category ${\mathcal{C}}^m_Q$. Then $T$ is almost complete $m$-cluster-tilting if there exists an indecomposable object $X$ which do not belong to $\mathrm{add}~T$ such that $X\bigoplus T$ is an $m$-cluster-tilting object. The object $X$ is called a complement for $T$.}
\end{defi}

\begin{theo}\cite{ZZ}\cite{W}
\cor{Let $T$ be an object in the category ${\mathcal{C}}^m_Q$. If $T$ is an almost complete $m$-cluster-tilting object, then there exist exactly $m+1$ non isomorphic complements for $T$.}
\end{theo}

If $n$ corresponds to the number of vertices of $Q$, it is known from \cite{Z}, that $T$ is an $m$-cluster-tilting object if and only if $T$ has $n$ indecomposable direct summands (up to isomorphism) and is $m$-rigid. So, for $T=\bigoplus T_i$ a basic $m$-cluster-tilting object, where each $T_i$ is indecomposable, let us define the almost complete $m$-rigid object ${\overline{T}}=T/T_k$ (where $T_k$ is an indecomposable summand of $T$). There are, up to isomorphism, $m+1$ complements of the almost complete $m$-cluster-tilting object $\overline{T}$, denoted by $T_k^{(c)}$, for $c \in \{ 0,\cdots, m \}$. Iyama and Yoshino in \cite{IY} showed the following Theorem:

\begin{theo}\cite[Lemma 5.7]{IY}
There are $m+1$ exchange triangles:
\begin{equation}
\xymatrix{
T_k^{(c)} \ar^{f_k^{(c)}}[r] & B_k^{(c)} \ar^{g_k^{(c+1)}}[r] & T_k^{(c+1)} \ar^{h_k^{(c+1)}}[r] & T_k^{(c)}[1] }
\label{eq:10}
\end{equation} where $T_k^{(m+1)}=T_k^{(0)}=T_k$.
Here, the $B_k^{(c)}$ are in ${\mathrm{add}} \overline{T}$, the maps $f_k^{(c)}$ (respectively $g_k^{(c+1)}$) are minimal left (respectively right) ${\mathrm{add}} \overline{T}$-approximations, hence, not split monomorphism or split epimorphism.
\end{theo}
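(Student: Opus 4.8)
The plan is to realize the $m+1$ complements as the successive third terms of triangles built from minimal left ${\mathrm{add}}\,\overline{T}$-approximations, and to verify at each step that this third term is again a complement, using the $(m+1)$-Calabi--Yau structure of ${\mathcal{C}}_Q^m$. Since ${\mathcal{C}}_Q^m$ is Hom-finite and Krull--Schmidt, minimal left and right ${\mathrm{add}}\,\overline{T}$-approximations exist for every object. Starting from a fixed complement $T_k^{(c)}$, I would take a minimal left ${\mathrm{add}}\,\overline{T}$-approximation $f_k^{(c)}\colon T_k^{(c)}\to B_k^{(c)}$ and complete it to a triangle
\[
T_k^{(c)}\xrightarrow{f_k^{(c)}}B_k^{(c)}\xrightarrow{g_k^{(c+1)}}T_k^{(c+1)}\xrightarrow{h_k^{(c+1)}}T_k^{(c)}[1],
\]
taking this as the definition of $T_k^{(c+1)}$.

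The central computation is to show that $\overline{T}\oplus T_k^{(c+1)}$ is $m$-cluster-tilting. Applying ${\mathrm{Hom}}(-,\overline{T})$ to the triangle, the rigidity of $\overline{T}$ kills the outer terms for $2\le i\le m$, while for $i=1$ the left-approximation property of $f_k^{(c)}$ makes ${\mathrm{Hom}}(B_k^{(c)},\overline{T})\to{\mathrm{Hom}}(T_k^{(c)},\overline{T})$ surjective; hence ${\mathrm{Ext}}^i(T_k^{(c+1)},\overline{T})=0$ for $1\le i\le m$. The dual vanishing ${\mathrm{Ext}}^i(\overline{T},T_k^{(c+1)})=0$ then follows from the $(m+1)$-Calabi--Yau duality ${\mathrm{Ext}}^i(X,Y)\cong D\,{\mathrm{Ext}}^{m+1-i}(Y,X)$ (with $D$ the $K$-dual), which carries the range $\{1,\dots,m\}$ into itself. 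For the self-extensions, applying ${\mathrm{Hom}}(-,T_k^{(c+1)})$ exhibits ${\mathrm{Ext}}^i(T_k^{(c+1)},T_k^{(c+1)})$ as a quotient of ${\mathrm{Ext}}^{i-1}(T_k^{(c)},T_k^{(c+1)})$, which vanishes for $2\le i\le m$; for $i=1$ I would use that every map $T_k^{(c)}\to T_k^{(c+1)}$ factors through $g_k^{(c+1)}$ (rigidity of $T_k^{(c)}$ makes ${\mathrm{Hom}}(T_k^{(c)},B_k^{(c)})\to{\mathrm{Hom}}(T_k^{(c)},T_k^{(c+1)})$ surjective) and that the resulting map to $B_k^{(c)}$ factors through $f_k^{(c)}$ by the approximation property, killing the last self-extension. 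Thus $\overline{T}\oplus T_k^{(c+1)}$ is $m$-rigid. Since $h_k^{(c+1)}$ lies in ${\mathrm{Ext}}^1(T_k^{(c+1)},T_k^{(c)})$ and would vanish were $T_k^{(c+1)}\in{\mathrm{add}}\,\overline{T}$, the triangle is non-split and $T_k^{(c+1)}\notin{\mathrm{add}}\,\overline{T}$; as an $m$-rigid object has at most $n$ indecomposable summands, $\overline{T}\oplus T_k^{(c+1)}$ has exactly $n$, forcing $T_k^{(c+1)}$ indecomposable, so it is $m$-cluster-tilting by Zhu's criterion. The same minimality shows $g_k^{(c+1)}$ is a minimal right ${\mathrm{add}}\,\overline{T}$-approximation, and neither map can split without placing a complement inside ${\mathrm{add}}\,\overline{T}$, which is impossible.

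The main obstacle is to show that this construction visits exactly $m+1$ distinct complements and closes into the cyclic family displayed, rather than stabilizing or forming a shorter loop. Here I would track the degree in which the extensions between the complements concentrate. A short induction on the triangles gives ${\mathrm{Ext}}^i(T_k^{(c+1)},T_k^{(c)})\neq0$ precisely for $i=1$, and, propagating through successive triangles, ${\mathrm{Ext}}^{\,c'-c}(T_k^{(c')},T_k^{(c)})\neq0$ with no other nonzero extension in the range $\{1,\dots,m\}$, for $1\le c'-c\le m$. Rigidity then forces $T_k^{(0)},\dots,T_k^{(m)}$ to be pairwise non-isomorphic, so they exhaust the $m+1$ complements already known to exist.

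To see that the cycle closes at period exactly $m+1$, I would compute ${\mathrm{Ext}}^\bullet(T_k^{(c)},T_k^{(m)})$ from ${\mathrm{Ext}}^{m-c}(T_k^{(m)},T_k^{(c)})$ via Calabi--Yau duality, finding it concentrated in degree $c+1$. The object $T_k^{(m+1)}$ produced by the next triangle is again a complement, hence isomorphic to some $T_k^{(c)}$; being consecutive to $T_k^{(m)}$, its extension to $T_k^{(m)}$ is concentrated in degree $1$, which forces $c+1=1$, i.e. $T_k^{(m+1)}\cong T_k^{(0)}$. This matching of the degree shift with the Calabi--Yau dimension is the crux: it is exactly the $(m+1)$-Calabi--Yau property that makes the chain of $m+1$ exchange triangles close into the asserted cyclic configuration, with $f_k^{(c)}$ and $g_k^{(c+1)}$ the claimed minimal approximations.
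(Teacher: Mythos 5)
The paper itself contains no proof of this theorem: it is quoted from Iyama--Yoshino \cite{IY}, with the complement count coming from \cite{W} and \cite{ZZ}. So your proposal has to be measured against the standard literature argument, which it largely reconstructs. Your first half is correct: the construction of $T_k^{(c+1)}$ as the cone of a minimal left ${\mathrm{add}}\,\overline{T}$-approximation, the vanishing of ${\mathrm{Ext}}^i(T_k^{(c+1)},\overline{T})$, the Calabi--Yau dual vanishing, and the factorization argument killing ${\mathrm{Ext}}^1(T_k^{(c+1)},T_k^{(c+1)})$ are all sound. (One small repair there: the summand count alone does not force $T_k^{(c+1)}$ indecomposable, since a priori it could contain a nonzero summand $V\in{\mathrm{add}}\,\overline{T}$ without raising the number of isomorphism classes; you need left minimality of $f_k^{(c)}$, applied via $h_k^{(c+1)}|_V\in{\mathrm{Ext}}^1(V,T_k^{(c)})=0$, to split such a $V$ off and conclude $V=0$.)

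The genuine gap is in the claim that ``a short induction on the triangles'' gives ${\mathrm{Ext}}^i(T_k^{(c')},T_k^{(c)})\neq 0$ \emph{exactly} for $i=c'-c$. The non-vanishing half is indeed a short induction: since ${\mathrm{Ext}}^j(B_k^{(c')},T_k^{(c)})=0$ for $1\le j\le m$, the connecting maps ${\mathrm{Ext}}^j(T_k^{(c')},T_k^{(c)})\to{\mathrm{Ext}}^{j+1}(T_k^{(c'+1)},T_k^{(c)})$ are injective, and $h_k^{(c+1)}\neq 0$ seeds the induction. The vanishing half is not: the long exact sequence only handles degrees $i\ge 2$, while in degree $1$ it identifies ${\mathrm{Ext}}^1(T_k^{(c'+1)},T_k^{(c)})$ with the cokernel of ${\mathrm{Hom}}(B_k^{(c')},T_k^{(c)})\xrightarrow{\circ f_k^{(c')}}{\mathrm{Hom}}(T_k^{(c')},T_k^{(c)})$. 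Killing this cokernel means proving that every morphism between the non-consecutive complements $T_k^{(c')}$ and $T_k^{(c)}$ factors through ${\mathrm{add}}\,\overline{T}$ -- a genuinely new statement, and exactly where the proofs in \cite{W} and \cite{ZZ} do extra work. Nor can it be extracted from the Calabi--Yau duality you invoke: writing ${\mathrm{Ext}}^1(T_k^{(c'+1)},T_k^{(c)})$ as the dual of ${\mathrm{Ext}}^m(T_k^{(c)},T_k^{(c'+1)})$ and running the long exact sequence on the other side reproduces the dual of the same cokernel. Since your closing step (``forces $c+1=1$'') rests precisely on the vanishing ${\mathrm{Ext}}^m(T_k^{(m)},T_k^{(c)})=0$ for $1\le c\le m-1$, the crux of the proof is unsupported as written.

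The gap is repairable with ingredients you already have. You correctly note that $g_k^{(c+1)}$ is a minimal right ${\mathrm{add}}\,\overline{T}$-approximation (right minimality follows because any summand of $B_k^{(c)}$ killed by $g_k^{(c+1)}$ would split off $T_k^{(c)}$, which is indecomposable and not in ${\mathrm{add}}\,\overline{T}$). Hence ``cone of the minimal left approximation'' and ``shifted cocone of the minimal right approximation'' are mutually inverse operations on isomorphism classes, so exchange is a permutation of the finite set of complements. Your distinctness argument uses only the non-vanishing half of the induction, which is sound; combined with the count of $m+1$ complements from \cite{W} and \cite{ZZ}, which you already invoke, the orbit of $T_k^{(0)}$ under this permutation has length at least $m+1$, hence exactly $m+1$, giving $T_k^{(m+1)}\cong T_k^{(0)}$ without any reference to extensions between non-consecutive complements.
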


For a definition of left and right approximation, see \cite{AS}.

\begin{defi}
The left-mutation of $T$ at $T_k^{(c)}$, for $k \in \{1, \cdots, m\}$, is the $m$-cluster-tilting object
\[ \mu_{T_k^{(c)}}T=T/T_k^{(c)} \bigoplus T_k^{(c+1)} \]
\end{defi}

We will often write $\mu_k$ instead of $\mu_{T_k}$, whenever there is no possible confusion with $c$.

\subsection{Mutation of colored quivers}\label{sec:colq}
In this section, we let $T$ be an $m$-cluster-tilting object, and $T'$ be some $m$-cluster-tilting object which is obtained by mutation of $T$. Unfortunately, if $Q_T$ is the Gabriel quiver associated with $T$, there does not exist any quiver mutation $\mu_j$ such that $Q_{T'}=\mu_j(Q_T)$. Then, in order to overcome this problem, Buan and Thomas in \cite{BT} defined a graded version of $Q_T$, called the colored quiver associated with $T$. \cor{For a precise example of the lack of information given by the Gabriel quiver, see \cite{Buan}.}

\begin{defi}\cite{BT}
Given a positive integer $m$, a colored quiver consists of the data of a quiver $Q=(Q_0,Q_1,s,t)$ and of an application $c:Q_1 \to \{ 0,\cdots,m \}$ which associates with an arrow its color.
Let $q_{ij}^{(c)}$ be the number of arrows from $i$ to $j$ of color $c$. If there is an arrow from $i$ to $j$ of color $c$, then we write $i \xrightarrow{(c)} j$.
\end{defi}

We require our colored quivers to satisfy the following conditions:

\begin{enumerate}
\item $q_{ii}^{(c)}=0$ for all $c$.
\item monochromaticity: if $q_{ij}^{(c)} \neq 0$ then $q_{ij}^{(c')}=0$ for all $c' \neq c$.
\item symmetry: $q_{ij}^{(c)}=q_{ji}^{(m-c)}$.
\end{enumerate}

The operation we are about to define is an involution called the mutation of a colored quiver at a vertex.

\begin{defi}\cite{BT}
Let $Q$ be a colored quiver, and let $k$ be a vertex of $Q$. We define the new quiver $\mu_k(Q)$ with the same vertices, and the new number of arrows $\tilde{q}_{ij}^{(c)}$ given by:
\[ 
\tilde{q}_{ij}^{(c)} = \left\{
    \begin{array}{l}
        {q}_{ij}^{(c+1)} \mbox{ if } j = k \\
        {q}_{ij}^{(c-1)} \mbox{ if } i = k \\
        {\mathrm{max}} \{0,q_{ij}^{(c)}-\sum_{t \neq c}{q}_{ij}^{(t)} + ({q}_{ik}^{(c)}-{q}_{ik}^{(c-1)}){q}_{kj}^{(0)} + {q}_{ik}^{(m)}({q}_{kj}^{(c)}-{q}_{kj}^{(c+1)}) \} \mbox{ otherwise.}
    \end{array}
    \right.
\]
\end{defi}

They also showed in \cite{BT} that mutating a colored quiver in this way is equivalent to the following procedure:

\begin{enumerate}
\item For any $\xymatrix@1{i\ar[r]^{(c)} & k\ar[r]^{(0)} & j}$, and any integer $c$ in $\{0,\cdots,m\}$, draw an arrow $\xymatrix@1{i\ar[r]^{(c)} & j}$ and an arrow $\xymatrix@1{j\ar[r]^{(m-c)} & i}$.
\item If the condition of monochromaticity is not satisfied anymore from one vertex $i$ to one vertex $j$, then remove the same number of arrows of each color, in order to restore the condition.
\item For any arrow $\xymatrix@1{i \ar[r]^{(c)} & k}$, add $1$ to the color $c$, and for any arrow $\xymatrix@1{k \ar[r]^{(c)} & j}$, subtract $1$ to the color $c$.
\end{enumerate}

%We recall that there are exchange triangles
%\begin{equation}
%\xymatrix{
%T_k^{(c)} \ar^{f_k^{(c)}}[r] & B_k^{(c)} \ar^{g_k^{(c+1)}}[r] & T_k^{(c+1)} \ar^{h_k^{(c+1)}}[r] & T_k^{(c)}[1] }
%\label{eq:10} %chercher comment faire une étoile \ast
%\end{equation}

With each $m$-cluster-tilting object $T$ in the $m$-cluster category, we associate a corresponding colored quiver $Q_T$ such that:

\begin{enumerate}
\item The vertices of $Q_T$ are the integers from $1$ to $n$ where $n$ is the number of indecomposable summands of $T$.
\item The number $q_{ij}^{(c)}$ is the multiplicity of $T_j$ in $B_i^{(c)}$ in the exchange triangle (\ref{eq:10}).
\end{enumerate}

We note that the subquiver of $Q_T$ with same vertices but only $0$-colored arrows is the Gabriel quiver of the endomorphism algebra of $T$.

We now state the main Theorem about colored quivers and $m$-cluster-tilting objects, a proof of which can be found in \cite{BT}:

\begin{theo}\cite[Theorem 2.1]{BT}\label{th:mut}
Let $T=\bigoplus_{i=1}^{n} T_i$ and $T'=T/T_k \bigoplus T_k^{(1)}$ be $m$-cluster-tilting objects, where there is an exchange triangle $T_k \to B_k^{(0)} \to T_k^{(1)} \to T_k[1]$. Then
\[Q_{T'}=\mu_k(Q_T).\]
\end{theo}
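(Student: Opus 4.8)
The statement is Theorem 2.1 of \cite{BT}, so the plan is to reconstruct the Buan--Thomas argument in the present setting. The central object is the homological reinterpretation of the colored quiver: by definition $q_{ij}^{(c)}$ is the multiplicity of $T_j$ in the middle term $B_i^{(c)}$ of the exchange triangle $T_i^{(c)} \to B_i^{(c)} \to T_i^{(c+1)} \to T_i^{(c)}[1]$, and since $\overline{T_i} = T/T_i$ is $m$-rigid with the $B_i^{(c)}$ minimal left/right $\mathrm{add}\,\overline{T_i}$-approximations, this multiplicity can be read off from a graded Hom space between the summands and the complements. The first step is therefore to fix such an interpretation (for instance as the dimension of a space of irreducible maps $T_i \to T_j$ in degree $c$, equivalently a graded piece of an $\mathrm{Ext}$-group), and to record that it is compatible with the symmetry relation $q_{ij}^{(c)} = q_{ji}^{(m-c)}$ coming from the $(m+1)$-Calabi--Yau duality. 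Everything then reduces to tracking how these spaces change when the summand $T_k = T_k^{(0)}$ of $T$ is replaced by $T_k^{(1)}$.

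The two cases where the mutated vertex is an endpoint are governed by the cyclic structure of the complements. The complements $T_k^{(0)}, T_k^{(1)}, \dots, T_k^{(m)}$ of the almost complete object $\overline{T} = T/T_k$ are cyclically permuted, with the exchange triangles (\ref{eq:1}) rotating accordingly; passing from $T$ to $T'$ relabels $T_k^{(1)}$ as the new distinguished complement and hence shifts the color index by one. This shift yields directly $\tilde q_{ij}^{(c)} = q_{ij}^{(c+1)}$ when $j = k$ and $\tilde q_{ij}^{(c)} = q_{ij}^{(c-1)}$ when $i = k$; the two are interchanged under the symmetry relation, so it suffices to verify one of them.

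The substantial case is $i, j \neq k$. Here $T_i$ and $T_j$ are unchanged, but the minimal $\mathrm{add}\,\overline{T}$-approximations defining $B_i^{(c)}$ must be recomputed in $\mathrm{add}\,\overline{T}'$, where $T_k$ has been removed and $T_k^{(1)}$ inserted. The plan is to apply the octahedral axiom to the exchange triangle $T_k \to B_k^{(0)} \to T_k^{(1)} \to T_k[1]$ at the vertex $k$ together with the triangle defining $B_i^{(c)}$: every map that previously factored through the summand $T_k$ of an approximation is rerouted through this triangle, producing a correction built from the composites $T_i \to \cdots \to T_k$ and $T_k \to \cdots \to T_j$. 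Counting multiplicities, these composites contribute exactly the bilinear terms $(q_{ik}^{(c)} - q_{ik}^{(c-1)})\,q_{kj}^{(0)}$ and $q_{ik}^{(m)}\,(q_{kj}^{(c)} - q_{kj}^{(c+1)})$ of the mutation rule, while the passage from a possibly non-minimal middle term to the minimal approximation accounts for the subtraction $\sum_{t \neq c} q_{ij}^{(t)}$ and the outer $\max\{0,\,\cdot\,\}$, which removes the arrows cancelled when monochromaticity is restored.

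The main obstacle is precisely this last case: one must control the minimality of the recomputed approximations and show that the number of summands $T_j$ surviving in the new middle term $B_i'^{(c)}$ matches the formula \emph{on the nose}, including the truncation at zero. Establishing this requires showing that the correction triangles interact as a complex, so that opposite-colored arrows between $i$ and $j$ cancel in pairs and no further arrows are created or destroyed beyond what the $\max$ records; this is where the $(m+1)$-Calabi--Yau property and the monochromaticity condition (2) are used most heavily.
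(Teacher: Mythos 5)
The first thing to note is that the paper does not prove this statement at all: it is imported verbatim from Buan and Thomas, and the sentence immediately preceding it says that a proof ``can be found in \cite{BT}''. So your reconstruction cannot be compared with any argument internal to this paper; it can only be measured against the proof in \cite{BT} itself, which this paper treats as a black box.

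Measured that way, your outline has the right skeleton --- the split into the endpoint cases $i=k$, $j=k$ (handled by the cyclic rotation of the $m+1$ complements $T_k^{(0)},\dots,T_k^{(m)}$, with the symmetry $q_{ij}^{(c)}=q_{ji}^{(m-c)}$ reducing one endpoint case to the other) and the substantial case $i,j\neq k$ --- but it is not a proof, and the gaps are genuine. First, your opening step, the identification of $q_{ij}^{(c)}$ with ``the dimension of a space of irreducible maps $T_i \to T_j$ in degree $c$, equivalently a graded piece of an $\mathrm{Ext}$-group'', is asserted rather than established: by definition $q_{ij}^{(c)}$ is a multiplicity in a minimal $\mathrm{add}\,\overline{T}$-approximation, and relating such multiplicities to Hom or Ext spaces (the color-$0$ case is the usual $\mathrm{rad}/\mathrm{rad}^2$ description of the Gabriel quiver, but the higher-color analogue is precisely what needs justification) is one of the substantive points of the Buan--Thomas argument, not a definition one may fix at will. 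Second, and more seriously, in the case $i,j\neq k$ you only describe what must be shown --- that rerouting maps through the exchange triangle $T_k \to B_k^{(0)} \to T_k^{(1)} \to T_k[1]$ produces exactly the correction $(q_{ik}^{(c)}-q_{ik}^{(c-1)})\,q_{kj}^{(0)} + q_{ik}^{(m)}\,(q_{kj}^{(c)}-q_{kj}^{(c+1)})$, that the recomputed approximations are minimal, and that the $\max\{0,\cdot\}$ truncation matches the monochromaticity cancellation --- and you explicitly defer establishing it (``the main obstacle is precisely this last case''). A single application of the octahedral axiom does not settle this; in \cite{BT} this case occupies the bulk of the proof and rests on several preparatory lemmas about compositions of exchange triangles and about when maps between summands factor through $\mathrm{add}(T_k)$. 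As it stands, your text is a plan for a proof in which the hardest step is named but left open.
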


In particular, the colored quiver $Q_{T'}$ only depends on the colored quiver $Q_T$.

\section{Geometric realization and flips}\label{sec:geo}

In this section we introduce the geometric realization inspired from \cite{CP01} and \cite{CP02}. We define the flip of an $(m+2)$-angulation, and build a colored quiver from an $(m+2)$-angulation. The main Theorem of the section, namely Theorem \ref{theo:corresp}, states the compatibility between the flip of an $(m+2)$-angulation and the mutation of the associated colored quiver.

Let $n \geq 4$ and $m \geq 2$ be integers. Let $P$ be an $(n-2)m$-gon with two central $(m-1)$-gons $L$ and $R$ inside of it (cf figure \ref{fig:figure1}). We set that $L$ (respectively $R$) is the leftmost (respectively the rightmost) $(m-1)$-gon in the interior of $P$. We replace each vertex of $L$ and $R$ by a disk, which we henceforth call a thick vertex. If $m=1$, then the surface considered is a disk with $n-2$ marked points on its border, with two thick vertices in its interior.

\cor{Considering that there is a realization of Dynkin type $D$ using tagged arcs, we could have used such arcs, with two punctures inside of the polygon in order to build a geometric realization of type $\tilde{D}$. However, this sort of realization is not compatible with the mutation of colored quivers associated with the $(m+2)$-angulations.}

\begin{figure}[!h]
\centering
\begin{tikzpicture}[scale=0.25]
\fill[fill=black,fill opacity=0.15] (0,4) -- (0,-4) -- (6.93,-8) -- (13.86,-4) -- (13.86,4) -- (6.93,8) -- cycle;
\draw [fill=black,fill opacity=1.0] (3.46,2) circle (0.5cm);
\draw [fill=black,fill opacity=1.0] (3.46,-2) circle (0.5cm);
\draw [fill=black,fill opacity=1.0] (10.39,2) circle (0.5cm);
\draw [fill=black,fill opacity=1.0] (10.39,-2) circle (0.5cm);
\draw (0,4)-- (0,-4);
\draw (0,-4)-- (6.93,-8);
\draw (6.93,-8)-- (13.86,-4);
\draw (13.86,-4)-- (13.86,4);
\draw (13.86,4)-- (6.93,8);
\draw (6.93,8)-- (0,4);
\draw [shift={(1.86,0)}] plot[domain=-0.89:0.89,variable=\t]({1*2.57*cos(\t r)+0*2.57*sin(\t r)},{0*2.57*cos(\t r)+1*2.57*sin(\t r)});
\draw [shift={(5.07,0)}] plot[domain=2.25:4.04,variable=\t]({1*2.57*cos(\t r)+0*2.57*sin(\t r)},{0*2.57*cos(\t r)+1*2.57*sin(\t r)});
\draw [shift={(12,0)}] plot[domain=2.25:4.04,variable=\t]({1*2.57*cos(\t r)+0*2.57*sin(\t r)},{0*2.57*cos(\t r)+1*2.57*sin(\t r)});
\draw [shift={(8.79,0)}] plot[domain=-0.89:0.89,variable=\t]({1*2.57*cos(\t r)+0*2.57*sin(\t r)},{0*2.57*cos(\t r)+1*2.57*sin(\t r)});
\end{tikzpicture}
\caption{The $(n-2)m$-gon with two digons. Here $m=3$ and $n=4$.}
\label{fig:figure1}
\end{figure}

\begin{defi}
Let us number the vertices of the polygon $P$ from $1$ to $(n-2)m$ clockwise. Then, an admissible arc between $i$ and $j$ is defined in the following way:
\begin{enumerate}
\item If $i \neq j$, then an admissible arc is an oriented path from $i$ to $j$, lying inside of $P$, which does not cross any of the two inner polygons, satisfying one of the following conditions:
\begin{itemize}
\item Either the arc crosses the space between both central polygons and cuts the figure into a $(km+1)$-gon and a $(k'm+1)$-gon, for some $k$ ($k'$ is entirely determined by $k$). This arc is of type $1$.
\item Either, the arc is homotopic to a boundary path, and cuts the figure into a $km$-gon with both central polygons inside of it and a $k'm+2$-gon (where $k'$ is still entirely determined by $k$). This arc is of type $2$.
\end{itemize}
\item If $i=j$, there are four types of admissible arcs :
\begin{itemize}
	\item A path ending in $i$, and the other end of the path tangent to one of the thick vertices placed around $L$.
	\item A path ending in $i$, and the other end of the path tangent to one of the thick vertices placed around $R$.
	\item A path starting and ending at $i$, going around $L$ or $R$, called a loop.
	\item A path starting and ending at $i$, going around $L$ or $R$, called a tagged loop.
\end{itemize}
\cor{All these arcs are of type 3.}
\item Any arc being tangent to two disks, one arising from $L$, and one from $R$ is admissible. \cor{This arc is of type 4.}
\end{enumerate}
\end{defi}

\cor{Note that any admissible arc can have some self-crossings. We will see at the end of the paper in Lemma \ref{lem:tri} that the admissible arcs without any self-crossing are in bijection with the rigid objects.}

\begin{defi}
Consider one moment an arc starting at an arbitrary vertex of $P$, and ending at a thick vertex $L$ or $R$. This arc is called to be left tangent at $L$ if it is $\mathcal{C}^\infty$, tangent to a thick vertex, and if there exists a neighborhood of this thick vertex such that this one is situated at the right of the vertex. We similarly define right tangency.
\end{defi}

\vspace{10pt}

%\begin{defi}
%We denote by $B_{ij}$ the boundary path from $i$ to $j$ going clockwise around the boundary (where $B_{ii}$ denotes the trivial path at $i$). We call a "$t$-angle", a figure delimited by arcs and/or $B_{ij}$ for some distinct $i$ and $j$ and/or sides of the inner polygons $R$ and $S$, where the total number of sides and arcs is $t$.
%\end{defi}

Note that we only consider unoriented arcs, the order of $i$ and $j$ is not important. For convenience, we will nevertheless use the terminology "from $i$ to $j$".

\begin{defi}
\coor{We call a Dehn twist an application which, with an arc, associates another arc, obtained by the action of rotating $L$ (respectively $R$). It means that if we consider an arc $\alpha$ hung to $L$ (respectively $R$), applying a Dehn twist of $L$ (respectively $R$) makes $\alpha$ roll around $L$ (respectively $R$) only (see figure \ref{fig:roll}). We can define Dehn twist in both clockwise and counterclockwise directions.}
\end{defi}

We now define $m$-diagonals.

\begin{defi}\label{def:equiv}
Let $\alpha$ and $\beta$ be two admissible arcs of the same type. We say that these arcs are equivalent when:
\begin{itemize}
\item \cor{If $\alpha$ and $\beta$ are of type $1$, $2$ or $4$, then they are said to be equivalent if they are homotopic.
\item If $\alpha$ and $\beta$ are of type $3$ and hang to the same vertex and to the same inner polygon (say $L$ for instance, but $R$ is similar), then they are said to be equivalent if they are homotopic, or, if there exist a Dehn twist $t$ such that $\alpha=t(\beta)$. In this case, we add in the class of equivalence a tagged loop (respectively a loop) if the arcs is left tangent (respectively right tangent), drawn in figure \ref{fig:loop}, around $L$, ending at the same vertex as $\alpha$.}
\end{itemize}
\end{defi}

\begin{figure}[!h]
\centering
\begin{tikzpicture}[scale=0.5]
\fill[fill=black,fill opacity=0.1] (0,4) -- (-1.66,3.46) -- (-2.69,2.05) -- (-2.68,0.3) -- (-1.66,-1.11) -- (0,-1.65) -- (1.66,-1.11) -- (2.69,0.3) -- (2.69,2.05) -- (1.66,3.46) -- cycle;
\draw [fill=black,fill opacity=1.0] (-1.66,1.57) circle (0.2cm);
\draw [fill=black,fill opacity=1.0] (1.66,1.58) circle (0.2cm);
\draw (0,4)-- (-1.66,3.46);
\draw (-1.66,3.46)-- (-2.69,2.05);
\draw (-2.69,2.05)-- (-2.68,0.3);
\draw (-2.68,0.3)-- (-1.66,-1.11);
\draw (-1.66,-1.11)-- (0,-1.65);
\draw (0,-1.65)-- (1.66,-1.11);
\draw (1.66,-1.11)-- (2.69,0.3);
\draw (2.69,0.3)-- (2.69,2.05);
\draw (2.69,2.05)-- (1.66,3.46);
\draw (1.66,3.46)-- (0,4);
\draw(-1.66,1.17) circle (0.4cm);
\draw(1.66,1.18) circle (0.4cm);
\draw [shift={(0.3,0.15)}] plot[domain=0.96:3.44,variable=\t]({-0.77*2.41*cos(\t r)+0.64*1.46*sin(\t r)},{-0.64*2.41*cos(\t r)+-0.77*1.46*sin(\t r)});
\draw [shift={(0.99,-1.47)}] plot[domain=3.09:4.82,variable=\t]({-0.21*3.53*cos(\t r)+0.98*0.93*sin(\t r)},{-0.98*3.53*cos(\t r)+-0.21*0.93*sin(\t r)});
\end{tikzpicture}
\caption{Loop drawn at a vertex, around $L$. The same can be done for $R$}
\label{fig:loop}
\end{figure}

\begin{rmk}
\coor{The reader can wonder why we do not replace $L$ and $R$ by punctures, as Baur and Marsh did for case $D$ in \cite{BM02}. The problem is that, if $m\geq 2$, there cannot exists any definition of the an $(m+2)$-angulation which leads to a compatibility between flip of the $(m+2)$-angulation and the colored quiver mutation.}
\end{rmk}

\begin{defi}\label{def:diago}
An $m$-diagonal is an equivalence class of admissible arcs, up to homotopy.
\end{defi}

\coor{Whenever it is clear, we indifferently identify  the term of equivalence class and well-chosen representative. Moreover, there can be for the reader a confusion between the equivalence relation on arcs and the homotopy equivalence relation. Note that all the arcs are considered up to homotopy, and that the $m$-diagonals are classes under the equivalence relation defined in \ref{def:equiv}.}

%\begin{defi}\label{def:diago}
%An $m$-diagonal is an equivalence class of admissible arcs where the different classes are:
%\begin{enumerate}
%\item If $i \neq j$, type $1$: If the arc cuts the figure into a $km+1$-gon and a $k'm+1$-gon, then a class up to homotopy of $m$-diagonals of type $1$ is built for each value of $k$ ($k'$ being entirely determined by $k$).
%\item If $i \neq j$, type $2$: If the arc cuts the figure into a $km$-gon with both central polygons inside of it and a $k'm+2$-gon, then a class up to homotopy of $m$-diagonals of type $2$ is built for each value of $k$.
%\item If $i=j$: a class contains all the images of Dehn twists of admissible arcs tangent to the left of a thick vertex of $R$ plus the left loop described in the Remark below. The other classes are given by all the images of Dehn twists of admissible arcs tangent to the right of a thick vertex (plus the right loop). Similar classes are built in the same way for $S$.
%\item Any admissible arc up to homotopy linking both central polygons forms a class of $m$-diagonal. %reformuler pour dire qu'il y a une boucle dans chaque classe.
%\end{enumerate}
%\end{defi}

%\begin{rmk}
%Let $i$ be a vertex of $P$, and $\alpha$ be an arc tangent to the left of a thick vertex of $R$. Then the loop belonging to the class of $\alpha$ is the arc up to homotopy going around $R$ without crossing $\alpha$ (and not homotopic to the boundary path). The same description can be done for right-tangency and $S$.
%
%Note that there is only one loop for each class containing a tangent $m$-diagonal.
%\end{rmk}

\begin{rmk}
If $i=j$, then, there exist an infinity of classes of arcs tangents to the left or $L$. Indeed, the $m$-diagonals car roll around $L$ and $R$, and this leads to two different classes of arcs. See figure \ref{fig:rolla} for an illustration.
\end{rmk}

\begin{figure}[!h]
\centering
\begin{tikzpicture}[scale=0.5]
\fill[fill=black,fill opacity=0.1] (0,4) -- (-1.66,3.46) -- (-2.69,2.05) -- (-2.68,0.3) -- (-1.66,-1.11) -- (0,-1.65) -- (1.66,-1.11) -- (2.69,0.3) -- (2.69,2.05) -- (1.66,3.46) -- cycle;
\draw [fill=black,fill opacity=1.0] (-1.66,1.57) circle (0.2cm);
\draw [fill=black,fill opacity=1.0] (1.66,1.58) circle (0.2cm);
\draw (0,4)-- (-1.66,3.46);
\draw (-1.66,3.46)-- (-2.69,2.05);
\draw (-2.69,2.05)-- (-2.68,0.3);
\draw (-2.68,0.3)-- (-1.66,-1.11);
\draw (-1.66,-1.11)-- (0,-1.65);
\draw (0,-1.65)-- (1.66,-1.11);
\draw (1.66,-1.11)-- (2.69,0.3);
\draw (2.69,0.3)-- (2.69,2.05);
\draw (2.69,2.05)-- (1.66,3.46);
\draw (1.66,3.46)-- (0,4);
\draw(-1.66,1.17) circle (0.4cm);
\draw(1.66,1.18) circle (0.4cm);
\draw [shift={(-0.65,-0.55)}] plot[domain=1.52:3.46,variable=\t]({-0.79*3.42*cos(\t r)+0.62*1.27*sin(\t r)},{-0.62*3.42*cos(\t r)+-0.79*1.27*sin(\t r)});
\end{tikzpicture}
=
\begin{tikzpicture}[scale=0.5]
\fill[fill=black,fill opacity=0.1] (0,4) -- (-1.66,3.46) -- (-2.69,2.05) -- (-2.68,0.3) -- (-1.66,-1.11) -- (0,-1.65) -- (1.66,-1.11) -- (2.69,0.3) -- (2.69,2.05) -- (1.66,3.46) -- cycle;
\draw [fill=black,fill opacity=1.0] (-1.66,1.57) circle (0.2cm);
\draw [fill=black,fill opacity=1.0] (1.66,1.58) circle (0.2cm);
\draw (0,4)-- (-1.66,3.46);
\draw (-1.66,3.46)-- (-2.69,2.05);
\draw (-2.69,2.05)-- (-2.68,0.3);
\draw (-2.68,0.3)-- (-1.66,-1.11);
\draw (-1.66,-1.11)-- (0,-1.65);
\draw (0,-1.65)-- (1.66,-1.11);
\draw (1.66,-1.11)-- (2.69,0.3);
\draw (2.69,0.3)-- (2.69,2.05);
\draw (2.69,2.05)-- (1.66,3.46);
\draw (1.66,3.46)-- (0,4);
\draw(-1.66,1.17) circle (0.4cm);
\draw(1.66,1.18) circle (0.4cm);
\draw [shift={(0.3,0.15)}] plot[domain=0.96:3.44,variable=\t]({-0.77*2.41*cos(\t r)+0.64*1.46*sin(\t r)},{-0.64*2.41*cos(\t r)+-0.77*1.46*sin(\t r)});
\draw [shift={(0.99,-1.47)}] plot[domain=3.09:4.82,variable=\t]({-0.21*3.53*cos(\t r)+0.98*0.93*sin(\t r)},{-0.98*3.53*cos(\t r)+-0.21*0.93*sin(\t r)});
\end{tikzpicture}
$\neq$
\begin{tikzpicture}[scale=0.5]
\fill[fill=black,fill opacity=0.1] (0,4) -- (-1.66,3.46) -- (-2.69,2.05) -- (-2.68,0.3) -- (-1.66,-1.11) -- (0,-1.65) -- (1.66,-1.11) -- (2.69,0.3) -- (2.69,2.05) -- (1.66,3.46) -- cycle;
\draw [fill=black,fill opacity=1.0] (-1.66,1.57) circle (0.2cm);
\draw [fill=black,fill opacity=1.0] (1.66,1.58) circle (0.2cm);
\draw (0,4)-- (-1.66,3.46);
\draw (-1.66,3.46)-- (-2.69,2.05);
\draw (-2.69,2.05)-- (-2.68,0.3);
\draw (-2.68,0.3)-- (-1.66,-1.11);
\draw (-1.66,-1.11)-- (0,-1.65);
\draw (0,-1.65)-- (1.66,-1.11);
\draw (1.66,-1.11)-- (2.69,0.3);
\draw (2.69,0.3)-- (2.69,2.05);
\draw (2.69,2.05)-- (1.66,3.46);
\draw (1.66,3.46)-- (0,4);
\draw(-1.66,1.17) circle (0.4cm);
\draw(1.66,1.18) circle (0.4cm);
\draw [shift={(-0.2,0.55)}] plot[domain=1.31:5.14,variable=\t]({-0.9*2.87*cos(\t r)+0.43*2.15*sin(\t r)},{-0.43*2.87*cos(\t r)+-0.9*2.15*sin(\t r)});
\draw [shift={(-0.04,0.94)}] plot[domain=-0.48:3.92,variable=\t]({-0.99*2.43*cos(\t r)+-0.1*1.39*sin(\t r)},{0.1*2.43*cos(\t r)+-0.99*1.39*sin(\t r)});
\end{tikzpicture}
=
\begin{tikzpicture}[scale=0.5]
\fill[fill=black,fill opacity=0.1] (0,4) -- (-1.66,3.46) -- (-2.69,2.05) -- (-2.68,0.3) -- (-1.66,-1.11) -- (0,-1.65) -- (1.66,-1.11) -- (2.69,0.3) -- (2.69,2.05) -- (1.66,3.46) -- cycle;
\draw [fill=black,fill opacity=1.0] (-1.66,1.57) circle (0.2cm);
\draw [fill=black,fill opacity=1.0] (1.66,1.58) circle (0.2cm);
\draw (0,4)-- (-1.66,3.46);
\draw (-1.66,3.46)-- (-2.69,2.05);
\draw (-2.69,2.05)-- (-2.68,0.3);
\draw (-2.68,0.3)-- (-1.66,-1.11);
\draw (-1.66,-1.11)-- (0,-1.65);
\draw (0,-1.65)-- (1.66,-1.11);
\draw (1.66,-1.11)-- (2.69,0.3);
\draw (2.69,0.3)-- (2.69,2.05);
\draw (2.69,2.05)-- (1.66,3.46);
\draw (1.66,3.46)-- (0,4);
\draw(-1.66,1.17) circle (0.4cm);
\draw(1.66,1.18) circle (0.4cm);
\draw [shift={(-0.2,0.55)}] plot[domain=1.31:5.14,variable=\t]({-0.9*2.87*cos(\t r)+0.43*2.15*sin(\t r)},{-0.43*2.87*cos(\t r)+-0.9*2.15*sin(\t r)});
\draw [shift={(-0.2,1.02)}] plot[domain=-0.72:3.51,variable=\t]({-0.99*2.41*cos(\t r)+0.12*1.52*sin(\t r)},{-0.12*2.41*cos(\t r)+-0.99*1.52*sin(\t r)});
\draw [shift={(0.73,1.13)}] plot[domain=3.25:5.68,variable=\t]({-0.9*1.43*cos(\t r)+0.44*0.69*sin(\t r)},{-0.44*1.43*cos(\t r)+-0.9*0.69*sin(\t r)});
\draw [shift={(-1.22,1.32)}] plot[domain=-1.99:1.87,variable=\t]({-0.78*1.12*cos(\t r)+0.63*0.76*sin(\t r)},{-0.63*1.12*cos(\t r)+-0.78*0.76*sin(\t r)});
\draw [shift={(-0.58,0.28)}] plot[domain=1.46:4.65,variable=\t]({-0.89*3.09*cos(\t r)+0.45*2*sin(\t r)},{-0.45*3.09*cos(\t r)+-0.89*2*sin(\t r)});
\end{tikzpicture}
\caption{On the one hand, both figures on the left represent the same $m$-diagonal, whereas both figures on the right, considering that the top right is a untagged loop, represent the same $m$-diagonal.}
\label{fig:rolla}
\end{figure}

In figure \ref{fig:roll}, we can see an example of two arcs corresponding to the same $m$-diagonal, where we have applied a Dehn twist to one of the polygons.

\begin{figure}[!h]
\centering
\begin{tikzpicture}[scale=0.2]
\fill[fill=black,fill opacity=0.15] (0,4) -- (0,-4) -- (6.93,-8) -- (13.86,-4) -- (13.86,4) -- (6.93,8) -- cycle;
\draw [fill=black,fill opacity=1.0] (3.46,2) circle (0.5cm);
\draw [fill=black,fill opacity=1.0] (3.46,-2) circle (0.5cm);
\draw [fill=black,fill opacity=1.0] (10.39,2) circle (0.5cm);
\draw [fill=black,fill opacity=1.0] (10.39,-2) circle (0.5cm);
\draw (0,4)-- (0,-4);
\draw (0,-4)-- (6.93,-8);
\draw (6.93,-8)-- (13.86,-4);
\draw (13.86,-4)-- (13.86,4);
\draw (13.86,4)-- (6.93,8);
\draw (6.93,8)-- (0,4);
\draw [shift={(1.86,0)}] plot[domain=-0.89:0.89,variable=\t]({1*2.57*cos(\t r)+0*2.57*sin(\t r)},{0*2.57*cos(\t r)+1*2.57*sin(\t r)});
\draw [shift={(5.07,0)}] plot[domain=2.25:4.04,variable=\t]({1*2.57*cos(\t r)+0*2.57*sin(\t r)},{0*2.57*cos(\t r)+1*2.57*sin(\t r)});
\draw [shift={(12,0)}] plot[domain=2.25:4.04,variable=\t]({1*2.57*cos(\t r)+0*2.57*sin(\t r)},{0*2.57*cos(\t r)+1*2.57*sin(\t r)});
\draw [shift={(8.79,0)}] plot[domain=-0.89:0.89,variable=\t]({1*2.57*cos(\t r)+0*2.57*sin(\t r)},{0*2.57*cos(\t r)+1*2.57*sin(\t r)});
\draw [shift={(11.77,-0.5)},color=red]  plot[domain=3.31:4.14,variable=\t]({1*8.93*cos(\t r)+0*8.93*sin(\t r)},{0*8.93*cos(\t r)+1*8.93*sin(\t r)});
\end{tikzpicture}
\hspace{20pt}
\begin{tikzpicture}[scale=0.2]
\fill[fill=black,fill opacity=0.1] (0,4) -- (0,-4) -- (6.93,-8) -- (13.86,-4) -- (13.86,4) -- (6.93,8) -- cycle;
\draw [fill=black,fill opacity=1.0] (3.46,-2) circle (1cm);
\draw [fill=black,fill opacity=1.0] (10.39,2) circle (1cm);
\draw (0,4)-- (0,-4);
\draw (0,-4)-- (6.93,-8);
\draw (6.93,-8)-- (13.86,-4);
\draw (13.86,-4)-- (13.86,4);
\draw (13.86,4)-- (6.93,8);
\draw (6.93,8)-- (0,4);
\draw [shift={(6.93,-7.72)},color=red] plot[domain=1.04:2.1,variable=\t]({1*13.61*cos(\t r)+0*13.61*sin(\t r)},{0*13.61*cos(\t r)+1*13.61*sin(\t r)});
\end{tikzpicture}
\hspace{20pt}
\begin{tikzpicture}[scale=0.2]
\fill[fill=black,fill opacity=0.15] (0,4) -- (0,-4) -- (6.93,-8) -- (13.86,-4) -- (13.86,4) -- (6.93,8) -- cycle;
\draw [fill=black,fill opacity=1.0] (3.46,-1) circle (0.5cm);
\draw [fill=black,fill opacity=1.0] (10.39,-1) circle (0.5cm);
\draw (0,4)-- (0,-4);
\draw (0,-4)-- (6.93,-8);
\draw (6.93,-8)-- (13.86,-4);
\draw (13.86,-4)-- (13.86,4);
\draw (13.86,4)-- (6.93,8);
\draw (6.93,8)-- (0,4);
\draw(3.46,0) circle (1cm);
\draw(10.39,0) circle (1cm);
\draw [color=red](3.46,-1.5)-- (10.39,-1.5);
\begin{scriptsize}
\fill [color=black] (3.46,-1) circle (1.0pt);
\end{scriptsize}
\end{tikzpicture}
\caption{Some examples of arcs in the hexagon. For example, if $m \neq 1$, the second arc is not admissible because it cuts the polygon into a $3$-gon and a $5$-gon.}
\label{fig:arcs}
\end{figure}

\begin{figure}[!h]
	\centering
	\begin{tikzpicture}[scale=0.25]
	\fill[fill=black,fill opacity=0.15] (0,4) -- (0,-4) -- (6.93,-8) -- (13.86,-4) -- (13.86,4) -- (6.93,8) -- cycle;
	\draw [fill=black,fill opacity=1.0] (3.46,2) circle (0.4cm);
	\draw [fill=black,fill opacity=1.0] (3.46,-2) circle (0.4cm);
	\draw [fill=black,fill opacity=1.0] (10.39,2) circle (0.4cm);
	\draw [fill=black,fill opacity=1.0] (10.39,-2) circle (0.4cm);
	\draw (0,4)-- (0,-4);
	\draw (0,-4)-- (6.93,-8);
	\draw (6.93,-8)-- (13.86,-4);
	\draw (13.86,-4)-- (13.86,4);
	\draw (13.86,4)-- (6.93,8);
	\draw (6.93,8)-- (0,4);
	\draw [shift={(1.86,0)}] plot[domain=-0.89:0.89,variable=\t]({1*2.57*cos(\t r)+0*2.57*sin(\t r)},{0*2.57*cos(\t r)+1*2.57*sin(\t r)});
	\draw [shift={(5.07,0)}] plot[domain=2.25:4.04,variable=\t]({1*2.57*cos(\t r)+0*2.57*sin(\t r)},{0*2.57*cos(\t r)+1*2.57*sin(\t r)});
	\draw [shift={(12,0)}] plot[domain=2.25:4.04,variable=\t]({1*2.57*cos(\t r)+0*2.57*sin(\t r)},{0*2.57*cos(\t r)+1*2.57*sin(\t r)});
	\draw [shift={(8.79,0)}] plot[domain=-0.89:0.89,variable=\t]({1*2.57*cos(\t r)+0*2.57*sin(\t r)},{0*2.57*cos(\t r)+1*2.57*sin(\t r)});
	\draw [shift={(1.59,-3.73)}] plot[domain=-1.26:0.41,variable=\t]({0.94*11.55*cos(\t r)+-0.35*6.17*sin(\t r)},{0.35*11.55*cos(\t r)+0.94*6.17*sin(\t r)});
	\end{tikzpicture}
	\hspace{20pt}
	\begin{tikzpicture}[scale=0.25]
	\fill[fill=black,fill opacity=0.15] (0,4) -- (0,-4) -- (6.93,-8) -- (13.86,-4) -- (13.86,4) -- (6.93,8) -- cycle;
	\draw [fill=black,fill opacity=1.0] (3.46,2) circle (0.4cm);
	\draw [fill=black,fill opacity=1.0] (3.46,-2) circle (0.4cm);
	\draw [fill=black,fill opacity=1.0] (10.39,2) circle (0.4cm);
	\draw [fill=black,fill opacity=1.0] (10.39,-2) circle (0.4cm);
	\draw (0,4)-- (0,-4);
	\draw (0,-4)-- (6.93,-8);
	\draw (6.93,-8)-- (13.86,-4);
	\draw (13.86,-4)-- (13.86,4);
	\draw (13.86,4)-- (6.93,8);
	\draw (6.93,8)-- (0,4);
	\draw [shift={(1.86,0)}] plot[domain=-0.89:0.89,variable=\t]({1*2.57*cos(\t r)+0*2.57*sin(\t r)},{0*2.57*cos(\t r)+1*2.57*sin(\t r)});
	\draw [shift={(5.07,0)}] plot[domain=2.25:4.04,variable=\t]({1*2.57*cos(\t r)+0*2.57*sin(\t r)},{0*2.57*cos(\t r)+1*2.57*sin(\t r)});
	\draw [shift={(12,0)}] plot[domain=2.25:4.04,variable=\t]({1*2.57*cos(\t r)+0*2.57*sin(\t r)},{0*2.57*cos(\t r)+1*2.57*sin(\t r)});
	\draw [shift={(8.79,0)}] plot[domain=-0.89:0.89,variable=\t]({1*2.57*cos(\t r)+0*2.57*sin(\t r)},{0*2.57*cos(\t r)+1*2.57*sin(\t r)});
	\draw [shift={(8.18,-2.24)}] plot[domain=0.72:4.18,variable=\t]({-0.53*7.35*cos(\t r)+0.85*3.06*sin(\t r)},{-0.85*7.35*cos(\t r)+-0.53*3.06*sin(\t r)});
	\draw [shift={(9.47,0.18)}] plot[domain=-1.84:1.09,variable=\t]({-0.57*3.36*cos(\t r)+0.82*2.57*sin(\t r)},{-0.82*3.36*cos(\t r)+-0.57*2.57*sin(\t r)});
	\end{tikzpicture}
	\caption{These two arcs represent the same $m$-diagonal.}
	\label{fig:roll}
\end{figure}

\begin{defi}
We call by $t$-angle, a figure with $t$ sides, made of:
\begin{itemize}
\item Sides of $P$
\item Sides of $L$ or $R$
\item $m$-diagonals.
\end{itemize}
\end{defi}

\begin{defi}
\begin{itemize}
\item \cor{Two $m$-diagonals are said to be noncrossing if each of them are non self-crossing and if there exist representatives in their homotopy class which do not cross each other in the interior of the surface.
\item An $(m+2)$-angulation is a set of noncrossing $m$-diagonals, such that there exist representative admissible arcs which cut the figure into $(m+2)$-angles. Such a choice of representatives is called a good set of representatives.}
\end{itemize}
\end{defi}

\coor{For instance, in order to show that two $m$-diagonals (one of type $3$) do not cross, one can, if needed, replace by the loop the arc of type $3$, since the loop belongs to this equivalence class (see figure \ref{fig:oeil} for an example). If the arc is left tangent, we can replace it by a tagged loop. If it is right tangent, we replace it by the untaggeed loop.}

\begin{rmk}
\coor{Many articles used tagged and untagged arcs instead of left and right arcs. Indeed, if we consider isolated arcs (not belonging to an $(m+2)$-angulation), it is more comfortable to use tagged and untagged arcs. Nevertheless, as one of the aims of the article is to show compatibility between the flip of an $(m+2)$-angulation and the colored quiver mutation, we remain using left and right tangency since the result cannot be showed using tagged and untagged arcs.}
\end{rmk}

%\begin{defi}
%\begin{enumerate} %est ce que deux arcs joignant R et S sont compatibles ? admissibles ?
%\item \begin{itemize}
%\item If two arcs are not tangent to the same inner polygon, they are said to be noncrossing if their classes under homotopy contain representatives which do not cross.
%\item If both arcs are tangent to the same inner polygon:
%\begin{itemize}
%\item If they have the same tangency (left for example), they are said to be noncrossing if their classes under homotopy contain representatives which do not cross.
%\item If not, replace one of them by the loop which lies in the corresponding class, and they are said to be noncrossing if their classes under homotopy contain representatives which do not cross.
%\end{itemize}
%\item Two $m$-diagonals linking both inner polygons are said to be noncrossing if their classes under homotopy contain representatives which do not cross.
%\end{itemize}
%\item An $(m+2)$-angulation is a set of noncrossing $m$-diagonals, such that there exist representative admissible arcs which cut the figure into $(m+2)$-angles. Such a choice of representatives is called a good set of representatives.
%\end{enumerate}
%\end{defi}

See figure \ref{fig:tridep} for the example of an $(m+2)$-angulation.

\begin{figure}[!h]
\centering
\begin{tikzpicture}[scale=0.25]
\fill[fill=black,fill opacity=0.15] (0,4) -- (0,-4) -- (6.93,-8) -- (13.86,-4) -- (13.86,4) -- (6.93,8) -- cycle;
\draw [fill=black,fill opacity=1.0] (3.46,2) circle (0.4cm);
\draw [fill=black,fill opacity=1.0] (3.46,-2) circle (0.4cm);
\draw [fill=black,fill opacity=1.0] (10.39,2) circle (0.4cm);
\draw [fill=black,fill opacity=1.0] (10.39,-2) circle (0.4cm);
\draw (0,4)-- (0,-4);
\draw (0,-4)-- (6.93,-8);
\draw (6.93,-8)-- (13.86,-4);
\draw (13.86,-4)-- (13.86,4);
\draw (13.86,4)-- (6.93,8);
\draw (6.93,8)-- (0,4);
\draw [shift={(1.86,0)}] plot[domain=-0.89:0.89,variable=\t]({1*2.57*cos(\t r)+0*2.57*sin(\t r)},{0*2.57*cos(\t r)+1*2.57*sin(\t r)});
\draw [shift={(5.07,0)}] plot[domain=2.25:4.04,variable=\t]({1*2.57*cos(\t r)+0*2.57*sin(\t r)},{0*2.57*cos(\t r)+1*2.57*sin(\t r)});
\draw [shift={(12,0)}] plot[domain=2.25:4.04,variable=\t]({1*2.57*cos(\t r)+0*2.57*sin(\t r)},{0*2.57*cos(\t r)+1*2.57*sin(\t r)});
\draw [shift={(8.79,0)}] plot[domain=-0.89:0.89,variable=\t]({1*2.57*cos(\t r)+0*2.57*sin(\t r)},{0*2.57*cos(\t r)+1*2.57*sin(\t r)});
\draw (6.93,8)-- (6.93,-8);
\draw [shift={(-11.62,-8.09)}] plot[domain=0:0.59,variable=\t]({1*18.55*cos(\t r)+0*18.55*sin(\t r)},{0*18.55*cos(\t r)+1*18.55*sin(\t r)});
\draw [shift={(8.18,-3.66)}] plot[domain=-0.47:1.77,variable=\t]({-0.8*8.37*cos(\t r)+-0.6*4.29*sin(\t r)},{0.6*8.37*cos(\t r)+-0.8*4.29*sin(\t r)});
\draw [shift={(5.68,-3.66)}] plot[domain=-0.47:1.77,variable=\t]({0.8*8.37*cos(\t r)+0.6*4.29*sin(\t r)},{0.6*8.37*cos(\t r)+-0.8*4.29*sin(\t r)});
\draw [shift={(25.48,-8.09)}] plot[domain=2.55:3.14,variable=\t]({1*18.55*cos(\t r)+0*18.55*sin(\t r)},{0*18.55*cos(\t r)+1*18.55*sin(\t r)});
\end{tikzpicture}
\hspace{20pt}
\begin{tikzpicture}[scale=0.7]
\fill[fill=black,fill opacity=0.1] (0,4) -- (-1.66,3.46) -- (-2.69,2.05) -- (-2.68,0.3) -- (-1.66,-1.11) -- (0,-1.65) -- (1.66,-1.11) -- (2.69,0.3) -- (2.69,2.05) -- (1.66,3.46) -- cycle;
\draw [fill=black,fill opacity=1.0] (1.66,0.74) circle (0.2cm);
\draw [fill=black,fill opacity=1.0] (-1.66,0.74) circle (0.2cm);
\draw (0,4)-- (-1.66,3.46);
\draw (-1.66,3.46)-- (-2.69,2.05);
\draw (-2.69,2.05)-- (-2.68,0.3);
\draw (-2.68,0.3)-- (-1.66,-1.11);
\draw (-1.66,-1.11)-- (0,-1.65);
\draw (0,-1.65)-- (1.66,-1.11);
\draw (1.66,-1.11)-- (2.69,0.3);
\draw (2.69,0.3)-- (2.69,2.05);
\draw (2.69,2.05)-- (1.66,3.46);
\draw (1.66,3.46)-- (0,4);
\draw [shift={(-8.03,-1.44)}] plot[domain=-0.03:0.66,variable=\t]({1*8.03*cos(\t r)+0*8.03*sin(\t r)},{0*8.03*cos(\t r)+1*8.03*sin(\t r)});
\draw [shift={(8.03,-1.43)}] plot[domain=2.49:3.17,variable=\t]({1*8.03*cos(\t r)+0*8.03*sin(\t r)},{0*8.03*cos(\t r)+1*8.03*sin(\t r)});
\draw [shift={(-0.75,-11.66)}] plot[domain=-0.82:0.59,variable=\t]({-0.07*14.49*cos(\t r)+-1*1.99*sin(\t r)},{1*14.49*cos(\t r)+-0.07*1.99*sin(\t r)});
\draw [shift={(1.88,0.59)}] plot[domain=-3.51:0.16,variable=\t]({-0.24*1.73*cos(\t r)+0.97*1.22*sin(\t r)},{-0.97*1.73*cos(\t r)+-0.24*1.22*sin(\t r)});
\draw(1.66,1.14) circle (0.4cm);
\draw(-1.66,1.14) circle (0.4cm);
\draw [shift={(3.75,0.06)}] plot[domain=2.83:3.65,variable=\t]({1*2.39*cos(\t r)+0*2.39*sin(\t r)},{0*2.39*cos(\t r)+1*2.39*sin(\t r)});
\draw [shift={(0.99,2.08)}] plot[domain=5.14:6.26,variable=\t]({1*1.69*cos(\t r)+0*1.69*sin(\t r)},{0*1.69*cos(\t r)+1*1.69*sin(\t r)});
\draw [shift={(3.95,0.08)}] plot[domain=2.87:3.62,variable=\t]({1*2.58*cos(\t r)+0*2.58*sin(\t r)},{0*2.58*cos(\t r)+1*2.58*sin(\t r)});
\draw [shift={(-4.08,-2.44)}] plot[domain=0.19:0.89,variable=\t]({1*4.16*cos(\t r)+0*4.16*sin(\t r)},{0*4.16*cos(\t r)+1*4.16*sin(\t r)});
\draw [shift={(-2.36,1.48)}] plot[domain=4.44:5.47,variable=\t]({1*1.22*cos(\t r)+0*1.22*sin(\t r)},{0*1.22*cos(\t r)+1*1.22*sin(\t r)});
\end{tikzpicture}
\caption{On the left, a collection of arcs that is not a good set of representatives since it does not cut the figure into $5$-angles. An example of a good set of representatives is shown on the right in the case $m=2$.}
\label{fig:tridep}
\end{figure}

\begin{figure}[!h]
\centering
\begin{tikzpicture}[scale=0.4]
\draw [fill=black,fill opacity=1.0] (2,2) circle (0.3cm);
\draw [fill=black,fill opacity=1.0] (2,0) circle (0.3cm);
\draw [shift={(1.43,1)}] plot[domain=-1.06:1.06,variable=\t]({1*1.15*cos(\t r)+0*1.15*sin(\t r)},{0*1.15*cos(\t r)+1*1.15*sin(\t r)});
\draw [shift={(2.57,1)}] plot[domain=2.09:4.2,variable=\t]({1*1.15*cos(\t r)+0*1.15*sin(\t r)},{0*1.15*cos(\t r)+1*1.15*sin(\t r)});
\draw [shift={(4.63,0.13)}] plot[domain=2.52:4.71,variable=\t]({1*3.53*cos(\t r)+0*3.53*sin(\t r)},{0*3.53*cos(\t r)+1*3.53*sin(\t r)});
\draw [shift={(-1.65,-2.76)}] plot[domain=-0.1:0.92,variable=\t]({1*6.3*cos(\t r)+0*6.3*sin(\t r)},{0*6.3*cos(\t r)+1*6.3*sin(\t r)});
\end{tikzpicture}
\hspace{20pt}
=
\begin{tikzpicture}[scale=0.4]
\draw [fill=black,fill opacity=1.0] (2,2) circle (0.3cm);
\draw [fill=black,fill opacity=1.0] (2,0) circle (0.3cm);
\draw [shift={(1.43,1)}] plot[domain=-1.06:1.06,variable=\t]({1*1.15*cos(\t r)+0*1.15*sin(\t r)},{0*1.15*cos(\t r)+1*1.15*sin(\t r)});
\draw [shift={(2.57,1)}] plot[domain=2.09:4.2,variable=\t]({1*1.15*cos(\t r)+0*1.15*sin(\t r)},{0*1.15*cos(\t r)+1*1.15*sin(\t r)});
\draw [shift={(3.11,0.07)}] plot[domain=-0.66:2.58,variable=\t]({-0.72*4.13*cos(\t r)+-0.69*2.74*sin(\t r)},{0.69*4.13*cos(\t r)+-0.72*2.74*sin(\t r)});
\draw [shift={(2.45,-1.07)}] plot[domain=4.11:6.2,variable=\t]({-0.16*4.65*cos(\t r)+-0.99*2.16*sin(\t r)},{0.99*4.65*cos(\t r)+-0.16*2.16*sin(\t r)});
\draw [shift={(-4.66,-4.06)}] plot[domain=0.07:0.74,variable=\t]({1*9.3*cos(\t r)+0*9.3*sin(\t r)},{0*9.3*cos(\t r)+1*9.3*sin(\t r)});
\end{tikzpicture}
\caption{The arc tangent to the left and the tagged loop belong to the same class of $m$-diagonals. The figure on the right is a $5$-angle (going twice through the same arc).}
\label{fig:oeil}
\end{figure}

\begin{lem}\label{lem:Dehntwist}
Let $\Delta$ be an $(m+2)$-angulation. Let $\{\alpha_1,\cdots,\alpha_r \}$ and $\{\beta_1,\cdots,\beta_r \}$ be two good sets of representatives for $\Delta$. Then there exists $\theta=(\theta_R,\theta_S)$ a Dehn twist on $L$ and $R$ such that, after exchanging some choices of tagged or untagged loops if necessary, for all $i$, $\theta (\beta_i)=\alpha_i$ up to homotopy. For any $i\in \{1,\cdots,r\}$, the arcs $\alpha_i$ and $\beta_i$ represent the same class of objects.
\end{lem}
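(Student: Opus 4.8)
The plan is to compare the two good sets class by class, isolating the only arcs on which a choice is actually possible. After reindexing so that $\alpha_i$ and $\beta_i$ represent the same $m$-diagonal $D_i$, I would first dispose of every $D_i$ that is of type $1$, of type $2$, or links both inner polygons: by Definition \ref{def:diago} each of these is an equivalence class taken \emph{up to homotopy only}, so $\alpha_i\simeq\beta_i$ for all such $i$. The only genuine freedom lies in case $(3)$ of Definition \ref{def:diago}, the diagonals tangent to a single inner polygon, whose classes are taken up to homotopy \emph{and} Dehn twist. Since rotating $R$ moves only the arcs hung to $R$ and leaves every other arc fixed up to homotopy, and symmetrically for $S$, the lemma reduces to two independent claims: that a single power $\theta_R^{a}$ carries the $R$-tangent representatives of $\{\beta_i\}$ to those of $\{\alpha_i\}$, and a single power $\theta_S^{b}$ does the same at $S$. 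The sought Dehn twist is then $\theta=(\theta_R^{a},\theta_S^{b})$, which fixes the homotopy-rigid arcs and matches all the rest.

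The heart of the argument is to prove that one uniform power works at $R$. For each $R$-tangent index $i$ the definition of the class already supplies an integer $a_i$ with $\alpha_i\simeq\theta_R^{a_i}(\beta_i)$, so what must be shown is that the $a_i$ all coincide. I would argue this by cutting $P$ along all diagonals that are not hung to $R$; these are homotopic in the two good sets by the previous paragraph, so the cut is the same for both and leaves a neighbourhood of $R$, an annular region carrying exactly the $R$-tangent arcs, whose outer boundary is fixed. Inside this region both families cut the annulus into $(m+2)$-angles with the same combinatorial pattern, since both realize the same set $\Delta$. Two such cuttings that agree away from $R$ but twist individually around it can close up simultaneously into $(m+2)$-angles only if the twists agree: otherwise two consecutive $R$-tangent arcs bounding one common $(m+2)$-angle would wind around $R$ by different amounts and could no longer bound a single $(m+2)$-angle. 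Equivalently, the homeomorphism matching the two families, being supported on an annulus and fixing its outer boundary, lies in the cyclic mapping class group generated by $\theta_R$; this forces $a_i\equiv a$.

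It remains to account for the loops and to assemble the two twists. Recall that the label (left or right) of the loop around $R$ is not independent data but is read off from the tangent arc ending in $R$; applying $\theta_R^{a}$ to that tangent arc therefore dictates the side on which the non-crossing loop must sit, and this may be the opposite of the side originally chosen in $\{\beta_i\}$. This is exactly the clause ``after exchanging some choices of left or right loops if necessary'' in the statement: we relabel the loop to match, after which $\theta_R^{a}$ carries the loop of $\{\beta_i\}$ to that of $\{\alpha_i\}$ as well. Running the same argument at $S$ produces $\theta_S^{b}$, and because $\theta_R$ and $\theta_S$ have disjoint supports, the composite $\theta=(\theta_R^{a},\theta_S^{b})$ satisfies $\theta(\beta_i)\simeq\alpha_i$ for every $i$.

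The step I expect to be the main obstacle is the uniform-winding claim of the second paragraph, and in particular the role of arcs linking $R$ to $S$. Such a linking arc is hung to $R$ yet its class is rigid up to homotopy, so $\theta_R$ must act trivially on it; on the other hand the annular cut around $R$ is clean only when no linking arc is present, since a linking arc leaves the neighbourhood of $R$ toward $S$ and breaks the annulus picture. I would therefore need a preliminary structural description of the diagonals incident to a given inner polygon in any $(m+2)$-angulation, showing that they are either a single tangent arc together with its loop (the twistable case, handled above with $a\neq 0$ allowed) or consist only of homotopy-rigid arcs (in which case $\alpha_i\simeq\beta_i$ already and one takes $a=0$), so that the two local situations never interfere. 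Establishing this dichotomy, and checking that the induced annular angulation genuinely has the same combinatorial type for both good sets, is where the real work lies.
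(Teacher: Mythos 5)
Your core uniformity step is the right one, and it is essentially the paper's own argument: the paper fixes, for each arc $\beta_i$ attached to $R$, a twist $\theta_i$ with $\theta_i(\beta_i)=\alpha_i$, and then shows that for two arcs attached to $R$ at consecutive positions the number of sides of $R$ between them is forced by the $(m+2)$-angle they bound together with their (fixed) endpoints on $P$; since this spacing is the same for $\{\alpha_i\}$ and $\{\beta_i\}$, all the $\theta_i$ coincide. Your sentence that two consecutive $R$-tangent arcs "could no longer bound a single $(m+2)$-angle" if they wound around $R$ by different amounts is exactly this; the annulus/mapping-class-group packaging on top of it is harmless but not what carries the proof.

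The genuine gap is the one you flag yourself and then propose to close with a statement that is false. The dichotomy "the diagonals incident to an inner polygon are either a single tangent arc together with its loop, or consist only of homotopy-rigid arcs" fails in both directions. First, an $(m+2)$-angulation can contain several tangent arcs to $R$ ending at different vertices of $P$ (see for instance the quadrangulation of figure \ref{fig:compat2}, which has three arcs hung to each inner square), so the twistable situation is not "a single tangent arc plus its loop". Second, and fatally for your reduction, a linking arc from $R$ to $S$ never occurs alone: the paper's maximality argument (in the proposition identifying $(m+2)$-angulations with maximal noncrossing sets of $m$-diagonals) shows that a linking arc forces the existence of arcs from $P$ to $R$ and from $P$ to $S$. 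So precisely when the annulus picture breaks down, twistable classes are present, and your two "local situations" always interfere. The repair needs no dichotomy: run the same spacing argument treating the linking arc as one of the arcs attached to $R$. Its class is a homotopy class, so its two representatives are already homotopic and its twist is $\mathrm{id}$; the closing-up condition then forces the twist of each neighbouring tangent arc to be trivial as well, i.e. $a=0$ in that case. (The paper's proof, which only discusses arcs whose other end lies on $P$, silently omits this case too, but the extension is immediate.) As written, however, the step you yourself describe as "where the real work lies" is missing from your proposal, and the route you indicate for it would fail.
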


\begin{rmk}\label{rmk:loop}
\cor{Sometimes, it is necessary to play with the representatives of the equivalence classes of $m$-diagonals. Here, for instance, to show the lemma, we are allowed to (and we will have to) replace an arc by a loop. Indeed, if there are two arcs of type $3$ ending at the same vertex, tangent on both sides of $L$ (respectively $R$), then replace the left tangent by a tagged loop. That does not change the $m$-diagonal, since both arcs belong to the same class of equivalence.}
%If necessary, we can always choose a set of representative where $\alpha$ is a loop. Indeed, if $\alpha$ is not a loop (hung to vertex $i$ of $P$, left tangent for instance), the chosen good set of representatives for the $(m+2)$-angulation contains an arc which is tangent to $R$ to the right, and has a common endpoint with $\alpha$ (called $i$). Exchange $\alpha$ for the left loop. The right loop is changed into a corresponding tangent arc.
\end{rmk}

\begin{proof}
We may assume that all the loops which appear in the good sets of representatives are tagged loops. We are only interested in the arcs attached to $L$. The case is similar for $R$.

Let $\alpha_i$ be an arc an extremity of which is a thick vertex of $L$. Let $\theta_i$ be a Dehn twist such that $\theta_i(\beta_i)=\alpha_i$. Let $\alpha_j$ be the neighbor of $\alpha_i$ according to $L$. Let $\theta_j$ such that $\theta_j(\beta_j)=\alpha_j$. Then the number of sides of $L$ between $\alpha_i$ and $\alpha_j$ is the same as the number of sides between $\beta_i$ and $\beta_j$. Indeed, the sets $\{\alpha_1,\cdots,\alpha_r \}$ and $\{\beta_1,\cdots,\beta_r \}$ are $(m+2)$-angulations, and the other extremity of $\alpha_i$ and $\beta_i$ (respectively $\alpha_j$ and $\beta_j$) is a vertex $a_i$ of $P$ (respectively $a_j$). Then to respect the notion of $(m+2)$-angles in an $(m+2)$-angulation, it means that if $k$ is the number of edges of $L$ between $\alpha_i$ and $\alpha_j$, $k$ is also the number of edges of $L$ between $\beta_i$ and $\beta_j$. Then $\theta_i=\theta_j$.

We can reproduce this process until we have treated all the arcs ending on $L$.
\end{proof}

We now introduce the twist which will be used in order to define the flip of an $(m+2)$-angulation at some $m$-diagonal.

\begin{rmk}
In a set of good representatives for an $(m+2)$-angulation, either there is a unique non-loop arc $\alpha$ of type $3$, incident with $L$ (respectively $R$), and a loop (tagged, if $\alpha$ is right tangent, untagged if $\alpha$ is left-tangent), or there are no loops.
\end{rmk}

\begin{defi}
Let $\Delta$ be an $(m+2)$-angulation and $\{\gamma_1,\cdots,\gamma_r\}$ a good set of representatives. Let $\alpha \in \{\gamma_1,\cdots,\gamma_r\}$, linking the vertices $a$ and $b$. If $\alpha$ is of type $3$, which means, linking a vertex $a$ to an inner polygon (say $L$, the issue is exactly the same for $R$) and surrounded by a loop (this loop can only appear at $a$, because there are no crossings), then replace this loop by the corresponding arc tangent to $L$ (left-tangent if it is tagged, right-tangent if it is untagged), and replace $\alpha$ by a loop.
\end{defi}

We are now in the good situation to define the twist of $\alpha$ in $\Delta$, whatever the type of $\alpha$.

\begin{defi}
Given a set of good representatives $\Delta$ of an $(m+2)$-angulation and $\alpha$ an $m$-diagonal in such a set, the twist of $\alpha$ is defined as follows:

\begin{itemize}
	\item Step 1: If $alpha$ is the unique $m$-diagonal in $\Delta$ incident with either $L$ or $R$ (in which case, we have an $(m+2)$-angle whose sides in $\Delta$ are $\alpha$ and a loop $\beta$), consider instead a different set of good representatives given by replacing $\alpha$ by a loop and $\beta$ by an arc incident with $L$ (or $R$), tagged if $\beta$ is left-tanget, untagged if not.
	\item Step 2: By orienting in the clockwise direction the two $(m+2)$-angles for which $\alpha$ is a side, let $\alpha_a$ and $\alpha_b$ be the sides of these $(m+2)$-angles which follow $\alpha$. The twist of $\alpha$ is then given by the concatenation $\alpha_a\alpha\alpha_b$.
\end{itemize}

%Pick any good set of representatives for $\Delta$ containing $\alpha$. Let $\alpha_a$ (respectively $\alpha_b$) be the side of the $(m+2)$-angle ending at $a$ (respectively at $b$) consecutive clockwise to $\alpha$. Then the twist of $\alpha$, namely $\kappa_{\Delta}(\alpha)$ is the path $\alpha_a \alpha \alpha_b$. If $\alpha$ is a loop, then follow it in the direction where we obtain a non self-crossing arc. See figure \ref{fig:twist} for an illustration of the twist.

If $m$ is even, then the side of the tangency does not change with the twist. If $m$ is odd, then change the side of the tangency of both arcs.

\end{defi}

\begin{rmk}
The question of parity is already present in the paper of Baur and Marsh on case $D$, in \cite[Remark 3.4]{BM02}. Remember that a left-tangent arc is associated with a tagged loop, and a right-tangent arc is associated with an untagged loop. Then in order to stay coherent with the tagged arcs in case $D$.
\end{rmk}

We note that, after applying the twist, the set of representatives is still a good one.

%\begin{lem}
%The new equivalence class obtained by twisting the $m$-diagonal only depends on the $m$-diagonal chosen in the $(m+2)$-angulation.
%\end{lem}

\begin{figure}[!h]
\centering
\begin{tikzpicture}[scale=0.8]
\draw (2.42,4.64)-- (3.84,3.26);
\draw (0.6,1.5)-- (2.3,-0.02);
\draw (2.42,4.64)-- (2.3,-0.02);
\draw (0.6,1.5)-- (3.84,3.26);
\begin{scriptsize}
\fill [color=black] (2.42,4.64) circle (1.5pt);
\draw[color=black] (2.58,4.9) node {$a$};
\fill [color=black] (2.3,-0.02) circle (1.5pt);
\draw[color=black] (2.46,0.24) node {$b$};
\draw[color=black] (2.5,3.84) node {$\alpha$};
\draw[color=black] (3.84,2.7) node {$\kappa\alpha$};
\end{scriptsize}
\end{tikzpicture}
\caption{Definition of a twist}
\label{fig:twist}
\end{figure}

\begin{defi}
Consider $\Delta$ an $(m+2)$-angulation. Choose a good set $\{ \gamma_1,\cdots,\gamma_r \}$ of representatives for $\Delta$. Let $\alpha$ be an arc in it. From Remark \ref{rmk:loop}, we may assume that $\alpha$ is a loop if necessary. The flip of the $(m+2)$-angulation at $\alpha$ is defined by $\mu_\alpha\Delta=\Delta \setminus \{ \alpha \} \cup \{ \alpha^* \}$ where $\alpha^*$ is given by $\kappa_{\Delta}(\alpha)$, the twist of $\alpha$.
\end{defi}

\begin{lem}
The definition of a flip is independent of the choice of good representatives.
\end{lem}

\begin{proof}
This result comes from the fact that the definition of a twist is independent of the choice of good representatives.

Here, we note that we choose a set of good representatives in which the arc corresponding to the $m$-diagonal $\alpha$ is a loop. As explained in Remark \ref{rmk:loop}, this is always possible.

Let $\{\alpha_1,\cdots,\alpha_r \}$ and $\{\beta_1,\cdots,\beta_r \}$ be two good sets of representatives for $\Delta$. There exists $\theta$ such that for all $i$, $\theta (\beta_i)=\alpha_i$, and $\theta$ commutes with the twist $\kappa_\Delta$.

In fact, applying the twist to $\alpha$ consists in "slipping" $\alpha$ clockwise. The Lemma \ref{lem:Dehntwist} says that "slipping along" a side of $L$ does not depend on the choice of representatives in a class under Dehn twist.
\end{proof}

%\begin{rmk}
%If we want to flip an $m$-diagonal $\alpha$ which is tangent to the left to one side of $R$ (when at the same time there is an $m$-diagonal tangent to the right on the same thick vertices as $\alpha$), then we have to choose the left loop, in order to make sure we have chosen the good representative. The issue is the same for any arcs tangent to the right.
%\end{rmk}

\begin{rmk}
\begin{enumerate}
\item Note that the twist has an inverse, which consists in moving the arc counterclockwise. Then the flip is also invertible.
\item We note that the twist is not the same as the Dehn twist.
\item A flip does not change the number of $m$-diagonals in the $(m+2)$-angulation.
\end{enumerate}
\end{rmk}

\begin{figure}[!h]
\centering
\begin{tikzpicture}[scale=0.7]
\fill[fill=black,fill opacity=0.1] (0,4) -- (-1.66,3.46) -- (-2.69,2.05) -- (-2.68,0.3) -- (-1.66,-1.11) -- (0,-1.65) -- (1.66,-1.11) -- (2.69,0.3) -- (2.69,2.05) -- (1.66,3.46) -- cycle;
\draw [fill=black,fill opacity=1.0] (-1.66,1.57) circle (0.2cm);
\draw [fill=black,fill opacity=1.0] (1.66,1.58) circle (0.2cm);
\draw (0,4)-- (-1.66,3.46);
\draw (-1.66,3.46)-- (-2.69,2.05);
\draw (-2.69,2.05)-- (-2.68,0.3);
\draw (-2.68,0.3)-- (-1.66,-1.11);
\draw (-1.66,-1.11)-- (0,-1.65);
\draw (0,-1.65)-- (1.66,-1.11);
\draw (1.66,-1.11)-- (2.69,0.3);
\draw (2.69,0.3)-- (2.69,2.05);
\draw (2.69,2.05)-- (1.66,3.46);
\draw (1.66,3.46)-- (0,4);
\draw(-1.66,1.17) circle (0.4cm);
\draw(1.66,1.18) circle (0.4cm);
\draw [shift={(-7.95,-1.41)}] plot[domain=-0.03:0.66,variable=\t]({1*7.96*cos(\t r)+0*7.96*sin(\t r)},{0*7.96*cos(\t r)+1*7.96*sin(\t r)});
\draw [shift={(8.42,-1.56)},color=red]  plot[domain=2.5:3.15,variable=\t]({1*8.42*cos(\t r)+0*8.42*sin(\t r)},{0*8.42*cos(\t r)+1*8.42*sin(\t r)});
\draw [shift={(-1.34,-1.17)}] plot[domain=-1.77:1.07,variable=\t]({-0.15*3.42*cos(\t r)+-0.99*1.28*sin(\t r)},{0.99*3.42*cos(\t r)+-0.15*1.28*sin(\t r)});
\draw [shift={(1.34,-1.17)}] plot[domain=-1.77:1.07,variable=\t]({0.15*3.42*cos(\t r)+0.99*1.28*sin(\t r)},{0.99*3.42*cos(\t r)+-0.15*1.28*sin(\t r)});
\draw [shift={(1.47,-0.36)}] plot[domain=-0.32:1.37,variable=\t]({-0.91*3.99*cos(\t r)+-0.42*1.82*sin(\t r)},{0.42*3.99*cos(\t r)+-0.91*1.82*sin(\t r)});
\draw [shift={(-1.47,-0.36)}] plot[domain=-0.32:1.37,variable=\t]({0.9*3.99*cos(\t r)+0.43*1.82*sin(\t r)},{0.43*3.99*cos(\t r)+-0.9*1.82*sin(\t r)});
\draw [shift={(-0.37,0.1)}] plot[domain=2.86:5.31,variable=\t]({0.78*2.44*cos(\t r)+0.63*1.36*sin(\t r)},{-0.63*2.44*cos(\t r)+0.78*1.36*sin(\t r)});
\draw [shift={(-1.26,-1.78)}] plot[domain=-1.59:0.07,variable=\t]({-0.16*3.73*cos(\t r)+-0.99*1.26*sin(\t r)},{0.99*3.73*cos(\t r)+-0.16*1.26*sin(\t r)});
\draw [shift={(1.26,-1.78)}] plot[domain=-1.59:0.07,variable=\t]({0.16*3.73*cos(\t r)+0.99*1.26*sin(\t r)},{0.99*3.73*cos(\t r)+-0.16*1.26*sin(\t r)});
\draw [shift={(0.37,0.1)}] plot[domain=2.86:5.31,variable=\t]({-0.78*2.44*cos(\t r)+-0.63*1.36*sin(\t r)},{-0.63*2.44*cos(\t r)+0.78*1.36*sin(\t r)});
\end{tikzpicture}
$\to$
\begin{tikzpicture}[scale=0.7]
\fill[fill=black,fill opacity=0.1] (0,4) -- (-1.66,3.46) -- (-2.69,2.05) -- (-2.68,0.3) -- (-1.66,-1.11) -- (0,-1.65) -- (1.66,-1.11) -- (2.69,0.3) -- (2.69,2.05) -- (1.66,3.46) -- cycle;
\draw [fill=black,fill opacity=1.0] (-1.66,1.57) circle (0.2cm);
\draw [fill=black,fill opacity=1.0] (1.66,1.58) circle (0.2cm);
\draw (0,4)-- (-1.66,3.46);
\draw (-1.66,3.46)-- (-2.69,2.05);
\draw (-2.69,2.05)-- (-2.68,0.3);
\draw (-2.68,0.3)-- (-1.66,-1.11);
\draw (-1.66,-1.11)-- (0,-1.65);
\draw (0,-1.65)-- (1.66,-1.11);
\draw (1.66,-1.11)-- (2.69,0.3);
\draw (2.69,0.3)-- (2.69,2.05);
\draw (2.69,2.05)-- (1.66,3.46);
\draw (1.66,3.46)-- (0,4);
\draw(-1.66,1.17) circle (0.4cm);
\draw(1.66,1.18) circle (0.4cm);
\draw [shift={(-7.95,-1.41)}] plot[domain=-0.03:0.66,variable=\t]({1*7.96*cos(\t r)+0*7.96*sin(\t r)},{0*7.96*cos(\t r)+1*7.96*sin(\t r)});
\draw [shift={(-1.34,-1.17)}] plot[domain=-1.77:1.07,variable=\t]({-0.15*3.42*cos(\t r)+-0.99*1.28*sin(\t r)},{0.99*3.42*cos(\t r)+-0.15*1.28*sin(\t r)});
\draw [shift={(1.34,-1.17)}] plot[domain=-1.77:1.07,variable=\t]({0.15*3.42*cos(\t r)+0.99*1.28*sin(\t r)},{0.99*3.42*cos(\t r)+-0.15*1.28*sin(\t r)});
\draw [shift={(1.47,-0.36)}] plot[domain=-0.32:1.37,variable=\t]({-0.91*3.99*cos(\t r)+-0.42*1.82*sin(\t r)},{0.42*3.99*cos(\t r)+-0.91*1.82*sin(\t r)});
\draw [shift={(-1.47,-0.36)}] plot[domain=-0.32:1.37,variable=\t]({0.9*3.99*cos(\t r)+0.43*1.82*sin(\t r)},{0.43*3.99*cos(\t r)+-0.9*1.82*sin(\t r)});
\draw [shift={(-0.37,0.1)}] plot[domain=2.86:5.31,variable=\t]({0.78*2.44*cos(\t r)+0.63*1.36*sin(\t r)},{-0.63*2.44*cos(\t r)+0.78*1.36*sin(\t r)});
\draw [shift={(-1.26,-1.78)}] plot[domain=-1.59:0.07,variable=\t]({-0.16*3.73*cos(\t r)+-0.99*1.26*sin(\t r)},{0.99*3.73*cos(\t r)+-0.16*1.26*sin(\t r)});
\draw [shift={(1.26,-1.78)}] plot[domain=-1.59:0.07,variable=\t]({0.16*3.73*cos(\t r)+0.99*1.26*sin(\t r)},{0.99*3.73*cos(\t r)+-0.16*1.26*sin(\t r)});
\draw [shift={(0.37,0.1)}] plot[domain=2.86:5.31,variable=\t]({-0.78*2.44*cos(\t r)+-0.63*1.36*sin(\t r)},{-0.63*2.44*cos(\t r)+0.78*1.36*sin(\t r)});
\draw [shift={(-0.54,-0.5)},color=red]  plot[domain=0.67:1.85,variable=\t]({1*4.12*cos(\t r)+0*4.12*sin(\t r)},{0*4.12*cos(\t r)+1*4.12*sin(\t r)});
\end{tikzpicture}
\caption{First example of a flip}
\label{fig:flip}
\end{figure}

\begin{figure}[!h]
\centering
\begin{tikzpicture}[scale=0.25]
\fill[fill=black,fill opacity=0.15] (0,4) -- (0,-4) -- (6.93,-8) -- (13.86,-4) -- (13.86,4) -- (6.93,8) -- cycle;
\draw [fill=black,fill opacity=1.0] (3.46,2) circle (0.4cm);
\draw [fill=black,fill opacity=1.0] (3.46,-2) circle (0.4cm);
\draw [fill=black,fill opacity=1.0] (10.39,2) circle (0.4cm);
\draw [fill=black,fill opacity=1.0] (10.39,-2) circle (0.4cm);
\draw (0,4)-- (0,-4);
\draw (0,-4)-- (6.93,-8);
\draw (6.93,-8)-- (13.86,-4);
\draw (13.86,-4)-- (13.86,4);
\draw (13.86,4)-- (6.93,8);
\draw (6.93,8)-- (0,4);
\draw [shift={(1.86,0)}] plot[domain=-0.89:0.89,variable=\t]({1*2.57*cos(\t r)+0*2.57*sin(\t r)},{0*2.57*cos(\t r)+1*2.57*sin(\t r)});
\draw [shift={(5.07,0)}] plot[domain=2.25:4.04,variable=\t]({1*2.57*cos(\t r)+0*2.57*sin(\t r)},{0*2.57*cos(\t r)+1*2.57*sin(\t r)});
\draw [shift={(12,0)}] plot[domain=2.25:4.04,variable=\t]({1*2.57*cos(\t r)+0*2.57*sin(\t r)},{0*2.57*cos(\t r)+1*2.57*sin(\t r)});
\draw [shift={(8.79,0)}] plot[domain=-0.89:0.89,variable=\t]({1*2.57*cos(\t r)+0*2.57*sin(\t r)},{0*2.57*cos(\t r)+1*2.57*sin(\t r)});
\draw (6.93,8)-- (6.93,-8);
\draw [shift={(8.18,-3.66)}] plot[domain=-0.47:1.77,variable=\t]({-0.8*8.37*cos(\t r)+-0.6*4.29*sin(\t r)},{0.6*8.37*cos(\t r)+-0.8*4.29*sin(\t r)});
\draw [shift={(10.26,-4.74)}]  plot[domain=-2.03:0.34,variable=\t]({0.14*8.29*cos(\t r)+0.99*3.18*sin(\t r)},{0.99*8.29*cos(\t r)+-0.14*3.18*sin(\t r)});
\draw [shift={(-12.68,-5.94)}]  plot[domain=-0.17:0.82,variable=\t]({0.96*26.9*cos(\t r)+0.27*9.94*sin(\t r)},{0.27*26.9*cos(\t r)+-0.96*9.94*sin(\t r)});
\draw [shift={(25.48,-8.09)}] plot[domain=2.55:3.14,variable=\t]({1*18.55*cos(\t r)+0*18.55*sin(\t r)},{0*18.55*cos(\t r)+1*18.55*sin(\t r)});
\draw [shift={(-1.43,8.3)},color=red]  plot[domain=5.39:6.25,variable=\t]({1*8.36*cos(\t r)+0*8.36*sin(\t r)},{0*8.36*cos(\t r)+1*8.36*sin(\t r)});
\end{tikzpicture}
$\to$
\begin{tikzpicture}[scale=0.25]
\fill[fill=black,fill opacity=0.15] (0,4) -- (0,-4) -- (6.93,-8) -- (13.86,-4) -- (13.86,4) -- (6.93,8) -- cycle;
\draw [fill=black,fill opacity=1.0] (3.46,2) circle (0.4cm);
\draw [fill=black,fill opacity=1.0] (3.46,-2) circle (0.4cm);
\draw [fill=black,fill opacity=1.0] (10.39,2) circle (0.4cm);
\draw [fill=black,fill opacity=1.0] (10.39,-2) circle (0.4cm);
\draw (0,4)-- (0,-4);
\draw (0,-4)-- (6.93,-8);
\draw (6.93,-8)-- (13.86,-4);
\draw (13.86,-4)-- (13.86,4);
\draw (13.86,4)-- (6.93,8);
\draw (6.93,8)-- (0,4);
\draw [shift={(1.86,0)}] plot[domain=-0.89:0.89,variable=\t]({1*2.57*cos(\t r)+0*2.57*sin(\t r)},{0*2.57*cos(\t r)+1*2.57*sin(\t r)});
\draw [shift={(5.07,0)}] plot[domain=2.25:4.04,variable=\t]({1*2.57*cos(\t r)+0*2.57*sin(\t r)},{0*2.57*cos(\t r)+1*2.57*sin(\t r)});
\draw [shift={(12,0)}] plot[domain=2.25:4.04,variable=\t]({1*2.57*cos(\t r)+0*2.57*sin(\t r)},{0*2.57*cos(\t r)+1*2.57*sin(\t r)});
\draw [shift={(8.79,0)}] plot[domain=-0.89:0.89,variable=\t]({1*2.57*cos(\t r)+0*2.57*sin(\t r)},{0*2.57*cos(\t r)+1*2.57*sin(\t r)});
\draw (6.93,8)-- (6.93,-8);
\draw [shift={(7.79,-3.41)}] plot[domain=-0.38:1.88,variable=\t]({-0.76*7.65*cos(\t r)+-0.65*4.25*sin(\t r)},{0.65*7.65*cos(\t r)+-0.76*4.25*sin(\t r)});
\draw [shift={(10.26,-4.74)}]  plot[domain=-2.03:0.34,variable=\t]({0.14*8.29*cos(\t r)+0.99*3.18*sin(\t r)},{0.99*8.29*cos(\t r)+-0.14*3.18*sin(\t r)});
\draw [shift={(-12.68,-5.94)}]  plot[domain=-0.17:0.82,variable=\t]({0.96*26.9*cos(\t r)+0.27*9.94*sin(\t r)},{0.27*26.9*cos(\t r)+-0.96*9.94*sin(\t r)});
\draw [shift={(25.48,-8.09)}] plot[domain=2.55:3.14,variable=\t]({1*18.55*cos(\t r)+0*18.55*sin(\t r)},{0*18.55*cos(\t r)+1*18.55*sin(\t r)});
\draw [shift={(2.91,-8.25)},color=red]  plot[domain=1.6:3.86,variable=\t]({-0.03*12.22*cos(\t r)+1*4.01*sin(\t r)},{-1*12.22*cos(\t r)+-0.03*4.01*sin(\t r)});
\draw [shift={(24.63,-2.76)},color=red]  plot[domain=-0.1:0.8,variable=\t]({-0.99*24.47*cos(\t r)+-0.12*10.07*sin(\t r)},{0.12*24.47*cos(\t r)+-0.99*10.07*sin(\t r)});
\end{tikzpicture}
\caption{Example of a flip with a loop}
\label{fig:fliploop}
\end{figure}

\begin{figure}[!h]
	\centering
\begin{tikzpicture}[scale=0.2]

	\fill[line width=0.8pt,fill=black,fill opacity=0.15] (0,4) -- (0,-4) -- (6.92820323027551,-8) -- (13.856406460551021,-4) -- (13.856406460551023,4) -- (6.928203230275516,8) -- cycle;

	\draw [line width=0.8pt,fill=black,fill opacity=1] (3.4641016151377566,2) circle (0.4cm);

	\draw [line width=0.8pt,fill=black,fill opacity=1] (3.4641016151377553,-2) circle (0.4cm);

	\draw [line width=0.8pt,fill=black,fill opacity=1] (10.392304845413266,2) circle (0.4cm);

	\draw [line width=0.8pt,fill=black,fill opacity=1] (10.392304845413266,-2) circle (0.4cm);

	\draw [line width=0.8pt] (0,4)-- (0,-4);

	\draw [line width=0.8pt] (0,-4)-- (6.92820323027551,-8);

	\draw [line width=0.8pt] (6.92820323027551,-8)-- (13.856406460551021,-4);

	\draw [line width=0.8pt] (13.856406460551021,-4)-- (13.856406460551023,4);

	\draw [line width=0.8pt] (13.856406460551023,4)-- (6.928203230275516,8);

	\draw [line width=0.8pt] (6.928203230275516,8)-- (0,4);

	\draw [shift={(1.8568379886224946,0)},line width=0.8pt]  plot[domain=-0.893844784234993:0.8938447842349926,variable=\t]({1*2.565793515682622*cos(\t r)+0*2.565793515682622*sin(\t r)},{0*2.565793515682622*cos(\t r)+1*2.565793515682622*sin(\t r)});

	\draw [shift={(5.07136524165302,0)},line width=0.8pt]  plot[domain=2.247747869354801:4.035437437824785,variable=\t]({1*2.5657935156826235*cos(\t r)+0*2.5657935156826235*sin(\t r)},{0*2.5657935156826235*cos(\t r)+1*2.5657935156826235*sin(\t r)});

	\draw [shift={(11.999568471928534,0)},line width=0.8pt]  plot[domain=2.2477478693548:4.0354374378247835,variable=\t]({1*2.565793515682625*cos(\t r)+0*2.565793515682625*sin(\t r)},{0*2.565793515682625*cos(\t r)+1*2.565793515682625*sin(\t r)});

	\draw [shift={(8.785041218898007,0)},line width=0.8pt]  plot[domain=-0.8938447842349948:0.8938447842349915,variable=\t]({1*2.565793515682622*cos(\t r)+0*2.565793515682622*sin(\t r)},{0*2.565793515682622*cos(\t r)+1*2.565793515682622*sin(\t r)});

	\draw [line width=0.8pt] (6.928203230275516,8)-- (6.928203230275509,-8);

	\draw [shift={(-0.37771498252856145,-28.18004286379138)},line width=0.8pt]  plot[domain=2.2958653215886624:3.3623533989748404,variable=\t]({-0.11915351806269256*31.52474382946886*cos(\t r)+0.9928758427584405*6.479154038130095*sin(\t r)},{-0.9928758427584405*31.52474382946886*cos(\t r)+-0.11915351806269256*6.479154038130095*sin(\t r)});

	\draw [shift={(37.504080123626295,-7.09158114422124)},line width=0.8pt]  plot[domain=-0.24068533539115844:0.6646270440140595,variable=\t]({-0.9814400026731381*37.901671206601115*cos(\t r)+-0.1917694479132438*10.952422745890962*sin(\t r)},{0.1917694479132438*37.901671206601115*cos(\t r)+-0.9814400026731381*10.952422745890962*sin(\t r)});

	\draw [shift={(14.234121443079585,28.18004286379138)},line width=0.8pt]  plot[domain=2.2958653215886624:3.3623533989748404,variable=\t]({0.11915351806269256*31.52474382946886*cos(\t r)+-0.9928758427584405*6.479154038130095*sin(\t r)},{0.9928758427584405*31.52474382946886*cos(\t r)+0.11915351806269256*6.479154038130095*sin(\t r)});

	\draw [shift={(-23.64767366307527,7.09158114422124)},line width=0.8pt]  plot[domain=-0.24068533539115844:0.6646270440140595,variable=\t]({0.9814400026731381*37.901671206601115*cos(\t r)+0.1917694479132438*10.952422745890962*sin(\t r)},{-0.1917694479132438*37.901671206601115*cos(\t r)+0.9814400026731381*10.952422745890962*sin(\t r)});

	\draw [shift={(-19.90284480639907,-10.703366047769393)},line width=0.8pt,color=red]  plot[domain=0.10041626153753302:0.4922859138044798,variable=\t]({1*26.96689316058806*cos(\t r)+0*26.96689316058806*sin(\t r)},{0*26.96689316058806*cos(\t r)+1*26.96689316058806*sin(\t r)});

	\draw [shift={(33.75925126695005,10.703366047769382)},line width=0.8pt]  plot[domain=3.2420089151273257:3.633878567394273,variable=\t]({1*26.966893160588018*cos(\t r)+0*26.966893160588018*sin(\t r)},{0*26.966893160588018*cos(\t r)+1*26.966893160588018*sin(\t r)});

\end{tikzpicture}
Step 1
\begin{tikzpicture}[scale=0.2]

	\fill[line width=0.8pt,fill=black,fill opacity=0.15] (0,4) -- (0,-4) -- (6.92820323027551,-8) -- (13.856406460551021,-4) -- (13.856406460551023,4) -- (6.928203230275516,8) -- cycle;

	\draw [line width=0.8pt,fill=black,fill opacity=1] (3.4641016151377566,2) circle (0.4cm);

	\draw [line width=0.8pt,fill=black,fill opacity=1] (3.4641016151377553,-2) circle (0.4cm);

	\draw [line width=0.8pt,fill=black,fill opacity=1] (10.392304845413266,2) circle (0.4cm);

	\draw [line width=0.8pt,fill=black,fill opacity=1] (10.392304845413266,-2) circle (0.4cm);

	\draw [line width=0.8pt] (0,4)-- (0,-4);

	\draw [line width=0.8pt] (0,-4)-- (6.92820323027551,-8);

	\draw [line width=0.8pt] (6.92820323027551,-8)-- (13.856406460551021,-4);

	\draw [line width=0.8pt] (13.856406460551021,-4)-- (13.856406460551023,4);

	\draw [line width=0.8pt] (13.856406460551023,4)-- (6.928203230275516,8);

	\draw [line width=0.8pt] (6.928203230275516,8)-- (0,4);

	\draw [shift={(1.8568379886224946,0)},line width=0.8pt]  plot[domain=-0.893844784234993:0.8938447842349926,variable=\t]({1*2.565793515682622*cos(\t r)+0*2.565793515682622*sin(\t r)},{0*2.565793515682622*cos(\t r)+1*2.565793515682622*sin(\t r)});

	\draw [shift={(5.07136524165302,0)},line width=0.8pt]  plot[domain=2.247747869354801:4.035437437824785,variable=\t]({1*2.5657935156826235*cos(\t r)+0*2.5657935156826235*sin(\t r)},{0*2.5657935156826235*cos(\t r)+1*2.5657935156826235*sin(\t r)});

	\draw [shift={(11.999568471928534,0)},line width=0.8pt]  plot[domain=2.2477478693548:4.0354374378247835,variable=\t]({1*2.565793515682625*cos(\t r)+0*2.565793515682625*sin(\t r)},{0*2.565793515682625*cos(\t r)+1*2.565793515682625*sin(\t r)});

	\draw [shift={(8.785041218898007,0)},line width=0.8pt]  plot[domain=-0.8938447842349948:0.8938447842349915,variable=\t]({1*2.565793515682622*cos(\t r)+0*2.565793515682622*sin(\t r)},{0*2.565793515682622*cos(\t r)+1*2.565793515682622*sin(\t r)});

	\draw [line width=0.8pt] (6.928203230275516,8)-- (6.928203230275509,-8);

	\draw [shift={(-0.37771498252856145,-28.18004286379138)},line width=0.8pt,color=red]  plot[domain=2.2958653215886624:3.3623533989748404,variable=\t]({-0.11915351806269256*31.52474382946886*cos(\t r)+0.9928758427584405*6.479154038130095*sin(\t r)},{-0.9928758427584405*31.52474382946886*cos(\t r)+-0.11915351806269256*6.479154038130095*sin(\t r)});

	\draw [shift={(37.504080123626295,-7.09158114422124)},line width=0.8pt,color=red]  plot[domain=-0.24068533539115844:0.6646270440140595,variable=\t]({-0.9814400026731381*37.901671206601115*cos(\t r)+-0.1917694479132438*10.952422745890962*sin(\t r)},{0.1917694479132438*37.901671206601115*cos(\t r)+-0.9814400026731381*10.952422745890962*sin(\t r)});

	\draw [shift={(14.234121443079585,28.18004286379138)},line width=0.8pt]  plot[domain=2.2958653215886624:3.3623533989748404,variable=\t]({0.11915351806269256*31.52474382946886*cos(\t r)+-0.9928758427584405*6.479154038130095*sin(\t r)},{0.9928758427584405*31.52474382946886*cos(\t r)+0.11915351806269256*6.479154038130095*sin(\t r)});

	\draw [shift={(-23.64767366307527,7.09158114422124)},line width=0.8pt]  plot[domain=-0.24068533539115844:0.6646270440140595,variable=\t]({0.9814400026731381*37.901671206601115*cos(\t r)+0.1917694479132438*10.952422745890962*sin(\t r)},{-0.1917694479132438*37.901671206601115*cos(\t r)+0.9814400026731381*10.952422745890962*sin(\t r)});

	\draw [shift={(33.75925126695005,10.703366047769382)},line width=0.8pt]  plot[domain=3.2420089151273257:3.633878567394273,variable=\t]({1*26.966893160588018*cos(\t r)+0*26.966893160588018*sin(\t r)},{0*26.966893160588018*cos(\t r)+1*26.966893160588018*sin(\t r)});

	\draw [shift={(11.73276134772133,-0.6584689243902758)},line width=0.8pt]  plot[domain=3.3079930588264648:4.132902017012776,variable=\t]({1*8.773930546685326*cos(\t r)+0*8.773930546685326*sin(\t r)},{0*8.773930546685326*cos(\t r)+1*8.773930546685326*sin(\t r)});

\end{tikzpicture}
Step 2
\begin{tikzpicture}[scale=0.2]

	\fill[line width=0.8pt,fill=black,fill opacity=0.15] (0,4) -- (0,-4) -- (6.92820323027551,-8) -- (13.856406460551021,-4) -- (13.856406460551023,4) -- (6.928203230275516,8) -- cycle;

	\draw [line width=0.8pt,fill=black,fill opacity=1] (3.4641016151377566,2) circle (0.4cm);

	\draw [line width=0.8pt,fill=black,fill opacity=1] (3.4641016151377553,-2) circle (0.4cm);

	\draw [line width=0.8pt,fill=black,fill opacity=1] (10.392304845413266,2) circle (0.4cm);

	\draw [line width=0.8pt,fill=black,fill opacity=1] (10.392304845413266,-2) circle (0.4cm);

	\draw [line width=0.8pt] (0,4)-- (0,-4);

	\draw [line width=0.8pt] (0,-4)-- (6.92820323027551,-8);

	\draw [line width=0.8pt] (6.92820323027551,-8)-- (13.856406460551021,-4);

	\draw [line width=0.8pt] (13.856406460551021,-4)-- (13.856406460551023,4);

	\draw [line width=0.8pt] (13.856406460551023,4)-- (6.928203230275516,8);

	\draw [line width=0.8pt] (6.928203230275516,8)-- (0,4);

	\draw [shift={(1.8568379886224946,0)},line width=0.8pt]  plot[domain=-0.893844784234993:0.8938447842349926,variable=\t]({1*2.565793515682622*cos(\t r)+0*2.565793515682622*sin(\t r)},{0*2.565793515682622*cos(\t r)+1*2.565793515682622*sin(\t r)});

	\draw [shift={(5.07136524165302,0)},line width=0.8pt]  plot[domain=2.247747869354801:4.035437437824785,variable=\t]({1*2.5657935156826235*cos(\t r)+0*2.5657935156826235*sin(\t r)},{0*2.5657935156826235*cos(\t r)+1*2.5657935156826235*sin(\t r)});

	\draw [shift={(11.999568471928534,0)},line width=0.8pt]  plot[domain=2.2477478693548:4.0354374378247835,variable=\t]({1*2.565793515682625*cos(\t r)+0*2.565793515682625*sin(\t r)},{0*2.565793515682625*cos(\t r)+1*2.565793515682625*sin(\t r)});

	\draw [shift={(8.785041218898007,0)},line width=0.8pt]  plot[domain=-0.8938447842349948:0.8938447842349915,variable=\t]({1*2.565793515682622*cos(\t r)+0*2.565793515682622*sin(\t r)},{0*2.565793515682622*cos(\t r)+1*2.565793515682622*sin(\t r)});

	\draw [line width=0.8pt] (6.928203230275516,8)-- (6.928203230275509,-8);

	\draw [shift={(14.234121443079585,28.18004286379138)},line width=0.8pt]  plot[domain=2.2958653215886624:3.3623533989748404,variable=\t]({0.11915351806269256*31.52474382946886*cos(\t r)+-0.9928758427584405*6.479154038130095*sin(\t r)},{0.9928758427584405*31.52474382946886*cos(\t r)+0.11915351806269256*6.479154038130095*sin(\t r)});

	\draw [shift={(-23.64767366307527,7.09158114422124)},line width=0.8pt]  plot[domain=-0.24068533539115844:0.6646270440140595,variable=\t]({0.9814400026731381*37.901671206601115*cos(\t r)+0.1917694479132438*10.952422745890962*sin(\t r)},{-0.1917694479132438*37.901671206601115*cos(\t r)+0.9814400026731381*10.952422745890962*sin(\t r)});

	\draw [shift={(33.75925126695005,10.703366047769382)},line width=0.8pt]  plot[domain=3.2420089151273257:3.633878567394273,variable=\t]({1*26.966893160588018*cos(\t r)+0*26.966893160588018*sin(\t r)},{0*26.966893160588018*cos(\t r)+1*26.966893160588018*sin(\t r)});

	\draw [shift={(11.73276134772133,-0.6584689243902758)},line width=0.8pt]  plot[domain=3.3079930588264648:4.132902017012776,variable=\t]({1*8.773930546685326*cos(\t r)+0*8.773930546685326*sin(\t r)},{0*8.773930546685326*cos(\t r)+1*8.773930546685326*sin(\t r)});

	\draw [shift={(2.5668374809859555,-0.8985327002734569)},line width=0.8pt,color=red]  plot[domain=1.4236690846235973:5.80651777070688,variable=\t]({-0.4745796509628384*4.443433979113369*cos(\t r)+0.8802125623347977*1.71628974295355*sin(\t r)},{-0.8802125623347977*4.443433979113369*cos(\t r)+-0.4745796509628384*1.71628974295355*sin(\t r)});

\end{tikzpicture}
	\caption{Example of a flip in which step one in the definition of the twist is necessary}
	\label{fig:fliploop2}
\end{figure}

The twist is defined on the $m$-diagonals, which is a class of equivalence under Dehn twist, and the flip is defined on an $(m+2)$-angulation, which is a set of $m$-diagonal. Nonetheless, we will sometimes apply them on arcs, implicitly assuming that we have chosen a good set of representatives.

\begin{defi}\label{def:triinit}
	For any $m$ and $n \geq 4$, we define the initial $(m+2)$-angulation in the following way: From the vertex $1$ of $P$, we draw:
	\begin{enumerate}
		\item \cor{An arc going to $L$, tangent to one of its thick vertices (which one does not matter, thanks to the equivalence under Dehn twist).
			\item A tagged loop around $L$.
			\item An arc going to $L$, tangent to one of its thick vertices (which one does not matter, thanks to the equivalence under Dehn twist).
			\item A tagged loop around $S$.
			\item The arcs of type $1$ connect vertex $1$ wth vertex $km+1$, with $1 \leq k \leq n-3$.}
	\end{enumerate}
%	There is an exception if $n=4$. Then $P$ has $2m$ sides, and there is only one arc of type $1$, which links vertex $1$ to vertex $m+1$. The arcs hung to $R$ are defined similarly, and the arcs hung to $S$ start at $m+1$ (and not $1$ as previously). For $n=4$, see figure \ref{fig:triinit} for the example of $m=3$.
%	
%	\cor{We could have avoided this exception for $n=4$. Nevertheless, in this situation, the Auslander-Reiten quiver is often drawn with the ending arrow in the opposite direction compared to the cases $n>4$ (see the next section for further explanation).}

\end{defi}

It is obvious that the arcs drawn from the previous definition form a good set of representatives for the initial $(m+2)$-angulation. See figure \ref{fig:trianginit} for an example of initial $4$-angulation.

Note that we do not have to label the loops, since their labels are automatically deduced from the other arc ending in the corresponding inner polygon.

\begin{figure}[!h]
\centering
\begin{tikzpicture}[scale=0.7]
\fill[fill=black,fill opacity=0.1] (0,4) -- (-1.66,3.46) -- (-2.69,2.05) -- (-2.68,0.3) -- (-1.66,-1.11) -- (0,-1.65) -- (1.66,-1.11) -- (2.69,0.3) -- (2.69,2.05) -- (1.66,3.46) -- cycle;
\draw [fill=black,fill opacity=1.0] (-1.66,1.57) circle (0.2cm);
\draw [fill=black,fill opacity=1.0] (1.66,1.58) circle (0.2cm);
\draw (0,4)-- (-1.66,3.46);
\draw (-1.66,3.46)-- (-2.69,2.05);
\draw (-2.69,2.05)-- (-2.68,0.3);
\draw (-2.68,0.3)-- (-1.66,-1.11);
\draw (-1.66,-1.11)-- (0,-1.65);
\draw (0,-1.65)-- (1.66,-1.11);
\draw (1.66,-1.11)-- (2.69,0.3);
\draw (2.69,0.3)-- (2.69,2.05);
\draw (2.69,2.05)-- (1.66,3.46);
\draw (1.66,3.46)-- (0,4);
\draw(-1.66,1.17) circle (0.4cm);
\draw(1.66,1.18) circle (0.4cm);
\draw [shift={(1.95,-3.35)}] plot[domain=-0.11:1.3,variable=\t]({-0.07*6.83*cos(\t r)+-1*1.89*sin(\t r)},{1*6.83*cos(\t r)+-0.07*1.89*sin(\t r)});
\draw [shift={(1.44,-3.05)}] plot[domain=2.2:4.47,variable=\t]({-0.01*5.75*cos(\t r)+1*1.5*sin(\t r)},{-1*5.75*cos(\t r)+-0.01*1.5*sin(\t r)});
\draw [shift={(-0.65,-0.55)}] plot[domain=1.52:3.46,variable=\t]({-0.79*3.42*cos(\t r)+0.62*1.27*sin(\t r)},{-0.62*3.42*cos(\t r)+-0.79*1.27*sin(\t r)});
\draw [shift={(0.3,0.15)}] plot[domain=0.96:3.44,variable=\t]({-0.77*2.41*cos(\t r)+0.64*1.46*sin(\t r)},{-0.64*2.41*cos(\t r)+-0.77*1.46*sin(\t r)});
\draw [shift={(0.99,-1.47)}] plot[domain=3.09:4.82,variable=\t]({-0.21*3.53*cos(\t r)+0.98*0.93*sin(\t r)},{-0.98*3.53*cos(\t r)+-0.21*0.93*sin(\t r)});
\draw [shift={(-1.94,-3.36)}] plot[domain=-0.11:1.3,variable=\t]({0.07*6.83*cos(\t r)+1*1.89*sin(\t r)},{1*6.83*cos(\t r)+-0.07*1.89*sin(\t r)});	\draw [shift={(-1.43,-3.05)}] plot[domain=2.2:4.47,variable=\t]({0.01*5.75*cos(\t r)+-1*1.5*sin(\t r)},{-1*5.75*cos(\t r)+-0.01*1.5*sin(\t r)});
\draw [shift={(-0.98,-1.47)}] plot[domain=3.09:4.82,variable=\t]({0.22*3.53*cos(\t r)+-0.98*0.93*sin(\t r)},{-0.98*3.53*cos(\t r)+-0.22*0.93*sin(\t r)});
\draw [shift={(-0.29,0.15)}] plot[domain=0.96:3.44,variable=\t]({0.77*2.41*cos(\t r)+-0.64*1.46*sin(\t r)},{-0.64*2.41*cos(\t r)+-0.77*1.46*sin(\t r)});
\draw [shift={(-7.41,-2.97)}] plot[domain=0.18:0.67,variable=\t]({1*7.53*cos(\t r)+0*7.53*sin(\t r)},{0*7.53*cos(\t r)+1*7.53*sin(\t r)});
\end{tikzpicture}
\caption{The initial $4$-angulation for $m=2$ and $n=7$.}
\label{fig:trianginit}
\end{figure}

\begin{lem}
Every $(m+2)$-angulation can be obtained from the initial $(m+2)$-angulation by a finite sequence of flips.
\end{lem}

\begin{proof}
\cor{We know that there are at least five $m$-diagonals (as $n \geq 4$). We are going to proceed by reduction. The first aim is to hang to each inner polygon $R$ and $L$, exactly one $m$-diagonal and one loop to reduce to a well-known case. We treat the case of $L$, the case of $R$ being similar.}

\cor{If there is a loop, then, do nothing. Else, there cannot be less than two arcs ending at $L$. Indeed, we can argue that if there was only one arc ending at $L$ (or no arc at all), then the set of noncrossing $m$-diagonals is not maximal, since we can add a loop for example, which would not cross any of the present arcs.}

\cor{If there are more than two arcs ending at $L$, first remark that there is no loop (else, this set of $m$-diagonals would not give an $(m+2)$-angulation). Then note that there cannot be two arcs ending at the same vertex of $P$. Else, we could replace one of them by a loop, and this is impossible. Choose one thick vertex $L_i$ where one arc, named $\alpha$ ends. There exists (at least) one arc $\beta$ ending at $L_{i+k}$. Flip it $k$ times in order for it to be hung to $L_i$. Apply a finite sequence of flips to the eventual other arcs hung to $L_j$, $j \neq i$, so as they become of type $1$ or $2$ by slipping along $\alpha$ or $\beta$.}

%Let $R_i$ be a thick vertex where two arcs hang. Then, it suffices to flip several times the arcs which do not end at $R_i$ in a way that they link two different thick vertices of $P$.

We are then we are reduced to the case of figure \ref{fig:4arcs} (which is not completed by the other arcs of the $(m+2)$-angulation).

%Then, if needed, we flip the $(m+2)$-angulation at some $m$-diagonals ending at $R$, in order to reduce to the case where only two arcs are attached to $R_i$, a thick vertex of $R$. We do the same for $S$. We fix a set of good representatives of the $(m+2)$-angulation.

%Indeed, there cannot be less than two arcs ending at $R$, and if there are more than two, then at most two end at the same disk $R_i$, and others end at $R_j$ where $j \neq i$. It suffices to flip several times the arcs which do not end at $R_i$ in a way that they link two different thick vertices of $P$. Then we are reduced to the case of figure \ref{fig:4arcs} (which is not completed by the other arcs of the $(m+2)$-angulation).

\begin{figure}[!h]
\centering
\begin{tikzpicture}[scale=0.2]
\fill[fill=black,fill opacity=0.15] (0,4) -- (0,-4) -- (6.93,-8) -- (13.86,-4) -- (13.86,4) -- (6.93,8) -- cycle;
\draw [fill=black,fill opacity=1.0] (3.46,2) circle (0.4cm);
\draw [fill=black,fill opacity=1.0] (3.46,-2) circle (0.4cm);
\draw [fill=black,fill opacity=1.0] (10.39,2) circle (0.4cm);
\draw [fill=black,fill opacity=1.0] (10.39,-2) circle (0.4cm);
\draw (0,4)-- (0,-4);
\draw (0,-4)-- (6.93,-8);
\draw (6.93,-8)-- (13.86,-4);
\draw (13.86,-4)-- (13.86,4);
\draw (13.86,4)-- (6.93,8);
\draw (6.93,8)-- (0,4);
\draw [shift={(1.86,0)}] plot[domain=-0.89:0.89,variable=\t]({1*2.57*cos(\t r)+0*2.57*sin(\t r)},{0*2.57*cos(\t r)+1*2.57*sin(\t r)});
\draw [shift={(5.07,0)}] plot[domain=2.25:4.04,variable=\t]({1*2.57*cos(\t r)+0*2.57*sin(\t r)},{0*2.57*cos(\t r)+1*2.57*sin(\t r)});
\draw [shift={(12,0)}] plot[domain=2.25:4.04,variable=\t]({1*2.57*cos(\t r)+0*2.57*sin(\t r)},{0*2.57*cos(\t r)+1*2.57*sin(\t r)});
\draw [shift={(8.79,0)}] plot[domain=-0.89:0.89,variable=\t]({1*2.57*cos(\t r)+0*2.57*sin(\t r)},{0*2.57*cos(\t r)+1*2.57*sin(\t r)});
\draw [shift={(-11.62,-8.09)}] plot[domain=0:0.59,variable=\t]({1*18.55*cos(\t r)+0*18.55*sin(\t r)},{0*18.55*cos(\t r)+1*18.55*sin(\t r)});
\draw [shift={(25.48,-8.09)}] plot[domain=2.55:3.14,variable=\t]({1*18.55*cos(\t r)+0*18.55*sin(\t r)},{0*18.55*cos(\t r)+1*18.55*sin(\t r)});
\draw [shift={(26.53,-5.94)}] plot[domain=-0.17:0.82,variable=\t]({-0.96*26.9*cos(\t r)+-0.27*9.94*sin(\t r)},{0.27*26.9*cos(\t r)+-0.96*9.94*sin(\t r)});
\draw [shift={(3.6,-4.74)}] plot[domain=-2.03:0.34,variable=\t]({-0.14*8.29*cos(\t r)+-0.99*3.18*sin(\t r)},{0.99*8.29*cos(\t r)+-0.14*3.18*sin(\t r)});
\draw [shift={(10.26,-4.74)}] plot[domain=-2.03:0.34,variable=\t]({0.14*8.29*cos(\t r)+0.99*3.18*sin(\t r)},{0.99*8.29*cos(\t r)+-0.14*3.18*sin(\t r)});
\draw [shift={(-12.68,-5.94)}] plot[domain=-0.17:0.82,variable=\t]({0.96*26.9*cos(\t r)+0.27*9.94*sin(\t r)},{0.27*26.9*cos(\t r)+-0.96*9.94*sin(\t r)});
\end{tikzpicture}
\caption{}
\label{fig:4arcs}
\end{figure}

The remaining arcs form an $(m+2)$-angulation of the $((n-2)m+2)$-gon defined by both loops and the boundary $(n-2)m$-gon. This is the geometric description of Dynkin type $A$, that we can study in \cite[Section 1]{BM01}. In that case, all $(m+2)$-angulations are related by a sequence of flips. This proves the result.

%Then, we notice that this figure is similar to the $A_n$ case. The $A_n$ case has been treated by Baur and Marsh in \cite{BM01}. From here, the remaining $(m+2)$-angulation is related by a sequence of flip to any $(m+2)$-angulation, including the initial $(m+2)$-angulation.
\end{proof}

\begin{coro}\label{lem:eq}
Any two $(m+2)$-angulations are related by a finite sequence of flips.
\end{coro}

This Corollary gives a different proof to the fact that
$m$-cluster-tilting mutation in the $m$-cluster category of type $\tilde{D}$ is transitive (assuming
$(m+2)$-angulations are in bijection with $m$-cluster-tilting objects, according to \cite{JM}).
The transitivity of cluster-tilting mutation in the hereditary case was proved in \cite{ZZ}.

\begin{coro}
Any $(m+2)$-angulation contains exactly $n+1$ $m$-diagonals. 	If it is given by a good set of representatives, then it has $n$ $(m+2)$-angles.
\end{coro}

%The following Lemma comes from a result which can be found in \cite{FHS} on general geodesics, which allows us to consider that the $m$-diagonals do not cross each other.
%
%\begin{lem}
%If $\Delta=\{\gamma_1,\cdots,\gamma_r \}$ is a collection of noncrossing $m$-diagonals, then there exist $\{\alpha_1, \cdots, \alpha_r \}$ representatives of $\{\gamma_1,\cdots,\gamma_r \}$, such that for any $i$ and any $j \neq i$, $\alpha_i$ and $\alpha_j$ do not cross.
%\end{lem}
%
%\begin{proof}
%We know from \cite{FHS} that under homotopy, there exist $\{\alpha_1, \cdots, \alpha_r \}$ representatives which do not cross. It remains the case where some $m$-diagonals are hung to $R$ or $S$, which can be treated in the same way. Now we treat the case where there exist $\gamma_i$ from a vertex of $P$ to a thick vertex of $R$, and $\gamma_j$ a loop around $R$. If an arc crosses $\gamma_i$, then it obviously crosses $\gamma_j$. Else, if it crosses the loop $\gamma_j$, it cannot be hung to a thick vertex of $R$, because that would not cut into $(m+2)$-angles. So it crosses also $\gamma_i$, and this shows the Lemma.
%\end{proof}

We can also apply the "flip" on a maximal set $\tilde{\Delta}$ of non-crossing $m$-diagonals.
%Indeed, the previous Lemma allows us to pick representatives which are pairwise noncrossing.
For $\alpha$ an $m$-diagonal, we can set $\kappa_{\tilde{\Delta}}(\alpha)$ exactly the same way we did for $(m+2)$-angulations.

\begin{prop}
A set of $m$-diagonals is an $(m+2)$-angulation if and only if it is a maximal set of noncrossing $m$-diagonals.
\end{prop}

\begin{proof}
On the one hand, a maximal set of noncrossing $m$-diagonals cuts the polygon into $(m+2)$-angles. Indeed, let us fix $\Delta$ a maximal set of noncrossing $m$-diagonals.

\begin{itemize}
\item If $\Delta$ contains two loops, then, these two loops cut the figure into a $(n-2)m+2$-gon. This reduces to case $A_{n-3}$ (since the polygon in the description in \cite{BM01} in type $A_p$ contains $pm+2$ sides). As the set is maximal, it cuts the figure into $(m+2)$-angles.
\item If $\Delta$ only contains one loop, then the loop cuts the figure into a $(n-2)m+1$-gon, with one thick vertex. This is the geometric realization of Baur and Marsh in \cite{BM02} for a category of Dynkin type $D_{n-1}$ (Baur and Marsh use the system of punctures instead of the inner polygons, but this does not change the proof). As the set is maximal, it cuts the figure into $(m+2)$-angles.

%If $\Delta$ only contains one loop (at $S$, for instance). Then, this $m$-diagonal cuts the polygon into a new figure with one inner polygon and $(n-2)m+1$ sides. Then, the loop behaves like the boundary of a new polygon of Dynkin type $D_{n-1}$ (by cutting in two the vertex of the loop). As the set is maximal, there exist two arcs $i$ and $j$ from two vertices of $P$ to $R$. If they join the same vertices, then replace one by a loop and this is the previous point. If not, we can apply the twist to one of them (for instance $i$) several times. Any times we apply the twist, it is impossible for $i$ to hang both ends of it to vertices of $P$. Indeed, this contradicts the maximality of the $m$-diagonals (and the definition of $m$-diagonals). But one end of $i$ finishes to "slip along" $j$. The only way is to become a loop. This leads us to the first case.
\item If $\Delta$ has no loops, then we use the following argument to create loops. There are several cases:
\begin{itemize}
\item There are two arcs ending to $L$ (for instance), and ending at the same vertex of $P$, then it suffices to replace one of them by a loop.
\item There are two arcs ending to $L$ (for instance), but ending at a different vertex of $P$. Say that $i$ ends in $a$ and $j$ ends in $b$. Then we can apply several times the twist to $j$ in order to hang it to the same vertex as $i$, $a$. It suffices then to replace $j$ by a loop.
\item There is one arc only ending to $R$. This would contradict the maximality of the number of $m$)diagonals, since we can add a loop.
\item \cor{If there are $k>3$ arcs ending to $R$, applying the twist several times to one of them make it slips along on arc, and become of other type. Then we are led to the case $k-1$, and we conclude by induction on $k$, the initialization being the previous case.}
\end{itemize}
\item If $\Delta$ contains an arc $i$ from $L$ to $R$, then any maximal set of noncrossing $m$-diagonals must contain two arcs with endpoints in $P$ and $L$, and similarly two arcs with endpoints in $P$ and $R$. Then we can reduce to the case where there arc two loops (one around $L$ and one around $R$).

If $\Delta$ contains an arc $i$ from $L$ to $R$, the maximality of $\Delta$ implies that there exist one arc from a vertex of $P$ to $L$ and one arc from $P$ to $R$ (indeed, if this was not the case, we have only $m$-diagonals linking different vertices of $P$, plus an arc from $L$ to $R$. We then can add an $m$-diagonal from a vertex of $P$ to $L$, which does not cross th others and is admissible, and this contradicts the maximality). Then, we apply the twist to $i$ as many times as necessary to hang one of its ends to the polygon $P$. Then, we come to the previous case.

Note that two arcs linking $L$ to $R$ are not compatible (as in \cite{FST}, for the case where $m=1$).
\end{itemize}

This finally shows that $\Delta$ cuts the figure into $(m+2)$-angles.

On the other hand, let $\Delta$ be a set of noncrossing $m$-diagonals, cutting the picture into $(m+2)$-angles. We first note that the $m$-diagonals do not cross. In addition, the set is maximal. Indeed, if it was not the case: Let us add a noncrossing arc $\alpha$ in an $(m+2)$-gon, then it cannot be an $m$-diagonal:

If $m=1$, then we cannot cut any triangle without being homotopic to one of the edges. Else, the only way to cut an $(m+2)$-angle by forming a $(km+1)$-gon is to form a triangle. This is impossible given the definition of an $m$-diagonal.
\end{proof}

\begin{coro}
A set of $m$-diagonals is an $(m+2)$-angulation if and only if it is a set of $n+1$ noncrossing $m$-diagonals.
\end{coro}

\begin{proof}
It suffices to show that if there exist $n+1$ noncrossing $m$-diagonals, then this number is maximal. Let $\Gamma$ be a set of good representatives of $n+1$ noncrossing $m$-diagonals. If there is an $m$-diagonal $\alpha \notin \Gamma$ which is not crossing all the $m$-diagonals of $\Gamma$, as there are $n$ $(m+2)$-angles, this $m$-diagonal cuts an $(m+2)$-angle, and this gives a contradiction.
\end{proof}

\begin{lem}
Any set of noncrossing $m$-diagonals, can be completed to an $(m+2)$-angulation.
\end{lem}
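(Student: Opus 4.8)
The plan is to leverage the preceding proposition, which identifies $(m+2)$-angulations with \emph{maximal} sets of noncrossing $m$-diagonals. Granting that characterization, it suffices to prove that every set of pairwise noncrossing $m$-diagonals is contained in a maximal one: maximality will then automatically promote it to an $(m+2)$-angulation, and the inclusion shows that the original family has been completed. So the statement reduces to an extension-to-maximality claim, and the only genuine content is that such a maximal extension exists.

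First I would fix a set $\Delta$ of noncrossing $m$-diagonals and, using the lemma drawn from \cite{FHS}, choose pairwise noncrossing representatives. If $\Delta$ cannot be enlarged it is already maximal and we are done. Otherwise there is an $m$-diagonal $\alpha$ crossing none of the chosen representatives; adjoining its class yields a strictly larger noncrossing family $\Delta' = \Delta \cup \{\alpha\}$. Iterating this greedy step produces an ascending chain $\Delta = \Delta_0 \subsetneq \Delta_1 \subsetneq \cdots$ of noncrossing families, and any term admitting no further enlargement is maximal, hence an $(m+2)$-angulation containing $\Delta$. The entire argument therefore hinges on showing that this chain cannot ascend forever.

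The hard part will be exactly this termination, i.e.\ producing an a priori bound on the cardinality of a noncrossing family, and here the main obstacle is that an individual $m$-diagonal may wind arbitrarily many times around $R$ and $S$, so naively one might fear an unbounded supply of arcs to add. The resolution is that winding has been quotiented out: an $m$-diagonal is a class modulo Dehn twists, so distinct members of a noncrossing family are genuinely distinct isotopy classes of disjoint arcs. I would then regard the noncrossing representatives as a system of pairwise disjoint, pairwise non-homotopic arcs on the surface obtained from $P$ by deleting the interiors of $R$ and $S$, which is a disc with two holes carrying the fixed marked points (the $(n-2)m$ vertices of $P$ together with the thick vertices). On such a fixed surface with a fixed finite set of marked points, the number of pairwise disjoint, pairwise non-homotopic arcs is finite, bounded by a constant depending only on $n$ and $m$; equivalently, one may count faces by an Euler-characteristic argument, noting that the mod-$m$ constraints in the definition of an $m$-diagonal force every face of a complete system to be an $(m+2)$-angle while the boundary data stays fixed. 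Either way, the length of the ascending chain is bounded.

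Finally, since the chain stabilizes, it reaches a term $\Delta_N$ that admits no further noncrossing enlargement. By construction $\Delta_N$ is a maximal set of noncrossing $m$-diagonals, so the previous proposition applies and $\Delta_N$ is an $(m+2)$-angulation; as $\Delta \subseteq \Delta_N$, the original family has indeed been completed. I note that the corollary then retroactively pins the cardinality of $\Delta_N$ to $n+1$ in every case, which could alternatively serve as the explicit bound invoked in the termination step.
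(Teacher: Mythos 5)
Your proposal follows essentially the same route as the paper: a greedy extension argument, adding noncrossing $m$-diagonals one at a time until the set is maximal, and then invoking the preceding proposition identifying maximal noncrossing sets with $(m+2)$-angulations. The only difference is that you justify termination more carefully (via finiteness of systems of pairwise disjoint, pairwise non-homotopic arcs on the marked disc with two holes), where the paper simply appeals to the finiteness of the number of sides of $P$; this is a welcome refinement of the same step rather than a different argument.
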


\begin{proof}
Suppose $\Lambda=\{\alpha_1,\cdots,\alpha_k \}$ is a set of noncrossing $m$-diagonals. If the set is maximal, then it is already an $(m+2)$-angulation. Else, we can add an new $m$-diagonal $\alpha_{k+1}$. We repeat the operation with $\Lambda \cup \{\alpha_{k+1} \}$ until the set is maximal. The process ends since $P$ has a finite number of sides, and once we have cut into $(m+2)$-angles, we cannot go further.
\end{proof}

\section{Construction of the category of $m$-diagonals}

We are now able to construct a new category, ${\mathcal{C}}$, equivalent to a subcategory of the higher cluster category of type $\tilde{D}$ by using the $m$-diagonals. More explicitly, we are going to draw an Auslander-Reiten quiver and we will see that it is isomorphic to a subquiver of the Auslander-Reiten quiver of ${\mathcal{C}}^m_Q$. We will also remark that the indecomposables of ${\mathcal{C}}$ correspond to the rigid indecomposables of the higher cluster category.

Before starting with the construction of the category we define what will be the generators the morphisms of ${\mathcal{C}}$: the elementary moves.

\subsection{Elementary moves}
Elementary moves are applications that send an $m$-diagonal to another one. They should not be confused with flips.

Let $Q$ be a quiver of type $\tilde{D_n}$. We want the elementary moves to correspond to the arrows in the Auslander-Reiten quiver of ${\mathcal{C}}^m_Q$.

%\begin{figure}[!h]
%\centering
%\begin{tikzpicture}[scale=0.2]
%\fill[fill=black,fill opacity=0.15] (0,4) -- (0,-4) -- (6.93,-8) -- (13.86,-4) -- (13.86,4) -- (6.93,8) -- cycle;
%\draw [fill=black,fill opacity=1.0] (3.46,2) circle (0.4cm);
%\draw [fill=black,fill opacity=1.0] (3.46,-2) circle (0.4cm);
%\draw [fill=black,fill opacity=1.0] (10.39,2) circle (0.4cm);
%\draw [fill=black,fill opacity=1.0] (10.39,-2) circle (0.4cm);
%\draw (0,4)-- (0,-4);
%\draw (0,-4)-- (6.93,-8);
%\draw (6.93,-8)-- (13.86,-4);
%\draw (13.86,-4)-- (13.86,4);
%\draw (13.86,4)-- (6.93,8);
%\draw (6.93,8)-- (0,4);
%\draw [shift={(1.86,0)}] plot[domain=-0.89:0.89,variable=\t]({1*2.57*cos(\t r)+0*2.57*sin(\t r)},{0*2.57*cos(\t r)+1*2.57*sin(\t r)});
%\draw [shift={(5.07,0)}] plot[domain=2.25:4.04,variable=\t]({1*2.57*cos(\t r)+0*2.57*sin(\t r)},{0*2.57*cos(\t r)+1*2.57*sin(\t r)});
%\draw [shift={(12,0)}] plot[domain=2.25:4.04,variable=\t]({1*2.57*cos(\t r)+0*2.57*sin(\t r)},{0*2.57*cos(\t r)+1*2.57*sin(\t r)});
%\draw [shift={(8.79,0)}] plot[domain=-0.89:0.89,variable=\t]({1*2.57*cos(\t r)+0*2.57*sin(\t r)},{0*2.57*cos(\t r)+1*2.57*sin(\t r)});
%\draw (6.93,8)-- (6.93,-8);
%\end{tikzpicture}
%\caption{An example of an $m$-diagonal of type $1$.}
%\label{fig:type}
%\end{figure}

We now recall the length of an $m$-diagonal of type $1$ from definition \ref{def:diago}.

\begin{defi}
	Let $a<b$ be two integers between $1$ and $(n-2)m$. The length of the boundary path from $a$ to $b$, denoted by $l(a,b)$ is the number of edges between $a$ and $b$ counted clockwise.
\end{defi}

\begin{defi}
	Let $P$ be the polygon considered in section \ref{sec:geo}. Let us number the vertices of $P$ clockwise. Let $\alpha$ be an $m$-diagonal from $i$ to $j$. The translation $\tau^{-1} \alpha$ of $\alpha$ is defined as follows:
	\begin{enumerate}
		\item If $i \neq j$ and $\alpha$ is of type $1$ or $2$, then $\tau^{-1} \alpha$ is the unique new arc obtained by by composing $\alpha$ with the boundary paths $B_{i~i+m}$ and $B_{j~j+m}$, linking $i+m$ and $j+m$, and of the same type.
		\item If $i=j$, the arc is of type $3$ then $\tau^{-1} \alpha$ is the new arc tangent to the same vertex of the inner polygon, ending at $i+m$. If $m$ is odd, then change the side of the tangency. To be clear, one can obtain $\tau^{-1} \alpha$ by moving the endpoint in the clockwise direction $m$ times.
		\item If $\alpha$ links both inner polygons, it means that $\alpha$ is of type $4$, from $L_k$ to $R_{k'}$ for instance, the new arc $\tau^{-1} \alpha$ links $L_{k-m}$ and $R_{k'+m}$ modulo $m-1$ (where indices are considered). The side of the tangency changes only if $m$ is odd. To be clear, one can obtain $\tau^{-1} \alpha$ by moving the endpoint in the clockwise direction $m$ times.
	\end{enumerate}
\end{defi}

For the last point, the new arc is uniquely defined, because it does not depends on the eventual Dehn twist of $L$ or $R$, but only of the relative position before applying $\tau^{-1}$.

\cor{This definition of a translation is the inverse of the Auslander-Reiten translation.}

Applying $\tau$ or $\tau^{-1}$ several times to an arc of type $1$ makes the arc "roll around" both central polygons.

From now and al throughought the paper, we cal by $B_{i,j}$ the boundary component linking $i$ to $j$ clockwise.

Here in figure \ref{fig:type1} and figure \ref{fig:1}, we can see examples of a translation:

\begin{figure}[!h]
	\centering
\begin{tikzpicture}[scale=0.25]
	\fill[line width=0.8pt,fill=black,fill opacity=0.15] (0,4) -- (0,-4) -- (6.92820323027551,-8) -- (13.856406460551021,-4) -- (13.856406460551023,4) -- (6.928203230275516,8) -- cycle;

	\draw [line width=0.8pt,fill=black,fill opacity=1] (3.4641016151377566,2) circle (0.4cm);

	\draw [line width=0.8pt,fill=black,fill opacity=1] (3.4641016151377553,-2) circle (0.4cm);

	\draw [line width=0.8pt,fill=black,fill opacity=1] (10.392304845413266,2) circle (0.4cm);

	\draw [line width=0.8pt,fill=black,fill opacity=1] (10.392304845413266,-2) circle (0.4cm);

	\draw [line width=0.8pt] (0,4)-- (0,-4);

	\draw [line width=0.8pt] (0,-4)-- (6.92820323027551,-8);

	\draw [line width=0.8pt] (6.92820323027551,-8)-- (13.856406460551021,-4);

	\draw [line width=0.8pt] (13.856406460551021,-4)-- (13.856406460551023,4);

	\draw [line width=0.8pt] (13.856406460551023,4)-- (6.928203230275516,8);

	\draw [line width=0.8pt] (6.928203230275516,8)-- (0,4);

	\draw [shift={(1.8568379886224946,0)},line width=0.8pt]  plot[domain=-0.893844784234993:0.8938447842349926,variable=\t]({1*2.565793515682622*cos(\t r)+0*2.565793515682622*sin(\t r)},{0*2.565793515682622*cos(\t r)+1*2.565793515682622*sin(\t r)});

	\draw [shift={(5.07136524165302,0)},line width=0.8pt]  plot[domain=2.247747869354801:4.035437437824785,variable=\t]({1*2.5657935156826235*cos(\t r)+0*2.5657935156826235*sin(\t r)},{0*2.5657935156826235*cos(\t r)+1*2.5657935156826235*sin(\t r)});

	\draw [shift={(11.999568471928534,0)},line width=0.8pt]  plot[domain=2.2477478693548:4.0354374378247835,variable=\t]({1*2.565793515682625*cos(\t r)+0*2.565793515682625*sin(\t r)},{0*2.565793515682625*cos(\t r)+1*2.565793515682625*sin(\t r)});

	\draw [shift={(8.785041218898007,0)},line width=0.8pt]  plot[domain=-0.8938447842349948:0.8938447842349915,variable=\t]({1*2.565793515682622*cos(\t r)+0*2.565793515682622*sin(\t r)},{0*2.565793515682622*cos(\t r)+1*2.565793515682622*sin(\t r)});

	\draw [shift={(12.576619119077439,-1.7495195332350346)},line width=0.8pt]  plot[domain=3.2924793999925908:5.229455306015604,variable=\t]({1*2.5889221638911213*cos(\t r)+0*2.5889221638911213*sin(\t r)},{0*2.5889221638911213*cos(\t r)+1*2.5889221638911213*sin(\t r)});

	\draw [shift={(12.576619119077439,-1.7495195332350346)},line width=0.8pt,color=red]  plot[domain=3.2924793999925908:5.229455306015604,variable=\t]({1*2.588922163891121*cos(\t r)+0*2.588922163891121*sin(\t r)},{0*2.588922163891121*cos(\t r)+1*2.588922163891121*sin(\t r)});

\end{tikzpicture}
	$\to$
\begin{tikzpicture}[scale=0.25]

	\fill[line width=0.8pt,fill=black,fill opacity=0.15] (0,4) -- (0,-4) -- (6.92820323027551,-8) -- (13.856406460551021,-4) -- (13.856406460551023,4) -- (6.928203230275516,8) -- cycle;

	\draw [line width=0.8pt,fill=black,fill opacity=1] (3.4641016151377566,2) circle (0.4cm);

	\draw [line width=0.8pt,fill=black,fill opacity=1] (3.4641016151377553,-2) circle (0.4cm);

	\draw [line width=0.8pt,fill=black,fill opacity=1] (10.392304845413266,2) circle (0.4cm);

	\draw [line width=0.8pt,fill=black,fill opacity=1] (10.392304845413266,-2) circle (0.4cm);

	\draw [line width=0.8pt] (0,4)-- (0,-4);

	\draw [line width=0.8pt] (0,-4)-- (6.92820323027551,-8);

	\draw [line width=0.8pt] (6.92820323027551,-8)-- (13.856406460551021,-4);

	\draw [line width=0.8pt] (13.856406460551021,-4)-- (13.856406460551023,4);

	\draw [line width=0.8pt] (13.856406460551023,4)-- (6.928203230275516,8);

	\draw [line width=0.8pt] (6.928203230275516,8)-- (0,4);

	\draw [shift={(1.8568379886224946,0)},line width=0.8pt]  plot[domain=-0.893844784234993:0.8938447842349926,variable=\t]({1*2.565793515682622*cos(\t r)+0*2.565793515682622*sin(\t r)},{0*2.565793515682622*cos(\t r)+1*2.565793515682622*sin(\t r)});

	\draw [shift={(5.07136524165302,0)},line width=0.8pt]  plot[domain=2.247747869354801:4.035437437824785,variable=\t]({1*2.5657935156826235*cos(\t r)+0*2.5657935156826235*sin(\t r)},{0*2.5657935156826235*cos(\t r)+1*2.5657935156826235*sin(\t r)});

	\draw [shift={(11.999568471928534,0)},line width=0.8pt]  plot[domain=2.2477478693548:4.0354374378247835,variable=\t]({1*2.565793515682625*cos(\t r)+0*2.565793515682625*sin(\t r)},{0*2.565793515682625*cos(\t r)+1*2.565793515682625*sin(\t r)});

	\draw [shift={(8.785041218898007,0)},line width=0.8pt]  plot[domain=-0.8938447842349948:0.8938447842349915,variable=\t]({1*2.565793515682622*cos(\t r)+0*2.565793515682622*sin(\t r)},{0*2.565793515682622*cos(\t r)+1*2.565793515682622*sin(\t r)});

	\draw [shift={(6.737716543589979,3.2639928053307075)},line width=0.8pt, color=red]  plot[domain=3.032787176890152:5.29602529986481,variable=\t]({1*6.7777968996106*cos(\t r)+0*6.7777968996106*sin(\t r)},{0*6.7777968996106*cos(\t r)+1*6.7777968996106*sin(\t r)});

\end{tikzpicture}
	\label{fig:1}
	\caption{Example of a translation}
\end{figure}

\begin{figure}[!h]
	\centering
	\begin{tikzpicture}[scale=0.25]
		\fill[fill=black,fill opacity=0.15] (0,4) -- (0,-4) -- (6.93,-8) -- (13.86,-4) -- (13.86,4) -- (6.93,8) -- cycle;
		\draw [fill=black,fill opacity=1.0] (3.46,2) circle (0.4cm);
		\draw [fill=black,fill opacity=1.0] (3.46,-2) circle (0.4cm);
		\draw [fill=black,fill opacity=1.0] (10.39,2) circle (0.4cm);
		\draw [fill=black,fill opacity=1.0] (10.39,-2) circle (0.4cm);
		\draw (0,4)-- (0,-4);
		\draw (0,-4)-- (6.93,-8);
		\draw (6.93,-8)-- (13.86,-4);
		\draw (13.86,-4)-- (13.86,4);
		\draw (13.86,4)-- (6.93,8);
		\draw (6.93,8)-- (0,4);
		\draw [shift={(1.86,0)}] plot[domain=-0.89:0.89,variable=\t]({1*2.57*cos(\t r)+0*2.57*sin(\t r)},{0*2.57*cos(\t r)+1*2.57*sin(\t r)});
		\draw [shift={(5.07,0)}] plot[domain=2.25:4.04,variable=\t]({1*2.57*cos(\t r)+0*2.57*sin(\t r)},{0*2.57*cos(\t r)+1*2.57*sin(\t r)});
		\draw [shift={(12,0)}] plot[domain=2.25:4.04,variable=\t]({1*2.57*cos(\t r)+0*2.57*sin(\t r)},{0*2.57*cos(\t r)+1*2.57*sin(\t r)});
		\draw [shift={(8.79,0)}] plot[domain=-0.89:0.89,variable=\t]({1*2.57*cos(\t r)+0*2.57*sin(\t r)},{0*2.57*cos(\t r)+1*2.57*sin(\t r)});
		\draw [shift={(5.22,-2.46)}] plot[domain=-1.4:2.54,variable=\t]({-0.55*6.77*cos(\t r)+-0.84*2.82*sin(\t r)},{0.84*6.77*cos(\t r)+-0.55*2.82*sin(\t r)});
		\draw [shift={(8.64,2.46)}] plot[domain=-1.4:2.54,variable=\t]({0.55*6.77*cos(\t r)+0.84*2.82*sin(\t r)},{-0.84*6.77*cos(\t r)+0.55*2.82*sin(\t r)});
	\end{tikzpicture}
	$\to$
	\begin{tikzpicture}[scale=0.25]
		\fill[fill=black,fill opacity=0.15] (0,4) -- (0,-4) -- (6.93,-8) -- (13.86,-4) -- (13.86,4) -- (6.93,8) -- cycle;
		\draw [fill=black,fill opacity=1.0] (3.46,2) circle (0.4cm);
		\draw [fill=black,fill opacity=1.0] (3.46,-2) circle (0.4cm);
		\draw [fill=black,fill opacity=1.0] (10.39,2) circle (0.4cm);
		\draw [fill=black,fill opacity=1.0] (10.39,-2) circle (0.4cm);
		\draw (0,4)-- (0,-4);
		\draw (0,-4)-- (6.93,-8);
		\draw (6.93,-8)-- (13.86,-4);
		\draw (13.86,-4)-- (13.86,4);
		\draw (13.86,4)-- (6.93,8);
		\draw (6.93,8)-- (0,4);
		\draw [shift={(1.86,0)}] plot[domain=-0.89:0.89,variable=\t]({1*2.57*cos(\t r)+0*2.57*sin(\t r)},{0*2.57*cos(\t r)+1*2.57*sin(\t r)});
		\draw [shift={(5.07,0)}] plot[domain=2.25:4.04,variable=\t]({1*2.57*cos(\t r)+0*2.57*sin(\t r)},{0*2.57*cos(\t r)+1*2.57*sin(\t r)});
		\draw [shift={(12,0)}] plot[domain=2.25:4.04,variable=\t]({1*2.57*cos(\t r)+0*2.57*sin(\t r)},{0*2.57*cos(\t r)+1*2.57*sin(\t r)});
		\draw [shift={(8.79,0)}] plot[domain=-0.89:0.89,variable=\t]({1*2.57*cos(\t r)+0*2.57*sin(\t r)},{0*2.57*cos(\t r)+1*2.57*sin(\t r)});
		\draw (6.93,8)-- (6.93,-8);
	\end{tikzpicture}
	\caption{Another example of a translation}
	\label{fig:type1}
\end{figure}

\begin{figure}[!h]
	\centering
\begin{tikzpicture}[scale=0.25]

	\fill[line width=0.8pt,fill=black,fill opacity=0.15] (0,4) -- (0,-4) -- (6.92820323027551,-8) -- (13.856406460551021,-4) -- (13.856406460551023,4) -- (6.928203230275516,8) -- cycle;

	\draw [line width=0.8pt,fill=black,fill opacity=1] (3.4641016151377566,2) circle (0.4cm);

	\draw [line width=0.8pt,fill=black,fill opacity=1] (10.392304845413266,2) circle (0.4cm);

	\fill[line width=0.8pt,fill=black,fill opacity=0.1] (3.4641016151377566,2) -- (2.4946890109468254,0.3067654035479556) -- (4.445779488136453,0.3138467597958013) -- cycle;

	\fill[line width=0.8pt,fill=black,fill opacity=0.15] (10.392304845413271,2) -- (11.3617174496042,0.30676540354795023) -- (9.41062697241457,0.3138467597957983) -- cycle;

	\draw [line width=0.8pt,fill=black,fill opacity=1] (2.4946890109468254,0.3067654035479556) circle (0.4018816135660477cm);

	\draw [line width=0.8pt,fill=black,fill opacity=1] (4.445779488136453,0.3138467597958013) circle (0.4182694886133407cm);

	\draw [line width=0.8pt,fill=black,fill opacity=1] (9.410626972414573,0.3138467597957983) circle (0.4182694886133407cm);

	\draw [line width=0.8pt,fill=black,fill opacity=1] (11.3617174496042,0.30676540354795023) circle (0.4018816135660477cm);

	\draw [line width=0.8pt] (0,4)-- (0,-4);

	\draw [line width=0.8pt] (0,-4)-- (6.92820323027551,-8);

	\draw [line width=0.8pt] (6.92820323027551,-8)-- (13.856406460551021,-4);

	\draw [line width=0.8pt] (13.856406460551021,-4)-- (13.856406460551023,4);

	\draw [line width=0.8pt] (13.856406460551023,4)-- (6.928203230275516,8);

	\draw [line width=0.8pt] (6.928203230275516,8)-- (0,4);

%	\draw [line width=0.8pt] (3.4641016151377566,2)-- (2.4946890109468254,0.3067654035479556);

%	\draw [line width=0.8pt] (2.4946890109468254,0.3067654035479556)-- (4.445779488136453,0.3138467597958013);

%	\draw [line width=0.8pt] (4.445779488136453,0.3138467597958013)-- (3.4641016151377566,2);

	\draw [shift={(4.212077115475866,-0.7391469794752008)},line width=0.8pt]  plot[domain=0.2170034915095111:1.797160748795029,variable=\t]({1*3.220513106249162*cos(\t r)+0*3.220513106249162*sin(\t r)},{0*3.220513106249162*cos(\t r)+1*3.220513106249162*sin(\t r)});

	\draw [shift={(9.53746156462467,0.37452751147670493)},line width=0.8pt]  plot[domain=3.332012513245802:6.209339886271012,variable=\t]({1*2.2205386151868196*cos(\t r)+0*2.2205386151868196*sin(\t r)},{0*2.2205386151868196*cos(\t r)+1*2.2205386151868196*sin(\t r)});

\end{tikzpicture}
	$\to$
\begin{tikzpicture}[scale=0.25]

	\fill[line width=0.8pt,fill=black,fill opacity=0.15] (0,4) -- (0,-4) -- (6.92820323027551,-8) -- (13.856406460551021,-4) -- (13.856406460551023,4) -- (6.928203230275516,8) -- cycle;

	\draw [line width=0.8pt,fill=black,fill opacity=1] (3.4641016151377566,2) circle (0.4cm);

	\draw [line width=0.8pt,fill=black,fill opacity=1] (10.392304845413266,2) circle (0.4cm);

	\fill[line width=2pt,fill=black,fill opacity=0.1] (3.4641016151377566,2) -- (2.4946890109468254,0.3067654035479556) -- (4.445779488136453,0.3138467597958013) -- cycle;

	\fill[line width=2pt,fill=black,fill opacity=0.15] (10.392304845413271,2) -- (11.3617174496042,0.30676540354795023) -- (9.41062697241457,0.3138467597957983) -- cycle;

	\draw [line width=0.8pt,fill=black,fill opacity=1] (2.4946890109468254,0.3067654035479556) circle (0.4018816135660477cm);

	\draw [line width=0.8pt,fill=black,fill opacity=1] (4.445779488136453,0.3138467597958013) circle (0.4182694886133407cm);

	\draw [line width=0.8pt,fill=black,fill opacity=1] (9.410626972414573,0.3138467597957983) circle (0.4182694886133407cm);

	\draw [line width=0.8pt,fill=black,fill opacity=1] (11.3617174496042,0.30676540354795023) circle (0.4018816135660477cm);

	\draw [line width=0.8pt] (0,4)-- (0,-4);

	\draw [line width=0.8pt] (0,-4)-- (6.92820323027551,-8);

	\draw [line width=0.8pt] (6.92820323027551,-8)-- (13.856406460551021,-4);

	\draw [line width=0.8pt] (13.856406460551021,-4)-- (13.856406460551023,4);

	\draw [line width=0.8pt] (13.856406460551023,4)-- (6.928203230275516,8);

	\draw [line width=0.8pt] (6.928203230275516,8)-- (0,4);

%	\draw [line width=0.8pt] (3.4641016151377566,2)-- (2.4946890109468254,0.3067654035479556);

%	\draw [line width=0.8pt] (2.4946890109468254,0.3067654035479556)-- (4.445779488136453,0.3138467597958013);

%	\draw [line width=0.8pt] (4.445779488136453,0.3138467597958013)-- (3.4641016151377566,2);
	\draw [shift={(13.042706632737085,22.030199016745826)},line width=0.8pt]  plot[domain=4.3328314592710875:4.436712769702422,variable=\t]({1*22.942222725390334*cos(\t r)+0*22.942222725390334*sin(\t r)},{0*22.942222725390334*cos(\t r)+1*22.942222725390334*sin(\t r)});

	\draw [shift={(8.210204109692489,3.9295621778492773)},line width=0.8pt]  plot[domain=4.371025733944578:5.054824368487186,variable=\t]({1*4.218743669646971*cos(\t r)+0*4.218743669646971*sin(\t r)},{0*4.218743669646971*cos(\t r)+1*4.218743669646971*sin(\t r)});

\end{tikzpicture}
	\caption{Last example of a translation, with $m=4$.}
	\label{fig:typem4}
\end{figure}

We now define elementary moves, which will correspond to arrows in the Auslander-Reiten quiver of ${\mathcal{C}}^m_{\tilde{D_n}}$ as shown in figure \ref{fig:ar1}.

Let us recall that the Auslander-Reiten quiver $\Gamma$ of ${\mathcal{C}}^m_{\tilde{D_n}}$ is defined as follows:
\begin{enumerate}
	\item The vertices of $\Gamma$ are the isomorphism classes of indecomposable objects in ${\mathcal{C}}^m_{\tilde{D_n}}$.
	\item Between two representants $X$ and $Y$, the arrows $X \to Y$ are in bijective correspondence with the vectors of a basis of $\mathrm{Irr}(X,Y)$, the irreducible morphisms from $X$ to $Y$.
\end{enumerate}

%\begin{defi}
%	Let $\alpha$ and $\beta$ be two $m$-diagonals. There is an elementary move from $\alpha$ to $\beta$ when:
%	\begin{enumerate}
%		\item $\alpha$ and $\beta$ are of type $1$, share a vertex $a$, and if $l(b,c)=m$ (where $\alpha$ ends at $b$ and $\beta$ ends at $c$).
%		\item $\alpha$ is of type $1$, $\beta$ links a vertex of $P$ to a thick vertex of $L$ (or equivalently $R$), and if $l(b,c)=m$, $\alpha$ and $\beta$ share a vertex $a$, and $\beta$ is consecutive to $\alpha$ clockwise.
%		\item $\beta$ is of type $1$, $\alpha$ links a vertex of $P$ to a thick vertex of $L$ (or equivalently $R$), and if $l(b,c)=m$, $\beta$ and $\alpha$ share a vertex $a$, and $\beta$ is consecutive to $\alpha$ clockwise.
%		\item $\alpha$ and $\beta$ are of type $2$ (homotopic to boundary paths), share a vertex $a$, and if $l(b,c)=m$ (where $\alpha$ ends at $b$ and $\beta$ ends at $c$).
%		\item If $l(b,c)=m$, and $\alpha$ and $\beta$ are of type $4$ (from a unique thick vertex of $L$ (respectively $R$) to a thick vertex of $R$ (respectively $L$)) and $\beta$ is consecutive to $\alpha$ clockwise.
%	\end{enumerate}
%\end{defi}

\begin{defi}
Let $\alpha$ be an $m$-diagonal. We define an elementary move from $\alpha$ to an arc $\beta$ by giving all possibilities given from the type of $\alpha$.
\begin{itemize}
	\item If $\alpha$ is of type $1$, an elementary move is given by fixing an endpoint $a$, and moving the other endpoint $b$ around the boundary component $m$ times in the clockwise direction. The resulting arc $\beta$ is either of type $1$ if $a\neq b+m$, of type $3$, if $a=b+m$.
	\item If $\alpha$ is of type $2$, an elementary move is given by fixing an endpoint $a$, and moving the other endpoint $b$ around the boundary component $m$ times in the clockwise direction. The resulting arc $\beta$ is also of type $2$.
	\item If $\alpha$ is of type $3$, the representative of its equivalence classgiven by a loop is chosen before performing the elementary move. Then, fixing an endpoint in $a$ (notice that the other endpoint $b$ is hung at the same vertex), move $b$, $m$ times, in order to find $\beta$, which is of type $1$.
	\item If $\alpha$ is of type $4$,  an elementary move is given by fixing an endpoint $a$ on $L$ (respectively $R$), and moving the other endpoint $b$, which is on $R$ (respectively $L$) to $L$ (respectively $R$), at vertex $a$. The resulting arc $\beta$ is of type $4$.
\end{itemize}
\end{defi}

Here is the tabular of elementary moves in the three first cases for a polygon with $(n-2)m$ sides (where e-m stands for "elementary move" and the $m$-diagonal $\alpha$ is black whereas $\beta$ is red). For sake of clarity, we have not drawn the disks on each thick vertex of the central polygons. The dotted lines hide $m$ edges.

The remaining cases in the definition will be treated in section $4$.

\vspace{20pt}

\begin{center}
	\begin{tabular}{|c|c|c|}
		\hline
		& type $1$ & type $2$\\
		\hline
		type $1$ & \begin{tikzpicture}[scale=0.35]
			\fill[dash pattern=on 3pt off 3pt,fill=black,fill opacity=0.1] (0,4) -- (-2.48,2.98) -- (-3.51,0.51) -- (-2.49,-1.97) -- (-0.02,-3.01) -- (2.46,-1.99) -- (3.49,0.49) -- (2.47,2.97) -- cycle;
			\fill[fill=black,fill opacity=1.0] (1.41,2.37) -- (1.21,2.14) -- (1.32,1.85) -- (1.61,1.8) -- (1.81,2.03) -- (1.71,2.32) -- cycle;
			\fill[fill=black,fill opacity=1.0] (-1.89,-0.91) -- (-1.66,-0.72) -- (-1.37,-0.82) -- (-1.32,-1.12) -- (-1.55,-1.31) -- (-1.84,-1.21) -- cycle;
			\draw (0,4)-- (-2.48,2.98);
			\draw [dash pattern=on 3pt off 3pt] (-2.48,2.98)-- (-3.51,0.51);
			\draw (-3.51,0.51)-- (-2.49,-1.97);
			\draw (-2.49,-1.97)-- (-0.02,-3.01);
			\draw (-0.02,-3.01)-- (2.46,-1.99);
			\draw (2.46,-1.99)-- (3.49,0.49);
			\draw (3.49,0.49)-- (2.47,2.97);
			\draw (2.47,2.97)-- (0,4);
			\draw [shift={(-4.35,-9.91)}] plot[domain=0.86:1.49,variable=\t]({1*10.45*cos(\t r)+0*10.45*sin(\t r)},{0*10.45*cos(\t r)+1*10.45*sin(\t r)});
			\draw [shift={(36.12,36.44)},color=red]  plot[domain=3.86:3.99,variable=\t]({1*51.09*cos(\t r)+0*51.09*sin(\t r)},{0*51.09*cos(\t r)+1*51.09*sin(\t r)});
			\draw (1.41,2.37)-- (1.21,2.14);
			\draw (1.21,2.14)-- (1.32,1.85);
			\draw (1.32,1.85)-- (1.61,1.8);
			\draw (1.61,1.8)-- (1.81,2.03);
			\draw (1.81,2.03)-- (1.71,2.32);
			\draw (1.71,2.32)-- (1.41,2.37);
		\end{tikzpicture} & \begin{tikzpicture}[scale=0.35]
			\fill[dash pattern=on 3pt off 3pt,fill=black,fill opacity=0.1] (0,4) -- (-2.48,2.98) -- (-3.51,0.51) -- (-2.49,-1.97) -- (-0.02,-3.01) -- (2.46,-1.99) -- (3.49,0.49) -- (2.47,2.97) -- cycle;
			\fill[fill=black,fill opacity=1.0] (1.41,2.37) -- (1.21,2.14) -- (1.32,1.85) -- (1.61,1.8) -- (1.81,2.03) -- (1.71,2.32) -- cycle;
			\fill[fill=black,fill opacity=1.0] (-1.89,-0.91) -- (-1.66,-0.72) -- (-1.37,-0.82) -- (-1.32,-1.12) -- (-1.55,-1.31) -- (-1.84,-1.21) -- cycle;
			\draw (0,4)-- (-2.48,2.98);
			\draw (-2.48,2.98)-- (-3.51,0.51);
			\draw (-3.51,0.51)-- (-2.49,-1.97);
			\draw (-2.49,-1.97)-- (-0.02,-3.01);
			\draw (-0.02,-3.01)-- (2.46,-1.99);
			\draw (2.46,-1.99)-- (3.49,0.49);
			\draw [dash pattern=on 3pt off 3pt] (3.49,0.49)-- (2.47,2.97);
			\draw (2.47,2.97)-- (0,4);
			\draw (1.41,2.37)-- (1.21,2.14);
			\draw (1.21,2.14)-- (1.32,1.85);
			\draw (1.32,1.85)-- (1.61,1.8);
			\draw (1.61,1.8)-- (1.81,2.03);
			\draw (1.81,2.03)-- (1.71,2.32);
			\draw (1.71,2.32)-- (1.41,2.37);
			\draw [shift={(7.63,1.41)}] plot[domain=-0.78:1,variable=\t]({-1*7.43*cos(\t r)+-0.09*1.54*sin(\t r)},{0.09*7.43*cos(\t r)+-1*1.54*sin(\t r)});
			\draw [shift={(5.65,6.36)},color=red]  plot[domain=3.95:4.36,variable=\t]({1*6.25*cos(\t r)+0*6.25*sin(\t r)},{0*6.25*cos(\t r)+1*6.25*sin(\t r)});
		\end{tikzpicture} \\
		\hline
		type $3$ & \begin{tikzpicture}[scale=0.35]
			\fill[dash pattern=on 3pt off 3pt,fill=black,fill opacity=0.1] (0,4) -- (-2.48,2.98) -- (-3.51,0.51) -- (-2.49,-1.97) -- (-0.02,-3.01) -- (2.46,-1.99) -- (3.49,0.49) -- (2.47,2.97) -- cycle;
			\fill[fill=black,fill opacity=1.0] (1.41,2.37) -- (1.21,2.14) -- (1.32,1.85) -- (1.61,1.8) -- (1.81,2.03) -- (1.71,2.32) -- cycle;
			\fill[fill=black,fill opacity=1.0] (-1.89,-0.91) -- (-1.66,-0.72) -- (-1.37,-0.82) -- (-1.32,-1.12) -- (-1.55,-1.31) -- (-1.84,-1.21) -- cycle;
			\draw (0,4)-- (-2.48,2.98);
			\draw (-2.48,2.98)-- (-3.51,0.51);
			\draw (-3.51,0.51)-- (-2.49,-1.97);
			\draw (-2.49,-1.97)-- (-0.02,-3.01);
			\draw (-0.02,-3.01)-- (2.46,-1.99);
			\draw (2.46,-1.99)-- (3.49,0.49);
			\draw [dash pattern=on 3pt off 3pt] (3.49,0.49)-- (2.47,2.97);
			\draw (2.47,2.97)-- (0,4);
			\draw (1.41,2.37)-- (1.21,2.14);
			\draw (1.21,2.14)-- (1.32,1.85);
			\draw (1.32,1.85)-- (1.61,1.8);
			\draw (1.61,1.8)-- (1.81,2.03);
			\draw (1.81,2.03)-- (1.71,2.32);
			\draw (1.71,2.32)-- (1.41,2.37);
			\draw [shift={(7.63,1.41)},color=red]  plot[domain=-0.78:1,variable=\t]({-1*7.43*cos(\t r)+-0.09*1.54*sin(\t r)},{0.09*7.43*cos(\t r)+-1*1.54*sin(\t r)});
			\draw [shift={(2.85,1.03)}] plot[domain=1.76:2.55,variable=\t]({1*1.98*cos(\t r)+0*1.98*sin(\t r)},{0*1.98*cos(\t r)+1*1.98*sin(\t r)});
		\end{tikzpicture} & no e-m \\
		\hline
	\end{tabular}
	\label{tab:tab}
\end{center}
in the $m$-cluster category
\vspace{10pt}

\subsection{The category of $m$-diagonals}
In this section, we will make use of the elementary moves defined above in order to generate the morphism of a category of $m$-diagonals. Let us just give a Lemma before starting.

\begin{lem}\label{lem:em}
	If $\alpha$ and $\beta$ are two $m$-diagonals, then there exists an elementary move $f:\alpha \to \beta$ if and only if there exists one $\overline{f}:\tau \beta \to \alpha$.
\end{lem}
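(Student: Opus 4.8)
The statement is the combinatorial avatar of the Auslander--Reiten mesh property: an irreducible map $X\to Y$ in an AR quiver is always accompanied by an irreducible map $\tau Y\to X$, and here the elementary moves are designed to model exactly the arrows of the AR quiver of ${\mathcal{C}}^m_{\tilde{D_n}}$. My plan is therefore to prove the statement directly on the combinatorics of $m$-diagonals, reducing each assertion to the explicit endpoint formula for $\tau$. I would first note that it suffices to establish the forward implication ``$f:\alpha\to\beta$ exists $\Rightarrow \overline f:\tau\beta\to\alpha$ exists,'' because $\tau$ is a bijection on $m$-diagonals (its inverse is the counterclockwise translation). Indeed, the conditions defining an elementary move --- sharing a vertex, having free ends at boundary distance $m$, or being consecutive clockwise --- depend only on the relative cyclic position of the endpoints, so they are preserved by the global shift $\tau$; hence $(\alpha,\beta)$ is a move if and only if $(\tau\alpha,\tau\beta)$ is. Applying the forward implication to the pair $(\tau\beta,\alpha)$ produces a move $\tau\alpha\to\tau\beta$, which by this $\tau$-equivariance is the same datum as a move $\alpha\to\beta$; this yields the converse for free.

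For the forward implication I would run through the cases of the definition of an elementary move. In case $1$, write $\alpha$ and $\beta$ as type-$1$ arcs sharing a vertex $a$, with free ends $b$ and $c$ at boundary distance $m$. Applying the endpoint formula for $\tau$, which shifts both ends of $\beta$ by $m$ along the boundary of $P$, I would check that $\tau\beta$ meets $\alpha$ in a single vertex and that the two remaining ends are again a boundary distance $m$ apart; with the clockwise conventions this is precisely a case-$1$ elementary move $\tau\beta\to\alpha$, the sub-case where the shared vertex is the other endpoint being symmetric. Case $4$ is handled identically, since $\tau$ preserves type $2$ and also shifts both ends by $m$. Cases $2$ and $3$ I would dispatch by invoking the remark that a type-$1$/tangent pair becomes a case-$1$ configuration once the tangent arc is replaced by the associated loop, so the type-$1$ computation applies verbatim after the loop is chosen consistently.

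The genuinely delicate part is case $5$, together with the tangency subtleties lurking in cases $2$ and $3$, since here $\tau$ is not simply the rotation of $P$ by $m$. For an arc linking the two inner polygons I would use part $(4)$ of the definition of $\tau$: an arc from $R_k$ to $S_{k'}$ is sent to $R_{k-m}$ and $S_{k'+m}$ modulo $m-1$, that is, to $R_{k-1}$ and $S_{k'+1}$ since $m\equiv 1 \pmod{m-1}$, while the tangency side flips exactly when $m$ is odd. I would then verify that ``$\beta$ consecutive to $\alpha$ clockwise'' is transported to ``$\alpha$ consecutive to $\tau\beta$ clockwise'' under this rotation of the thick-vertex indices, taking care that the parity-dependent side-change acts consistently on both ends so that consecutivity is preserved. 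This $\bmod\,(m-1)$ and parity bookkeeping, and checking that the same flip does not break $\tau$-equivariance for tangent arcs, is where I expect the only real friction; the type-$1$ and type-$2$ cases are routine modular arithmetic on $\mathbb{Z}/(n-2)m$, and the equivariance step removes any need to argue the two directions of the equivalence separately.
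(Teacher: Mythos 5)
Your proposal is correct and follows essentially the same route as the paper: a case-by-case verification, over the types of $m$-diagonals, that the endpoint formulas for $\tau$ transport the elementary-move conditions from the pair $(\alpha,\beta)$ to the pair $(\tau\beta,\alpha)$. The only organizational differences are that you derive the converse from the forward implication via $\tau$-equivariance of elementary moves (where the paper simply states that ``the converse is similar''), and that your handling of the case of arcs joining $R$ to $S$ tracks the index arithmetic modulo $m-1$ and the tangency parity more carefully than the paper, which recycles the boundary-distance argument in a case where it does not literally apply.
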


\begin{proof}
	There are several cases to examine. We can always reduce to the case where $\alpha$ or $\beta$ are not loops. Note that if $\alpha$ is of type $1$ or $3$, there exists an elementary move from $\alpha$ to $\beta$ if and only if $\beta$ is also an arc of type $1$ or $3$.
	\begin{enumerate}
		\item Let us first study the case of arcs of type $1$ or $3$.
		\begin{enumerate}
			\item If $\beta$ is of type $3$.
			
			Then let $a$ be its only vertex. To be compatible with the definition of an elementary move, it is necessary for $\alpha$ to be of type $1$. Let $b$ be the other vertex of $\alpha$ (the first one being $a$, since both arcs have to share an oriented angle).
			
			By definition, $l(a,b)=m$. If $a'$ is the extremity of the $m$-diagonal $\tau \beta$, then $l(a',a)=m$. So $a'=b$ and $\tau \beta$ and $\alpha$ share an oriented angle. Thus there exists $\overline{f} : \tau \beta \to \alpha$
			
			\item If $\beta$ is of type $1$. There are two sub-cases:
			\begin{enumerate}
				\item If $\alpha$ is of type $3$.
				
				Let $a$ and $b$ be the vertices of $\beta$, where $a$ is also the one of $\alpha$.
				
				Then $l(a,b)=m$. So, if $c$ and $d$ are the vertices of $\tau \beta$ then $d=a$ (and $l(c,d)=m$). Thus there exists $\overline{f} : \tau \beta \to \alpha$.
				
				\item If $\alpha$ is of type $1$.
				
				Let $a$ and $b$ be the vertices of $\beta$ and $a$ and $c$ the ones of $\alpha$. We have $l(c,b)=m$.
				
				So, if $d$ and $e$ are the vertices of $\tau \beta$, we have $e=c$ and $l(d,a)=m$. Thus there exists $\overline{f} : \tau \beta \to \alpha$.
			\end{enumerate}
		\end{enumerate}
		\item Now, if $\alpha$ and $\beta$ are of type $2$ or $4$.
		\begin{enumerate}
			\item If $\alpha$ is homotopic to the boundary path (type $2$). Then $\beta$ must be also homotopic to the boundary path. Let $a$ and $b$ be the vertices of $\beta$, where $a$ is also the one of $\alpha$. Then, $l(c,b)=m$. So, if $d$ and $e$ are the vertices of $\tau \beta$ then $d=c$ (and $l(a,e)=m$). Thus there exists $\overline{f} : \tau \beta \to \alpha$.
			\item If $\alpha$ is from a thick vertex of $L$ to a thick vertex of $R$ (of type $4$), then $\beta$ is of the same type. Let $a$ and $b$ be the vertices of $\beta$, where $a$ is also the one of $\alpha$. Then, $l(c,b)=m$. So, if $d$ and $e$ are the vertices of $\tau \beta$ then $d=c$ (and $l(a,e)=m$). Thus there exists $\overline{f} : \tau \beta \to \alpha$
		\end{enumerate}
	\end{enumerate}
	
	The converse is similar.
\end{proof}

%In fact, this Lemma seems natural since applying $\tau^{-1}$ corresponds to apply an elementary move to each end of the arc.

\begin{defi} \label{def:carquois}
	The quiver $\tilde{Q}$ is defined as follows:
	\begin{enumerate}
		\item The vertices are the $m$-diagonals.
		\item There is an arrow between two $m$-diagonals $\alpha$ and $\beta$ when there is an elementary move from $\alpha$ to $\beta$.
	\end{enumerate}
	The category ${\mathcal{C}}$ is defined to be the additive mesh category of $\tilde{Q}$.
\end{defi}

It means that the category ${\mathcal{C}}$ is the additive Krull-Schmidt category where:

\begin{enumerate}
	\item The indecomposable objects of ${\mathcal{C}}$ are the $m$-diagonals of definition \ref{def:diago}. So the objects of ${\mathcal{C}}$ are the finite direct sums of $m$-diagonals.
	\item The set of morphisms between two indecomposable objects $X$ and $Y$ is given by the $k$-vector space generated by the paths from $X$ to $Y$ in $\tilde{Q}$ by the subvector-space generated (elements in the ideal generated by) mesh relations.
	\item The composition is induced by the concatenation of paths in $\tilde{Q}$.
\end{enumerate}
Mesh relations: By the previous Lemma, we know that if we have $f:\beta \to \alpha$, then we also have $\overline{f}:\tau \alpha \to \beta$. Then the mesh relations are the relations:

\[ R_\alpha = \sum_{\beta,f:\beta \to \alpha} f\overline{f}. \]

We will see that ${\mathcal{C}}$ is equivalent to a sub-category of the higher cluster category ${\mathcal{C}}^{(m)}_{\tilde{D_n}}$.

Let us recall a definition.

\begin{defi}
	A stable translation quiver is a quiver $Q=(Q_0,Q_1)$ without any loops or multiple edges, together with a bijection $\tau:Q_0 \to Q_0$, such that for all vertices $x, y\in Q_0$, the number of arrows from $y$ to $x$ is equal to the number of arrows from $\tau(x)$ to $y$.
\end{defi}

\begin{coro}[Corollary of Lemma \ref{lem:em}]
	The quiver ($\tilde{Q}, \tau)$ is a stable translation quiver.
\end{coro}

\begin{proof}
	\cor{By definition, $\tau$ is defined on all vertices, and has a clear inverse.}
	
	Let $x$ be a vertex in $\tilde{Q}$. Given a vertex $y$, we have to show that if there is an arrow from $y$ to $x$, then there is an arrow from $\tau x$ to $y$. This is exactly the Lemma \ref{lem:em}.
	
	\cor{In addition, the quiver $\tilde{Q}$ does not have any loops (there cannot be any elementary move from $\alpha$ to $\alpha$) nor multiple arrows with the condition of the length in the definition of an elementary move.}
\end{proof}

Figure \ref{fig:ar1} illustrates a small part of the Auslander-Reiten quiver of ${\mathcal{C}}^{(m)}_Q$.

\begin{landscape}
	\begin{figure}
		\[ \scalebox{1.2}{
			\xymatrix {
				\cdots \ar[ddr] & & \begin{tikzpicture}[scale=0.1]
					\fill[fill=black,fill opacity=0.15] (0,4) -- (0,-4) -- (6.93,-8) -- (13.86,-4) -- (13.86,4) -- (6.93,8) -- cycle;
					\draw [fill=black,fill opacity=1.0] (3.46,2) circle (0.4cm);
					\draw [fill=black,fill opacity=1.0] (3.46,-2) circle (0.4cm);
					\draw [fill=black,fill opacity=1.0] (10.39,2) circle (0.4cm);
					\draw [fill=black,fill opacity=1.0] (10.39,-2) circle (0.4cm);
					\draw (0,4)-- (0,-4);
					\draw (0,-4)-- (6.93,-8);
					\draw (6.93,-8)-- (13.86,-4);
					\draw (13.86,-4)-- (13.86,4);
					\draw (13.86,4)-- (6.93,8);
					\draw (6.93,8)-- (0,4);
					\draw [shift={(1.86,0)}] plot[domain=-0.89:0.89,variable=\t]({1*2.57*cos(\t r)+0*2.57*sin(\t r)},{0*2.57*cos(\t r)+1*2.57*sin(\t r)});
					\draw [shift={(5.07,0)}] plot[domain=2.25:4.04,variable=\t]({1*2.57*cos(\t r)+0*2.57*sin(\t r)},{0*2.57*cos(\t r)+1*2.57*sin(\t r)});
					\draw [shift={(12,0)}] plot[domain=2.25:4.04,variable=\t]({1*2.57*cos(\t r)+0*2.57*sin(\t r)},{0*2.57*cos(\t r)+1*2.57*sin(\t r)});
					\draw [shift={(8.79,0)}] plot[domain=-0.89:0.89,variable=\t]({1*2.57*cos(\t r)+0*2.57*sin(\t r)},{0*2.57*cos(\t r)+1*2.57*sin(\t r)});
					\draw [shift={(25.48,-8.09)}] plot[domain=2.55:3.14,variable=\t]({1*18.55*cos(\t r)+0*18.55*sin(\t r)},{0*18.55*cos(\t r)+1*18.55*sin(\t r)});
				\end{tikzpicture} \ar[ddr] & & \begin{tikzpicture}[scale=0.1]
					\fill[fill=black,fill opacity=0.15] (0,4) -- (0,-4) -- (6.93,-8) -- (13.86,-4) -- (13.86,4) -- (6.93,8) -- cycle;
					\draw [fill=black,fill opacity=1.0] (3.46,2) circle (0.4cm);
					\draw [fill=black,fill opacity=1.0] (3.46,-2) circle (0.4cm);
					\draw [fill=black,fill opacity=1.0] (10.39,2) circle (0.4cm);
					\draw [fill=black,fill opacity=1.0] (10.39,-2) circle (0.4cm);
					\draw (0,4)-- (0,-4);
					\draw (0,-4)-- (6.93,-8);
					\draw (6.93,-8)-- (13.86,-4);
					\draw (13.86,-4)-- (13.86,4);
					\draw (13.86,4)-- (6.93,8);
					\draw (6.93,8)-- (0,4);
					\draw [shift={(1.86,0)}] plot[domain=-0.89:0.89,variable=\t]({1*2.57*cos(\t r)+0*2.57*sin(\t r)},{0*2.57*cos(\t r)+1*2.57*sin(\t r)});
					\draw [shift={(5.07,0)}] plot[domain=2.25:4.04,variable=\t]({1*2.57*cos(\t r)+0*2.57*sin(\t r)},{0*2.57*cos(\t r)+1*2.57*sin(\t r)});
					\draw [shift={(12,0)}] plot[domain=2.25:4.04,variable=\t]({1*2.57*cos(\t r)+0*2.57*sin(\t r)},{0*2.57*cos(\t r)+1*2.57*sin(\t r)});
					\draw [shift={(8.79,0)}] plot[domain=-0.89:0.89,variable=\t]({1*2.57*cos(\t r)+0*2.57*sin(\t r)},{0*2.57*cos(\t r)+1*2.57*sin(\t r)});
					\draw [shift={(5.98,1.98)}] plot[domain=-2.44:1.88,variable=\t]({-0.52*7.4*cos(\t r)+0.86*3.54*sin(\t r)},{-0.86*7.4*cos(\t r)+-0.52*3.54*sin(\t r)});
				\end{tikzpicture} \ar[ddr] & & \cdots\\
				\cdots \ar[dr] & & \begin{tikzpicture}[scale=0.1]
					\fill[fill=black,fill opacity=0.15] (0,4) -- (0,-4) -- (6.93,-8) -- (13.86,-4) -- (13.86,4) -- (6.93,8) -- cycle;
					\draw [fill=black,fill opacity=1.0] (3.46,2) circle (0.4cm);
					\draw [fill=black,fill opacity=1.0] (3.46,-2) circle (0.4cm);
					\draw [fill=black,fill opacity=1.0] (10.39,2) circle (0.4cm);
					\draw [fill=black,fill opacity=1.0] (10.39,-2) circle (0.4cm);
					\draw (0,4)-- (0,-4);
					\draw (0,-4)-- (6.93,-8);
					\draw (6.93,-8)-- (13.86,-4);
					\draw (13.86,-4)-- (13.86,4);
					\draw (13.86,4)-- (6.93,8);
					\draw (6.93,8)-- (0,4);
					\draw [shift={(1.86,0)}] plot[domain=-0.89:0.89,variable=\t]({1*2.57*cos(\t r)+0*2.57*sin(\t r)},{0*2.57*cos(\t r)+1*2.57*sin(\t r)});
					\draw [shift={(5.07,0)}] plot[domain=2.25:4.04,variable=\t]({1*2.57*cos(\t r)+0*2.57*sin(\t r)},{0*2.57*cos(\t r)+1*2.57*sin(\t r)});
					\draw [shift={(12,0)}] plot[domain=2.25:4.04,variable=\t]({1*2.57*cos(\t r)+0*2.57*sin(\t r)},{0*2.57*cos(\t r)+1*2.57*sin(\t r)});
					\draw [shift={(8.79,0)}] plot[domain=-0.89:0.89,variable=\t]({1*2.57*cos(\t r)+0*2.57*sin(\t r)},{0*2.57*cos(\t r)+1*2.57*sin(\t r)});
					\draw [shift={(5.68,-3.66)}] plot[domain=-0.47:1.77,variable=\t]({0.8*8.37*cos(\t r)+0.6*4.29*sin(\t r)},{0.6*8.37*cos(\t r)+-0.8*4.29*sin(\t r)});
				\end{tikzpicture} \ar[dr] & & \begin{tikzpicture}[scale=0.1]
					\fill[fill=black,fill opacity=0.15] (0,4) -- (0,-4) -- (6.93,-8) -- (13.86,-4) -- (13.86,4) -- (6.93,8) -- cycle;
					\draw [fill=black,fill opacity=1.0] (3.46,2) circle (0.4cm);
					\draw [fill=black,fill opacity=1.0] (3.46,-2) circle (0.4cm);
					\draw [fill=black,fill opacity=1.0] (10.39,2) circle (0.4cm);
					\draw [fill=black,fill opacity=1.0] (10.39,-2) circle (0.4cm);
					\draw (0,4)-- (0,-4);
					\draw (0,-4)-- (6.93,-8);
					\draw (6.93,-8)-- (13.86,-4);
					\draw (13.86,-4)-- (13.86,4);
					\draw (13.86,4)-- (6.93,8);
					\draw (6.93,8)-- (0,4);
					\draw [shift={(1.86,0)}] plot[domain=-0.89:0.89,variable=\t]({1*2.57*cos(\t r)+0*2.57*sin(\t r)},{0*2.57*cos(\t r)+1*2.57*sin(\t r)});
					\draw [shift={(5.07,0)}] plot[domain=2.25:4.04,variable=\t]({1*2.57*cos(\t r)+0*2.57*sin(\t r)},{0*2.57*cos(\t r)+1*2.57*sin(\t r)});
					\draw [shift={(12,0)}] plot[domain=2.25:4.04,variable=\t]({1*2.57*cos(\t r)+0*2.57*sin(\t r)},{0*2.57*cos(\t r)+1*2.57*sin(\t r)});
					\draw [shift={(8.79,0)}] plot[domain=-0.89:0.89,variable=\t]({1*2.57*cos(\t r)+0*2.57*sin(\t r)},{0*2.57*cos(\t r)+1*2.57*sin(\t r)});
					\draw [shift={(7.87,2.24)}] plot[domain=-2.01:1.41,variable=\t]({-0.71*8.01*cos(\t r)+0.7*5.25*sin(\t r)},{-0.7*8.01*cos(\t r)+-0.71*5.25*sin(\t r)});
				\end{tikzpicture} \ar[dr] & & \cdots\\
				& \begin{tikzpicture}[scale=0.1]
					\fill[fill=black,fill opacity=0.15] (0,4) -- (0,-4) -- (6.93,-8) -- (13.86,-4) -- (13.86,4) -- (6.93,8) -- cycle;
					\draw [fill=black,fill opacity=1.0] (3.46,2) circle (0.4cm);
					\draw [fill=black,fill opacity=1.0] (3.46,-2) circle (0.4cm);
					\draw [fill=black,fill opacity=1.0] (10.39,2) circle (0.4cm);
					\draw [fill=black,fill opacity=1.0] (10.39,-2) circle (0.4cm);
					\draw (0,4)-- (0,-4);
					\draw (0,-4)-- (6.93,-8);
					\draw (6.93,-8)-- (13.86,-4);
					\draw (13.86,-4)-- (13.86,4);
					\draw (13.86,4)-- (6.93,8);
					\draw (6.93,8)-- (0,4);
					\draw [shift={(1.86,0)}] plot[domain=-0.89:0.89,variable=\t]({1*2.57*cos(\t r)+0*2.57*sin(\t r)},{0*2.57*cos(\t r)+1*2.57*sin(\t r)});
					\draw [shift={(5.07,0)}] plot[domain=2.25:4.04,variable=\t]({1*2.57*cos(\t r)+0*2.57*sin(\t r)},{0*2.57*cos(\t r)+1*2.57*sin(\t r)});
					\draw [shift={(12,0)}] plot[domain=2.25:4.04,variable=\t]({1*2.57*cos(\t r)+0*2.57*sin(\t r)},{0*2.57*cos(\t r)+1*2.57*sin(\t r)});
					\draw [shift={(8.79,0)}] plot[domain=-0.89:0.89,variable=\t]({1*2.57*cos(\t r)+0*2.57*sin(\t r)},{0*2.57*cos(\t r)+1*2.57*sin(\t r)});
					\draw (6.93,8)-- (6.93,-8);
				\end{tikzpicture}
				\ar[uur] \ar[ur] \ar[dr] \ar[ddr] & & \begin{tikzpicture}[scale=0.1]
					\fill[fill=black,fill opacity=0.15] (0,4) -- (0,-4) -- (6.93,-8) -- (13.86,-4) -- (13.86,4) -- (6.93,8) -- cycle;
					\draw [fill=black,fill opacity=1.0] (3.46,2) circle (0.4cm);
					\draw [fill=black,fill opacity=1.0] (3.46,-2) circle (0.4cm);
					\draw [fill=black,fill opacity=1.0] (10.39,2) circle (0.4cm);
					\draw [fill=black,fill opacity=1.0] (10.39,-2) circle (0.4cm);
					\draw (0,4)-- (0,-4);
					\draw (0,-4)-- (6.93,-8);
					\draw (6.93,-8)-- (13.86,-4);
					\draw (13.86,-4)-- (13.86,4);
					\draw (13.86,4)-- (6.93,8);
					\draw (6.93,8)-- (0,4);
					\draw [shift={(1.86,0)}] plot[domain=-0.89:0.89,variable=\t]({1*2.57*cos(\t r)+0*2.57*sin(\t r)},{0*2.57*cos(\t r)+1*2.57*sin(\t r)});
					\draw [shift={(5.07,0)}] plot[domain=2.25:4.04,variable=\t]({1*2.57*cos(\t r)+0*2.57*sin(\t r)},{0*2.57*cos(\t r)+1*2.57*sin(\t r)});
					\draw [shift={(12,0)}] plot[domain=2.25:4.04,variable=\t]({1*2.57*cos(\t r)+0*2.57*sin(\t r)},{0*2.57*cos(\t r)+1*2.57*sin(\t r)});
					\draw [shift={(8.79,0)}] plot[domain=-0.89:0.89,variable=\t]({1*2.57*cos(\t r)+0*2.57*sin(\t r)},{0*2.57*cos(\t r)+1*2.57*sin(\t r)});
					\draw [shift={(8.71,-2.14)}] plot[domain=0.48:4.56,variable=\t]({-0.48*6.78*cos(\t r)+0.88*2.61*sin(\t r)},{-0.88*6.78*cos(\t r)+-0.48*2.61*sin(\t r)});
					\draw [shift={(5.15,2.14)}] plot[domain=0.48:4.56,variable=\t]({0.48*6.78*cos(\t r)+-0.88*2.61*sin(\t r)},{0.88*6.78*cos(\t r)+0.48*2.61*sin(\t r)});
				\end{tikzpicture} \ar[uur] \ar[ur] \ar[dr] \ar[ddr] & & \begin{tikzpicture}[scale=0.1]
					\fill[fill=black,fill opacity=0.15] (0,4) -- (0,-4) -- (6.93,-8) -- (13.86,-4) -- (13.86,4) -- (6.93,8) -- cycle;
					\draw [fill=black,fill opacity=1.0] (3.46,2) circle (0.4cm);
					\draw [fill=black,fill opacity=1.0] (3.46,-2) circle (0.4cm);
					\draw [fill=black,fill opacity=1.0] (10.39,2) circle (0.4cm);
					\draw [fill=black,fill opacity=1.0] (10.39,-2) circle (0.4cm);
					\draw (0,4)-- (0,-4);
					\draw (0,-4)-- (6.93,-8);
					\draw (6.93,-8)-- (13.86,-4);
					\draw (13.86,-4)-- (13.86,4);
					\draw (13.86,4)-- (6.93,8);
					\draw (6.93,8)-- (0,4);
					\draw [shift={(1.86,0)}] plot[domain=-0.89:0.89,variable=\t]({1*2.57*cos(\t r)+0*2.57*sin(\t r)},{0*2.57*cos(\t r)+1*2.57*sin(\t r)});
					\draw [shift={(5.07,0)}] plot[domain=2.25:4.04,variable=\t]({1*2.57*cos(\t r)+0*2.57*sin(\t r)},{0*2.57*cos(\t r)+1*2.57*sin(\t r)});
					\draw [shift={(12,0)}] plot[domain=2.25:4.04,variable=\t]({1*2.57*cos(\t r)+0*2.57*sin(\t r)},{0*2.57*cos(\t r)+1*2.57*sin(\t r)});
					\draw [shift={(8.79,0)}] plot[domain=-0.89:0.89,variable=\t]({1*2.57*cos(\t r)+0*2.57*sin(\t r)},{0*2.57*cos(\t r)+1*2.57*sin(\t r)});
					\draw [shift={(5.43,-2.31)}] plot[domain=1.26:4.51,variable=\t]({-0.75*8.75*cos(\t r)+0.66*5.52*sin(\t r)},{-0.66*8.75*cos(\t r)+-0.75*5.52*sin(\t r)});
					\draw [shift={(4.35,0.43)}] plot[domain=-2.04:1.85,variable=\t]({-0.51*3.43*cos(\t r)+0.86*2.54*sin(\t r)},{-0.86*3.43*cos(\t r)+-0.51*2.54*sin(\t r)});
					\draw [shift={(9.51,-0.43)}] plot[domain=-2.04:1.85,variable=\t]({0.51*3.43*cos(\t r)+-0.86*2.54*sin(\t r)},{0.86*3.43*cos(\t r)+0.51*2.54*sin(\t r)});
					\draw [shift={(8.43,2.31)}] plot[domain=1.26:4.51,variable=\t]({0.75*8.75*cos(\t r)+-0.66*5.52*sin(\t r)},{0.66*8.75*cos(\t r)+0.75*5.52*sin(\t r)});
				\end{tikzpicture} \ar[uur] \ar[ur] \ar[dr] \ar[ddr] & \\
				\cdots \ar[ur] & & \begin{tikzpicture}[scale=0.1]
					\fill[fill=black,fill opacity=0.15] (0,4) -- (0,-4) -- (6.93,-8) -- (13.86,-4) -- (13.86,4) -- (6.93,8) -- cycle;
					\draw [fill=black,fill opacity=1.0] (3.46,2) circle (0.4cm);
					\draw [fill=black,fill opacity=1.0] (3.46,-2) circle (0.4cm);
					\draw [fill=black,fill opacity=1.0] (10.39,2) circle (0.4cm);
					\draw [fill=black,fill opacity=1.0] (10.39,-2) circle (0.4cm);
					\draw (0,4)-- (0,-4);
					\draw (0,-4)-- (6.93,-8);
					\draw (6.93,-8)-- (13.86,-4);
					\draw (13.86,-4)-- (13.86,4);
					\draw (13.86,4)-- (6.93,8);
					\draw (6.93,8)-- (0,4);
					\draw [shift={(1.86,0)}] plot[domain=-0.89:0.89,variable=\t]({1*2.57*cos(\t r)+0*2.57*sin(\t r)},{0*2.57*cos(\t r)+1*2.57*sin(\t r)});
					\draw [shift={(5.07,0)}] plot[domain=2.25:4.04,variable=\t]({1*2.57*cos(\t r)+0*2.57*sin(\t r)},{0*2.57*cos(\t r)+1*2.57*sin(\t r)});
					\draw [shift={(12,0)}] plot[domain=2.25:4.04,variable=\t]({1*2.57*cos(\t r)+0*2.57*sin(\t r)},{0*2.57*cos(\t r)+1*2.57*sin(\t r)});
					\draw [shift={(8.79,0)}] plot[domain=-0.89:0.89,variable=\t]({1*2.57*cos(\t r)+0*2.57*sin(\t r)},{0*2.57*cos(\t r)+1*2.57*sin(\t r)});
					\draw [shift={(8.18,3.66)}] plot[domain=-0.47:1.77,variable=\t]({-0.8*8.37*cos(\t r)+-0.6*4.29*sin(\t r)},{-0.6*8.37*cos(\t r)+0.8*4.29*sin(\t r)});
				\end{tikzpicture} \ar[ur] & & \begin{tikzpicture}[scale=0.1]
					\fill[fill=black,fill opacity=0.15] (0,4) -- (0,-4) -- (6.93,-8) -- (13.86,-4) -- (13.86,4) -- (6.93,8) -- cycle;
					\draw [fill=black,fill opacity=1.0] (3.46,2) circle (0.4cm);
					\draw [fill=black,fill opacity=1.0] (3.46,-2) circle (0.4cm);
					\draw [fill=black,fill opacity=1.0] (10.39,2) circle (0.4cm);
					\draw [fill=black,fill opacity=1.0] (10.39,-2) circle (0.4cm);
					\draw (0,4)-- (0,-4);
					\draw (0,-4)-- (6.93,-8);
					\draw (6.93,-8)-- (13.86,-4);
					\draw (13.86,-4)-- (13.86,4);
					\draw (13.86,4)-- (6.93,8);
					\draw (6.93,8)-- (0,4);
					\draw [shift={(1.86,0)}] plot[domain=-0.89:0.89,variable=\t]({1*2.57*cos(\t r)+0*2.57*sin(\t r)},{0*2.57*cos(\t r)+1*2.57*sin(\t r)});
					\draw [shift={(5.07,0)}] plot[domain=2.25:4.04,variable=\t]({1*2.57*cos(\t r)+0*2.57*sin(\t r)},{0*2.57*cos(\t r)+1*2.57*sin(\t r)});
					\draw [shift={(12,0)}] plot[domain=2.25:4.04,variable=\t]({1*2.57*cos(\t r)+0*2.57*sin(\t r)},{0*2.57*cos(\t r)+1*2.57*sin(\t r)});
					\draw [shift={(8.79,0)}] plot[domain=-0.89:0.89,variable=\t]({1*2.57*cos(\t r)+0*2.57*sin(\t r)},{0*2.57*cos(\t r)+1*2.57*sin(\t r)});
					\draw [shift={(5.99,-2.24)}] plot[domain=-2.01:1.41,variable=\t]({0.71*8.01*cos(\t r)+-0.7*5.25*sin(\t r)},{0.7*8.01*cos(\t r)+0.71*5.25*sin(\t r)});
				\end{tikzpicture} \ar[ur] & & \cdots\\
				\cdots \ar[uur] & & \begin{tikzpicture}[scale=0.1]
					\fill[fill=black,fill opacity=0.15] (0,4) -- (0,-4) -- (6.93,-8) -- (13.86,-4) -- (13.86,4) -- (6.93,8) -- cycle;
					\draw [fill=black,fill opacity=1.0] (3.46,2) circle (0.4cm);
					\draw [fill=black,fill opacity=1.0] (3.46,-2) circle (0.4cm);
					\draw [fill=black,fill opacity=1.0] (10.39,2) circle (0.4cm);
					\draw [fill=black,fill opacity=1.0] (10.39,-2) circle (0.4cm);
					\draw (0,4)-- (0,-4);
					\draw (0,-4)-- (6.93,-8);
					\draw (6.93,-8)-- (13.86,-4);
					\draw (13.86,-4)-- (13.86,4);
					\draw (13.86,4)-- (6.93,8);
					\draw (6.93,8)-- (0,4);
					\draw [shift={(1.86,0)}] plot[domain=-0.89:0.89,variable=\t]({1*2.57*cos(\t r)+0*2.57*sin(\t r)},{0*2.57*cos(\t r)+1*2.57*sin(\t r)});
					\draw [shift={(5.07,0)}] plot[domain=2.25:4.04,variable=\t]({1*2.57*cos(\t r)+0*2.57*sin(\t r)},{0*2.57*cos(\t r)+1*2.57*sin(\t r)});
					\draw [shift={(12,0)}] plot[domain=2.25:4.04,variable=\t]({1*2.57*cos(\t r)+0*2.57*sin(\t r)},{0*2.57*cos(\t r)+1*2.57*sin(\t r)});
					\draw [shift={(8.79,0)}] plot[domain=-0.89:0.89,variable=\t]({1*2.57*cos(\t r)+0*2.57*sin(\t r)},{0*2.57*cos(\t r)+1*2.57*sin(\t r)});
					\draw [shift={(-11.62,8.09)}] plot[domain=5.69:6.28,variable=\t]({1*18.55*cos(\t r)+0*18.55*sin(\t r)},{0*18.55*cos(\t r)+1*18.55*sin(\t r)});
				\end{tikzpicture} \ar[uur] & & \begin{tikzpicture}[scale=0.1]
					\fill[fill=black,fill opacity=0.15] (0,4) -- (0,-4) -- (6.93,-8) -- (13.86,-4) -- (13.86,4) -- (6.93,8) -- cycle;
					\draw [fill=black,fill opacity=1.0] (3.46,2) circle (0.4cm);
					\draw [fill=black,fill opacity=1.0] (3.46,-2) circle (0.4cm);
					\draw [fill=black,fill opacity=1.0] (10.39,2) circle (0.4cm);
					\draw [fill=black,fill opacity=1.0] (10.39,-2) circle (0.4cm);
					\draw (0,4)-- (0,-4);
					\draw (0,-4)-- (6.93,-8);
					\draw (6.93,-8)-- (13.86,-4);
					\draw (13.86,-4)-- (13.86,4);
					\draw (13.86,4)-- (6.93,8);
					\draw (6.93,8)-- (0,4);
					\draw [shift={(1.86,0)}] plot[domain=-0.89:0.89,variable=\t]({1*2.57*cos(\t r)+0*2.57*sin(\t r)},{0*2.57*cos(\t r)+1*2.57*sin(\t r)});
					\draw [shift={(5.07,0)}] plot[domain=2.25:4.04,variable=\t]({1*2.57*cos(\t r)+0*2.57*sin(\t r)},{0*2.57*cos(\t r)+1*2.57*sin(\t r)});
					\draw [shift={(12,0)}] plot[domain=2.25:4.04,variable=\t]({1*2.57*cos(\t r)+0*2.57*sin(\t r)},{0*2.57*cos(\t r)+1*2.57*sin(\t r)});
					\draw [shift={(8.79,0)}] plot[domain=-0.89:0.89,variable=\t]({1*2.57*cos(\t r)+0*2.57*sin(\t r)},{0*2.57*cos(\t r)+1*2.57*sin(\t r)});
					\draw [shift={(7.74,-2.14)}] plot[domain=0.65:5,variable=\t]({-0.49*6.93*cos(\t r)+0.87*3.55*sin(\t r)},{-0.87*6.93*cos(\t r)+-0.49*3.55*sin(\t r)});
				\end{tikzpicture} \ar[uur] & & \cdots
		}}
		\]
		\caption{A subquiver of the Auslander-Reiten quiver of $\tilde{Q}$ with vertices $m$-diagonals, for $m=3$ and $n=4$}
		\label{fig:ar1}
	\end{figure}
\end{landscape}

We can find on a slice of the Auslander-Reiten quiver of $\mathcal{C}$ the following initial $(m+2)$-angulation. In fact, each arc of the initial $(m+2)$-angulation corresponds to the image in the $m$-cluster category of a preprojective indecomposable representation.

\begin{rmk}
	We note that, on the previous pictures, we have only drawn part of the first transjective component of the Auslander-Reiten quiver of $\mathcal{C}$, but there are $m$ copies of it, where we can find the $m$-diagonals shifted $1, \cdots, m-1$ times (see definition \ref{def:shift}). Indeed, between the tubes, there are $m$ copies of the preinjective and preprojective components. On figure \ref{fig:preproj} we can see how the preprojective, regular and preinjective components of a Auslander-Reiten quiver of type $\tilde{D_n}$ are set. We can see the first shift of the preprojective objects, and also the first shift of the regular objects.
\end{rmk}

\begin{figure}[!h]
	\begin{tikzpicture}[scale=0.2]
		\draw (-4,2)-- (-4,-2);
		\draw [dotted] (0.54,2)-- (3.3,2);
		\draw (-4,-2)-- (0.54,-2);
		\draw [dotted] (0.54,-2)-- (3.3,-2);
		\draw (9.64,7.49)-- (9.64,-5.78);
		\draw (9.64,-5.78)-- (11.91,-5.78);
		\draw (15.17,-5.78)-- (17.44,-5.78);
		\draw (9.64,-5.78)-- (9.64,7.49);
		\draw (11.91,-5.78)-- (11.91,7.49);
		\draw (15.17,-5.78)-- (15.17,7.49);
		\draw (17.44,-5.78)-- (17.44,7.49);
		\draw (-4,2)-- (0.54,2);
		\draw (-3.74,-7.62) node[anchor=north west] {preprojective};
		\draw (14.58,-7.93) node[anchor=north west] {regular};
		\draw (28.59,-8.15) node[anchor=north west] {preinj};
		\draw (20.7,-5.78)-- (20.7,7.49);
		\draw (22.97,-5.78)-- (22.97,7.49);
		\draw (22.97,-5.78)-- (20.7,-5.78);
		\draw (36.61,2)-- (32.07,2);
		\draw [dotted] (32.07,2)-- (29.31,2);
		\draw (36.61,2)-- (36.61,-2);
		\draw (36.61,-2)-- (32.07,-2);
		\draw [dotted] (32.07,-2)-- (29.31,-2);
		\draw (36.61,2)-- (41.15,2);
		\draw [dotted] (41.15,2)-- (43.91,2);
		\draw [dotted] (41.15,-2)-- (43.91,-2);
		\draw (36.61,-2)-- (41.15,-2);
		\draw (50.25,-5.78)-- (50.25,7.49);
		\draw (50.25,-5.78)-- (52.52,-5.78);
		\draw (52.52,-5.78)-- (52.52,7.49);
		\draw (55.78,-5.78)-- (55.78,7.49);
		\draw (58.05,-5.78)-- (58.05,7.49);
		\draw (58.05,-5.78)-- (55.78,-5.78);
		\draw (61.3,-5.78)-- (61.3,7.49);
		\draw (63.57,-5.78)-- (61.3,-5.78);
		\draw (63.57,7.49)-- (63.57,-5.78);
		\draw [dash pattern=on 9pt off 9pt] (69.33,0)-- (73.55,0);
		\draw (37.44,-8.15) node[anchor=north west] {first shift of preproj};
		\draw (55.16,-8.15) node[anchor=north west] {first shift of regular};
	\end{tikzpicture}
	\caption{}
	\label{fig:preproj}
\end{figure}

\subsection{Mesh relations}
It is possible to make use of the $m$-diagonals so as to easily picture mesh relations. There are several types of mesh relations that can thus be described depending on the number of paths from $\tau \alpha$ to $\alpha$.

The first type of relation is of the following form:

\[ \xymatrix@1{
	\begin{tikzpicture}[scale=0.3]
		\fill[fill=black,fill opacity=0.1] (0,4) -- (-1.66,3.46) -- (-2.69,2.05) -- (-2.68,0.3) -- (-1.66,-1.11) -- (0,-1.65) -- (1.66,-1.11) -- (2.69,0.3) -- (2.69,2.05) -- (1.66,3.46) -- cycle;
		\draw [fill=black,fill opacity=1.0] (1.66,1.61) circle (0.2cm);
		\draw [fill=black,fill opacity=1.0] (-1.66,1.61) circle (0.2cm);
		\draw (0,4)-- (-1.66,3.46);
		\draw (-1.66,3.46)-- (-2.69,2.05);
		\draw (-2.69,2.05)-- (-2.68,0.3);
		\draw (-2.68,0.3)-- (-1.66,-1.11);
		\draw (-1.66,-1.11)-- (0,-1.65);
		\draw (0,-1.65)-- (1.66,-1.11);
		\draw (1.66,-1.11)-- (2.69,0.3);
		\draw (2.69,0.3)-- (2.69,2.05);
		\draw (2.69,2.05)-- (1.66,3.46);
		\draw (1.66,3.46)-- (0,4);
		\draw(1.67,1.18) circle (0.44cm);
		\draw(-1.67,1.17) circle (0.44cm);
		\draw [shift={(-2.52,-0.42)}] plot[domain=-0.43:1.16,variable=\t]({0.96*5.16*cos(\t r)+0.3*2.09*sin(\t r)},{0.3*5.16*cos(\t r)+-0.96*2.09*sin(\t r)});
	\end{tikzpicture} \ar[dr] & & \begin{tikzpicture}[scale=0.3]
		\fill[fill=black,fill opacity=0.1] (0,4) -- (-1.66,3.46) -- (-2.69,2.05) -- (-2.68,0.3) -- (-1.66,-1.11) -- (0,-1.65) -- (1.66,-1.11) -- (2.69,0.3) -- (2.69,2.05) -- (1.66,3.46) -- cycle;
		\draw [fill=black,fill opacity=1.0] (1.66,1.61) circle (0.2cm);
		\draw [fill=black,fill opacity=1.0] (-1.66,1.61) circle (0.2cm);
		\draw (0,4)-- (-1.66,3.46);
		\draw (-1.66,3.46)-- (-2.69,2.05);
		\draw (-2.69,2.05)-- (-2.68,0.3);
		\draw (-2.68,0.3)-- (-1.66,-1.11);
		\draw (-1.66,-1.11)-- (0,-1.65);
		\draw (0,-1.65)-- (1.66,-1.11);
		\draw (1.66,-1.11)-- (2.69,0.3);
		\draw (2.69,0.3)-- (2.69,2.05);
		\draw (2.69,2.05)-- (1.66,3.46);
		\draw (1.66,3.46)-- (0,4);
		\draw(1.67,1.18) circle (0.44cm);
		\draw(-1.67,1.17) circle (0.44cm);
		\draw [shift={(-0.61,1.16)}] plot[domain=0.92:3.89,variable=\t]({-0.98*3.04*cos(\t r)+-0.21*1.6*sin(\t r)},{0.21*3.04*cos(\t r)+-0.98*1.6*sin(\t r)});
	\end{tikzpicture} \\
	& \begin{tikzpicture}[scale=0.3]
		\fill[fill=black,fill opacity=0.1] (0,4) -- (-1.66,3.46) -- (-2.69,2.05) -- (-2.68,0.3) -- (-1.66,-1.11) -- (0,-1.65) -- (1.66,-1.11) -- (2.69,0.3) -- (2.69,2.05) -- (1.66,3.46) -- cycle;
		\draw [fill=black,fill opacity=1.0] (1.66,1.61) circle (0.2cm);
		\draw [fill=black,fill opacity=1.0] (-1.66,1.61) circle (0.2cm);
		\draw (0,4)-- (-1.66,3.46);
		\draw (-1.66,3.46)-- (-2.69,2.05);
		\draw (-2.69,2.05)-- (-2.68,0.3);
		\draw (-2.68,0.3)-- (-1.66,-1.11);
		\draw (-1.66,-1.11)-- (0,-1.65);
		\draw (0,-1.65)-- (1.66,-1.11);
		\draw (1.66,-1.11)-- (2.69,0.3);
		\draw (2.69,0.3)-- (2.69,2.05);
		\draw (2.69,2.05)-- (1.66,3.46);
		\draw (1.66,3.46)-- (0,4);
		\draw(1.67,1.18) circle (0.44cm);
		\draw(-1.67,1.17) circle (0.44cm);
		\draw [shift={(0.51,0.16)}] plot[domain=0.83:4.74,variable=\t]({-0.67*2.51*cos(\t r)+0.75*1.12*sin(\t r)},{-0.75*2.51*cos(\t r)+-0.67*1.12*sin(\t r)});
		\draw [shift={(-1.94,2.31)}] plot[domain=4.36:5.54,variable=\t]({1*2.14*cos(\t r)+0*2.14*sin(\t r)},{0*2.14*cos(\t r)+1*2.14*sin(\t r)});
	\end{tikzpicture} \ar[ur] &
} \]

It is easy to see that, if we call by $a$ and $b$ the extremities of both arcs of type different from $1$, then the arc of type $1$ links $a$ to $b$. This is a general case.

The second one is of this shape:

\[\xymatrix{
	& \begin{tikzpicture}[scale=0.3]
		\fill[fill=black,fill opacity=0.1] (0,4) -- (-1.66,3.46) -- (-2.69,2.05) -- (-2.68,0.3) -- (-1.66,-1.11) -- (0,-1.65) -- (1.66,-1.11) -- (2.69,0.3) -- (2.69,2.05) -- (1.66,3.46) -- cycle;
		\draw [fill=black,fill opacity=1.0] (1.66,1.61) circle (0.2cm);
		\draw [fill=black,fill opacity=1.0] (-1.66,1.61) circle (0.2cm);
		\draw (0,4)-- (-1.66,3.46);
		\draw (-1.66,3.46)-- (-2.69,2.05);
		\draw (-2.69,2.05)-- (-2.68,0.3);
		\draw (-2.68,0.3)-- (-1.66,-1.11);
		\draw (-1.66,-1.11)-- (0,-1.65);
		\draw (0,-1.65)-- (1.66,-1.11);
		\draw (1.66,-1.11)-- (2.69,0.3);
		\draw (2.69,0.3)-- (2.69,2.05);
		\draw (2.69,2.05)-- (1.66,3.46);
		\draw (1.66,3.46)-- (0,4);
		\draw [shift={(8.03,-1.43)}] plot[domain=2.49:3.17,variable=\t]({1*8.03*cos(\t r)+0*8.03*sin(\t r)},{0*8.03*cos(\t r)+1*8.03*sin(\t r)});
		\draw(1.67,1.18) circle (0.44cm);
		\draw(-1.67,1.17) circle (0.44cm);
	\end{tikzpicture} \ar[dr] & \\
	\begin{tikzpicture}[scale=0.3]
		\fill[fill=black,fill opacity=0.1] (0,4) -- (-1.66,3.46) -- (-2.69,2.05) -- (-2.68,0.3) -- (-1.66,-1.11) -- (0,-1.65) -- (1.66,-1.11) -- (2.69,0.3) -- (2.69,2.05) -- (1.66,3.46) -- cycle;
		\draw [fill=black,fill opacity=1.0] (1.66,1.61) circle (0.2cm);
		\draw [fill=black,fill opacity=1.0] (-1.66,1.61) circle (0.2cm);
		\draw (0,4)-- (-1.66,3.46);
		\draw (-1.66,3.46)-- (-2.69,2.05);
		\draw (-2.69,2.05)-- (-2.68,0.3);
		\draw (-2.68,0.3)-- (-1.66,-1.11);
		\draw (-1.66,-1.11)-- (0,-1.65);
		\draw (0,-1.65)-- (1.66,-1.11);
		\draw (1.66,-1.11)-- (2.69,0.3);
		\draw (2.69,0.3)-- (2.69,2.05);
		\draw (2.69,2.05)-- (1.66,3.46);
		\draw (1.66,3.46)-- (0,4);
		\draw [shift={(-8.03,-1.44)}] plot[domain=-0.03:0.66,variable=\t]({1*8.03*cos(\t r)+0*8.03*sin(\t r)},{0*8.03*cos(\t r)+1*8.03*sin(\t r)});
		\draw(1.67,1.18) circle (0.44cm);
		\draw(-1.67,1.17) circle (0.44cm);
	\end{tikzpicture} \ar[ur] \ar[dr] & & \begin{tikzpicture}[scale=0.3]
		\fill[fill=black,fill opacity=0.1] (0,4) -- (-1.66,3.46) -- (-2.69,2.05) -- (-2.68,0.3) -- (-1.66,-1.11) -- (0,-1.65) -- (1.66,-1.11) -- (2.69,0.3) -- (2.69,2.05) -- (1.66,3.46) -- cycle;
		\draw [fill=black,fill opacity=1.0] (1.66,1.61) circle (0.2cm);
		\draw [fill=black,fill opacity=1.0] (-1.66,1.61) circle (0.2cm);
		\draw (0,4)-- (-1.66,3.46);
		\draw (-1.66,3.46)-- (-2.69,2.05);
		\draw (-2.69,2.05)-- (-2.68,0.3);
		\draw (-2.68,0.3)-- (-1.66,-1.11);
		\draw (-1.66,-1.11)-- (0,-1.65);
		\draw (0,-1.65)-- (1.66,-1.11);
		\draw (1.66,-1.11)-- (2.69,0.3);
		\draw (2.69,0.3)-- (2.69,2.05);
		\draw (2.69,2.05)-- (1.66,3.46);
		\draw (1.66,3.46)-- (0,4);
		\draw(1.67,1.18) circle (0.44cm);
		\draw(-1.67,1.17) circle (0.44cm);
		\draw [shift={(-2.18,1.68)}] plot[domain=4.36:5.86,variable=\t]({1*1.47*cos(\t r)+0*1.47*sin(\t r)},{0*1.47*cos(\t r)+1*1.47*sin(\t r)});
		\draw [shift={(3.22,-0.67)}] plot[domain=1.93:2.74,variable=\t]({1*4.42*cos(\t r)+0*4.42*sin(\t r)},{0*4.42*cos(\t r)+1*4.42*sin(\t r)});
	\end{tikzpicture} \\
	& \begin{tikzpicture}[scale=0.3]
		\fill[fill=black,fill opacity=0.1] (0,4) -- (-1.66,3.46) -- (-2.69,2.05) -- (-2.68,0.3) -- (-1.66,-1.11) -- (0,-1.65) -- (1.66,-1.11) -- (2.69,0.3) -- (2.69,2.05) -- (1.66,3.46) -- cycle;
		\draw [fill=black,fill opacity=1.0] (1.66,1.61) circle (0.2cm);
		\draw [fill=black,fill opacity=1.0] (-1.66,1.61) circle (0.2cm);
		\draw (0,4)-- (-1.66,3.46);
		\draw (-1.66,3.46)-- (-2.69,2.05);
		\draw (-2.69,2.05)-- (-2.68,0.3);
		\draw (-2.68,0.3)-- (-1.66,-1.11);
		\draw (-1.66,-1.11)-- (0,-1.65);
		\draw (0,-1.65)-- (1.66,-1.11);
		\draw (1.66,-1.11)-- (2.69,0.3);
		\draw (2.69,0.3)-- (2.69,2.05);
		\draw (2.69,2.05)-- (1.66,3.46);
		\draw (1.66,3.46)-- (0,4);
		\draw(1.67,1.18) circle (0.44cm);
		\draw(-1.67,1.17) circle (0.44cm);
		\draw [shift={(-2.18,1.68)}] plot[domain=4.36:5.86,variable=\t]({1*1.47*cos(\t r)+0*1.47*sin(\t r)},{0*1.47*cos(\t r)+1*1.47*sin(\t r)});
		\draw [shift={(-2.59,1.81)}] plot[domain=-0.4:1.06,variable=\t]({1*1.9*cos(\t r)+0*1.9*sin(\t r)},{0*1.9*cos(\t r)+1*1.9*sin(\t r)});
	\end{tikzpicture} \ar[ur]}\]

The third one is of this shape:

\[ \scalebox{1.0}{
	\xymatrix{
		& \begin{tikzpicture}[scale=0.3]
			\fill[fill=black,fill opacity=0.1] (0,4) -- (-1.66,3.46) -- (-2.69,2.05) -- (-2.68,0.3) -- (-1.66,-1.11) -- (0,-1.65) -- (1.66,-1.11) -- (2.69,0.3) -- (2.69,2.05) -- (1.66,3.46) -- cycle;
			\draw [fill=black,fill opacity=1.0] (1.66,1.61) circle (0.2cm);
			\draw [fill=black,fill opacity=1.0] (-1.66,1.61) circle (0.2cm);
			\draw (0,4)-- (-1.66,3.46);
			\draw (-1.66,3.46)-- (-2.69,2.05);
			\draw (-2.69,2.05)-- (-2.68,0.3);
			\draw (-2.68,0.3)-- (-1.66,-1.11);
			\draw (-1.66,-1.11)-- (0,-1.65);
			\draw (0,-1.65)-- (1.66,-1.11);
			\draw (1.66,-1.11)-- (2.69,0.3);
			\draw (2.69,0.3)-- (2.69,2.05);
			\draw (2.69,2.05)-- (1.66,3.46);
			\draw (1.66,3.46)-- (0,4);
			\draw(1.67,1.18) circle (0.44cm);
			\draw(-1.67,1.17) circle (0.44cm);
			\draw [shift={(-2.52,-0.42)}] plot[domain=-0.43:1.16,variable=\t]({0.96*5.16*cos(\t r)+0.3*2.09*sin(\t r)},{0.3*5.16*cos(\t r)+-0.96*2.09*sin(\t r)});
		\end{tikzpicture} \ar[dr] & \\
		\begin{tikzpicture}[scale=0.3]
			\fill[fill=black,fill opacity=0.1] (0,4) -- (-1.66,3.46) -- (-2.69,2.05) -- (-2.68,0.3) -- (-1.66,-1.11) -- (0,-1.65) -- (1.66,-1.11) -- (2.69,0.3) -- (2.69,2.05) -- (1.66,3.46) -- cycle;
			\draw [fill=black,fill opacity=1.0] (1.66,1.61) circle (0.2cm);
			\draw [fill=black,fill opacity=1.0] (-1.66,1.61) circle (0.2cm);
			\draw (0,4)-- (-1.66,3.46);
			\draw (-1.66,3.46)-- (-2.69,2.05);
			\draw (-2.69,2.05)-- (-2.68,0.3);
			\draw (-2.68,0.3)-- (-1.66,-1.11);
			\draw (-1.66,-1.11)-- (0,-1.65);
			\draw (0,-1.65)-- (1.66,-1.11);
			\draw (1.66,-1.11)-- (2.69,0.3);
			\draw (2.69,0.3)-- (2.69,2.05);
			\draw (2.69,2.05)-- (1.66,3.46);
			\draw (1.66,3.46)-- (0,4);
			\draw(1.67,1.18) circle (0.44cm);
			\draw(-1.67,1.17) circle (0.44cm);
			\draw [shift={(1.35,-0.73)}] plot[domain=-1.27:1.84,variable=\t]({-0.04*3.28*cos(\t r)+-1*1.44*sin(\t r)},{1*3.28*cos(\t r)+-0.04*1.44*sin(\t r)});
		\end{tikzpicture} \ar[dr] \ar[ur] \ar[r] & \begin{tikzpicture}[scale=0.3]
			\fill[fill=black,fill opacity=0.1] (0,4) -- (-1.66,3.46) -- (-2.69,2.05) -- (-2.68,0.3) -- (-1.66,-1.11) -- (0,-1.65) -- (1.66,-1.11) -- (2.69,0.3) -- (2.69,2.05) -- (1.66,3.46) -- cycle;
			\draw [fill=black,fill opacity=1.0] (1.66,1.61) circle (0.2cm);
			\draw [fill=black,fill opacity=1.0] (-1.66,1.61) circle (0.2cm);
			\draw (0,4)-- (-1.66,3.46);
			\draw (-1.66,3.46)-- (-2.69,2.05);
			\draw (-2.69,2.05)-- (-2.68,0.3);
			\draw (-2.68,0.3)-- (-1.66,-1.11);
			\draw (-1.66,-1.11)-- (0,-1.65);
			\draw (0,-1.65)-- (1.66,-1.11);
			\draw (1.66,-1.11)-- (2.69,0.3);
			\draw (2.69,0.3)-- (2.69,2.05);
			\draw (2.69,2.05)-- (1.66,3.46);
			\draw (1.66,3.46)-- (0,4);
			\draw(1.67,1.18) circle (0.44cm);
			\draw(-1.67,1.17) circle (0.44cm);
			\draw [shift={(3.31,-1.08)}] plot[domain=2.13:3.31,variable=\t]({1*3.35*cos(\t r)+0*3.35*sin(\t r)},{0*3.35*cos(\t r)+1*3.35*sin(\t r)});
		\end{tikzpicture} \ar[r] & \begin{tikzpicture}[scale=0.3]
			\fill[fill=black,fill opacity=0.1] (0,4) -- (-1.66,3.46) -- (-2.69,2.05) -- (-2.68,0.3) -- (-1.66,-1.11) -- (0,-1.65) -- (1.66,-1.11) -- (2.69,0.3) -- (2.69,2.05) -- (1.66,3.46) -- cycle;
			\draw [fill=black,fill opacity=1.0] (1.66,1.61) circle (0.2cm);
			\draw [fill=black,fill opacity=1.0] (-1.66,1.61) circle (0.2cm);
			\draw (0,4)-- (-1.66,3.46);
			\draw (-1.66,3.46)-- (-2.69,2.05);
			\draw (-2.69,2.05)-- (-2.68,0.3);
			\draw (-2.68,0.3)-- (-1.66,-1.11);
			\draw (-1.66,-1.11)-- (0,-1.65);
			\draw (0,-1.65)-- (1.66,-1.11);
			\draw (1.66,-1.11)-- (2.69,0.3);
			\draw (2.69,0.3)-- (2.69,2.05);
			\draw (2.69,2.05)-- (1.66,3.46);
			\draw (1.66,3.46)-- (0,4);
			\draw(1.67,1.18) circle (0.44cm);
			\draw(-1.67,1.17) circle (0.44cm);
			\draw [shift={(0.51,0.16)}] plot[domain=0.83:4.74,variable=\t]({-0.67*2.51*cos(\t r)+0.75*1.12*sin(\t r)},{-0.75*2.51*cos(\t r)+-0.67*1.12*sin(\t r)});
			\draw [shift={(-1.94,2.31)}] plot[domain=4.36:5.54,variable=\t]({1*2.14*cos(\t r)+0*2.14*sin(\t r)},{0*2.14*cos(\t r)+1*2.14*sin(\t r)});
		\end{tikzpicture} \\
		& \begin{tikzpicture}[scale=0.3]
			\fill[fill=black,fill opacity=0.1] (0,4) -- (-1.66,3.46) -- (-2.69,2.05) -- (-2.68,0.3) -- (-1.66,-1.11) -- (0,-1.65) -- (1.66,-1.11) -- (2.69,0.3) -- (2.69,2.05) -- (1.66,3.46) -- cycle;
			\draw [fill=black,fill opacity=1.0] (1.66,1.61) circle (0.2cm);
			\draw [fill=black,fill opacity=1.0] (-1.66,1.61) circle (0.2cm);
			\draw (0,4)-- (-1.66,3.46);
			\draw (-1.66,3.46)-- (-2.69,2.05);
			\draw (-2.69,2.05)-- (-2.68,0.3);
			\draw (-2.68,0.3)-- (-1.66,-1.11);
			\draw (-1.66,-1.11)-- (0,-1.65);
			\draw (0,-1.65)-- (1.66,-1.11);
			\draw (1.66,-1.11)-- (2.69,0.3);
			\draw (2.69,0.3)-- (2.69,2.05);
			\draw (2.69,2.05)-- (1.66,3.46);
			\draw (1.66,3.46)-- (0,4);
			\draw(1.67,1.18) circle (0.44cm);
			\draw(-1.67,1.17) circle (0.44cm);
			\draw [shift={(-1.85,1.13)}] plot[domain=3.92:6.01,variable=\t]({1*1.18*cos(\t r)+0*1.18*sin(\t r)},{0*1.18*cos(\t r)+1*1.18*sin(\t r)});
			\draw [shift={(0.96,0.34)}] plot[domain=-0.02:2.87,variable=\t]({1*1.73*cos(\t r)+0*1.73*sin(\t r)},{0*1.73*cos(\t r)+1*1.73*sin(\t r)});
		\end{tikzpicture} \ar[ur] & \\
}}
\]

If $\alpha$ is the $m$-diagonal at the left from $a$ to $c$, then the $m$-diagonal at the right is $\tau \alpha$ from $c$ to $b$. They are of same length $m$.

The upper two pictures in the middle column (or the lower two if we are situated at the bottom of the Auslander-Reiten quiver) on the figures \ref{fig:ar1} and \ref{fig:ar72} are arcs tangent to one side of a central polygon, and the other extremity $b$ is at the vertex shared by both $\alpha$ and $\beta$ (as $l(a,c)=m$ and $\beta=\tau \alpha$, they share one vertex). Finally the lowest (or uppermost) picture is an $m$-diagonal of type 1, and $l(a,b)=2m$. The bases of the tube will be described in the next section.

\subsection{The case of regular modules}

\cor{Now that we have set the category of $m$-diagonals and that we visualize what happens in the transjective component of the Auslander-Reiten quiver, we are interested in the case of non-homogeneous tubes, and their correspondences in terms of arcs. We are going to focus on arcs of type $2$ and $4$. Indeed, these remaining diagonals have the particularity of being cyclic. If we apply $\tau$ several times to them, they come to their first position at a rank $r$. This cannot be the case of the types of $m$-diagonals defined in \ref{def:diago} (because they wrap around the central polygons).}

In figure \ref{fig:tub}, we can see the other types of diagonals. Note that we only have drawn the first ones, but there are also their successive images under $\tau$ (of which there are a finite number because they are cyclic).

\begin{figure}[!h]
	\centering
	\begin{tikzpicture}[scale=0.7]
		\fill[fill=black,fill opacity=0.1] (0,4) -- (-1.66,3.46) -- (-2.69,2.05) -- (-2.68,0.3) -- (-1.66,-1.11) -- (0,-1.65) -- (1.66,-1.11) -- (2.69,0.3) -- (2.69,2.05) -- (1.66,3.46) -- cycle;
		\draw [fill=black,fill opacity=1.0] (1.66,1.61) circle (0.2cm);
		\draw [fill=black,fill opacity=1.0] (-1.66,1.61) circle (0.2cm);
		\draw (0,4)-- (-1.66,3.46);
		\draw (-1.66,3.46)-- (-2.69,2.05);
		\draw (-2.69,2.05)-- (-2.68,0.3);
		\draw (-2.68,0.3)-- (-1.66,-1.11);
		\draw (-1.66,-1.11)-- (0,-1.65);
		\draw (0,-1.65)-- (1.66,-1.11);
		\draw (1.66,-1.11)-- (2.69,0.3);
		\draw (2.69,0.3)-- (2.69,2.05);
		\draw (2.69,2.05)-- (1.66,3.46);
		\draw (1.66,3.46)-- (0,4);
		\draw(1.67,1.18) circle (0.44cm);
		\draw(-1.67,1.17) circle (0.44cm);
		\draw [shift={(-0.68,-0.91)}] plot[domain=0.72:1.79,variable=\t]({1*4.48*cos(\t r)+0*4.48*sin(\t r)},{0*4.48*cos(\t r)+1*4.48*sin(\t r)});
	\end{tikzpicture}
	\hspace{20pt}
	\begin{tikzpicture}[scale=0.7]
		\fill[fill=black,fill opacity=0.1] (0,4) -- (-1.66,3.46) -- (-2.69,2.05) -- (-2.68,0.3) -- (-1.66,-1.11) -- (0,-1.65) -- (1.66,-1.11) -- (2.69,0.3) -- (2.69,2.05) -- (1.66,3.46) -- cycle;
		\draw [fill=black,fill opacity=1.0] (1.66,1.61) circle (0.2cm);
		\draw [fill=black,fill opacity=1.0] (-1.66,1.61) circle (0.2cm);
		\draw (0,4)-- (-1.66,3.46);
		\draw (-1.66,3.46)-- (-2.69,2.05);
		\draw (-2.69,2.05)-- (-2.68,0.3);
		\draw (-2.68,0.3)-- (-1.66,-1.11);
		\draw (-1.66,-1.11)-- (0,-1.65);
		\draw (0,-1.65)-- (1.66,-1.11);
		\draw (1.66,-1.11)-- (2.69,0.3);
		\draw (2.69,0.3)-- (2.69,2.05);
		\draw (2.69,2.05)-- (1.66,3.46);
		\draw (1.66,3.46)-- (0,4);
		\draw(1.67,1.18) circle (0.44cm);
		\draw(-1.67,1.17) circle (0.44cm);
		\draw [shift={(-2.61,-3.65)}] plot[domain=0.83:1.4,variable=\t]({1*5.54*cos(\t r)+0*5.54*sin(\t r)},{0*5.54*cos(\t r)+1*5.54*sin(\t r)});
		\draw [shift={(1.65,1.03)}] plot[domain=-2.27:1.59,variable=\t]({1*0.78*cos(\t r)+0*0.78*sin(\t r)},{0*0.78*cos(\t r)+1*0.78*sin(\t r)});
	\end{tikzpicture}
	\hspace{20pt}
	\begin{tikzpicture}[scale=0.7]
		\fill[fill=black,fill opacity=0.1] (0,4) -- (-1.66,3.46) -- (-2.69,2.05) -- (-2.68,0.3) -- (-1.66,-1.11) -- (0,-1.65) -- (1.66,-1.11) -- (2.69,0.3) -- (2.69,2.05) -- (1.66,3.46) -- cycle;
		\draw [fill=black,fill opacity=1.0] (1.66,1.61) circle (0.2cm);
		\draw [fill=black,fill opacity=1.0] (-1.66,1.61) circle (0.2cm);
		\draw (0,4)-- (-1.66,3.46);
		\draw (-1.66,3.46)-- (-2.69,2.05);
		\draw (-2.69,2.05)-- (-2.68,0.3);
		\draw (-2.68,0.3)-- (-1.66,-1.11);
		\draw (-1.66,-1.11)-- (0,-1.65);
		\draw (0,-1.65)-- (1.66,-1.11);
		\draw (1.66,-1.11)-- (2.69,0.3);
		\draw (2.69,0.3)-- (2.69,2.05);
		\draw (2.69,2.05)-- (1.66,3.46);
		\draw (1.66,3.46)-- (0,4);
		\draw(1.67,1.18) circle (0.44cm);
		\draw(-1.67,1.17) circle (0.44cm);
		\draw (-1.65,1.81)-- (1.68,1.81);
	\end{tikzpicture}
	\caption{The three different types of arcs in the tube, type $2$, and the last two of type $4$, for $m=2$ and $n=7$}
	\label{fig:tub}
\end{figure}

\begin{defi}
	Let $d \in \{ 1, \cdots, m \}$. We denote by $T^d_2$ (respectively $T'^d_2$) the connected components in the quiver composed by the $d$-th shift of $\tau^i(\alpha)$, $i \in \mathbb{N}$ for the $m$-diagonal of type $4$ at the center of the figure \ref{fig:tub} (respectively at the right of the figure). The arrows correspond to the elementary moves we can draw.
	
	We denote by $T^d_{n-2}$ the connected components in the quiver composed by the $d$-th shift of $\tau^i(\alpha)$, $i \in \mathbb{N}$, where $\alpha$ is the $m$-diagonal of type $2$ ending at $1$.
\end{defi}

These diagonals have to figure in the Auslander-Reiten quiver of $Q$, and we can see them in the non-homogeneous tubes (it means in the regular part of the quiver).

In case $\tilde{D_n}$, as we can see in \cite{WCB}, there are three types of non-homogeneous tubes, two of period $2$, and one of period $n-2$.

We have to notice, that we have $m$ copies of each tube, it means we have $3m$ non-homogeneous tubes. The first tube contains the first picture of figure \ref{fig:tub} and all its images under $\tau$. There are $m$ copies of this tube, which correspond to the successive shifts of the arcs. This is the same process for the second and third tube.

\[
\xymatrix@1{
	& & \vdots & & \\
	& \cdots \ar[dr] & & \cdots \ar[dr] & \\
	d^t_2 \ar[ur] \ar@/_5pc/[rrrr]_{\text{objects identified}} & & \tau^{-1} d^t_2 \ar[ur] & & \tau ^{-2} d^t_2 \ar@/^5pc/[llll] \\
}
\]
\vspace{20pt}

It is known in \cite{WCB} that in a tube of size $r$, only the first $r-1$ layers contain rigid objects. Graphically this corresponds to noncrossing arcs. It means that in $\tilde{Q}$, for the example of the only tube of size $n-2$, the $(n-1)$ lowest lines are made of noncrossing arcs. Then the arc crosses itself, and this does not correspond anymore to an $m$-rigid object in the higher cluster category, and this is the same for the shifted arcs in the successive copies of this first tube. Then with an arc situated on the $r-1$ first lines of the tube, we associate the $m$-rigid object which actually takes place in the Auslander-Reiten quiver of ${\mathcal{C}}^{(m)}_{\tilde{D_n}}$.

In order to make sure that an arc corresponds to a unique $m$-rigid object in the tube, we have to choose a convention. Let $\Delta$ be the initial $(m+2)$-angulation. Let $\beta$ be the red $m$-diagonal in figure \ref{fig:tube}. Then the vertex of $Q$ associated with $\beta$ is $n-1$ from the isomorphism of Theorem \ref{th:iso}. Let $\alpha=\mu_\Delta(\beta)$. We only need to associate an $m$-rigid object in order to find all the objects of the tube of size $n-2$. We choose to associate with $\alpha$ the simple object $\tau^{-1}S_{n-1}$ at vertex $n-1$.

To be precise, we set at the base of the first tube of size $n-2$ the flip of the arc corresponding to the preprojective at the first slice of the Auslander-Reiten quiver. This arc is thus associated with the simple regular module which corresponds to the mutation of the preprojective. This means that we have mutated the object $P_{n-1}$ in the $(m+2)$-angulation containing the arcs corresponding to the sum of the indecomposable projective objects.

For the tubes of size $2$, we set the only arcs linking both central polygons without self-crossing (see figure \ref{fig:tub}).

\begin{figure}[!h]
	\centering
	\begin{tikzpicture}[scale=0.6]
		\fill[fill=black,fill opacity=0.1] (0,4) -- (-1.66,3.46) -- (-2.69,2.05) -- (-2.68,0.3) -- (-1.66,-1.11) -- (0,-1.65) -- (1.66,-1.11) -- (2.69,0.3) -- (2.69,2.05) -- (1.66,3.46) -- cycle;
		\draw [fill=black,fill opacity=1.0] (1.66,1.61) circle (0.2cm);
		\draw [fill=black,fill opacity=1.0] (-1.66,1.61) circle (0.2cm);
		\draw (0,4)-- (-1.66,3.46);
		\draw (-1.66,3.46)-- (-2.69,2.05);
		\draw (-2.69,2.05)-- (-2.68,0.3);
		\draw (-2.68,0.3)-- (-1.66,-1.11);
		\draw (-1.66,-1.11)-- (0,-1.65);
		\draw (0,-1.65)-- (1.66,-1.11);
		\draw (1.66,-1.11)-- (2.69,0.3);
		\draw (2.69,0.3)-- (2.69,2.05);
		\draw (2.69,2.05)-- (1.66,3.46);
		\draw (1.66,3.46)-- (0,4);
		\draw [shift={(-8.03,-1.44)}] plot[domain=-0.03:0.66,variable=\t]({1*8.03*cos(\t r)+0*8.03*sin(\t r)},{0*8.03*cos(\t r)+1*8.03*sin(\t r)});
		\draw [shift={(-0.75,-11.66)}] plot[domain=-0.82:0.59,variable=\t]({-0.07*14.49*cos(\t r)+-1*1.99*sin(\t r)},{1*14.49*cos(\t r)+-0.07*1.99*sin(\t r)});
		\draw [shift={(8.03,-1.43)}] plot[domain=2.49:3.17,variable=\t]({1*8.03*cos(\t r)+0*8.03*sin(\t r)},{0*8.03*cos(\t r)+1*8.03*sin(\t r)});
		\draw [shift={(0.76,-11.66)},color=red]  plot[domain=-0.82:0.59,variable=\t]({0.07*14.49*cos(\t r)+1*1.99*sin(\t r)},{1*14.49*cos(\t r)+-0.07*1.99*sin(\t r)});
		\draw(1.67,1.18) circle (0.44cm);
		\draw(-1.67,1.17) circle (0.44cm);
		\draw [shift={(-7.33,-2.91)}] plot[domain=0.17:0.66,variable=\t]({1*7.44*cos(\t r)+0*7.44*sin(\t r)},{0*7.44*cos(\t r)+1*7.44*sin(\t r)});
		\draw [shift={(7.33,-2.9)}] plot[domain=2.48:2.97,variable=\t]({1*7.44*cos(\t r)+0*7.44*sin(\t r)},{0*7.44*cos(\t r)+1*7.44*sin(\t r)});
		\draw [shift={(2.52,-0.41)}] plot[domain=-0.43:1.16,variable=\t]({-0.96*5.16*cos(\t r)+-0.29*2.09*sin(\t r)},{0.29*5.16*cos(\t r)+-0.96*2.09*sin(\t r)});
		\draw [shift={(-2.52,-0.42)}] plot[domain=-0.43:1.16,variable=\t]({0.96*5.16*cos(\t r)+0.3*2.09*sin(\t r)},{0.3*5.16*cos(\t r)+-0.96*2.09*sin(\t r)});
		\begin{scriptsize}
			\draw[color=black] (-0.37,1.28) node {$1$};
			\draw[color=black] (-1.48,2.56) node {$6$};
			\draw[color=black] (0.48,1.19) node {$2$};	\draw[color=black] (1.62,2.85) node {$3$};
			\draw[color=black] (-0.7,0.23) node {$7$};
			\draw[color=black] (0.62,0.23) node {$4$};
			\draw[color=black] (-2.12,0.21) node {$8$};
			\draw[color=black] (2.41,0.43) node {$5$};
		\end{scriptsize}
	\end{tikzpicture}
	\hspace{10pt}
	$\xrightarrow{\text{flip at 3}}$
	\hspace{10pt}
	\begin{tikzpicture}[scale=0.6]
		\fill[fill=black,fill opacity=0.1] (0,4) -- (-1.66,3.46) -- (-2.69,2.05) -- (-2.68,0.3) -- (-1.66,-1.11) -- (0,-1.65) -- (1.66,-1.11) -- (2.69,0.3) -- (2.69,2.05) -- (1.66,3.46) -- cycle;
		\draw [fill=black,fill opacity=1.0] (1.66,1.61) circle (0.2cm);
		\draw [fill=black,fill opacity=1.0] (-1.66,1.61) circle (0.2cm);
		\draw (0,4)-- (-1.66,3.46);
		\draw (-1.66,3.46)-- (-2.69,2.05);
		\draw (-2.69,2.05)-- (-2.68,0.3);
		\draw (-2.68,0.3)-- (-1.66,-1.11);
		\draw (-1.66,-1.11)-- (0,-1.65);
		\draw (0,-1.65)-- (1.66,-1.11);
		\draw (1.66,-1.11)-- (2.69,0.3);
		\draw (2.69,0.3)-- (2.69,2.05);
		\draw (2.69,2.05)-- (1.66,3.46);
		\draw (1.66,3.46)-- (0,4);
		\draw [shift={(-8.03,-1.44)}] plot[domain=-0.03:0.66,variable=\t]({1*8.03*cos(\t r)+0*8.03*sin(\t r)},{0*8.03*cos(\t r)+1*8.03*sin(\t r)});
		\draw [shift={(-0.75,-11.66)}] plot[domain=-0.82:0.59,variable=\t]({-0.07*14.49*cos(\t r)+-1*1.99*sin(\t r)},{1*14.49*cos(\t r)+-0.07*1.99*sin(\t r)});
		\draw [shift={(8.03,-1.43)}] plot[domain=2.49:3.17,variable=\t]({1*8.03*cos(\t r)+0*8.03*sin(\t r)},{0*8.03*cos(\t r)+1*8.03*sin(\t r)});
		\draw(1.67,1.18) circle (0.44cm);
		\draw(-1.67,1.17) circle (0.44cm);
		\draw [shift={(0.6,-0.09)}] plot[domain=-0.36:1.61,variable=\t]({-0.9*3.18*cos(\t r)+-0.43*1.66*sin(\t r)},{0.43*3.18*cos(\t r)+-0.9*1.66*sin(\t r)});
		\draw [shift={(-0.6,-0.09)}] plot[domain=-0.36:1.61,variable=\t]({0.9*3.18*cos(\t r)+0.43*1.66*sin(\t r)},{0.43*3.18*cos(\t r)+-0.9*1.66*sin(\t r)});
		\draw [shift={(-1.22,1.18)},color=red]  plot[domain=-0.67:0.67,variable=\t]({1*3.67*cos(\t r)+0*3.67*sin(\t r)},{0*3.67*cos(\t r)+1*3.67*sin(\t r)});
		\draw [shift={(-8.55,-3.43)}] plot[domain=0.21:0.64,variable=\t]({1*8.73*cos(\t r)+0*8.73*sin(\t r)},{0*8.73*cos(\t r)+1*8.73*sin(\t r)});
		\draw [shift={(8.55,-3.42)}] plot[domain=2.51:2.94,variable=\t]({1*8.73*cos(\t r)+0*8.73*sin(\t r)},{0*8.73*cos(\t r)+1*8.73*sin(\t r)});
		\begin{scriptsize}
			\draw[color=black] (-0.37,1.28) node {$1$};
			\draw[color=black] (-1.48,2.56) node {$6$};
			\draw[color=black] (0.48,1.19) node {$2$};
			\draw[color=black] (-1.92,-0.06) node {$8$};
			\draw[color=black] (2.07,0.27) node {$5$};
			\draw[color=black] (2.56,1.31) node {$3$};
			\draw[color=black] (-0.76,0.28) node {$7$};
			\draw[color=black] (0.8,0.44) node {$4$};
		\end{scriptsize}
	\end{tikzpicture}
	\caption{We flip $\beta$ in order to define $\alpha$ in the base of the first tube}
	\label{fig:tube}
\end{figure}

%In this way we have a $1-1$-correspondence between the arcs linking two different vertices which are homotopic to the border and the objects of the tube of size $n-2$ in the Aulslander-Reiten quiver of $\mathcal{C}^{(m)}_Q$.

With all the definitions of $m$-diagonals, and $(m+2)$-angulations on the polygon, we are now able to associate a quiver with an $(m+2)$-angulation. We will see that the flip of an $(m+2)$-angulation is compatible with quiver mutation at the same vertex.

\section{Colored quivers and $(m+2)$-angulations}
First, we indicate how to associate a quiver with an $(m+2)$-angulation and then we will see how to draw a colored quiver from an $(m+2)$-angulation.

\begin{defi}\label{def:quiver}
Let $\Delta$ be an $(m+2)$-angulation. We choose a good set of representatives. We define the quiver $Q_\Delta$ associated with $\Delta$ in the following way:

\begin{enumerate}
\item The vertices of $Q_\Delta$ are in bijective correspondence with the $m$-diagonals of $\Delta$.
\item Between two vertices $i$ and $j$ (corresponding to two $m$-diagonals in $\Delta$), we draw an arrow from $i$ to $j$ when both diagonals share an oriented angle. We require $j$ to be consecutive to $i$ clockwise.
\item \cor{If either $i$ is a loop and $j$ an arc inside of it, then forget $i$ (respectively $j$) to draw the incident arrows of $j$ (respectively $i$). Do not forget to draw an arrow from $i$ to $j$.}
\end{enumerate}
\end{defi}

\begin{rmk}
Even if we have chosen a good set of representatives, the definition is independent of the choice of such set.
\end{rmk}

\begin{rmk}
From there, we call $m$-diagonals and vertices by the same name. It should be clear from the context whether we talk about polygons or quivers.

This implies that $Q_\Delta$ from Lemma \ref{lem:Dehntwist} is well-defined, because it is independent of the choice of a good system of representatives. Indeed, the Dehn twist does not affect the order of the $m$-diagonals. More precisely, if we apply $\theta_R$ and $\theta_S$ two Dehn twists to the polygons $L$ and $R$, let us recall $i'$ and $j'$ the images if $i$ and $j$ by $\theta_S \circ \theta_R$. The end of $i'$ (respectively $j'$) hung to $P$ si the same as the one of $i$ (respectively $j$). Then the order is the same, and there is an arrow if $'i$ is consecutive to $j'$ (which is the same as $i$ being consecutive to $j$).
\end{rmk}

For example in case $m=1$, if we take the following triangulation $\Delta$ (figure \ref{fig:triang}):

\begin{figure}[!h]
\centering
\begin{tikzpicture}[scale=0.7]
\fill[fill=black,fill opacity=0.1] (0,4) -- (-4,0) -- (0,-4) -- (4,0) -- cycle;
\draw [fill=black,fill opacity=1.0] (1,1) circle (0.5cm);
\draw [fill=black,fill opacity=1.0] (-1,-1) circle (0.5cm);
\draw (0,4)-- (-4,0);
\draw (-4,0)-- (0,-4);
\draw (0,-4)-- (4,0);
\draw (4,0)-- (0,4);
\draw (-4,0)-- (4,0);
\draw [shift={(-3.53,-11.12)}] plot[domain=0.98:1.22,variable=\t]({1*13.43*cos(\t r)+0*13.43*sin(\t r)},{0*13.43*cos(\t r)+1*13.43*sin(\t r)});
\draw [shift={(4.65,13.41)}] plot[domain=4.42:4.66,variable=\t]({1*13.43*cos(\t r)+0*13.43*sin(\t r)},{0*13.43*cos(\t r)+1*13.43*sin(\t r)});
\draw [shift={(3.53,11.12)}] plot[domain=4.12:4.36,variable=\t]({1*13.43*cos(\t r)+0*13.43*sin(\t r)},{0*13.43*cos(\t r)+1*13.43*sin(\t r)});
\draw [shift={(-4.65,-13.41)}] plot[domain=1.28:1.52,variable=\t]({1*13.43*cos(\t r)+0*13.43*sin(\t r)},{0*13.43*cos(\t r)+1*13.43*sin(\t r)});
\draw [shift={(-16.21,-7.04)}] plot[domain=2.72:3.84,variable=\t]({-0.95*17.95*cos(\t r)+0.3*4.77*sin(\t r)},{-0.3*17.95*cos(\t r)+-0.95*4.77*sin(\t r)});
\draw [shift={(16.21,7.04)}] plot[domain=2.72:3.84,variable=\t]({0.95*17.95*cos(\t r)+-0.3*4.77*sin(\t r)},{0.3*17.95*cos(\t r)+0.95*4.77*sin(\t r)});
\begin{scriptsize}
\draw[color=black] (2.64,0.88) node {$7$};
\draw[color=black] (1.8,0.34) node {$6$};
\draw[color=black] (0,0) node {$1$};
\draw[color=black] (-2.24,-0.96) node {$4$};
\draw[color=black] (-1.7,-0.3) node {$3$};
\draw[color=black] (0.62,-1.12) node {$2$};
\draw[color=black] (-0.46,1.08) node {$5$};
\end{scriptsize}
\end{tikzpicture}
\caption{A triangulation $\Delta$. For sake of clarity we have replaced loops $6$ and $4$ by arcs tangent to $L$ and $R$, but the loops actually make part of the good set of representatives.}
\label{fig:triang}
\end{figure}

Now, following the rule, we associate the following quiver $Q_\Delta$ with this triangulation:

\[ \xymatrix@1{
3 & & & & 6 \\
& 2\ar[ul]\ar[dl] & 1\ar[l]\ar[r] & 5\ar[ur]\ar[dr] & \\
4 & & & & 7} \]

Let us now mutate this quiver at vertex $2$. We obtain this new quiver $Q'$:

\[ \xymatrix@1{
3\ar[dr] & & & & 6 \\
& 2\ar[r] & 1\ar[r]\ar[ull]\ar[dll] & 5\ar[ur]\ar[dr] & \\
4\ar[ur] & & & & 7} \]

We remark that if we flip the previous triangulation at the $m$-diagonal $2$, we obtain the new triangulation $\Delta'$ of figure \ref{fig:triang2}.

\begin{figure}[!h]
\centering
\begin{tikzpicture}[scale=0.7]
\fill[fill=black,fill opacity=0.1] (0,4) -- (-4,0) -- (0,-4) -- (4,0) -- cycle;
\draw [fill=black,fill opacity=1.0] (1,1) circle (0.5cm);
\draw [fill=black,fill opacity=1.0] (-1,-1) circle (0.5cm);
\draw (0,4)-- (-4,0);
\draw (-4,0)-- (0,-4);
\draw (0,-4)-- (4,0);
\draw (4,0)-- (0,4);
\draw (-4,0)-- (4,0);
\draw [shift={(-3.53,-11.12)}] plot[domain=0.98:1.22,variable=\t]({1*13.43*cos(\t r)+0*13.43*sin(\t r)},{0*13.43*cos(\t r)+1*13.43*sin(\t r)});
\draw [shift={(4.65,13.41)}] plot[domain=4.42:4.66,variable=\t]({1*13.43*cos(\t r)+0*13.43*sin(\t r)},{0*13.43*cos(\t r)+1*13.43*sin(\t r)});
\draw [shift={(3.53,11.12)}] plot[domain=4.12:4.36,variable=\t]({1*13.43*cos(\t r)+0*13.43*sin(\t r)},{0*13.43*cos(\t r)+1*13.43*sin(\t r)});
\draw [shift={(-4.65,-13.41)}] plot[domain=1.28:1.52,variable=\t]({1*13.43*cos(\t r)+0*13.43*sin(\t r)},{0*13.43*cos(\t r)+1*13.43*sin(\t r)});
\draw [shift={(16.21,7.04)}] plot[domain=2.72:3.84,variable=\t]({0.95*17.95*cos(\t r)+-0.3*4.77*sin(\t r)},{0.3*17.95*cos(\t r)+0.95*4.77*sin(\t r)});
\draw [shift={(-0.2,33)}] plot[domain=2.79:3.48,variable=\t]({-0.01*35.07*cos(\t r)+-1*11.83*sin(\t r)},{1*35.07*cos(\t r)+-0.01*11.83*sin(\t r)});
\begin{scriptsize}
\draw[color=black] (2.64,0.88) node {$7$};
\draw[color=black] (1.8,0.34) node {$6$};
\draw[color=black] (-2.24,-0.96) node {$4$};
\draw[color=black] (-1.7,-0.3) node {$3$};
\draw[color=black] (0,0) node {$1$};
\draw[color=black] (-0.46,1.08) node {$5$};
\draw[color=black] (0.08,-2.02) node {$2$};
\end{scriptsize}
\end{tikzpicture}
\caption{The triangulation $\Delta'$}
\label{fig:triang2}
\end{figure}

It is noticeable that the quiver $Q_\Delta'$ arising from the new triangulation corresponds to the mutation at $2$ of the quiver $Q_\Delta$. This is a result showed in \cite{FST} for $m=1$.

\begin{prop}[\cite{FST}, Proposition $4.8$]
Let $\Delta$ be any triangulation. Let $Q_\Delta$ be the quiver associated with $\Delta$ as before. If $\Delta_i$ is the new triangulation flipped at $i$ from $\Delta$, then the quiver $Q_{\Delta_i}$ associated with $\Delta_i$ corresponds to the mutation at vertex $i$ of $Q_\Delta$.
\end{prop}

We will later exhibit a proof for the case of colored quivers of mutation-type $\tilde{D}$.	 We now describe how to associate a colored quiver with an $(m+2)$-angulation.

\begin{lem}
If $m=1$, then the $m$-diagonals of $P$ are in bijection with the tagged arcs in \cite{FST} for the twice punctured $(n-2)m$-gon and the obvious bijection respects the noncrossing conditions.
\end{lem}

\begin{proof}
Let $\mathcal{D}$ be the set of all $1$-diagonals in our setup. Let $\mathcal{A}$ be the set of all tagged arcs in the setup of \cite{FST}. Define a bijection $\mathcal{D} \to \mathcal{A}$ as follows :

\begin{itemize}
\item If $\alpha \in \mathcal{D}$ links two vertices of $P$, then send it to the same arc in $\mathcal{A}$.\item If $\alpha \in \mathcal{D}$ is a left/right tangent to a thick vertex, send it to the corresponding plain/notched arc in $\mathcal{A}$.
\end{itemize}

%It suffices to read the article of \cite{FST} to see that the arcs are in bijection. Arcs hung to a thick vertex of $R$ (or $S$) correspond to arcs linking a puncture in the realization of \cite{FST}. In our point of view, there are two types of arcs, one left tangent, and one right tangent. In Fomin Schapiro and Thurston case, there are plain, or notched arcs, so two types of arcs again.
\end{proof}

\begin{defi}\label{def:colquiver}
Let $\Delta$ be an $(m+2)$-angulation, where we have taken a good set of representatives (here again, the definition does not depends ont such a choice). We define the colored quiver $Q_\Delta$ associated with $\Delta$ in the following way:

\begin{enumerate}
\item The vertices of $Q_\Delta$ are in bijection with the $m$-diagonals of $\Delta$.
\item If $i$ and $j$ form two sides of the same polygon, then we draw an arrow from $i$ to $j$. The color of the corresponding arrow is the number of edges between both $m$-diagonals, counted clockwise from $i$.
\item \cor{If either $i$ is a loop and $j$ an arc inside of it, then forget $i$ (respectively $j$) to draw the incident arrows of $j$ (respectively $i$). In this case, do not forget to draw an arrow from $i$ to $j$ of color $0$, and an arrow from $j$ to $i$ of color $m$.}
\end{enumerate}
\end{defi}

\begin{ex}
For example, the colored quiver associated with figure \ref{fig:9cot} is the following one:

\[\scalebox{1.5}{\xymatrix@1{
& & 2 \ar@<1ex>^{\scriptscriptstyle{(2)}}[drr] \ar^{\scriptscriptstyle{(2)}}[dll] \ar@<1ex>^{\scriptscriptstyle{(1)}}[d] & & \\
6 \ar@<1ex>^{\scriptscriptstyle{(1)}}[urr] \ar^{\scriptscriptstyle{(3)}}[rr] \ar^{\scriptscriptstyle{(0)}}[d] & & 1 \ar^{\scriptscriptstyle{(2)}}[u] \ar@<1ex>^{\scriptscriptstyle{(0)}}[ll] \ar@<1ex>^{\scriptscriptstyle{(0)}}[rr] \ar@<1ex>^{\scriptscriptstyle{(2)}}[drr] & & 3 \ar^{\scriptscriptstyle{(1)}}[ull] \ar^{\scriptscriptstyle{(3)}}[ll] \ar@<1ex>^{\scriptscriptstyle{(1)}}[d] \\
5 \ar^{\scriptscriptstyle{(1)}}[rrrr] \ar@<1ex>^{\scriptscriptstyle{(3)}}[u] & & & & 4 \ar^{\scriptscriptstyle{(1)}}[ull] \ar^{\scriptscriptstyle{(2)}}[u] \ar@<1ex>^{\scriptscriptstyle{(2)}}[llll]
}}\]

\begin{figure}[h!]
	\centering
\begin{tikzpicture}[scale=0.5]

	\fill[line width=0.8pt,fill=black,fill opacity=0.1] (-6.21,-0.44) -- (-3.31,-0.44) -- (-1.0884711149549648,1.4240840680909628) -- (-0.584891399720866,4.2800265518263645) -- (-2.0348913997208653,6.791500222801237) -- (-4.76,7.7833586384456765) -- (-7.485108600279132,6.79150022280124) -- (-8.935108600279133,4.280026551826368) -- (-8.431528885045037,1.4240840680909654) -- cycle;

	\draw [line width=0.8pt,fill=black,fill opacity=1] (-6.91,4.56) circle (0.37947331922020555cm);

	\draw [line width=0.8pt,fill=black,fill opacity=1] (-2.61,2.527684516418403) circle (0.37947331922020555cm);

	\draw [line width=0.8pt,fill=black,fill opacity=1] (-6.911301849727237,2.530443566892921) circle (0.37947331922020555cm);

	\draw [line width=0.8pt,fill=black,fill opacity=1] (-2.6086981502727635,4.557240949525481) circle (0.37947331922020555cm);

	\draw [line width=0.8pt] (-6.21,-0.44)-- (-3.31,-0.44);

	\draw [line width=0.8pt] (-3.31,-0.44)-- (-1.0884711149549648,1.4240840680909628);

	\draw [line width=0.8pt] (-1.0884711149549648,1.4240840680909628)-- (-0.584891399720866,4.2800265518263645);

	\draw [line width=0.8pt] (-0.584891399720866,4.2800265518263645)-- (-2.0348913997208653,6.791500222801237);

	\draw [line width=0.8pt] (-2.0348913997208653,6.791500222801237)-- (-4.76,7.7833586384456765);

	\draw [line width=0.8pt] (-4.76,7.7833586384456765)-- (-7.485108600279132,6.79150022280124);

	\draw [line width=0.8pt] (-7.485108600279132,6.79150022280124)-- (-8.935108600279133,4.280026551826368);

	\draw [line width=0.8pt] (-8.935108600279133,4.280026551826368)-- (-8.431528885045037,1.4240840680909654);

	\draw [line width=0.8pt] (-8.431528885045037,1.4240840680909654)-- (-6.21,-0.44);

	\draw [shift={(-3.561643138173719,3.5430735776599684)},line width=0.8pt]  plot[domain=-0.8177939835105823:0.8165110927986511,variable=\t]({1*1.391631994502193*cos(\t r)+0*1.391631994502193*sin(\t r)},{0*1.391631994502193*cos(\t r)+1*1.391631994502193*sin(\t r)});

	\draw [shift={(-1.6570550120990424,3.541851888283916)},line width=0.8pt]  plot[domain=2.323798670079213:3.958103746388443,variable=\t]({1*1.3916319945021955*cos(\t r)+0*1.3916319945021955*sin(\t r)},{0*1.3916319945021955*cos(\t r)+1*1.3916319945021955*sin(\t r)});

	\draw [shift={(-5.958356861826281,3.544610938758434)},line width=0.8pt]  plot[domain=2.3237986700792113:3.958103746388444,variable=\t]({1*1.391631994502193*cos(\t r)+0*1.391631994502193*sin(\t r)},{0*1.391631994502193*cos(\t r)+1*1.391631994502193*sin(\t r)});

	\draw [shift={(-7.862944987900958,3.5458326281344883)},line width=0.8pt]  plot[domain=-0.8177939835105814:0.8165110927986498,variable=\t]({1*1.3916319945021958*cos(\t r)+0*1.3916319945021958*sin(\t r)},{0*1.3916319945021958*cos(\t r)+1*1.3916319945021958*sin(\t r)});

	\draw [shift={(0.4761944745201511,6.482505804133501)},line width=0.8pt]  plot[domain=2.898087678214627:4.053847948892344,variable=\t]({1*5.395363812713038*cos(\t r)+0*5.395363812713038*sin(\t r)},{0*5.395363812713038*cos(\t r)+1*5.395363812713038*sin(\t r)});

	\draw [shift={(-2.3059571682978515,2.4328101719759787)},line width=0.8pt]  plot[domain=0.8207367629426887:1.7029286948460944,variable=\t]({1*2.5247328044280355*cos(\t r)+0*2.5247328044280355*sin(\t r)},{0*2.5247328044280355*cos(\t r)+1*2.5247328044280355*sin(\t r)});

	\draw [shift={(-3.151511877279275,3.430560873615431)},line width=0.8pt]  plot[domain=-3.211903802703982:1.2023393894049665,variable=\t]({1*1.6124722363591635*cos(\t r)+0*1.6124722363591635*sin(\t r)},{0*1.6124722363591635*cos(\t r)+1*1.6124722363591635*sin(\t r)});

	\draw [shift={(-6.368488122720725,3.657123642802972)},line width=0.8pt]  plot[domain=-0.07031114911418879:4.34393204299476,variable=\t]({1*1.6124722363591637*cos(\t r)+0*1.6124722363591637*sin(\t r)},{0*1.6124722363591637*cos(\t r)+1*1.6124722363591637*sin(\t r)});

	\draw [shift={(-2.199944693337082,5.7936075443250505)},line width=0.8pt]  plot[domain=3.372297945858585:4.9614758836687995,variable=\t]({1*4.508670416519972*cos(\t r)+0*4.508670416519972*sin(\t r)},{0*4.508670416519972*cos(\t r)+1*4.508670416519972*sin(\t r)});

	\draw [shift={(-9.065646439997627,0.8202139431321874)},line width=0.8pt]  plot[domain=-0.4156005268739831:0.6432973447075572,variable=\t]({1*3.1213547976377014*cos(\t r)+0*3.1213547976377014*sin(\t r)},{0*3.1213547976377014*cos(\t r)+1*3.1213547976377014*sin(\t r)});

	\draw [shift={(7.6251125720296535,18.963863033928604)},line width=0.8pt]  plot[domain=2.5910333724042474:3.3074272829938387,variable=\t]({0.6835490082746702*22.544611322992676*cos(\t r)+-0.7299046193076975*5.1773707160877676*sin(\t r)},{0.7299046193076975*22.544611322992676*cos(\t r)+0.6835490082746702*5.1773707160877676*sin(\t r)});

	\draw (-5.1,4.54) node[anchor=north west] {1};

	\draw (-4.57,5.66) node[anchor=north west] {2};

	\draw (-1.96,5.5) node[anchor=north west] {3};

	\draw (-5.01,2.04) node[anchor=north west] {4};

	\draw (-5.83,1.52) node[anchor=north west] {5};

	\draw (-8.8,3.84) node[anchor=north west] {6};

	\begin{scriptsize}

		\draw [fill=black] (-6.21,-0.44) circle (2.5pt);

		\draw [fill=black] (-3.31,-0.44) circle (2.5pt);

		\draw [fill=black] (-1.0884711149549648,1.4240840680909628) circle (2.5pt);

		\draw [fill=black] (-0.584891399720866,4.2800265518263645) circle (2.5pt);

		\draw [fill=black] (-2.0348913997208653,6.791500222801237) circle (2.5pt);

		\draw [fill=black] (-4.76,7.7833586384456765) circle (2.5pt);

		\draw [fill=black] (-7.485108600279132,6.79150022280124) circle (2.5pt);

		\draw [fill=black] (-8.935108600279133,4.280026551826368) circle (2.5pt);

		\draw [fill=black] (-8.431528885045037,1.4240840680909654) circle (2.5pt);

	\end{scriptsize}

\end{tikzpicture}
	\caption{Example of the associated quiver of this $(m+2)$-angulation, for $n=5$ and $m=3$.}
	\label{fig:9cot}
\end{figure}

\end{ex}

We note that if we only draw the arrows of color $0$, then we find the classical quiver associated with the $(m+2)$-angulation as in Definition \ref{def:quiver}.

\begin{prop}
There is an equivalent definition: the vertices are similarly defined, and for $i$ and $j$ two vertices, and $c$ an integer,
\[ 
q_{ij}^{(c)} = \left\{
    \begin{array}{ll}
        1 & \mbox{if } \kappa_\Delta^c(i) \mbox{ and } j \mbox{ share an oriented angle} \\
        0 & \mbox{otherwise.}
    \end{array}
    \right.
\]
\end{prop}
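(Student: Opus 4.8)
The plan is to show that the two prescriptions for $q_{ij}^{(c)}$ assign the same value to every ordered pair $(i,j)$ and every color $c$, by first establishing the base case $c=0$ and then propagating it through the twist $\kappa_\Delta$. First I would fix the dictionary at color $0$. By the remark following Definition \ref{def:colquiver}, the color-$0$ part of the colored quiver of Definition \ref{def:colquiver} is exactly the classical quiver of Definition \ref{def:quiver}; there an arrow $i\to j$ is drawn precisely when $i$ and $j$ share an oriented angle with $j$ clockwise-consecutive to $i$. On the other hand, in Definition \ref{def:colquiver} an arrow $i\to j$ has color $0$ exactly when there are no edges between $i$ and $j$ counted clockwise from $i$, i.e. when $j$ is the side immediately clockwise after $i$ in a common $(m+2)$-angle. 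Hence ``$\kappa_\Delta^{0}(i)=i$ and $j$ share an oriented angle'' holds if and only if the arrow $i\to j$ of Definition \ref{def:colquiver} has color $0$. This settles $c=0$ and pins down what ``sharing an oriented angle'' must mean throughout.

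Next I would prove the \emph{key geometric lemma}: one application of $\kappa_\Delta$ lowers by one the number of edges between $i$ and $j$ counted clockwise from $i$. Concretely, suppose $i$ and $j$ are sides of a common $(m+2)$-angle $F$ with $c\ge 1$ edges between them counted clockwise from $i$, so that the arrow $i\to j$ has color $c$ in Definition \ref{def:colquiver}. Writing $i$ as the arc from $a$ to $b$ and letting $\alpha_a,\alpha_b$ be the sides of $F$ consecutive to $i$ at $a$ and at $b$, the twist $\kappa_\Delta(i)=\alpha_a\, i\, \alpha_b$ rotates $i$ one notch clockwise: tracking the endpoint that faces $j$ as it slides to the next vertex of the dissection, and pulling the concatenated path tight, one checks that $\kappa_\Delta(i)$ is again a side of a common $(m+2)$-angle with $j$, now separated by exactly $c-1$ clockwise edges. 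Iterating $c$ times brings the count down to $0$, so that $\kappa_\Delta^{c}(i)$ and $j$ share an oriented angle; conversely, applying $c$ inverse twists to such a sharing configuration recovers the color-$c$ position of $j$ relative to $i$. Combined with the base case this yields $q_{ij}^{(c)}=1$ in Definition \ref{def:colquiver} if and only if $\kappa_\Delta^{c}(i)$ and $j$ share an oriented angle, which is the asserted formula.

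Finally I would check well-definedness and the degenerate configurations. Independence of the chosen system of good representatives (hence of the choice of left or right loops and of the Dehn twists on $R$ and $S$) follows from Lemma \ref{lem:Dehntwist}: a Dehn twist preserves the cyclic order of the $m$-diagonals and commutes with $\kappa_\Delta$, so it preserves both the clockwise edge-count and the shared-angle relation. The $0/1$ nature of $q_{ij}^{(c)}$ together with the monochromaticity and symmetry axioms $q_{ij}^{(c)}=q_{ji}^{(m-c)}$ are read off from the fact that the $m+2$ sides of a single $(m+2)$-angle are cyclically ordered: if $j$ lies $c$ edges clockwise from $i$ in $F$, then $i$ lies $m-c$ edges clockwise from $j$ in the same $F$. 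The cases where $i$ or $j$ is tangent to an inner polygon (loops and tangent diagonals), is of type $2$, or links $R$ to $S$ are handled by the convention, built into the definition of the twist, of first replacing an arc by the appropriate loop; there the edges of the ambient $(m+2)$-angle include sides of $R$ or $S$, but the same count applies verbatim. The main obstacle is the key lemma itself: verifying that $\kappa_\Delta(i)=\alpha_a\, i\, \alpha_b$, after homotopy, lands as a genuine side of an $(m+2)$-angle sharing the correct oriented angle with $j$ and lowering the clockwise count by exactly one, uniformly across all arc types above. Everything else is bookkeeping once this local statement is secured.
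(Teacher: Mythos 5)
Your proposal is correct and follows essentially the same route as the paper's own proof: both arguments rest on the single observation that one application of $\kappa_\Delta$ decreases by one the number of edges between $i$ and $j$ counted clockwise, so that iterating $c$ times produces a shared oriented angle, with the converse obtained by applying the inverse twist $c$ times. The paper's proof is in fact terser than yours (it omits the $c=0$ base case, the well-definedness under Dehn twists, and the discussion of loops and tangent arcs, all of which you rightly flag), so your additional bookkeeping only strengthens the argument rather than diverging from it.
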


\begin{proof}
We only have to show that the arrows are the same. If $i$ and $j$ form two sides of the polygon, with a color $c$, it means that if we apply the twist to $i$, then there will be $c-1$ edges between $ \kappa_\Delta(i)$ and $j$. Then if we apply the twist $c$ times, there will be no edge between $ \kappa_\Delta^c(i)$ and $j$, and they will share an oriented angle.

On the other hand, if $ \kappa_\Delta^c(i)$ and $j$ share an oriented angle, it suffices to apply the inverse of the twist $c$ times to make sure that $i$ and $j$ form two sides of a polygon, and that there are $c$ edges between $i$ and $j$.
\end{proof}

\begin{figure}[!h]
\centering
\begin{tikzpicture}[scale=0.2]
\fill[fill=black,fill opacity=0.15] (0,4) -- (0,-4) -- (6.93,-8) -- (13.86,-4) -- (13.86,4) -- (6.93,8) -- cycle;
\draw [fill=black,fill opacity=1.0] (3.46,2) circle (0.4cm);
\draw [fill=black,fill opacity=1.0] (3.46,-2) circle (0.4cm);
\draw [fill=black,fill opacity=1.0] (10.39,2) circle (0.4cm);
\draw [fill=black,fill opacity=1.0] (10.39,-2) circle (0.4cm);
\draw (0,4)-- (0,-4);
\draw (0,-4)-- (6.93,-8);
\draw (6.93,-8)-- (13.86,-4);
\draw (13.86,-4)-- (13.86,4);
\draw (13.86,4)-- (6.93,8);
\draw (6.93,8)-- (0,4);
\draw [shift={(1.86,0)}] plot[domain=-0.89:0.89,variable=\t]({1*2.57*cos(\t r)+0*2.57*sin(\t r)},{0*2.57*cos(\t r)+1*2.57*sin(\t r)});
\draw [shift={(5.07,0)}] plot[domain=2.25:4.04,variable=\t]({1*2.57*cos(\t r)+0*2.57*sin(\t r)},{0*2.57*cos(\t r)+1*2.57*sin(\t r)});
\draw [shift={(12,0)}] plot[domain=2.25:4.04,variable=\t]({1*2.57*cos(\t r)+0*2.57*sin(\t r)},{0*2.57*cos(\t r)+1*2.57*sin(\t r)});
\draw [shift={(8.79,0)}] plot[domain=-0.89:0.89,variable=\t]({1*2.57*cos(\t r)+0*2.57*sin(\t r)},{0*2.57*cos(\t r)+1*2.57*sin(\t r)});
\draw (6.93,8)-- (6.93,-8);
\draw [shift={(22.8,-8.35)}] plot[domain=-0.37:0.89,variable=\t]({-0.93*23.56*cos(\t r)+-0.38*7.31*sin(\t r)},{0.38*23.56*cos(\t r)+-0.93*7.31*sin(\t r)});
\draw [shift={(-0.38,-28.18)}] plot[domain=2.3:3.36,variable=\t]({-0.12*31.52*cos(\t r)+0.99*6.48*sin(\t r)},{-0.99*31.52*cos(\t r)+-0.12*6.48*sin(\t r)});
\draw [shift={(37.5,-7.09)}] plot[domain=-0.24:0.66,variable=\t]({-0.98*37.9*cos(\t r)+-0.19*10.95*sin(\t r)},{0.19*37.9*cos(\t r)+-0.98*10.95*sin(\t r)});
\draw [shift={(14.23,28.18)},color=red]  plot[domain=2.3:3.36,variable=\t]({0.12*31.52*cos(\t r)+-0.99*6.48*sin(\t r)},{0.99*31.52*cos(\t r)+0.12*6.48*sin(\t r)});
\draw [shift={(-23.65,7.09)},color=red]  plot[domain=-0.24:0.66,variable=\t]({0.98*37.9*cos(\t r)+0.19*10.95*sin(\t r)},{-0.19*37.9*cos(\t r)+0.98*10.95*sin(\t r)});
\draw [shift={(34.18,10.82)}] plot[domain=3.24:3.64,variable=\t]({1*27.41*cos(\t r)+0*27.41*sin(\t r)},{0*27.41*cos(\t r)+1*27.41*sin(\t r)});
\begin{scriptsize}
\draw[color=black] (6.78,0.23) node {$1$};
\draw[color=black] (6.03,-1.83) node {$4$};
\draw[color=black] (1.69,-3.06) node {$5$};
\draw[color=black] (7.96,2.79) node {$2$};
\draw[color=black] (12.35,2.82) node {$3$};
\end{scriptsize}
\end{tikzpicture}
\hspace{20pt}
$\xrightarrow{\text{flip at 3}}$
\hspace{20pt}
\begin{tikzpicture}[scale=0.2]
\fill[fill=black,fill opacity=0.15] (0,4) -- (0,-4) -- (6.93,-8) -- (13.86,-4) -- (13.86,4) -- (6.93,8) -- cycle;
\draw [fill=black,fill opacity=1.0] (3.46,2) circle (0.4cm);
\draw [fill=black,fill opacity=1.0] (3.46,-2) circle (0.4cm);
\draw [fill=black,fill opacity=1.0] (10.39,2) circle (0.4cm);
\draw [fill=black,fill opacity=1.0] (10.39,-2) circle (0.4cm);
\draw (0,4)-- (0,-4);
\draw (0,-4)-- (6.93,-8);
\draw (6.93,-8)-- (13.86,-4);
\draw (13.86,-4)-- (13.86,4);
\draw (13.86,4)-- (6.93,8);
\draw (6.93,8)-- (0,4);
\draw [shift={(1.86,0)}] plot[domain=-0.89:0.89,variable=\t]({1*2.57*cos(\t r)+0*2.57*sin(\t r)},{0*2.57*cos(\t r)+1*2.57*sin(\t r)});
\draw [shift={(5.07,0)}] plot[domain=2.25:4.04,variable=\t]({1*2.57*cos(\t r)+0*2.57*sin(\t r)},{0*2.57*cos(\t r)+1*2.57*sin(\t r)});
\draw [shift={(12,0)}] plot[domain=2.25:4.04,variable=\t]({1*2.57*cos(\t r)+0*2.57*sin(\t r)},{0*2.57*cos(\t r)+1*2.57*sin(\t r)});
\draw [shift={(8.79,0)}] plot[domain=-0.89:0.89,variable=\t]({1*2.57*cos(\t r)+0*2.57*sin(\t r)},{0*2.57*cos(\t r)+1*2.57*sin(\t r)});
\draw (6.93,8)-- (6.93,-8);
\draw [shift={(22.8,-8.35)}] plot[domain=-0.37:0.89,variable=\t]({-0.93*23.56*cos(\t r)+-0.38*7.31*sin(\t r)},{0.38*23.56*cos(\t r)+-0.93*7.31*sin(\t r)});
\draw [shift={(-0.38,-28.18)}] plot[domain=2.3:3.36,variable=\t]({-0.12*31.52*cos(\t r)+0.99*6.48*sin(\t r)},{-0.99*31.52*cos(\t r)+-0.12*6.48*sin(\t r)});
\draw [shift={(37.5,-7.09)}] plot[domain=-0.24:0.66,variable=\t]({-0.98*37.9*cos(\t r)+-0.19*10.95*sin(\t r)},{0.19*37.9*cos(\t r)+-0.98*10.95*sin(\t r)});
\draw [shift={(34.18,10.82)}] plot[domain=3.24:3.64,variable=\t]({1*27.41*cos(\t r)+0*27.41*sin(\t r)},{0*27.41*cos(\t r)+1*27.41*sin(\t r)});
\draw [shift={(11.34,-1.74)},color=red]  plot[domain=3.58:5.93,variable=\t]({1*0.61*cos(\t r)+0*0.61*sin(\t r)},{0*0.61*cos(\t r)+1*0.61*sin(\t r)});
\draw [shift={(-32.93,16)},color=red]  plot[domain=5.9:6.03,variable=\t]({1*48.31*cos(\t r)+0*48.31*sin(\t r)},{0*48.31*cos(\t r)+1*48.31*sin(\t r)});
\begin{scriptsize}
\draw[color=black] (6.79,0.22) node {$1$};
\draw[color=black] (6.04,-1.87) node {$4$};
\draw[color=black] (1.68,-2.99) node {$5$};
\draw[color=black] (7.96,2.78) node {$2$};
\draw[color=black] (13.16,0) node {$3$};
\end{scriptsize}
\end{tikzpicture}
\end{figure}
\begin{figure}[!h]
\centering
$ \xymatrix@1{
4 \ar^{(3)}[dr] & & 2 \ar^{(3)}[dl]\ar@<1ex>^{(3)}[dd] \\
& 1 \ar@<1ex>^{(0)}[ul] \ar@<1ex>^{(0)}[ur] \ar@<1ex>^{(0)}[dr] \ar@<1ex>^{(0)}[dl] & \\
5 \ar^{(3)}[ur] & & 3 \ar^{(3)}[ul] \ar^{(0)}[uu] } $
\hspace{10pt}
$\xrightarrow{\text{mutation at 3}}$
\hspace{10pt}
$ \xymatrix@1{
4 \ar^{(3)}[dr] & & 2 \ar^{(3)}[dl]\ar@<1ex>^{(0)}[dd] \\
& 1 \ar@<1ex>^{(0)}[ul] \ar@<1ex>^{(0)}[ur] \ar@<1ex>^{(1)}[dr] \ar@<1ex>^{(0)}[dl] & \\
5 \ar^{(3)}[ur] & & 3 \ar^{(2)}[ul]\ar^{(3)}[uu] } $
\caption{Quiver mutation is compatible with the flip of an $(m+2)$-angulation, for $m=3$.}
\label{fig:compat}
\end{figure}

\begin{figure}[!h]
\centering
\begin{tikzpicture}[scale=0.6]
\fill[fill=black,fill opacity=0.1] (0,4) -- (-1.66,3.46) -- (-2.69,2.05) -- (-2.68,0.3) -- (-1.66,-1.11) -- (0,-1.65) -- (1.66,-1.11) -- (2.69,0.3) -- (2.69,2.05) -- (1.66,3.46) -- cycle;
\draw [fill=black,fill opacity=1.0] (1.66,1.61) circle (0.2cm);
\draw [fill=black,fill opacity=1.0] (-1.66,1.61) circle (0.2cm);
\draw (0,4)-- (-1.66,3.46);
\draw (-1.66,3.46)-- (-2.69,2.05);
\draw (-2.69,2.05)-- (-2.68,0.3);
\draw (-2.68,0.3)-- (-1.66,-1.11);
\draw (-1.66,-1.11)-- (0,-1.65);
\draw (0,-1.65)-- (1.66,-1.11);
\draw (1.66,-1.11)-- (2.69,0.3);
\draw (2.69,0.3)-- (2.69,2.05);
\draw (2.69,2.05)-- (1.66,3.46);
\draw (1.66,3.46)-- (0,4);
\draw [shift={(-8.03,-1.44)}] plot[domain=-0.03:0.66,variable=\t]({1*8.03*cos(\t r)+0*8.03*sin(\t r)},{0*8.03*cos(\t r)+1*8.03*sin(\t r)});
\draw [shift={(-0.75,-11.66)}] plot[domain=-0.82:0.59,variable=\t]({-0.07*14.49*cos(\t r)+-1*1.99*sin(\t r)},{1*14.49*cos(\t r)+-0.07*1.99*sin(\t r)});
\draw [shift={(8.03,-1.43)}] plot[domain=2.49:3.17,variable=\t]({1*8.03*cos(\t r)+0*8.03*sin(\t r)},{0*8.03*cos(\t r)+1*8.03*sin(\t r)});
\draw [shift={(0.76,-11.66)},color=red]  plot[domain=-0.82:0.59,variable=\t]({0.07*14.49*cos(\t r)+1*1.99*sin(\t r)},{1*14.49*cos(\t r)+-0.07*1.99*sin(\t r)});
\draw(1.67,1.18) circle (0.44cm);
\draw(-1.67,1.17) circle (0.44cm);
\draw [shift={(-7.33,-2.91)}] plot[domain=0.17:0.66,variable=\t]({1*7.44*cos(\t r)+0*7.44*sin(\t r)},{0*7.44*cos(\t r)+1*7.44*sin(\t r)});
\draw [shift={(7.33,-2.9)}] plot[domain=2.48:2.97,variable=\t]({1*7.44*cos(\t r)+0*7.44*sin(\t r)},{0*7.44*cos(\t r)+1*7.44*sin(\t r)});
\draw [shift={(2.52,-0.41)}] plot[domain=-0.43:1.16,variable=\t]({-0.96*5.16*cos(\t r)+-0.29*2.09*sin(\t r)},{0.29*5.16*cos(\t r)+-0.96*2.09*sin(\t r)});
\draw [shift={(-2.52,-0.42)}] plot[domain=-0.43:1.16,variable=\t]({0.96*5.16*cos(\t r)+0.3*2.09*sin(\t r)},{0.3*5.16*cos(\t r)+-0.96*2.09*sin(\t r)});
\begin{scriptsize}
\draw[color=black] (-0.37,1.28) node {$1$};
\draw[color=black] (-1.48,2.56) node {$6$};
\draw[color=black] (0.48,1.19) node {$2$};
\draw[color=black] (1.62,2.85) node {$3$};
\draw[color=black] (-0.7,0.23) node {$7$};
\draw[color=black] (0.62,0.23) node {$4$};
\draw[color=black] (-2.12,0.21) node {$8$};
\draw[color=black] (2.41,0.43) node {$5$};
\end{scriptsize}
\end{tikzpicture}
\hspace{10pt}
$\xrightarrow{\text{flip at 3}}$
\hspace{10pt}
\begin{tikzpicture}[scale=0.6]
\fill[fill=black,fill opacity=0.1] (0,4) -- (-1.66,3.46) -- (-2.69,2.05) -- (-2.68,0.3) -- (-1.66,-1.11) -- (0,-1.65) -- (1.66,-1.11) -- (2.69,0.3) -- (2.69,2.05) -- (1.66,3.46) -- cycle;
\draw [fill=black,fill opacity=1.0] (1.66,1.61) circle (0.2cm);
\draw [fill=black,fill opacity=1.0] (-1.66,1.61) circle (0.2cm);
\draw (0,4)-- (-1.66,3.46);
\draw (-1.66,3.46)-- (-2.69,2.05);
\draw (-2.69,2.05)-- (-2.68,0.3);
\draw (-2.68,0.3)-- (-1.66,-1.11);
\draw (-1.66,-1.11)-- (0,-1.65);
\draw (0,-1.65)-- (1.66,-1.11);
\draw (1.66,-1.11)-- (2.69,0.3);
\draw (2.69,0.3)-- (2.69,2.05);
\draw (2.69,2.05)-- (1.66,3.46);
\draw (1.66,3.46)-- (0,4);
\draw [shift={(-8.03,-1.44)}] plot[domain=-0.03:0.66,variable=\t]({1*8.03*cos(\t r)+0*8.03*sin(\t r)},{0*8.03*cos(\t r)+1*8.03*sin(\t r)});
\draw [shift={(-0.75,-11.66)}] plot[domain=-0.82:0.59,variable=\t]({-0.07*14.49*cos(\t r)+-1*1.99*sin(\t r)},{1*14.49*cos(\t r)+-0.07*1.99*sin(\t r)});
\draw [shift={(8.03,-1.43)}] plot[domain=2.49:3.17,variable=\t]({1*8.03*cos(\t r)+0*8.03*sin(\t r)},{0*8.03*cos(\t r)+1*8.03*sin(\t r)});
\draw(1.67,1.18) circle (0.44cm);
\draw(-1.67,1.17) circle (0.44cm);
\draw [shift={(0.6,-0.09)}] plot[domain=-0.36:1.61,variable=\t]({-0.9*3.18*cos(\t r)+-0.43*1.66*sin(\t r)},{0.43*3.18*cos(\t r)+-0.9*1.66*sin(\t r)});
\draw [shift={(-0.6,-0.09)}] plot[domain=-0.36:1.61,variable=\t]({0.9*3.18*cos(\t r)+0.43*1.66*sin(\t r)},{0.43*3.18*cos(\t r)+-0.9*1.66*sin(\t r)});
\draw [shift={(-1.22,1.18)},color=red]  plot[domain=-0.67:0.67,variable=\t]({1*3.67*cos(\t r)+0*3.67*sin(\t r)},{0*3.67*cos(\t r)+1*3.67*sin(\t r)});
\draw [shift={(-8.55,-3.43)}] plot[domain=0.21:0.64,variable=\t]({1*8.73*cos(\t r)+0*8.73*sin(\t r)},{0*8.73*cos(\t r)+1*8.73*sin(\t r)});
\draw [shift={(8.55,-3.42)}] plot[domain=2.51:2.94,variable=\t]({1*8.73*cos(\t r)+0*8.73*sin(\t r)},{0*8.73*cos(\t r)+1*8.73*sin(\t r)});
\begin{scriptsize}
\draw[color=black] (-0.37,1.28) node {$1$};
\draw[color=black] (-1.48,2.56) node {$6$};
\draw[color=black] (0.48,1.19) node {$2$};
\draw[color=black] (-1.92,-0.06) node {$8$};
\draw[color=black] (2.07,0.27) node {$5$};
\draw[color=black] (2.56,1.31) node {$3$};
\draw[color=black] (-0.76,0.28) node {$7$};
\draw[color=black] (0.8,0.44) node {$4$};
\end{scriptsize}
\end{tikzpicture}
\end{figure}
\begin{figure}[!h]
\centering
$\scalebox{0.9}{ \xymatrix@1{
7 \ar@<1ex>^{(2)}[dr] & & & & & 4 \ar@<1ex>^{(0)}[dl] \\
& 6 \ar@<1ex>^{(0)}[ul] \ar@<1ex>^{(0)}[dl] \ar@<1ex>^{(2)}[r] & 1 \ar@<1ex>^{(0)}[l] \ar@<1ex>^{(2)}[r] & 2 \ar@<1ex>^{(0)}[l] \ar@<1ex>^{(2)}[r] & 3 \ar@<1ex>^{(0)}[l] \ar@<1ex>^{(2)}[ur] \ar@<1ex>^{(2)}[dr] & \\
8 \ar@<1ex>^{(2)}[ur] & & & & & 5 \ar@<1ex>^{(0)}[ul] }} $
\hspace{10pt}
$\xrightarrow{\text{mutation at 3}}$
\hspace{10pt}
$ \scalebox{0.9}{\xymatrix@1{
7 \ar@<1ex>^{(2)}[dr] & & & & & 4 \ar@<1ex>^{(1)}[dl] \ar@<-2ex>_{\scriptscriptstyle{(0)}}[dll] \\
& 6 \ar@<1ex>^{(0)}[ul] \ar@<1ex>^{(0)}[dl] \ar@<1ex>^{(2)}[r] & 1 \ar@<1ex>^{(0)}[l] \ar@<1ex>^{(2)}[r] & 2 \ar@<1ex>^{(0)}[l] \ar@<1ex>^{\scriptscriptstyle{(0)}}[r] \ar@<1ex>_{\scriptscriptstyle{(2)}}[urr] \ar@<-1ex>^{\scriptscriptstyle{(2)}}[drr] & 3 \ar@<1ex>^{\scriptscriptstyle{(2)}}[l] \ar[ur] \ar^{(1)}[dr] & \\
8 \ar@<1ex>^{(2)}[ur] & & & & & 5 \ar@<1ex>[ul] \ar@<2ex>^{\scriptscriptstyle{(0)}}[ull] }} $
\caption{Quiver mutation is compatible with the flip of an $(m+2)$-angulation, for $m=2$. In order to keep a clear figure, we have not replaced the arcs 8 and 5 by loops.}
\label{fig:compat2}
\end{figure}

\begin{lem}
The quiver fulfills the conditions asked for colored quivers in section \ref{sec:colq}. It has no loops, is monochrome and symmetric.
\end{lem}

\begin{proof}
By definition, the quiver contains no loops (i.e. no arrows from $i$ to $i$).

If there is an arrow from $i$ to $j$ of color $c$, it means that $i$ and $j$ share two sides of an $(m+2)$-angle. If we count from $i$ to $j$, there are $c$ edges between them. But, as we are considering $(m+2)$-angles, it means that from $j$ to $i$ there are $m-c$ edges. So there is an arrow from $j$ to $i$ of color $m-c$. Then the symmetry is respected.

Now it remains to show that the quiver is monochrome. We divide this proof into three cases.

\cor{First case: $m=1$. The possible colors are either $0$ or $1$. If we have $\xymatrix{i \ar[r]^{(1)} & j}$ and $\xymatrix{i \ar[r]^{(0)} & j}$. Then, we have an oriented angle from $i$ to $j$, and, with the symmetry of the figure, an arrow $\xymatrix{j \ar[r]^{(0)} & i}$, then an oriented angle from $j$ to $i$. This leads to a digon and not a triangle, as it must be the case with $m=1$. This leads to a contradiction, and there is only one possible color for arrows: the quiver is monochrome.}

\cor{If $i$ is a loop starting in $a$, around $L$ for instance, and $j$ linking $a$ to $L$, then there is no arrow between $i$ and $j$ by definition. If $j$ is another $m$-diagonal, then the situation is the same as above: two arrows of different color incident to $i$ would create a digon.}

\vspace{10pt}

Second case: $m > 2$. If we have $\xymatrix{i \ar[r]^{(c)} & j}$ and $\xymatrix{i \ar[r]^{(c')} & j}$, it means that $i$ and $j$ share two $(m+2)$-gons, one with $c$ edges between $i$ and $j$, and the other one with $c'$ edges. So we are in the type of situation of figure \ref{fig:oreille}).

\begin{figure}[!h]
\centering
\begin{tikzpicture}[scale=1]
\fill[fill=black,fill opacity=0.1] (0,4) -- (-1.66,3.46) -- (-2.69,2.05) -- (-2.68,0.3) -- (-1.66,-1.11) -- (0,-1.65) -- (1.66,-1.11) -- (2.69,0.3) -- (2.69,2.05) -- (1.66,3.46) -- cycle;
\fill[fill=black,fill opacity=1.0] (-1.94,1.52) -- (-1.94,1.26) -- (-1.71,1.13) -- (-1.49,1.26) -- (-1.49,1.52) -- (-1.71,1.65) -- cycle;
\fill[fill=black,fill opacity=1.0] (1.94,1.52) -- (1.94,1.26) -- (1.72,1.13) -- (1.49,1.26) -- (1.49,1.52) -- (1.72,1.65) -- cycle;
\draw (0,4)-- (-1.66,3.46);
\draw [dotted] (-1.66,3.46)-- (-2.69,2.05);
\draw (-2.69,2.05)-- (-2.68,0.3);
\draw [dotted] (-2.68,0.3)-- (-1.66,-1.11);
\draw (-1.66,-1.11)-- (0,-1.65);
\draw [dotted] (0,-1.65)-- (1.66,-1.11);
\draw (1.66,-1.11)-- (2.69,0.3);
\draw [dotted] (2.69,0.3)-- (2.69,2.05);
\draw (2.69,2.05)-- (1.66,3.46);
\draw [dotted] (1.66,3.46)-- (0,4);
\draw (-1.94,1.52)-- (-1.94,1.26);
\draw (-1.94,1.26)-- (-1.71,1.13);
\draw (-1.71,1.13)-- (-1.49,1.26);
\draw (-1.49,1.26)-- (-1.49,1.52);
\draw (-1.49,1.52)-- (-1.71,1.65);
\draw (-1.71,1.65)-- (-1.94,1.52);
\draw (-2.68,0.3)-- (-1.71,1.13);
\draw (-1.49,1.26)-- (1.52,1.51);
\draw (1.94,1.27)-- (2.69,0.3);
\draw [shift={(2.22,-0.44)},dash pattern=on 1pt off 1pt on 3pt off 4pt,color=red]  plot[domain=2.03:2.99,variable=\t]({1*4.96*cos(\t r)+0*4.96*sin(\t r)},{0*4.96*cos(\t r)+1*4.96*sin(\t r)});
\draw [shift={(-1.43,0.13)},dash pattern=on 1pt off 1pt on 3pt off 4pt,color=red]  plot[domain=0.04:1.22,variable=\t]({1*4.13*cos(\t r)+0*4.13*sin(\t r)},{0*4.13*cos(\t r)+1*4.13*sin(\t r)});
\draw [shift={(12.07,17.8)},dash pattern=on 1pt off 1pt on 3pt off 4pt,color=blue]  plot[domain=4.01:4.16,variable=\t]({1*22.89*cos(\t r)+0*22.89*sin(\t r)},{0*22.89*cos(\t r)+1*22.89*sin(\t r)});
\draw [shift={(-1.85,3.73)},dash pattern=on 1pt off 1pt on 3pt off 4pt,color=blue]  plot[domain=5.04:5.64,variable=\t]({1*5.69*cos(\t r)+0*5.69*sin(\t r)},{0*5.69*cos(\t r)+1*5.69*sin(\t r)});
\draw (-2.46,4.2) node[anchor=north west] {P1};
\draw (-2.05,-1.53) node[anchor=north west] {P2};
\draw [shift={(0,1.06)}] plot[domain=-0.24:3.36,variable=\t]({1*3.46*cos(\t r)+0*3.46*sin(\t r)},{0*3.46*cos(\t r)+1*3.46*sin(\t r)});
\draw [shift={(0,1.05)}] plot[domain=3.39:6.02,variable=\t]({1*3.04*cos(\t r)+0*3.04*sin(\t r)},{0*3.04*cos(\t r)+1*3.04*sin(\t r)});
\draw [shift={(-1.72,1.43)}] plot[domain=0.02:4.23,variable=\t]({1*0.37*cos(\t r)+0*0.37*sin(\t r)},{0*0.37*cos(\t r)+1*0.37*sin(\t r)});
\draw (-1.56,1.97) node[anchor=north west] {R1};
\draw [shift={(1.74,1.5)}] plot[domain=-0.6:2.66,variable=\t]({1*0.32*cos(\t r)+0*0.32*sin(\t r)},{0*0.32*cos(\t r)+1*0.32*sin(\t r)});
\draw (1.81,2.22) node[anchor=north west] {S1};
\draw (-1.63,1.02)-- (-1.36,1.12);
\draw (-1.52,1.02) node[anchor=north west] {R2};
\draw [shift={(1.71,1.36)}] plot[domain=3.26:5.28,variable=\t]({1*0.35*cos(\t r)+0*0.35*sin(\t r)},{0*0.35*cos(\t r)+1*0.35*sin(\t r)});
\draw (1.46,1.01) node[anchor=north west] {S2};
\draw (-0.87,3.31) node[anchor=north west] {Ti};
\draw (-1.06,-0.66) node[anchor=north west] {Ui};
\begin{scriptsize}
\draw[color=black] (-2.13,1.02) node {$i$};
\draw[color=black] (2.44,0.43) node {$j$};
\end{scriptsize}
\end{tikzpicture}
\caption{Pathological case. The number $R_k$ (respectively $S_k$) counts the number of thick vertices on each side or $R$ (respectively $S$). The numbers $T_l$ and $U_l$ counts the $m$-diagonals which are homotopic to the boundary path. Finally, $P_m$ denotes the number of sides from $i$ to $j$ (and from $j$ to $i$).}
\label{fig:oreille}
\end{figure}

Let $T_i$ (respectively $U_i$) be the vertices along the dotted edges cutting on the side of $P_1$ (respectively $P_2$) the figure into two parts, one of type $A$ and one of type $D$. On the figure, we can visualize $T_i$ in red and $U_i$ in blue. Let $R_1$ and $R_2$ (respectively $S_1$ and $S_2$) be the number of vertices as shown in figure \ref{fig:oreille} in $R$ (respectively in $S$).

Then, by making a computation on the total number of vertices, we are led to a contradiction.

Indeed, we have the following equations:

\begin{equation}
\left\{
    \begin{array}{l}
        \sum T_i+R_1+S_1=m-1 \\
        \sum U_i+R_2+S_2=m-1 \\
        R_1+R_2=m-1 \\
        S_1+S_2=m-1 \\
        \sum T_i + \sum U_i=n-2 \\
    \end{array}
    \right.
\end{equation}

Then we have that

\[ \sum T_i +R_1+S_1 + \sum U_i+R_2 +S_2= 2(m-1). \]

So

\[ \sum T_i+\sum U_i=0 \]

This leads to $\sum T_i=\sum U_i=0$ and this is impossible.

\vspace{10pt}

Third case: $m=2$. Consider three different possible sub-cases. %faire les figures

First sub-case: if we have $\xymatrix{i \ar[r]^{(0)} & j}$ and $\xymatrix{i \ar[r]^{(1)} & j}$ it means that $i$ and $j$ belong to a triangle, which is impossible since $m=2$.

Second sub-case: if we have $\xymatrix{i \ar[r]^{(0)} & j}$ and $\xymatrix{i \ar[r]^{(2)} & j}$, it means that we have $\xymatrix{i \ar[r]^{(0)} & j}$ and $\xymatrix{j \ar[r]^{(0)} & i}$ which is also impossible.

Third sub-case: if we have $\xymatrix{i \ar[r]^{(1)} & j}$ and $\xymatrix{i \ar[r]^{(2)} & j}$, this reduces to the first case, since we have $\xymatrix{j \ar[r]^{(1)} & i}$ and $\xymatrix{j \ar[r]^{(0)} & i}$.

We thus have shown that there is no way to have two arrows from $i$ to $j$ of different colors This show the Lemma.
\end{proof}

\begin{theo}\label{theo:corresp}
Let $\Delta$ be any $(m+2)$-angulation. Let $Q_\Delta$ be the colored quiver associated with the $(m+2)$-angulation $\Delta$. If $\Delta_k$ is the new $(m+2)$-angulation flipped at $k$ from $\Delta$, then the colored quiver $Q_{\Delta_k}$ associated with $\Delta_k$ is the mutation at vertex $k$ of the colored quiver $Q_\Delta$ (see figures \ref{fig:compat} and \ref{fig:compat2} for an illustration).
\end{theo}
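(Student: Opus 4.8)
The plan is to establish $Q_{\Delta_k}=\mu_k(Q_\Delta)$ by a purely local analysis, exploiting that both operations are supported near the $m$-diagonal $k$. On the combinatorial side, the mutation formula only changes the entries $q_{ik}^{(c)}$, $q_{kj}^{(c)}$ and those $q_{ij}^{(c)}$ for which there are arrows $i\to k$ and $k\to j$; on the geometric side, flipping at $k$ only disturbs the two $(m+2)$-angles having $k$ as a side, together with the edges bounding them. First I would invoke Remark \ref{rmk:loop} to fix a good set of representatives in which $k$ is not a loop, so that $\alpha^*=\kappa_\Delta(k)$ is the well-defined path $\alpha_a\,k\,\alpha_b$. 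Throughout I would use the equivalent description of the colored quiver recorded above, namely $q_{ij}^{(c)}=1$ exactly when $\kappa_\Delta^{c}(i)$ and $j$ share an oriented angle, since this is the form most directly comparable with the flip, and I would also keep the Buan–Thomas three-step reformulation of colored mutation at hand as the organizing template.

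The first block of the computation concerns the arrows incident to $k$. Geometrically, passing from $k$ to $\alpha^*=\kappa_\Delta(k)$ slips the diagonal one edge clockwise, so that an edge which shared an oriented angle with $k$ at a given color is carried to a shared oriented angle with $\alpha^*$ at a color differing by one unit, the shift having opposite signs for incoming and outgoing arrows. I would verify that this slip reproduces exactly the definitional rules $\tilde q_{ij}^{(c)}=q_{ij}^{(c+1)}$ for $j=k$ and $\tilde q_{ij}^{(c)}=q_{ij}^{(c-1)}$ for $i=k$, which is the content of the third step of the three-step procedure. Here the symmetry condition $q_{ij}^{(c)}=q_{ji}^{(m-c)}$ must be respected, and I would confirm that the clockwise count from $\alpha^*$ remains compatible with it.

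The second, harder block concerns the arrows $i\to j$ with $i,j\neq k$. Flipping at $k$ merges the two $(m+2)$-angles adjacent to $k$ into a single region, which $\alpha^*$ then re-divides into two new $(m+2)$-angles; this is the only way new angle-sharing relations among the $i,j\neq k$ can be created or destroyed. I would show that a pair $(i,j)$ gains a shared oriented angle in $\Delta_k$ precisely when $i$ shared an angle with $k$ and $j$ was a color-$0$ neighbour of $k$, which is exactly the configuration $i\xrightarrow{(c)}k\xrightarrow{(0)}j$ producing, in the first step of the procedure, an arrow $i\xrightarrow{(c)}j$ and its symmetric $j\xrightarrow{(m-c)}i$. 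I would then argue that the cancellations forced by restoring monochromaticity—the second step—are exactly what the $\mathrm{max}\{0,\ldots\}$ in the mutation formula encodes, so that the two colored quivers agree entry by entry.

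The remaining work, and the main obstacle, is the case analysis dictated by the several kinds of $m$-diagonals in type $\tilde D$. When neither $k$ nor its neighbours touch an inner polygon, the local picture is that of an unpunctured polygon and I would conclude by the type-$A$ computation of Baur and Marsh in \cite{BM01}. The delicate cases are those in which $k$, or one of the angles adjacent to it, involves $R$ or $S$: a diagonal tangent to the left or right, a loop, or an arc linking $R$ to $S$ can occur, and the twist may convert a tangent arc into a loop. For these I would always present $k$ as a loop via Remark \ref{rmk:loop}, control the Dehn-twist ambiguity through Lemma \ref{lem:Dehntwist} so that all angle counts are representative-independent, and then check the identity separately for type-$1$ diagonals, type-$2$ diagonals, tangent arcs at $R$ and at $S$, and arcs joining $R$ to $S$, together with the exceptional configuration $n=4$. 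Verifying that flip and mutation remain simultaneously involutive in each of these cases, and that the sides of $R$ and $S$ contribute the correct colors, is where the bulk of the care is required.
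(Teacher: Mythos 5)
Your proposal follows essentially the same route as the paper's own proof: it relies on the same key tool (the characterization $q_{ij}^{(c)}=1$ iff $\kappa_\Delta^{c}(i)$ and $j$ share an oriented angle) and the same case decomposition mirroring the Buan--Thomas rule, namely arrows with $i=k$ or $j=k$ (color shift by one under the slip of $k$), creation of arrows $i\to j$ from configurations $i\xrightarrow{(c)}k\xrightarrow{(0)}j$ when $i$ and $j$ did not already bound a common $(m+2)$-angle, and cancellation via monochromaticity when they did. Your additional layer of case analysis over the types of $m$-diagonals in type $\tilde{D}$ (tangent arcs, loops, arcs joining $R$ to $S$, the $n=4$ exception) and the appeal to Lemma \ref{lem:Dehntwist} for representative-independence are more explicit than what the paper writes out, but they elaborate rather than diverge from its argument.
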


\begin{proof}
Let $\Delta$ be an $(m+2)$-angulation, let $Q_\Delta$ be the associated colored quiver, and let $k$ be a vertex of $Q$.

We want to show that $Q_{\mu_k(\Delta)}=\mu_k(Q_\Delta)$. Let us call by ${\mathcal{T}}$ the new $(m+2)$-angulation obtained from $\Delta$ by flipping the arc $k$. There is an evident bijection between the vertices of $Q_\Delta$ and $Q_T$. Let $i$ and $j$ be two vertices of $Q_{\mu_k(\Delta)}$ (and of $\mu_k(Q_\Delta$)).

Let $\tilde{q}_{ij}^{(c)}$ (respectively $\overline{q}_{ij}^{(c)}$) be the number of arrows of color $c$ from $i$ to $j$ in $Q_{\mu_k(\Delta)}$ (respectively $\mu_k(Q_\Delta)$). Let us show that $\tilde{q}_{ij}^{(c)}=\overline{q}_{ij}^{(c)}$.

First case: If $k=i$, then $\overline{q}_{ij}^{(c)}=q_{ij}^{(c-1)}$ and $\kappa_{\mathcal{T}}^{c}(i)=\kappa_\Delta^{c+1}(i)$. Consequently $\tilde{q}_{ij}^{(c)}=q_{ij}^{(c-1)}$.

Second case: If $k=j$, then $\overline{q}_{ij}^{(c)}=q_{ij}^{(c+1)}$ and $\kappa_{\mathcal{T}}^{c}(i)=\kappa_\Delta^{c-1}(i)$. Consequently $\tilde{q}_{ij}^{(c)}=q_{ij}^{(c+1)}$.

Third case (see figure \ref{fig:case33}): Assume that, in $Q_\Delta$, we have $\xymatrix@1{i\ar[r]^{(c)} & k\ar[r]^{(0)} & j}$ and $i$ and $j$ are not two sides of the same $(m+2)$-angle. Then $\overline{q}_{ij}^{(c)}=q_{ij}^{(c)}+1$. We notice that we cannot have two arrows from $k$ to $j$ of color $0$. We are in a situation of the following type:

\begin{center}
\begin{tikzpicture}[scale=0.4]
\draw (1.26,2.8)-- (0.46,0.22);
\draw [shift={(3.05,-0.51)},dash pattern=on 2pt off 2pt]  plot[domain=1.08:2.07,variable=\t]({1*3.76*cos(\t r)+0*3.76*sin(\t r)},{0*3.76*cos(\t r)+1*3.76*sin(\t r)});
\draw (4.84,2.8)-- (4.38,0.38);
\draw (4.84,2.8)-- (5.64,0.5);
\begin{scriptsize}
\draw[color=black] (0.56,1.76) node {$i$};
\fill [color=black] (4.84,2.8) circle (1.5pt);
\draw[color=black] (5,3.06) node {$a$};
\draw[color=black] (4.34,1.8) node {$k$};
\draw[color=black] (5.42,1.78) node {$j$};
\end{scriptsize}
\end{tikzpicture}
\end{center}

Let then $a$ be the common vertex of $k$ and $j$. Let $b$ be the other vertex of $j$ (where $a$ or $b$ can be central polygons). Then $\kappa_\Delta(k)$ and $j$ have $b$ as a common vertex. As, in $\Delta$, $\kappa_\Delta^c(i)$ and $k$ have one common vertex $a$, then $\kappa_{\mathcal{T}}^{c}(i)$ and $j$ have one common vertex. As a consequence, $\tilde{q}_{ij}^{(c)}=q_{ij}^{(c)}+1$. There is no loss because there are not any arrows from $i$ to $j$ of color different from $c$. We note that there can be two arrows of different colors appearing from $j$ to $k$, but as this is forbidden, we remove them, and this confirms that there are no two such arcs in $\mu_k(\Delta)$. This case is symmetric to the case where $i$ and $k$ share a common vertex.

Fourth case: Assume that we are in a situation of the following type:

\begin{center}
\begin{tikzpicture}[scale=0.4]
\draw (1.26,2.8)-- (0.46,0.22);
\draw [shift={(3.05,-0.51)},dash pattern=on 2pt off 2pt]  plot[domain=1.08:2.07,variable=\t]({1*3.76*cos(\t r)+0*3.76*sin(\t r)},{0*3.76*cos(\t r)+1*3.76*sin(\t r)});
\draw (4.84,2.8)-- (5.64,0.5);
\draw (5.64,0.5)-- (3.7,-0.32);
\begin{scriptsize}
\draw[color=black] (0.56,1.76) node {$i$};
\draw[color=black] (5.46,1.78) node {$k$};
\fill [color=black] (5.64,0.5) circle (1.5pt);
\draw[color=black] (5.8,0.76) node {$a$};
\draw[color=black] (4.46,0.38) node {$j$};
\end{scriptsize}
\end{tikzpicture}
\end{center}

It means that we have a quiver:
\[ \xymatrix@1{
i\ar[rr]^{(c)} \ar[dr]_{(c+1)} & & k \ar[dl]^{(0)} \\
& j
} \]

Then $\overline{q}_{ij}^{(c)}=q_{ij}^{(c)}-1=0$, because the arrows get erased.
Moreover, under the notations of the third case, $\kappa_\Delta(k)$ and $j$ have $b$ as a common vertex, and $i$ and $j$ do not share a common polygon anymore. Then $\tilde{q}_{ij}^{(c)}=q_{ij}^{(c)}-1$.
\end{proof}

\begin{ex}
Let us draw an example in figure \ref{fig:case33}.
	
\begin{figure}[!h]
\centering
\begin{tikzpicture}[scale=0.7]
\fill[fill=black,fill opacity=0.1] (0,4) -- (-1.66,3.46) -- (-2.69,2.05) -- (-2.68,0.3) -- (-1.66,-1.11) -- (0,-1.65) -- (1.66,-1.11) -- (2.69,0.3) -- (2.69,2.05) -- (1.66,3.46) -- cycle;
\draw [fill=black,fill opacity=1.0] (-1.66,1.57) circle (0.2cm);
\draw [fill=black,fill opacity=1.0] (1.66,1.58) circle (0.2cm);
\draw [dash pattern=on 1pt off 1pt on 3pt off 4pt](0,4)-- (-1.66,3.46);
\draw (-1.66,3.46)-- (-2.69,2.05);
\draw (-2.69,2.05)-- (-2.68,0.3);
\draw (-2.68,0.3)-- (-1.66,-1.11);
\draw [dash pattern=on 1pt off 1pt on 3pt off 4pt](-1.66,-1.11)-- (0,-1.65);
\draw [dash pattern=on 1pt off 1pt on 3pt off 4pt](0,-1.65)-- (1.66,-1.11);
\draw (1.66,-1.11)-- (2.69,0.3);
\draw (2.69,0.3)-- (2.69,2.05);
\draw (2.69,2.05)-- (1.66,3.46);
\draw [dash pattern=on 1pt off 1pt on 3pt off 4pt](1.66,3.46)-- (0,4);
\draw(-1.66,1.17) circle (0.4cm);
\draw(1.66,1.18) circle (0.4cm);
\draw [shift={(3.19,3.49)}] plot[domain=2.98:4.56,variable=\t]({1*3.23*cos(\t r)+0*3.23*sin(\t r)},{0*3.23*cos(\t r)+1*3.23*sin(\t r)});
\draw [shift={(19.33,1.18)}] plot[domain=3:3.29,variable=\t]({1*19.53*cos(\t r)+0*19.53*sin(\t r)},{0*19.53*cos(\t r)+1*19.53*sin(\t r)});
\draw [shift={(-9.63,111.22)}] plot[domain=3.03:3.41,variable=\t]({-0.07*111.42*cos(\t r)+-1*7.82*sin(\t r)},{1*111.42*cos(\t r)+-0.07*7.82*sin(\t r)});
\draw (-0.74,1.61) node[anchor=north west] {i};
\draw (-0.06,0.73) node[anchor=north west] {k};
\draw (0.45,2.1) node[anchor=north west] {j};
\end{tikzpicture}
$\to$
\begin{tikzpicture}[scale=0.7]
\fill[fill=black,fill opacity=0.1] (0,4) -- (-1.66,3.46) -- (-2.69,2.05) -- (-2.68,0.3) -- (-1.66,-1.11) -- (0,-1.65) -- (1.66,-1.11) -- (2.69,0.3) -- (2.69,2.05) -- (1.66,3.46) -- cycle;
\draw [fill=black,fill opacity=1.0] (-1.66,1.57) circle (0.2cm);
\draw [fill=black,fill opacity=1.0] (1.66,1.58) circle (0.2cm);
\draw [dash pattern=on 1pt off 1pt on 3pt off 4pt](0,4)-- (-1.66,3.46);
\draw (-1.66,3.46)-- (-2.69,2.05);
\draw (-2.69,2.05)-- (-2.68,0.3);
\draw (-2.68,0.3)-- (-1.66,-1.11);
\draw [dash pattern=on 1pt off 1pt on 3pt off 4pt](-1.66,-1.11)-- (0,-1.65);
\draw [dash pattern=on 1pt off 1pt on 3pt off 4pt](0,-1.65)-- (1.66,-1.11);
\draw (1.66,-1.11)-- (2.69,0.3);
\draw (2.69,0.3)-- (2.69,2.05);
\draw (2.69,2.05)-- (1.66,3.46);
\draw [dash pattern=on 1pt off 1pt on 3pt off 4pt](1.66,3.46)-- (0,4);
\draw(-1.66,1.17) circle (0.4cm);
\draw(1.66,1.18) circle (0.4cm);
\draw [shift={(3.19,3.49)}] plot[domain=2.98:4.56,variable=\t]({1*3.23*cos(\t r)+0*3.23*sin(\t r)},{0*3.23*cos(\t r)+1*3.23*sin(\t r)});
\draw [shift={(-9.63,111.22)}] plot[domain=3.03:3.41,variable=\t]({-0.07*111.42*cos(\t r)+-1*7.82*sin(\t r)},{1*111.42*cos(\t r)+-0.07*7.82*sin(\t r)});
\draw (-0.74,1.61) node[anchor=north west] {i};
\draw (0.64,-0.7) node[anchor=north west] {k};
\draw (0.45,2.1) node[anchor=north west] {j};
\draw [shift={(-0.56,2.9)}] plot[domain=4.45:5.61,variable=\t]({1*4.16*cos(\t r)+0*4.16*sin(\t r)},{0*4.16*cos(\t r)+1*4.16*sin(\t r)});
\end{tikzpicture}
\caption{The mutation at $k$ in the third case}
\label{fig:case33}
\end{figure}
	
We have $\xymatrix@1{i\ar[r]^{(0)} & k\ar[r]^{(0)} & j}$. When we mutate $k$, we obtain the figure in the right. The quiver becomes \[\xymatrix{i \ar^{(0)}[r] \ar_{(1)}[dr] & j \ar^{(0)}[d] \\ & k}\] (where we do not note the inverse arrow for sake of clarity).
	
Now let us mutate at vertex $i$. The $4$-angulation becomes:
	
\begin{figure}[!h]
\centering
\begin{tikzpicture}[scale=0.7]
\fill[fill=black,fill opacity=0.1] (0,4) -- (-1.66,3.46) -- (-2.69,2.05) -- (-2.68,0.3) -- (-1.66,-1.11) -- (0,-1.65) -- (1.66,-1.11) -- (2.69,0.3) -- (2.69,2.05) -- (1.66,3.46) -- cycle;
\draw [fill=black,fill opacity=1.0] (-1.66,1.57) circle (0.2cm);
\draw [fill=black,fill opacity=1.0] (1.66,1.58) circle (0.2cm);
\draw [dash pattern=on 1pt off 1pt on 3pt off 4pt](0,4)-- (-1.66,3.46);
\draw (-1.66,3.46)-- (-2.69,2.05);
\draw (-2.69,2.05)-- (-2.68,0.3);
\draw (-2.68,0.3)-- (-1.66,-1.11);
\draw [dash pattern=on 1pt off 1pt on 3pt off 4pt](-1.66,-1.11)-- (0,-1.65);
\draw [dash pattern=on 1pt off 1pt on 3pt off 4pt](0,-1.65)-- (1.66,-1.11);
\draw (1.66,-1.11)-- (2.69,0.3);
\draw (2.69,0.3)-- (2.69,2.05);
\draw (2.69,2.05)-- (1.66,3.46);
\draw [dash pattern=on 1pt off 1pt on 3pt off 4pt](1.66,3.46)-- (0,4);
\draw(-1.66,1.17) circle (0.4cm);
\draw(1.66,1.18) circle (0.4cm);
\draw [shift={(3.19,3.49)}] plot[domain=2.98:4.56,variable=\t]({1*3.23*cos(\t r)+0*3.23*sin(\t r)},{0*3.23*cos(\t r)+1*3.23*sin(\t r)});
\draw (-0.62,-0.03) node[anchor=north west] {i};
\draw (0.64,-0.7) node[anchor=north west] {k};
\draw (0.45,2.1) node[anchor=north west] {j};
\draw [shift={(-0.56,2.9)}] plot[domain=4.45:5.61,variable=\t]({1*4.16*cos(\t r)+0*4.16*sin(\t r)},{0*4.16*cos(\t r)+1*4.16*sin(\t r)});
\draw [shift={(0.52,2.78)}] plot[domain=3.37:5.43,variable=\t]({1*3.29*cos(\t r)+0*3.29*sin(\t r)},{0*3.29*cos(\t r)+1*3.29*sin(\t r)});
\end{tikzpicture}
\end{figure}
	
If we mutate the colored quivers, since two arrows of different colors are drawn from $j$ to $k$ (one of color $(0)$ and one of color $(1)$), they are removed and the quiver becomes $j \to i \to k$ of color $(0)$. This exactly corresponds to the quiver of the $4$-angulation.
\end{ex}

\section{Equivalence of categories}
In this section, assuming that $Q$ is still a quiver of type $\tilde{D_n}$, we build an additive functor $F$ from the category ${\mathcal{C}}$ to the $m$-cluster category.

\begin{defi}\label{def:shift}
Let $P$ be the polygon considered in section \ref{sec:geo}. Let us number the vertices of $P$ clockwise. Let $\alpha$ be an $m$-diagonal from $i$ to $j$. The shift $\alpha[1]$ of $\alpha$ is defined as follows:
\begin{enumerate}
\item If $i \neq j$ and $\alpha$ is of type $1$ or $2$, then $\alpha[1]$ is the unique new arc obtained by composing $\alpha$ with the boundary paths $B_{i~i+1}$ and $B_{j~j+1}$, linking $i+1$ and $j+1$, and of the same type.
\item If $i=j$, $\alpha$ is of type $3$, then $\alpha[1]$ is the new arc tangent to the same vertex of the inner polygon, ending at $i+1$. If $m$ is odd, then change the side of the tangency. (see figure \ref{fig:shift1}).
\item If $\alpha$ is of type $4$, which means it links both inner polygons, from $L_k$ to $R_{k'}$ for instance, the new arc $\alpha[1]$ links $L_{k-1}$ and $R_{k'+1}$. The side of the tangency changes only if $m$ is odd (see figure \ref{fig:shift2}).
\end{enumerate}
\end{defi}

\begin{figure}[!h]
\centering
\begin{tikzpicture}[scale=0.25]
\fill[fill=black,fill opacity=0.15] (0.,4.) -- (0.,-4.) -- (6.92820323028,-8.) -- (13.8564064606,-4.) -- (13.8564064606,4.) -- (6.92820323028,8.) -- cycle;
\draw [fill=black,fill opacity=1.0] (3.46410161514,2.) circle (0.4cm);
\draw [fill=black,fill opacity=1.0] (3.46410161514,-2.) circle (0.4cm);
\draw [fill=black,fill opacity=1.0] (10.3923048454,2.) circle (0.4cm);
\draw [fill=black,fill opacity=1.0] (10.3923048454,-2.) circle (0.4cm);
\draw (0.,4.)-- (0.,-4.);
\draw (0.,-4.)-- (6.92820323028,-8.);
\draw (6.92820323028,-8.)-- (13.8564064606,-4.);
\draw (13.8564064606,-4.)-- (13.8564064606,4.);
\draw (13.8564064606,4.)-- (6.92820323028,8.);
\draw (6.92820323028,8.)-- (0.,4.);
\draw [shift={(1.85683798862,0.)}] plot[domain=-0.893844784235:0.893844784235,variable=\t]({1.*2.56579351568*cos(\t r)+0.*2.56579351568*sin(\t r)},{0.*2.56579351568*cos(\t r)+1.*2.56579351568*sin(\t r)});
\draw [shift={(5.07136524165,0.)}] plot[domain=2.24774786935:4.03543743782,variable=\t]({1.*2.56579351568*cos(\t r)+0.*2.56579351568*sin(\t r)},{0.*2.56579351568*cos(\t r)+1.*2.56579351568*sin(\t r)});
\draw [shift={(11.9995684719,0.)}] plot[domain=2.24774786935:4.03543743782,variable=\t]({1.*2.56579351568*cos(\t r)+0.*2.56579351568*sin(\t r)},{0.*2.56579351568*cos(\t r)+1.*2.56579351568*sin(\t r)});
\draw [shift={(8.7850412189,0.)}] plot[domain=-0.893844784235:0.893844784235,variable=\t]({1.*2.56579351568*cos(\t r)+0.*2.56579351568*sin(\t r)},{0.*2.56579351568*cos(\t r)+1.*2.56579351568*sin(\t r)});
\draw [shift={(-8.39982219049,-6.22540112081)}] plot[domain=0.414238148445:0.748105935798,variable=\t]({1.*20.9119678736*cos(\t r)+0.*20.9119678736*sin(\t r)},{0.*20.9119678736*cos(\t r)+1.*20.9119678736*sin(\t r)});
\end{tikzpicture}
$\to$
\begin{tikzpicture}[scale=0.25]
\fill[fill=black,fill opacity=0.15] (0.,4.) -- (0.,-4.) -- (6.92820323028,-8.) -- (13.8564064606,-4.) -- (13.8564064606,4.) -- (6.92820323028,8.) -- cycle;
\draw [fill=black,fill opacity=1.0] (3.46410161514,2.) circle (0.4cm);
\draw [fill=black,fill opacity=1.0] (3.46410161514,-2.) circle (0.4cm);
\draw [fill=black,fill opacity=1.0] (10.3923048454,2.) circle (0.4cm);
\draw [fill=black,fill opacity=1.0] (10.3923048454,-2.) circle (0.4cm);
\draw (0.,4.)-- (0.,-4.);
\draw (0.,-4.)-- (6.92820323028,-8.);
\draw (6.92820323028,-8.)-- (13.8564064606,-4.);
\draw (13.8564064606,-4.)-- (13.8564064606,4.);
\draw (13.8564064606,4.)-- (6.92820323028,8.);
\draw (6.92820323028,8.)-- (0.,4.);
\draw [shift={(1.85683798862,0.)}] plot[domain=-0.893844784235:0.893844784235,variable=\t]({1.*2.56579351568*cos(\t r)+0.*2.56579351568*sin(\t r)},{0.*2.56579351568*cos(\t r)+1.*2.56579351568*sin(\t r)});
\draw [shift={(5.07136524165,0.)}] plot[domain=2.24774786935:4.03543743782,variable=\t]({1.*2.56579351568*cos(\t r)+0.*2.56579351568*sin(\t r)},{0.*2.56579351568*cos(\t r)+1.*2.56579351568*sin(\t r)});
\draw [shift={(11.9995684719,0.)}] plot[domain=2.24774786935:4.03543743782,variable=\t]({1.*2.56579351568*cos(\t r)+0.*2.56579351568*sin(\t r)},{0.*2.56579351568*cos(\t r)+1.*2.56579351568*sin(\t r)});
\draw [shift={(8.7850412189,0.)}] plot[domain=-0.893844784235:0.893844784235,variable=\t]({1.*2.56579351568*cos(\t r)+0.*2.56579351568*sin(\t r)},{0.*2.56579351568*cos(\t r)+1.*2.56579351568*sin(\t r)});
\draw [shift={(13.6041993016,-0.403940802752)}] plot[domain=1.51359029435:2.49336089239,variable=\t]({1.*4.41115665616*cos(\t r)+0.*4.41115665616*sin(\t r)},{0.*4.41115665616*cos(\t r)+1.*4.41115665616*sin(\t r)});
\end{tikzpicture}
\label{fig:shift1}
\caption{The shift for $m$ odd and $\alpha$ of type $3$}
\end{figure}

\begin{figure}[!h]
\centering
\begin{tikzpicture}[scale=0.25]
\fill[fill=black,fill opacity=0.15] (0.,4.) -- (0.,-4.) -- (6.92820323028,-8.) -- (13.8564064606,-4.) -- (13.8564064606,4.) -- (6.92820323028,8.) -- cycle;
\draw [fill=black,fill opacity=1.0] (3.46410161514,2.) circle (0.4cm);
\draw [fill=black,fill opacity=1.0] (3.46410161514,-2.) circle (0.4cm);
\draw [fill=black,fill opacity=1.0] (10.3923048454,2.) circle (0.4cm);
\draw [fill=black,fill opacity=1.0] (10.3923048454,-2.) circle (0.4cm);
\draw (0.,4.)-- (0.,-4.);
\draw (0.,-4.)-- (6.92820323028,-8.);
\draw (6.92820323028,-8.)-- (13.8564064606,-4.);
\draw (13.8564064606,-4.)-- (13.8564064606,4.);
\draw (13.8564064606,4.)-- (6.92820323028,8.);
\draw (6.92820323028,8.)-- (0.,4.);
\draw [shift={(1.85683798862,0.)}] plot[domain=-0.893844784235:0.893844784235,variable=\t]({1.*2.56579351568*cos(\t r)+0.*2.56579351568*sin(\t r)},{0.*2.56579351568*cos(\t r)+1.*2.56579351568*sin(\t r)});
\draw [shift={(5.07136524165,0.)}] plot[domain=2.24774786935:4.03543743782,variable=\t]({1.*2.56579351568*cos(\t r)+0.*2.56579351568*sin(\t r)},{0.*2.56579351568*cos(\t r)+1.*2.56579351568*sin(\t r)});
\draw [shift={(11.9995684719,0.)}] plot[domain=2.24774786935:4.03543743782,variable=\t]({1.*2.56579351568*cos(\t r)+0.*2.56579351568*sin(\t r)},{0.*2.56579351568*cos(\t r)+1.*2.56579351568*sin(\t r)});
\draw [shift={(8.7850412189,0.)}] plot[domain=-0.893844784235:0.893844784235,variable=\t]({1.*2.56579351568*cos(\t r)+0.*2.56579351568*sin(\t r)},{0.*2.56579351568*cos(\t r)+1.*2.56579351568*sin(\t r)});
\draw [shift={(11.077585728,-2.29029671889)}] plot[domain=1.71504065064:2.63724523296,variable=\t]({1.*4.739497249*cos(\t r)+0.*4.739497249*sin(\t r)},{0.*4.739497249*cos(\t r)+1.*4.739497249*sin(\t r)});
\draw [shift={(2.77882073252,2.29029671889)}] plot[domain=4.85663330423:5.77883788655,variable=\t]({1.*4.739497249*cos(\t r)+0.*4.739497249*sin(\t r)},{0.*4.739497249*cos(\t r)+1.*4.739497249*sin(\t r)});
\end{tikzpicture}
$\to$
\begin{tikzpicture}[scale=0.25]
\fill[fill=black,fill opacity=0.15] (0.,4.) -- (0.,-4.) -- (6.92820323028,-8.) -- (13.8564064606,-4.) -- (13.8564064606,4.) -- (6.92820323028,8.) -- cycle;
\draw [fill=black,fill opacity=1.0] (3.46410161514,2.) circle (0.4cm);
\draw [fill=black,fill opacity=1.0] (3.46410161514,-2.) circle (0.4cm);
\draw [fill=black,fill opacity=1.0] (10.3923048454,2.) circle (0.4cm);
\draw [fill=black,fill opacity=1.0] (10.3923048454,-2.) circle (0.4cm);
\draw (0.,4.)-- (0.,-4.);
\draw (0.,-4.)-- (6.92820323028,-8.);
\draw (6.92820323028,-8.)-- (13.8564064606,-4.);
\draw (13.8564064606,-4.)-- (13.8564064606,4.);
\draw (13.8564064606,4.)-- (6.92820323028,8.);
\draw (6.92820323028,8.)-- (0.,4.);
\draw [shift={(1.85683798862,0.)}] plot[domain=-0.893844784235:0.893844784235,variable=\t]({1.*2.56579351568*cos(\t r)+0.*2.56579351568*sin(\t r)},{0.*2.56579351568*cos(\t r)+1.*2.56579351568*sin(\t r)});
\draw [shift={(5.07136524165,0.)}] plot[domain=2.24774786935:4.03543743782,variable=\t]({1.*2.56579351568*cos(\t r)+0.*2.56579351568*sin(\t r)},{0.*2.56579351568*cos(\t r)+1.*2.56579351568*sin(\t r)});
\draw [shift={(11.9995684719,0.)}] plot[domain=2.24774786935:4.03543743782,variable=\t]({1.*2.56579351568*cos(\t r)+0.*2.56579351568*sin(\t r)},{0.*2.56579351568*cos(\t r)+1.*2.56579351568*sin(\t r)});
\draw [shift={(8.7850412189,0.)}] plot[domain=-0.893844784235:0.893844784235,variable=\t]({1.*2.56579351568*cos(\t r)+0.*2.56579351568*sin(\t r)},{0.*2.56579351568*cos(\t r)+1.*2.56579351568*sin(\t r)});
\draw [shift={(12.6114252192,4.50603240067)}] plot[domain=3.81196831547:4.40190091165,variable=\t]({1.*7.25281601667*cos(\t r)+0.*7.25281601667*sin(\t r)},{0.*7.25281601667*cos(\t r)+1.*7.25281601667*sin(\t r)});
\draw [shift={(1.24498124132,-4.50603240067)}] plot[domain=0.670375661881:1.26030825806,variable=\t]({1.*7.25281601667*cos(\t r)+0.*7.25281601667*sin(\t r)},{0.*7.25281601667*cos(\t r)+1.*7.25281601667*sin(\t r)});
\end{tikzpicture}
\label{fig:shift2}
\caption{The shift for $m$ odd and $\alpha$ of type $4$}
\end{figure}

\begin{defi}
If $d$ is an integer in $\{ 1, \cdots, m \}$, then we define $S^d$ as the connected component in the quiver of Definition \ref{def:carquois} containing the $d$-th shift of the initial $(m+2)$-angulation, the first component being the one corresponding to the initial $(m+2)$-angulation, and the following ones corresponding to the initial $(m+2)$-angulation where $[1],[2],\cdots$ has been applied.

Let ${\mathcal{S}}^d$ be the component in the Auslander-Reiten quiver of ${\mathcal{C}}^{(m)}_{\tilde{D_n}}$, containing objects of type $\tau ^s P[d]$ where $P$ is a projective indecomposable object and $s$ is an integer.
\end{defi}

\begin{theo}\label{th:iso}
Let $d \in \{ 1, \cdots, m \}$. Then we have an isomorphism between both components:

\[ S^d \simeq {\mathcal{S}}^d \]
\end{theo}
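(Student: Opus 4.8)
The plan is to exhibit an explicit isomorphism of translation quivers $\phi\colon S^d \to \mathcal{S}^d$ and then to invoke the fact that a mesh category depends only on its underlying stable translation quiver, so that $\phi$ induces the desired isomorphism. Both $S^d$ and $\mathcal{S}^d$ are connected stable translation quivers: the former because $(\tilde{Q},\tau)$ is a stable translation quiver (the proposition preceding Figure \ref{fig:ar1}) and $S^d$ is by definition one of its connected components; the latter because the transjective component of the Auslander--Reiten quiver $\Gamma$ of $\mathcal{C}^{(m)}_{\tilde{D_n}}$ contains no projectives and $\tau$ is invertible there. Hence it suffices to match the two translation quivers vertex by vertex and arrow by arrow.

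First I would fix a section. The $d$-shifted initial $(m+2)$-angulation of Definition \ref{def:triinit} is, by the remark following Figure \ref{fig:ar1}, a complete slice of $S^d$; correspondingly its arcs are sent to the indecomposable objects $\tau^{0}P[d]$, the shifted projectives, which sit on a section of $\mathcal{S}^d$. I would define $\phi$ on this slice by this assignment, matching each of the five families of $m$-diagonals (type $1$ transjective arcs, type $2$ boundary-homotopic arcs, arcs tangent to $R$, arcs tangent to $S$, and arcs joining $R$ to $S$) to the vertex of the section it represents. Since $\tau$ acts freely on both $S^d$ and $\mathcal{S}^d$ and each is connected and generated from a section by its $\tau$-orbit together with the mesh arrows, I extend $\phi$ to all vertices by $\tau$-equivariance, $\phi(\tau^{s}x)=\tau^{s}\phi(x)$. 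Well-definedness amounts to checking that the chosen slice really is a section, i.e. no two of its arcs lie in a common $\tau$-orbit, which follows from the explicit combinatorics of Definition \ref{def:triinit}.

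The core of the argument is to show that $\phi$ is a bijection on arrows: for $m$-diagonals $\alpha,\beta$ there is an elementary move $\alpha\to\beta$ in $\tilde{Q}$ exactly when there is an irreducible morphism $\phi(\alpha)\to\phi(\beta)$ in $\Gamma$, with matching multiplicities. By $\tau$-equivariance it is enough to verify this for arrows whose source lies on the section, and there only finitely many local configurations occur; these are precisely the cases tabulated after the definition of elementary moves, together with the three mesh pictures drawn in the subsection on mesh relations. Each configuration is then compared with the known local shape of the transjective component $\mathbb{Z}\tilde{D_n}$ of the tame type $\tilde{D_n}$ Auslander--Reiten quiver (Figure \ref{fig:preproj}); in particular the four pendant vertices of $\tilde{D_n}$ correspond to the arcs tangent to the thick vertices of $R$ and $S$, the distinction between left and right tangency (and the associated loops) producing the branching. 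Lemma \ref{lem:em}, which asserts that $f\colon\alpha\to\beta$ exists iff $\overline{f}\colon\tau\beta\to\alpha$ exists, is exactly the translation-quiver compatibility guaranteeing that the meshes around corresponding vertices carry the same arrows.

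Once $\phi$ is established as an isomorphism of translation quivers, it maps mesh relations to mesh relations: commuting with $\tau$ and preserving arrows, it sends $R_\alpha=\sum_{\beta,\,f\colon\beta\to\alpha} f\overline{f}$ to $R_{\phi(\alpha)}$, and therefore induces an isomorphism of the associated additive Krull--Schmidt mesh categories, which is the claimed $S^d\simeq\mathcal{S}^d$. The step I expect to be the main obstacle is the local arrow-matching at the branch vertices: verifying that the interplay of left/right tangency, the loop conventions, and the $\tau$-rule ``change the side of tangency when $m$ is odd'' reproduces exactly the four valence-one vertices and the mesh structure of $\mathbb{Z}\tilde{D_n}$, rather than collapsing two $\tau$-orbits or creating a spurious arrow. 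Subordinate to this is confirming that the section of Definition \ref{def:triinit} is complete, so that the $\tau$-extension of $\phi$ is a bijection onto all of $\mathcal{S}^d$.
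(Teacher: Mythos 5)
Your proposal follows essentially the same route as the paper's own proof: anchor the map on the initial slice (arcs of the shifted initial $(m+2)$-angulation sent to the shifted projectives $P_i[d]$), extend $\tau$-equivariantly over the whole component, and verify bijectivity on vertices and on arrows via the correspondence between elementary moves and irreducible morphisms, with Lemma \ref{lem:em} supplying the translation-compatibility. The only difference is your explicit closing step passing from the translation-quiver isomorphism to the mesh categories, which the paper leaves implicit; this is a harmless (indeed clarifying) addition rather than a different argument.
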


\begin{proof}
For $n \geq 6$, we fix an orientation for our quivers. The orientation we choose is the one of the quiver corresponding to the initial $m$-angulation:

\[ \xymatrix@1{
n \ar[dr] & & & & & n-3 \\
& n-1 \ar[r] & 1 \ar[r] & 2 \ar@{.>}[r] & n-4 \ar[ur]\ar[dr] & \\
n+1 \ar[ur] & & & & & n-2} \]

Let $\Delta^0_1$ be the initial $(m+2)$-angulation in the first copy of the quiver $\tilde{Q}$. It corresponds to the $m$-cluster-tilting object $T= \bigoplus P_i$ at the initial slice of the Auslander-Reiten quiver of $Q$. With the $m$-diagonal $\alpha_i$ (at vertex $i$), we associate the projective module $P_i=F(\alpha_i)$. Then we associate with the elementary moves of $m$-diagonals, the irreducible morphisms in the cluster category. Indeed, according to the tabular of section \ref{tab:tab}, there is an elementary move from $\alpha$ to $\beta$ when both share an oriented angle. As it has been told in Definition \ref{def:quiver}, in the quiver it means that there is an arrow from $\alpha$ to $\beta$. So there is an irreducible morphism from $F(\alpha)$ to $F(\beta)$.

Now that we have treated the case of the initial slice in the first copy, still calling $\alpha_i$ the $m$-diagonal at vertex $i$ in this slice, we note that every $m$-diagonal in the component $S^d$ in the quiver $\tilde{Q}$ is of the form $\tau ^t \alpha_i [d]$, for some $t \in {\mathbb{Z}}$ and $i \in \{ 1,\cdots,n+1 \}$. So it is natural to define

\[ F(\tau ^t \alpha_i [d])= \tau ^t (P_i[d])  \forall t. \]

Therefore, there is a bijection between the arcs of $S^1$ and the indecomposables of the transjective components in the Auslander-Reiten quiver of ${\mathcal{C}}^{(m)}_{\tilde{D_n}}$ containing the image of the indecomposable projective objects.

Indeed, there are two things to show: First, for any arc $\alpha$, there is a unique way to write it $\tau ^t \alpha_i[d]$. If we had $\tau ^t \alpha_i[d]=\tau ^s \alpha_j[d]$ in a component $d$, then $\tau^t \alpha_i=\tau ^s \alpha_j$, so $i=j$ because of the type of the diagonal (if it is tangent one or another side of the central polygon, or if it is of type $1$, with length $l$, etc...), and if $t \neq s$ it would mean that there is the same $m$-diagonal at two different vertices in the quiver $\tilde{Q}$, which is impossible. Second, if $X$ is an indecomposable in the $d$-th component of $\tilde{Q}$, it can be written $\tau ^t P_i[d]$, and this has a unique antecedent by $F$, which is $\tau ^t \alpha_i[d]$.

As a matter of conclusion, $F$ is bijective on the objects. Let us show that it is also a bijection on the elementary moves. Suppose that $f : \alpha \to \beta$ is an elementary move. Then there exist $s \in {\mathbb{N}}, i \in \{1, \cdots, n+1 \}$, $d \in \{1, \cdots, m \}$ such that $\tau ^s \alpha = \alpha_i[d]$ and $\tau ^s \beta = \beta^*[d]$ (i.e $\alpha=\tau ^{-s} \alpha_i [d]$ and $\beta=\tau ^{-s} \beta^* [d]$, where $\beta^*=\alpha_j$ when there is an elementary move $\alpha_i \to \alpha_j$ or $\beta^*=\tau^{-1}\alpha_j$ where there is an elementary move $\alpha_j \to \alpha_i$. In the first case, there is an irreducible morphism from $P_i$ to $P_j$ thus from $F(\alpha_i)$ to $F(\beta^*)$ and from $F(\alpha)=\tau^{-s}F(\alpha_i)[d]$ to $F(\beta)=\tau^{-s}F(\beta^*)[d]$. In the second case, there is an irreducible morphism from $P_j$ to $P_i$ thus from $P_i$ to $\tau^{-1}P_j$ and from $F(\tau ^s \alpha)=F(\alpha_i[d]) \to F(\beta_i[d])=F(\tau ^s \beta)$, thus an arrow $F(\alpha) \to F(\beta)$.

This proves the statement.
\end{proof}

\begin{defi}
\cor{Let $d \in \{ 1, \cdots, m \}$. We recall that $T^{d}_2$ (respectively $T'^{d}_2$) is the connected components in the quiver composed by the $d$-th shift of the two types of $m$-diagonals of type $4$. Let $T^d_{n-2}$ be the connected components in the quiver composed by the $d$-th shift of the $m$-diagonals of type $4$.}

Let ${\mathcal{T}}^d_2$ and ${\mathcal{T'}}^d_2$ be the $d$-th shift of the tubes of size $2$ in the Auslander-Reiten quiver of ${\mathcal{C}}^{(m)}_{\tilde{D_n}}$. Let ${\mathcal{T}}^d_{n-2}$ be the $d$-th shift of the tube of size $n-2$.
\end{defi}

\begin{theo}
\cor{Under the notations of the previous definition, we have the isomorphism between the pairs of components \[T^d_2 \simeq {\mathcal{T}}^d_2 \text{ ; } T'^d_2 \simeq {\mathcal{T'}}^d_2 \text{ and } T^d_{n-2} \simeq {\mathcal{T}}^d_{n-2}\]}
\end{theo}

\begin{proof}
We only state the proof for the bijection $T^{n-2} \simeq {\mathcal{T}}^{n-2}$, the others are similar.

\cor{We first note that the components of the quiver are of the same size and shape. Indeed, ${\mathcal{T}}^{n-2}$ is a non-homogeneous tube of size $n-2$. Moreover, the tube $T^{n-2}$ is also of size $n-2$ (because the $m$-diagonals are cyclic of order $n-2$). We know from \cite{JM} that the simple module at the base of the first tube corresponds to the arc of type $2$ and of size $m$ starting at vertex $1$. Then there is a 1-1 correspondence between the elementary moves and the irreducible morphisms.}
\end{proof}

\section{About non-self-crossing arcs and $m$-rigid objects}

We know by Theorem \ref{th:iso} that an arc in the geometric realization can be interpreted as an object in the $m$-cluster-category. We use the following notation: if $\alpha$ is an arc in the category ${\mathcal{C}}$ let $X_\alpha$ be the object associated in the $m$-cluster category.

\begin{lem}\label{lem:tri}
Let $A$ be the set of equivalence class of $m$-diagonals with no selfcrossing. The application \[\begin{array}{ccccc}
 & A & \to & \{m\text{-rigid indecomposable objects}\} \\
 & \alpha & \mapsto & X_\alpha \\
\end{array}\] is a bijection.
\end{lem}

\begin{proof}
With each arc $\alpha$ which does not cross itself we associate by Theorem \ref{th:iso} an $m$-rigid object $X_\alpha$. Then $\forall i \in \{1,\cdots,m \}~{\mathrm{Ext}}^i(X_\alpha,X_\alpha)=0$.

Indeed, first, if $\alpha$ belongs to the preprojective part of the Auslander-Reiten quiver, it belongs in particular to a slice of it which forms a quiver of type $\tilde{D_n}$. If we add all the arcs forming this quiver, we obtain an $(m+2)$-angulation. In the Auslander-Reiten quiver of ${\mathcal{C}}^m_Q$, by Theorem \ref{th:iso}, this slice corresponds to an $m$-cluster-tilting object, made of the sum of all $m$-rigid objects at the vertices of the slice. Then, with $\alpha$ is associated an $m$-rigid object $X_\alpha$, and as $\alpha$ is in this part of the Auslander-Reiten quiver of $Q$, it does not cross itself.

Second, if $\alpha$ belongs to a tube, then if it does not cross itself, it is at the base of it, and by the previous section, this corresponds to an $m$-rigid object (because in the case of a tube of size $r$, $X_\alpha$ is situated at one of the $r-1$ first lines). Conversely, if $X_\alpha$ is $m$-rigid, then it is at the base (in the first $r-1$ lines) of the tube of size $r$, it means that it corresponds to an arc of the first lines, which does not cross itself.

We have thus shown that $\alpha$ is an arc which does not cross itself if and only if $\forall i \in \{1,\cdots,m \}, {\mathrm{Ext}}^i(X_\alpha,X_\alpha)=0$.
\end{proof}

\begin{rmk}
We can extend this bijection to the arcs in the non-homogeneous tubes including arcs with a self-crossing. Indeed, the arcs at the base of the tube are the arcs of lenght $m$. If we number the levels in the tube, by starting at $0$ for the arcs of length $m$, then the arcs at level $i$ are the arcs of length $im$. This can be easily visualized in figure \ref{fig:tubu}.
\end{rmk}

In \cite[Theorem 4.5]{JM}, it is shown that there is a bijection between $(m+2)$-angulations and $m$-cluster-tilting objects.

\section{An example}

Let us resume this paper with a complete study of an Euclidean quiver of type $\tilde{D_n}$, and its $m$-cluster category, taking $n=7$ and $m=2$. So we study the following category \[ {\mathcal{C}}^2_{\tilde{D_7}}={\mathcal{D}}^b(K \tilde{D_7})/\tau^{-1}[2], \] where $\tilde{D_7}$ is the following quiver:

\[ \xymatrix@1{
7\ar[dr] & & & & & 4 \\
& 6\ar[r] & 1\ar[r] & 2\ar[r] & 3\ar[ur]\ar[dr] & \\
8\ar[ur] & & & & & 5} \]

\subsection{Quadrangulations of the decagon and colored quiver}
The geometric realization shown in figure \ref{fig:decagon} for this quiver is a decagon with two squares inside of it.

\begin{figure}[!h]
\centering
\begin{tikzpicture}[scale=0.8]
\fill[fill=black,fill opacity=0.1] (0,4) -- (-1.66,3.46) -- (-2.69,2.05) -- (-2.68,0.3) -- (-1.66,-1.11) -- (0,-1.65) -- (1.66,-1.11) -- (2.69,0.3) -- (2.69,2.05) -- (1.66,3.46) -- cycle;
\draw [fill=black,fill opacity=1.0] (1.66,1.61) circle (0.2cm);
\draw [fill=black,fill opacity=1.0] (-1.66,1.61) circle (0.2cm);
\draw (0,4)-- (-1.66,3.46);
\draw (-1.66,3.46)-- (-2.69,2.05);
\draw (-2.69,2.05)-- (-2.68,0.3);
\draw (-2.68,0.3)-- (-1.66,-1.11);
\draw (-1.66,-1.11)-- (0,-1.65);
\draw (0,-1.65)-- (1.66,-1.11);
\draw (1.66,-1.11)-- (2.69,0.3);
\draw (2.69,0.3)-- (2.69,2.05);
\draw (2.69,2.05)-- (1.66,3.46);
\draw (1.66,3.46)-- (0,4);
\draw(1.67,1.18) circle (0.44cm);
\draw(-1.67,1.17) circle (0.44cm);
\end{tikzpicture}
\caption{The decagon with two monogons}
\label{fig:decagon}
\end{figure}

We are now drawing eight $2$-diagonals in order to cut the decagon in quadrangles. This is in figure \ref{fig:triinit2} the initial $(m+2)$-angulation from which we will build later the translation quiver isomorphic to a sub-quiver of the Auslander-Reiten quiver of ${\mathcal{C}}^{(m)}_{\tilde{D_7}}$.

\begin{figure}[!h]
\centering
\begin{tikzpicture}[scale=0.8]
\fill[fill=black,fill opacity=0.1] (0,4) -- (-1.66,3.46) -- (-2.69,2.05) -- (-2.68,0.3) -- (-1.66,-1.11) -- (0,-1.65) -- (1.66,-1.11) -- (2.69,0.3) -- (2.69,2.05) -- (1.66,3.46) -- cycle;
\draw [fill=black,fill opacity=1.0] (1.66,1.61) circle (0.2cm);
\draw [fill=black,fill opacity=1.0] (-1.66,1.61) circle (0.2cm);
\draw (0,4)-- (-1.66,3.46);
\draw (-1.66,3.46)-- (-2.69,2.05);
\draw (-2.69,2.05)-- (-2.68,0.3);
\draw (-2.68,0.3)-- (-1.66,-1.11);
\draw (-1.66,-1.11)-- (0,-1.65);
\draw (0,-1.65)-- (1.66,-1.11);
\draw (1.66,-1.11)-- (2.69,0.3);
\draw (2.69,0.3)-- (2.69,2.05);
\draw (2.69,2.05)-- (1.66,3.46);
\draw (1.66,3.46)-- (0,4);
\draw [shift={(-8.03,-1.44)}] plot[domain=-0.03:0.66,variable=\t]({1*8.03*cos(\t r)+0*8.03*sin(\t r)},{0*8.03*cos(\t r)+1*8.03*sin(\t r)});
\draw [shift={(-0.75,-11.66)}] plot[domain=-0.82:0.59,variable=\t]({-0.07*14.49*cos(\t r)+-1*1.99*sin(\t r)},{1*14.49*cos(\t r)+-0.07*1.99*sin(\t r)});
\draw [shift={(8.03,-1.43)}] plot[domain=2.49:3.17,variable=\t]({1*8.03*cos(\t r)+0*8.03*sin(\t r)},{0*8.03*cos(\t r)+1*8.03*sin(\t r)});
\draw [shift={(0.76,-11.66)}] plot[domain=-0.82:0.59,variable=\t]({0.07*14.49*cos(\t r)+1*1.99*sin(\t r)},{1*14.49*cos(\t r)+-0.07*1.99*sin(\t r)});
\draw(1.67,1.18) circle (0.44cm);
\draw(-1.67,1.17) circle (0.44cm);
\draw [shift={(-7.33,-2.91)}] plot[domain=0.17:0.66,variable=\t]({1*7.44*cos(\t r)+0*7.44*sin(\t r)},{0*7.44*cos(\t r)+1*7.44*sin(\t r)});
\draw [shift={(7.33,-2.9)}] plot[domain=2.48:2.97,variable=\t]({1*7.44*cos(\t r)+0*7.44*sin(\t r)},{0*7.44*cos(\t r)+1*7.44*sin(\t r)});
\draw [shift={(2.52,-0.41)}] plot[domain=-0.43:1.16,variable=\t]({-0.96*5.16*cos(\t r)+-0.29*2.09*sin(\t r)},{0.29*5.16*cos(\t r)+-0.96*2.09*sin(\t r)});
\draw [shift={(-2.52,-0.42)}] plot[domain=-0.43:1.16,variable=\t]({0.96*5.16*cos(\t r)+0.3*2.09*sin(\t r)},{0.3*5.16*cos(\t r)+-0.96*2.09*sin(\t r)});
\begin{scriptsize}
\draw[color=black] (-0.39,1.39) node {$1$};
\draw[color=black] (-1.35,2.39) node {$6$};
\draw[color=black] (0.37,1.38) node {$2$};
\draw[color=black] (1.53,2.62) node {$3$};
\draw[color=black] (-0.65,0.18) node {$7$};
\draw[color=black] (0.68,0.28) node {$4$};
\draw[color=black] (-2.19,0.27) node {$8$};
\draw[color=black] (2.25,0.39) node {$5$};
\end{scriptsize}
\end{tikzpicture}
\caption{The initial quadrangulation for $n=7$ and $m=2$}
\label{fig:triinit2}
\end{figure}

If we follow the rules of Definition \ref{def:quiver}, we associate with this quadrangulation the quiver at the beginning of this section.

Moreover, if we associate the colored quiver with this quadrangulation according to Definition \ref{def:colquiver}, we obtain this one.

\[ \scalebox{1.0}{ \xymatrix@1{
7 \ar@<1ex>^{(2)}[dr] & & & & & 4 \ar@<1ex>^{(0)}[dl] \\
& 6 \ar@<1ex>^{(0)}[ul] \ar@<1ex>^{(0)}[dl] \ar@<1ex>^{(2)}[r] & 1 \ar@<1ex>^{(0)}[l] \ar@<1ex>^{(2)}[r] & 2 \ar@<1ex>^{(0)}[l] \ar@<1ex>^{(2)}[r] & 3 \ar@<1ex>^{(0)}[l] \ar@<1ex>^{(2)}[ur] \ar@<1ex>^{(2)}[dr] & \\
8 \ar@<1ex>^{(2)}[ur] & & & & & 5 \ar@<1ex>^{(0)}[ul] }} \]

Now we know that mutating a colored quiver corresponds to flipping an arc in an $(m+2)$-angulation, we can flip in any way the quadrangulation, draw the corresponding  colored quiver, and note that the colored quivers are related by mutation at the corresponding vertex.

\subsection{$2$-diagonals and the preprojective component of the Auslander-Reiten quiver of $\tilde{D_7}$}
From the initial quadrangulation, we extract the arcs and put them in the first component of the Auslander-Reiten quiver of $\tilde{D_7}$.

\begin{figure}[!h]
\[ \scalebox{0.5}{
 \xymatrix{
 & & & & & \begin{tikzpicture}[scale=0.5]
 \fill[fill=black,fill opacity=0.1] (0,4) -- (-1.66,3.46) -- (-2.69,2.05) -- (-2.68,0.3) -- (-1.66,-1.11) -- (0,-1.65) -- (1.66,-1.11) -- (2.69,0.3) -- (2.69,2.05) -- (1.66,3.46) -- cycle;
 \draw [fill=black,fill opacity=1.0] (1.66,1.61) circle (0.2cm);
 \draw [fill=black,fill opacity=1.0] (-1.66,1.61) circle (0.2cm);
 \draw (0,4)-- (-1.66,3.46);
 \draw (-1.66,3.46)-- (-2.69,2.05);
 \draw (-2.69,2.05)-- (-2.68,0.3);
 \draw (-2.68,0.3)-- (-1.66,-1.11);
 \draw (-1.66,-1.11)-- (0,-1.65);
 \draw (0,-1.65)-- (1.66,-1.11);
 \draw (1.66,-1.11)-- (2.69,0.3);
 \draw (2.69,0.3)-- (2.69,2.05);
 \draw (2.69,2.05)-- (1.66,3.46);
 \draw (1.66,3.46)-- (0,4);
 \draw(1.67,1.18) circle (0.44cm);
 \draw(-1.67,1.17) circle (0.44cm);
 \draw [shift={(-2.52,-0.42)}] plot[domain=-0.43:1.16,variable=\t]({0.96*5.16*cos(\t r)+0.3*2.09*sin(\t r)},{0.3*5.16*cos(\t r)+-0.96*2.09*sin(\t r)});
 \end{tikzpicture} \\
 & & & & \begin{tikzpicture}[scale=0.5]
 \fill[fill=black,fill opacity=0.1] (0,4) -- (-1.66,3.46) -- (-2.69,2.05) -- (-2.68,0.3) -- (-1.66,-1.11) -- (0,-1.65) -- (1.66,-1.11) -- (2.69,0.3) -- (2.69,2.05) -- (1.66,3.46) -- cycle;
 \draw [fill=black,fill opacity=1.0] (1.66,1.61) circle (0.2cm);
 \draw [fill=black,fill opacity=1.0] (-1.66,1.61) circle (0.2cm);
 \draw (0,4)-- (-1.66,3.46);
 \draw (-1.66,3.46)-- (-2.69,2.05);
 \draw (-2.69,2.05)-- (-2.68,0.3);
 \draw (-2.68,0.3)-- (-1.66,-1.11);
 \draw (-1.66,-1.11)-- (0,-1.65);
 \draw (0,-1.65)-- (1.66,-1.11);
 \draw (1.66,-1.11)-- (2.69,0.3);
 \draw (2.69,0.3)-- (2.69,2.05);
 \draw (2.69,2.05)-- (1.66,3.46);
 \draw (1.66,3.46)-- (0,4);
 \draw [shift={(0.76,-11.66)}] plot[domain=-0.82:0.59,variable=\t]({0.07*14.49*cos(\t r)+1*1.99*sin(\t r)},{1*14.49*cos(\t r)+-0.07*1.99*sin(\t r)});
 \draw(1.67,1.18) circle (0.44cm);
 \draw(-1.67,1.17) circle (0.44cm);
 \end{tikzpicture} \ar[ur] \ar[r] & \begin{tikzpicture}[scale=0.5]
 \fill[fill=black,fill opacity=0.1] (0,4) -- (-1.66,3.46) -- (-2.69,2.05) -- (-2.68,0.3) -- (-1.66,-1.11) -- (0,-1.65) -- (1.66,-1.11) -- (2.69,0.3) -- (2.69,2.05) -- (1.66,3.46) -- cycle;
 \draw [fill=black,fill opacity=1.0] (1.66,1.61) circle (0.2cm);
 \draw [fill=black,fill opacity=1.0] (-1.66,1.61) circle (0.2cm);
 \draw (0,4)-- (-1.66,3.46);
 \draw (-1.66,3.46)-- (-2.69,2.05);
 \draw (-2.69,2.05)-- (-2.68,0.3);
 \draw (-2.68,0.3)-- (-1.66,-1.11);
 \draw (-1.66,-1.11)-- (0,-1.65);
 \draw (0,-1.65)-- (1.66,-1.11);
 \draw (1.66,-1.11)-- (2.69,0.3);
 \draw (2.69,0.3)-- (2.69,2.05);
 \draw (2.69,2.05)-- (1.66,3.46);
 \draw (1.66,3.46)-- (0,4);
 \draw(1.67,1.18) circle (0.44cm);
 \draw(-1.67,1.17) circle (0.44cm);
 \draw [shift={(7.33,-2.9)}] plot[domain=2.48:2.97,variable=\t]({1*7.44*cos(\t r)+0*7.44*sin(\t r)},{0*7.44*cos(\t r)+1*7.44*sin(\t r)});
 \end{tikzpicture} \\
 & & & \begin{tikzpicture}[scale=0.5]
 \fill[fill=black,fill opacity=0.1] (0,4) -- (-1.66,3.46) -- (-2.69,2.05) -- (-2.68,0.3) -- (-1.66,-1.11) -- (0,-1.65) -- (1.66,-1.11) -- (2.69,0.3) -- (2.69,2.05) -- (1.66,3.46) -- cycle;
 \draw [fill=black,fill opacity=1.0] (1.66,1.61) circle (0.2cm);
 \draw [fill=black,fill opacity=1.0] (-1.66,1.61) circle (0.2cm);
 \draw (0,4)-- (-1.66,3.46);
 \draw (-1.66,3.46)-- (-2.69,2.05);
 \draw (-2.69,2.05)-- (-2.68,0.3);
 \draw (-2.68,0.3)-- (-1.66,-1.11);
 \draw (-1.66,-1.11)-- (0,-1.65);
 \draw (0,-1.65)-- (1.66,-1.11);
 \draw (1.66,-1.11)-- (2.69,0.3);
 \draw (2.69,0.3)-- (2.69,2.05);
 \draw (2.69,2.05)-- (1.66,3.46);
 \draw (1.66,3.46)-- (0,4);
 \draw [shift={(8.03,-1.43)}] plot[domain=2.49:3.17,variable=\t]({1*8.03*cos(\t r)+0*8.03*sin(\t r)},{0*8.03*cos(\t r)+1*8.03*sin(\t r)});
 \draw(1.67,1.18) circle (0.44cm);
 \draw(-1.67,1.17) circle (0.44cm);
 \end{tikzpicture} \ar[ur] & & \\
 & & \begin{tikzpicture}[scale=0.5]
 \fill[fill=black,fill opacity=0.1] (0,4) -- (-1.66,3.46) -- (-2.69,2.05) -- (-2.68,0.3) -- (-1.66,-1.11) -- (0,-1.65) -- (1.66,-1.11) -- (2.69,0.3) -- (2.69,2.05) -- (1.66,3.46) -- cycle;
 \draw [fill=black,fill opacity=1.0] (1.66,1.61) circle (0.2cm);
 \draw [fill=black,fill opacity=1.0] (-1.66,1.61) circle (0.2cm);
 \draw (0,4)-- (-1.66,3.46);
 \draw (-1.66,3.46)-- (-2.69,2.05);
 \draw (-2.69,2.05)-- (-2.68,0.3);
 \draw (-2.68,0.3)-- (-1.66,-1.11);
 \draw (-1.66,-1.11)-- (0,-1.65);
 \draw (0,-1.65)-- (1.66,-1.11);
 \draw (1.66,-1.11)-- (2.69,0.3);
 \draw (2.69,0.3)-- (2.69,2.05);
 \draw (2.69,2.05)-- (1.66,3.46);
 \draw (1.66,3.46)-- (0,4);
 \draw [shift={(-8.03,-1.44)}] plot[domain=-0.03:0.66,variable=\t]({1*8.03*cos(\t r)+0*8.03*sin(\t r)},{0*8.03*cos(\t r)+1*8.03*sin(\t r)});
 \draw(1.67,1.18) circle (0.44cm);
 \draw(-1.67,1.17) circle (0.44cm);
 \end{tikzpicture} \ar[ur] & & & \\
 \begin{tikzpicture}[scale=0.5]
 \fill[fill=black,fill opacity=0.1] (0,4) -- (-1.66,3.46) -- (-2.69,2.05) -- (-2.68,0.3) -- (-1.66,-1.11) -- (0,-1.65) -- (1.66,-1.11) -- (2.69,0.3) -- (2.69,2.05) -- (1.66,3.46) -- cycle;
 \draw [fill=black,fill opacity=1.0] (1.66,1.61) circle (0.2cm);
 \draw [fill=black,fill opacity=1.0] (-1.66,1.61) circle (0.2cm);
 \draw (0,4)-- (-1.66,3.46);
 \draw (-1.66,3.46)-- (-2.69,2.05);
 \draw (-2.69,2.05)-- (-2.68,0.3);
 \draw (-2.68,0.3)-- (-1.66,-1.11);
 \draw (-1.66,-1.11)-- (0,-1.65);
 \draw (0,-1.65)-- (1.66,-1.11);
 \draw (1.66,-1.11)-- (2.69,0.3);
 \draw (2.69,0.3)-- (2.69,2.05);
 \draw (2.69,2.05)-- (1.66,3.46);
 \draw (1.66,3.46)-- (0,4);
 \draw(1.67,1.18) circle (0.44cm);
 \draw(-1.67,1.17) circle (0.44cm);
 \draw [shift={(2.52,-0.41)}] plot[domain=-0.43:1.16,variable=\t]({-0.96*5.16*cos(\t r)+-0.29*2.09*sin(\t r)},{0.29*5.16*cos(\t r)+-0.96*2.09*sin(\t r)});
 \end{tikzpicture} \ar[r] & \begin{tikzpicture}[scale=0.5]
 \fill[fill=black,fill opacity=0.1] (0,4) -- (-1.66,3.46) -- (-2.69,2.05) -- (-2.68,0.3) -- (-1.66,-1.11) -- (0,-1.65) -- (1.66,-1.11) -- (2.69,0.3) -- (2.69,2.05) -- (1.66,3.46) -- cycle;
 \draw [fill=black,fill opacity=1.0] (1.66,1.61) circle (0.2cm);
 \draw [fill=black,fill opacity=1.0] (-1.66,1.61) circle (0.2cm);
 \draw (0,4)-- (-1.66,3.46);
 \draw (-1.66,3.46)-- (-2.69,2.05);
 \draw (-2.69,2.05)-- (-2.68,0.3);
 \draw (-2.68,0.3)-- (-1.66,-1.11);
 \draw (-1.66,-1.11)-- (0,-1.65);
 \draw (0,-1.65)-- (1.66,-1.11);
 \draw (1.66,-1.11)-- (2.69,0.3);
 \draw (2.69,0.3)-- (2.69,2.05);
 \draw (2.69,2.05)-- (1.66,3.46);
 \draw (1.66,3.46)-- (0,4);
 \draw [shift={(-0.75,-11.66)}] plot[domain=-0.82:0.59,variable=\t]({-0.07*14.49*cos(\t r)+-1*1.99*sin(\t r)},{1*14.49*cos(\t r)+-0.07*1.99*sin(\t r)});
 \draw(1.67,1.18) circle (0.44cm);
 \draw(-1.67,1.17) circle (0.44cm);
 \end{tikzpicture} \ar[ur] & & & & \\
 \begin{tikzpicture}[scale=0.5]
 \fill[fill=black,fill opacity=0.1] (0,4) -- (-1.66,3.46) -- (-2.69,2.05) -- (-2.68,0.3) -- (-1.66,-1.11) -- (0,-1.65) -- (1.66,-1.11) -- (2.69,0.3) -- (2.69,2.05) -- (1.66,3.46) -- cycle;
 \draw [fill=black,fill opacity=1.0] (1.66,1.61) circle (0.2cm);
 \draw [fill=black,fill opacity=1.0] (-1.66,1.61) circle (0.2cm);
 \draw (0,4)-- (-1.66,3.46);
 \draw (-1.66,3.46)-- (-2.69,2.05);
 \draw (-2.69,2.05)-- (-2.68,0.3);
 \draw (-2.68,0.3)-- (-1.66,-1.11);
 \draw (-1.66,-1.11)-- (0,-1.65);
 \draw (0,-1.65)-- (1.66,-1.11);
 \draw (1.66,-1.11)-- (2.69,0.3);
 \draw (2.69,0.3)-- (2.69,2.05);
 \draw (2.69,2.05)-- (1.66,3.46);
 \draw (1.66,3.46)-- (0,4);
 \draw(1.67,1.18) circle (0.44cm);
 \draw(-1.67,1.17) circle (0.44cm);
 \draw [shift={(-7.33,-2.91)}] plot[domain=0.17:0.66,variable=\t]({1*7.44*cos(\t r)+0*7.44*sin(\t r)},{0*7.44*cos(\t r)+1*7.44*sin(\t r)});
 \end{tikzpicture} \ar[ur] & & & & & \\
 }}
\]
 \label{fig:ar2}
\end{figure}

As the preprojective component of the Auslander-Reiten quiver is of the following form (where we have associated the vertices with the types of the arcs we are working with), we just have to apply $\tau$ and the shift functor to the arcs to be able to complete the preprojective component and obtain the result in figure \ref{fig:ar72}.

\[ \scalebox{.9}{
 \xymatrix{
 & & & & & d_7 \ar[dr] & & \tau^{-1}d_7 \ar[dr] & & \tau^{-2}d_7 \\
 & & & & d_6 \ar[dr] \ar[ur] \ar[r] & d_8 \ar[r] & \tau^{-1}d_6 \ar[dr] \ar[ur] \ar[r] & \tau^{-1}d_8 \ar[r] & \tau^{-2}d_6 \ar[ur] \ar[r] & \tau^{-2}d_8 \\
 & & & d_1 \ar[dr] \ar[ur] & & \tau^{-1}d_1 \ar[dr] \ar[ur] & & \tau^{-2}d_1 \ar[ur] & & \\
 & & d_2 \ar[dr] \ar[ur] & & \tau^{-1}d_2 \ar[dr] \ar[ur] & & \tau^{-2}d_2 \ar[ur] & & & \\
 d_4 \ar[r] & d_3 \ar[ur] \ar[dr] \ar[r] & \tau^{-1}d_4 \ar[r] & \tau^{-1}d_3 \ar[dr] \ar[ur] \ar[r] & \tau^{-2}d_4 \ar[r] & \tau^{-2}d_3 \ar[ur] \ar[r] & \tau^{-3}d_4 & & & \\
 d_5 \ar[ur] & & \tau^{-1}d_5 \ar[ur] & & \tau^{-2}d_5 \ar[ur] & & & & & \\
 }}
\]

%\begin{landscape}
\begin{figure}
\[ \scalebox{0.7}{
 \xymatrix{
 & & & & & \begin{tikzpicture}[scale=0.3]
 \fill[fill=black,fill opacity=0.1] (0,4) -- (-1.66,3.46) -- (-2.69,2.05) -- (-2.68,0.3) -- (-1.66,-1.11) -- (0,-1.65) -- (1.66,-1.11) -- (2.69,0.3) -- (2.69,2.05) -- (1.66,3.46) -- cycle;
 \draw [fill=black,fill opacity=1.0] (1.66,1.61) circle (0.2cm);
 \draw [fill=black,fill opacity=1.0] (-1.66,1.61) circle (0.2cm);
 \draw (0,4)-- (-1.66,3.46);
 \draw (-1.66,3.46)-- (-2.69,2.05);
 \draw (-2.69,2.05)-- (-2.68,0.3);
 \draw (-2.68,0.3)-- (-1.66,-1.11);
 \draw (-1.66,-1.11)-- (0,-1.65);
 \draw (0,-1.65)-- (1.66,-1.11);
 \draw (1.66,-1.11)-- (2.69,0.3);
 \draw (2.69,0.3)-- (2.69,2.05);
 \draw (2.69,2.05)-- (1.66,3.46);
 \draw (1.66,3.46)-- (0,4);
 \draw(1.67,1.18) circle (0.44cm);
 \draw(-1.67,1.17) circle (0.44cm);
 \draw [shift={(-2.52,-0.42)}] plot[domain=-0.43:1.16,variable=\t]({0.96*5.16*cos(\t r)+0.3*2.09*sin(\t r)},{0.3*5.16*cos(\t r)+-0.96*2.09*sin(\t r)});
 \end{tikzpicture} \ar[dr] & & \begin{tikzpicture}[scale=0.3]
 \fill[fill=black,fill opacity=0.1] (0,4) -- (-1.66,3.46) -- (-2.69,2.05) -- (-2.68,0.3) -- (-1.66,-1.11) -- (0,-1.65) -- (1.66,-1.11) -- (2.69,0.3) -- (2.69,2.05) -- (1.66,3.46) -- cycle;
 \draw [fill=black,fill opacity=1.0] (1.66,1.61) circle (0.2cm);
 \draw [fill=black,fill opacity=1.0] (-1.66,1.61) circle (0.2cm);
 \draw (0,4)-- (-1.66,3.46);
 \draw (-1.66,3.46)-- (-2.69,2.05);
 \draw (-2.69,2.05)-- (-2.68,0.3);
 \draw (-2.68,0.3)-- (-1.66,-1.11);
 \draw (-1.66,-1.11)-- (0,-1.65);
 \draw (0,-1.65)-- (1.66,-1.11);
 \draw (1.66,-1.11)-- (2.69,0.3);
 \draw (2.69,0.3)-- (2.69,2.05);
 \draw (2.69,2.05)-- (1.66,3.46);
 \draw (1.66,3.46)-- (0,4);
 \draw(1.67,1.18) circle (0.44cm);
 \draw(-1.67,1.17) circle (0.44cm);
 \draw [shift={(-0.13,0.88)}] plot[domain=0.29:3.91,variable=\t]({-0.99*2.71*cos(\t r)+0.11*1.01*sin(\t r)},{-0.11*2.71*cos(\t r)+-0.99*1.01*sin(\t r)});
 \end{tikzpicture} \ar[dr] & \\
 & & & & \begin{tikzpicture}[scale=0.3]
 \fill[fill=black,fill opacity=0.1] (0,4) -- (-1.66,3.46) -- (-2.69,2.05) -- (-2.68,0.3) -- (-1.66,-1.11) -- (0,-1.65) -- (1.66,-1.11) -- (2.69,0.3) -- (2.69,2.05) -- (1.66,3.46) -- cycle;
 \draw [fill=black,fill opacity=1.0] (1.66,1.61) circle (0.2cm);
 \draw [fill=black,fill opacity=1.0] (-1.66,1.61) circle (0.2cm);
 \draw (0,4)-- (-1.66,3.46);
 \draw (-1.66,3.46)-- (-2.69,2.05);
 \draw (-2.69,2.05)-- (-2.68,0.3);
 \draw (-2.68,0.3)-- (-1.66,-1.11);
 \draw (-1.66,-1.11)-- (0,-1.65);
 \draw (0,-1.65)-- (1.66,-1.11);
 \draw (1.66,-1.11)-- (2.69,0.3);
 \draw (2.69,0.3)-- (2.69,2.05);Conclusion partielle
 \draw (2.69,2.05)-- (1.66,3.46);
 \draw (1.66,3.46)-- (0,4);
 \draw [shift={(0.76,-11.66)}] plot[domain=-0.82:0.59,variable=\t]({0.07*14.49*cos(\t r)+1*1.99*sin(\t r)},{1*14.49*cos(\t r)+-0.07*1.99*sin(\t r)});
 \draw(1.67,1.18) circle (0.44cm);
 \draw(-1.67,1.17) circle (0.44cm);
 \end{tikzpicture} \ar[dr] \ar[ur] \ar[r] & \begin{tikzpicture}[scale=0.3]
 \fill[fill=black,fill opacity=0.1] (0,4) -- (-1.66,3.46) -- (-2.69,2.05) -- (-2.68,0.3) -- (-1.66,-1.11) -- (0,-1.65) -- (1.66,-1.11) -- (2.69,0.3) -- (2.69,2.05) -- (1.66,3.46) -- cycle;
 \draw [fill=black,fill opacity=1.0] (1.66,1.61) circle (0.2cm);
 \draw [fill=black,fill opacity=1.0] (-1.66,1.61) circle (0.2cm);
 \draw (0,4)-- (-1.66,3.46);
 \draw (-1.66,3.46)-- (-2.69,2.05);
 \draw (-2.69,2.05)-- (-2.68,0.3);
 \draw (-2.68,0.3)-- (-1.66,-1.11);
 \draw (-1.66,-1.11)-- (0,-1.65);
 \draw (0,-1.65)-- (1.66,-1.11);
 \draw (1.66,-1.11)-- (2.69,0.3);
 \draw (2.69,0.3)-- (2.69,2.05);
 \draw (2.69,2.05)-- (1.66,3.46);
 \draw (1.66,3.46)-- (0,4);
 \draw(1.67,1.18) circle (0.44cm);
 \draw(-1.67,1.17) circle (0.44cm);
 \draw [shift={(7.33,-2.9)}] plot[domain=2.48:2.97,variable=\t]({1*7.44*cos(\t r)+0*7.44*sin(\t r)},{0*7.44*cos(\t r)+1*7.44*sin(\t r)});
 \end{tikzpicture} \ar[r] & \begin{tikzpicture}[scale=0.3]
 \fill[fill=black,fill opacity=0.1] (0,4) -- (-1.66,3.46) -- (-2.69,2.05) -- (-2.68,0.3) -- (-1.66,-1.11) -- (0,-1.65) -- (1.66,-1.11) -- (2.69,0.3) -- (2.69,2.05) -- (1.66,3.46) -- cycle;
 \draw [fill=black,fill opacity=1.0] (1.66,1.61) circle (0.2cm);
 \draw [fill=black,fill opacity=1.0] (-1.66,1.61) circle (0.2cm);
 \draw (0,4)-- (-1.66,3.46);
 \draw (-1.66,3.46)-- (-2.69,2.05);
 \draw (-2.69,2.05)-- (-2.68,0.3);
 \draw (-2.68,0.3)-- (-1.66,-1.11);
 \draw (-1.66,-1.11)-- (0,-1.65);
 \draw (0,-1.65)-- (1.66,-1.11);
 \draw (1.66,-1.11)-- (2.69,0.3);
 \draw (2.69,0.3)-- (2.69,2.05);
 \draw (2.69,2.05)-- (1.66,3.46);
 \draw (1.66,3.46)-- (0,4);
 \draw(1.67,1.18) circle (0.44cm);
 \draw(-1.67,1.17) circle (0.44cm);
 \draw [shift={(-1.64,2.62)}] plot[domain=4.29:5.6,variable=\t]({1*2.55*cos(\t r)+0*2.55*sin(\t r)},{0*2.55*cos(\t r)+1*2.55*sin(\t r)});
 \draw [shift={(0.92,0.29)}] plot[domain=0.43:4.6,variable=\t]({-0.58*2.32*cos(\t r)+0.81*0.9*sin(\t r)},{-0.81*2.32*cos(\t r)+-0.58*0.9*sin(\t r)});
 \end{tikzpicture} \ar[dr] \ar[ur] \ar[r] & \begin{tikzpicture}[scale=0.3]
 \fill[fill=black,fill opacity=0.1] (0,4) -- (-1.66,3.46) -- (-2.69,2.05) -- (-2.68,0.3) -- (-1.66,-1.11) -- (0,-1.65) -- (1.66,-1.11) -- (2.69,0.3) -- (2.69,2.05) -- (1.66,3.46) -- cycle;
 \draw [fill=black,fill opacity=1.0] (1.66,1.61) circle (0.2cm);
 \draw [fill=black,fill opacity=1.0] (-1.66,1.61) circle (0.2cm);
 \draw (0,4)-- (-1.66,3.46);
 \draw (-1.66,3.46)-- (-2.69,2.05);
 \draw (-2.69,2.05)-- (-2.68,0.3);
 \draw (-2.68,0.3)-- (-1.66,-1.11);
 \draw (-1.66,-1.11)-- (0,-1.65);
 \draw (0,-1.65)-- (1.66,-1.11);
 \draw (1.66,-1.11)-- (2.69,0.3);
 \draw (2.69,0.3)-- (2.69,2.05);
 \draw (2.69,2.05)-- (1.66,3.46);
 \draw (1.66,3.46)-- (0,4);
 \draw(1.67,1.18) circle (0.44cm);
 \draw(-1.67,1.17) circle (0.44cm);
 \draw [shift={(-1.64,2.62)}] plot[domain=4.29:5.6,variable=\t]({1*2.55*cos(\t r)+0*2.55*sin(\t r)},{0*2.55*cos(\t r)+1*2.55*sin(\t r)});
 \draw [shift={(2.14,-0.5)}] plot[domain=1.76:2.44,variable=\t]({1*2.35*cos(\t r)+0*2.35*sin(\t r)},{0*2.35*cos(\t r)+1*2.35*sin(\t r)});
 \end{tikzpicture} \ar[r] & \cdots \\
 & \cdots & & \begin{tikzpicture}[scale=0.3]
 \fill[fill=black,fill opacity=0.1] (0,4) -- (-1.66,3.46) -- (-2.69,2.05) -- (-2.68,0.3) -- (-1.66,-1.11) -- (0,-1.65) -- (1.66,-1.11) -- (2.69,0.3) -- (2.69,2.05) -- (1.66,3.46) -- cycle;
 \draw [fill=black,fill opacity=1.0] (1.66,1.61) circle (0.2cm);
 \draw [fill=black,fill opacity=1.0] (-1.66,1.61) circle (0.2cm);
 \draw (0,4)-- (-1.66,3.46);
 \draw (-1.66,3.46)-- (-2.69,2.05);
 \draw (-2.69,2.05)-- (-2.68,0.3);
 \draw (-2.68,0.3)-- (-1.66,-1.11);
 \draw (-1.66,-1.11)-- (0,-1.65);
 \draw (0,-1.65)-- (1.66,-1.11);
 \draw (1.66,-1.11)-- (2.69,0.3);
 \draw (2.69,0.3)-- (2.69,2.05);
 \draw (2.69,2.05)-- (1.66,3.46);
 \draw (1.66,3.46)-- (0,4);
 \draw [shift={(8.03,-1.43)}] plot[domain=2.49:3.17,variable=\t]({1*8.03*cos(\t r)+0*8.03*sin(\t r)},{0*8.03*cos(\t r)+1*8.03*sin(\t r)});
 \draw(1.67,1.18) circle (0.44cm);
 \draw(-1.67,1.17) circle (0.44cm);
 \end{tikzpicture} \ar[dr] \ar[ur] & & \begin{tikzpicture}[scale=0.3]
 \fill[fill=black,fill opacity=0.1] (0,4) -- (-1.66,3.46) -- (-2.69,2.05) -- (-2.68,0.3) -- (-1.66,-1.11) -- (0,-1.65) -- (1.66,-1.11) -- (2.69,0.3) -- (2.69,2.05) -- (1.66,3.46) -- cycle;
 \draw [fill=black,fill opacity=1.0] (1.66,1.61) circle (0.2cm);
 \draw [fill=black,fill opacity=1.0] (-1.66,1.61) circle (0.2cm);
 \draw (0,4)-- (-1.66,3.46);
 \draw (-1.66,3.46)-- (-2.69,2.05);
 \draw (-2.69,2.05)-- (-2.68,0.3);
 \draw (-2.68,0.3)-- (-1.66,-1.11);
 \draw (-1.66,-1.11)-- (0,-1.65);
 \draw (0,-1.65)-- (1.66,-1.11);
 \draw (1.66,-1.11)-- (2.69,0.3);
 \draw (2.69,0.3)-- (2.69,2.05);
 \draw (2.69,2.05)-- (1.66,3.46);
 \draw (1.66,3.46)-- (0,4);
 \draw(1.67,1.18) circle (0.44cm);
 \draw(-1.67,1.17) circle (0.44cm);
 \draw [shift={(-2.18,1.68)}] plot[domain=4.36:5.86,variable=\t]({1*1.47*cos(\t r)+0*1.47*sin(\t r)},{0*1.47*cos(\t r)+1*1.47*sin(\t r)});
 \draw [shift={(0.84,0.32)}] plot[domain=-0.01:2.72,variable=\t]({1*1.84*cos(\t r)+0*1.84*sin(\t r)},{0*1.84*cos(\t r)+1*1.84*sin(\t r)});
 \end{tikzpicture} \ar[dr] \ar[ur] & & \cdots \ar[ur] & \\
 & & \begin{tikzpicture}[scale=0.3]
 \fill[fill=black,fill opacity=0.1] (0,4) -- (-1.66,3.46) -- (-2.69,2.05) -- (-2.68,0.3) -- (-1.66,-1.11) -- (0,-1.65) -- (1.66,-1.11) -- (2.69,0.3) -- (2.69,2.05) -- (1.66,3.46) -- cycle;
 \draw [fill=black,fill opacity=1.0] (1.66,1.61) circle (0.2cm);
 \draw [fill=black,fill opacity=1.0] (-1.66,1.61) circle (0.2cm);
 \draw (0,4)-- (-1.66,3.46);
 \draw (-1.66,3.46)-- (-2.69,2.05);
 \draw (-2.69,2.05)-- (-2.68,0.3);
 \draw (-2.68,0.3)-- (-1.66,-1.11);
 \draw (-1.66,-1.11)-- (0,-1.65);
 \draw (0,-1.65)-- (1.66,-1.11);
 \draw (1.66,-1.11)-- (2.69,0.3);
 \draw (2.69,0.3)-- (2.69,2.05);
 \draw (2.69,2.05)-- (1.66,3.46);
 \draw (1.66,3.46)-- (0,4);
 \draw [shift={(-8.03,-1.44)}] plot[domain=-0.03:0.66,variable=\t]({1*8.03*cos(\t r)+0*8.03*sin(\t r)},{0*8.03*cos(\t r)+1*8.03*sin(\t r)});
 \draw(1.67,1.18) circle (0.44cm);
 \draw(-1.67,1.17) circle (0.44cm);
 \end{tikzpicture} \ar[dr] \ar[ur] & & \begin{tikzpicture}[scale=0.3]
 \fill[fill=black,fill opacity=0.1] (0,4) -- (-1.66,3.46) -- (-2.69,2.05) -- (-2.68,0.3) -- (-1.66,-1.11) -- (0,-1.65) -- (1.66,-1.11) -- (2.69,0.3) -- (2.69,2.05) -- (1.66,3.46) -- cycle;
 \draw [fill=black,fill opacity=1.0] (1.66,1.61) circle (0.2cm);
 \draw [fill=black,fill opacity=1.0] (-1.66,1.61) circle (0.2cm);
 \draw (0,4)-- (-1.66,3.46);
 \draw (-1.66,3.46)-- (-2.69,2.05);
 \draw (-2.69,2.05)-- (-2.68,0.3);
 \draw (-2.68,0.3)-- (-1.66,-1.11);
 \draw (-1.66,-1.11)-- (0,-1.65);
 \draw (0,-1.65)-- (1.66,-1.11);
 \draw (1.66,-1.11)-- (2.69,0.3);
 \draw (2.69,0.3)-- (2.69,2.05);
 \draw (2.69,2.05)-- (1.66,3.46);
 \draw (1.66,3.46)-- (0,4);
 \draw(1.67,1.18) circle (0.44cm);
 \draw(-1.67,1.17) circle (0.44cm);
 \draw [shift={(-2.18,1.68)}] plot[domain=4.36:5.86,variable=\t]({1*1.47*cos(\t r)+0*1.47*sin(\t r)},{0*1.47*cos(\t r)+1*1.47*sin(\t r)});
 \draw [shift={(3.22,-0.67)}] plot[domain=1.93:2.74,variable=\t]({1*4.42*cos(\t r)+0*4.42*sin(\t r)},{0*4.42*cos(\t r)+1*4.42*sin(\t r)});
 \end{tikzpicture} \ar[dr] \ar[ur] & & \cdots \ar[ur] & & \\
 \begin{tikzpicture}[scale=0.3]
 \fill[fill=black,fill opacity=0.1] (0,4) -- (-1.66,3.46) -- (-2.69,2.05) -- (-2.68,0.3) -- (-1.66,-1.11) -- (0,-1.65) -- (1.66,-1.11) -- (2.69,0.3) -- (2.69,2.05) -- (1.66,3.46) -- cycle;
 \draw [fill=black,fill opacity=1.0] (1.66,1.61) circle (0.2cm);
 \draw [fill=black,fill opacity=1.0] (-1.66,1.61) circle (0.2cm);
 \draw (0,4)-- (-1.66,3.46);
 \draw (-1.66,3.46)-- (-2.69,2.05);
 \draw (-2.69,2.05)-- (-2.68,0.3);
 \draw (-2.68,0.3)-- (-1.66,-1.11);
 \draw (-1.66,-1.11)-- (0,-1.65);
 \draw (0,-1.65)-- (1.66,-1.11);
 \draw (1.66,-1.11)-- (2.69,0.3);
 \draw (2.69,0.3)-- (2.69,2.05);
 \draw (2.69,2.05)-- (1.66,3.46);
 \draw (1.66,3.46)-- (0,4);
 \draw(1.67,1.18) circle (0.44cm);
 \draw(-1.67,1.17) circle (0.44cm);
 \draw [shift={(2.52,-0.41)}] plot[domain=-0.43:1.16,variable=\t]({-0.96*5.16*cos(\t r)+-0.29*2.09*sin(\t r)},{0.29*5.16*cos(\t r)+-0.96*2.09*sin(\t r)});
 \end{tikzpicture} \ar[r] & \begin{tikzpicture}[scale=0.3]
 \fill[fill=black,fill opacity=0.1] (0,4) -- (-1.66,3.46) -- (-2.69,2.05) -- (-2.68,0.3) -- (-1.66,-1.11) -- (0,-1.65) -- (1.66,-1.11) -- (2.69,0.3) -- (2.69,2.05) -- (1.66,3.46) -- cycle;
 \draw [fill=black,fill opacity=1.0] (1.66,1.61) circle (0.2cm);
 \draw [fill=black,fill opacity=1.0] (-1.66,1.61) circle (0.2cm);
 \draw (0,4)-- (-1.66,3.46);
 \draw (-1.66,3.46)-- (-2.69,2.05);
 \draw (-2.69,2.05)-- (-2.68,0.3);
 \draw (-2.68,0.3)-- (-1.66,-1.11);
 \draw (-1.66,-1.11)-- (0,-1.65);
 \draw (0,-1.65)-- (1.66,-1.11);
 \draw (1.66,-1.11)-- (2.69,0.3);
 \draw (2.69,0.3)-- (2.69,2.05);
 \draw (2.69,2.05)-- (1.66,3.46);
 \draw (1.66,3.46)-- (0,4);
 \draw [shift={(-0.75,-11.66)}] plot[domain=-0.82:0.59,variable=\t]({-0.07*14.49*cos(\t r)+-1*1.99*sin(\t r)},{1*14.49*cos(\t r)+-0.07*1.99*sin(\t r)});
 \draw(1.67,1.18) circle (0.44cm);
 \draw(-1.67,1.17) circle (0.44cm);
 \end{tikzpicture} \ar[ur] \ar[dr] \ar[r] & \begin{tikzpicture}[scale=0.3]
 \fill[fill=black,fill opacity=0.1] (0,4) -- (-1.66,3.46) -- (-2.69,2.05) -- (-2.68,0.3) -- (-1.66,-1.11) -- (0,-1.65) -- (1.66,-1.11) -- (2.69,0.3) -- (2.69,2.05) -- (1.66,3.46) -- cycle;
 \draw [fill=black,fill opacity=1.0] (1.66,1.61) circle (0.2cm);
 \draw [fill=black,fill opacity=1.0] (-1.66,1.61) circle (0.2cm);
 \draw (0,4)-- (-1.66,3.46);
 \draw (-1.66,3.46)-- (-2.69,2.05);
 \draw (-2.69,2.05)-- (-2.68,0.3);
 \draw (-2.68,0.3)-- (-1.66,-1.11);
 \draw (-1.66,-1.11)-- (0,-1.65);
 \draw (0,-1.65)-- (1.66,-1.11);
 \draw (1.66,-1.11)-- (2.69,0.3);
 \draw (2.69,0.3)-- (2.69,2.05);
 \draw (2.69,2.05)-- (1.66,3.46);
 \draw (1.66,3.46)-- (0,4);
 \draw(1.67,1.18) circle (0.44cm);
 \draw(-1.67,1.17) circle (0.44cm);
 \draw [shift={(-1.23,0.43)}] plot[domain=1.9:3.23,variable=\t]({1*1.46*cos(\t r)+0*1.46*sin(\t r)},{0*1.46*cos(\t r)+1*1.46*sin(\t r)});
 \end{tikzpicture} \ar[r] & \begin{tikzpicture}[scale=0.3]
 \fill[fill=black,fill opacity=0.1] (0,4) -- (-1.66,3.46) -- (-2.69,2.05) -- (-2.68,0.3) -- (-1.66,-1.11) -- (0,-1.65) -- (1.66,-1.11) -- (2.69,0.3) -- (2.69,2.05) -- (1.66,3.46) -- cycle;
 \draw [fill=black,fill opacity=1.0] (1.66,1.61) circle (0.2cm);
 \draw [fill=black,fill opacity=1.0] (-1.66,1.61) circle (0.2cm);
 \draw (0,4)-- (-1.66,3.46);
 \draw (-1.66,3.46)-- (-2.69,2.05);
 \draw (-2.69,2.05)-- (-2.68,0.3);
 \draw (-2.68,0.3)-- (-1.66,-1.11);
 \draw (-1.66,-1.11)-- (0,-1.65);
 \draw (0,-1.65)-- (1.66,-1.11);
 \draw (1.66,-1.11)-- (2.69,0.3);
 \draw (2.69,0.3)-- (2.69,2.05);
 \draw (2.69,2.05)-- (1.66,3.46);
 \draw (1.66,3.46)-- (0,4);
 \draw(1.67,1.18) circle (0.44cm);
 \draw(-1.67,1.17) circle (0.44cm);
 \draw [shift={(-2.18,1.68)}] plot[domain=4.36:5.86,variable=\t]({1*1.47*cos(\t r)+0*1.47*sin(\t r)},{0*1.47*cos(\t r)+1*1.47*sin(\t r)});
 \draw [shift={(-2.59,1.81)}] plot[domain=-0.4:1.06,variable=\t]({1*1.9*cos(\t r)+0*1.9*sin(\t r)},{0*1.9*cos(\t r)+1*1.9*sin(\t r)});
 \end{tikzpicture} \ar[dr] \ar[ur] \ar[r] & \begin{tikzpicture}[scale=0.3]
 \fill[fill=black,fill opacity=0.1] (0,4) -- (-1.66,3.46) -- (-2.69,2.05) -- (-2.68,0.3) -- (-1.66,-1.11) -- (0,-1.65) -- (1.66,-1.11) -- (2.69,0.3) -- (2.69,2.05) -- (1.66,3.46) -- cycle;
 \draw [fill=black,fill opacity=1.0] (1.66,1.61) circle (0.2cm);
 \draw [fill=black,fill opacity=1.0] (-1.66,1.61) circle (0.2cm);
 \draw (0,4)-- (-1.66,3.46);
 \draw (-1.66,3.46)-- (-2.69,2.05);
 \draw (-2.69,2.05)-- (-2.68,0.3);
 \draw (-2.68,0.3)-- (-1.66,-1.11);
 \draw (-1.66,-1.11)-- (0,-1.65);
 \draw (0,-1.65)-- (1.66,-1.11);
 \draw (1.66,-1.11)-- (2.69,0.3);
 \draw (2.69,0.3)-- (2.69,2.05);
 \draw (2.69,2.05)-- (1.66,3.46);
 \draw (1.66,3.46)-- (0,4);
 \draw(1.67,1.18) circle (0.44cm);
 \draw(-1.67,1.17) circle (0.44cm);
 \draw [shift={(-4.15,2.25)}] plot[domain=-0.25:0.45,variable=\t]({1*2.77*cos(\t r)+0*2.77*sin(\t r)},{0*2.77*cos(\t r)+1*2.77*sin(\t r)});
 \end{tikzpicture} \ar[r] & \cdots \ar[ur] \ar[r] & \cdots & & \\
 \begin{tikzpicture}[scale=0.3]
 \fill[fill=black,fill opacity=0.1] (0,4) -- (-1.66,3.46) -- (-2.69,2.05) -- (-2.68,0.3) -- (-1.66,-1.11) -- (0,-1.65) -- (1.66,-1.11) -- (2.69,0.3) -- (2.69,2.05) -- (1.66,3.46) -- cycle;
 \draw [fill=black,fill opacity=1.0] (1.66,1.61) circle (0.2cm);
 \draw [fill=black,fill opacity=1.0] (-1.66,1.61) circle (0.2cm);
 \draw (0,4)-- (-1.66,3.46);
 \draw (-1.66,3.46)-- (-2.69,2.05);
 \draw (-2.69,2.05)-- (-2.68,0.3);
 \draw (-2.68,0.3)-- (-1.66,-1.11);
 \draw (-1.66,-1.11)-- (0,-1.65);
 \draw (0,-1.65)-- (1.66,-1.11);
 \draw (1.66,-1.11)-- (2.69,0.3);
 \draw (2.69,0.3)-- (2.69,2.05);
 \draw (2.69,2.05)-- (1.66,3.46);
 \draw (1.66,3.46)-- (0,4);
 \draw(1.67,1.18) circle (0.44cm);
 \draw(-1.67,1.17) circle (0.44cm);
 \draw [shift={(-7.33,-2.91)}] plot[domain=0.17:0.66,variable=\t]({1*7.44*cos(\t r)+0*7.44*sin(\t r)},{0*7.44*cos(\t r)+1*7.44*sin(\t r)});
 \end{tikzpicture} \ar[ur] & & \begin{tikzpicture}[scale=0.3]
 \fill[fill=black,fill opacity=0.1] (0,4) -- (-1.66,3.46) -- (-2.69,2.05) -- (-2.68,0.3) -- (-1.66,-1.11) -- (0,-1.65) -- (1.66,-1.11) -- (2.69,0.3) -- (2.69,2.05) -- (1.66,3.46) -- cycle;
 \draw [fill=black,fill opacity=1.0] (1.66,1.61) circle (0.2cm);
 \draw [fill=black,fill opacity=1.0] (-1.66,1.61) circle (0.2cm);
 \draw (0,4)-- (-1.66,3.46);
 \draw (-1.66,3.46)-- (-2.69,2.05);
 \draw (-2.69,2.05)-- (-2.68,0.3);
 \draw (-2.68,0.3)-- (-1.66,-1.11);
 \draw (-1.66,-1.11)-- (0,-1.65);
 \draw (0,-1.65)-- (1.66,-1.11);
 \draw (1.66,-1.11)-- (2.69,0.3);
 \draw (2.69,0.3)-- (2.69,2.05);
 \draw (2.69,2.05)-- (1.66,3.46);
 \draw (1.66,3.46)-- (0,4);
 \draw(1.67,1.18) circle (0.44cm);
 \draw(-1.67,1.17) circle (0.44cm);
 \draw [shift={(-1.82,0.82)}] plot[domain=-2.61:1.42,variable=\t]({1*1.01*cos(\t r)+0*1.01*sin(\t r)},{0*1.01*cos(\t r)+1*1.01*sin(\t r)});
 \end{tikzpicture} \ar[ur] & & \cdots \ar[ur] & & & & \\
 }}
\]
 \caption{Arcs on the Auslander-Reiten quiver of $\tilde{D_7}$}
 \label{fig:ar72}
\end{figure}
%\end{landscape}

Note that there are two copies of this component, and the other one is about the same but all the arcs have been shifted once clockwise.

\cor{The preprojective component is infinite, and as we apply the Auslander-Reiten translation several times, the arcs wrap around the inner polygons.}

\subsection{$2$-diagonals in the non-homogeneous tubes of the Auslander-Reiten quiver of $\tilde{D_7 }$}
Concerning the regular modules, there are three types of non-homogeneous tubes: The first type, containing $m$ tubes of size $5$ and the two other types of tubes (each one containing $m$ copies of it) of size $2$. The one of size $5$ corresponds to the following cyclic arc (in figure \ref{fig:arc2}) and its images under the shift.

\begin{figure}[!h]
\centering
\begin{tikzpicture}[scale=0.6]
\fill[fill=black,fill opacity=0.1] (0,4) -- (-1.66,3.46) -- (-2.69,2.05) -- (-2.68,0.3) -- (-1.66,-1.11) -- (0,-1.65) -- (1.66,-1.11) -- (2.69,0.3) -- (2.69,2.05) -- (1.66,3.46) -- cycle;
\draw [fill=black,fill opacity=1.0] (1.66,1.61) circle (0.2cm);
\draw [fill=black,fill opacity=1.0] (-1.66,1.61) circle (0.2cm);
\draw (0,4)-- (-1.66,3.46);
\draw (-1.66,3.46)-- (-2.69,2.05);
\draw (-2.69,2.05)-- (-2.68,0.3);
\draw (-2.68,0.3)-- (-1.66,-1.11);
\draw (-1.66,-1.11)-- (0,-1.65);
\draw (0,-1.65)-- (1.66,-1.11);
\draw (1.66,-1.11)-- (2.69,0.3);
\draw (2.69,0.3)-- (2.69,2.05);
\draw (2.69,2.05)-- (1.66,3.46);
\draw (1.66,3.46)-- (0,4);
\draw(1.67,1.18) circle (0.44cm);
\draw(-1.67,1.17) circle (0.44cm);
\draw [shift={(-0.56,-0.56)}] plot[domain=0.68:1.84,variable=\t]({1*4.16*cos(\t r)+0*4.16*sin(\t r)},{0*4.16*cos(\t r)+1*4.16*sin(\t r)});
\end{tikzpicture}
\caption{Arc of order $5$}
\label{fig:arc2}
\end{figure}

Indeed, if we apply $\tau$ five times, the arc returns to the origin, so it is cyclic of order $5$. The two remaining arcs corresponding to the tubes of order $2$ are the ones of figure \ref{fig:arc22}, plus their successive images under the shift.

\begin{figure}[!h]
\centering
\begin{tikzpicture}[scale=0.6]
\fill[fill=black,fill opacity=0.1] (0,4) -- (-1.66,3.46) -- (-2.69,2.05) -- (-2.68,0.3) -- (-1.66,-1.11) -- (0,-1.65) -- (1.66,-1.11) -- (2.69,0.3) -- (2.69,2.05) -- (1.66,3.46) -- cycle;
\draw [fill=black,fill opacity=1.0] (1.66,1.61) circle (0.2cm);
\draw [fill=black,fill opacity=1.0] (-1.66,1.61) circle (0.2cm);
\draw (0,4)-- (-1.66,3.46);
\draw (-1.66,3.46)-- (-2.69,2.05);
\draw (-2.69,2.05)-- (-2.68,0.3);
\draw (-2.68,0.3)-- (-1.66,-1.11);
\draw (-1.66,-1.11)-- (0,-1.65);
\draw (0,-1.65)-- (1.66,-1.11);
\draw (1.66,-1.11)-- (2.69,0.3);
\draw (2.69,0.3)-- (2.69,2.05);
\draw (2.69,2.05)-- (1.66,3.46);
\draw (1.66,3.46)-- (0,4);
\draw(1.67,1.18) circle (0.44cm);
\draw(-1.67,1.17) circle (0.44cm);
\draw [shift={(-1.66,1.06)}] plot[domain=1.61:4.66,variable=\t]({1*0.75*cos(\t r)+0*0.75*sin(\t r)},{0*0.75*cos(\t r)+1*0.75*sin(\t r)});
\draw [shift={(-1.39,3.98)}] plot[domain=4.63:5.64,variable=\t]({1*3.68*cos(\t r)+0*3.68*sin(\t r)},{0*3.68*cos(\t r)+1*3.68*sin(\t r)});
\end{tikzpicture}
\hspace{20pt}
\begin{tikzpicture}[scale=0.6]
\fill[fill=black,fill opacity=0.1] (0,4) -- (-1.66,3.46) -- (-2.69,2.05) -- (-2.68,0.3) -- (-1.66,-1.11) -- (0,-1.65) -- (1.66,-1.11) -- (2.69,0.3) -- (2.69,2.05) -- (1.66,3.46) -- cycle;
\draw [fill=black,fill opacity=1.0] (1.66,1.61) circle (0.2cm);
\draw [fill=black,fill opacity=1.0] (-1.66,1.61) circle (0.2cm);
\draw (0,4)-- (-1.66,3.46);
\draw (-1.66,3.46)-- (-2.69,2.05);
\draw (-2.69,2.05)-- (-2.68,0.3);
\draw (-2.68,0.3)-- (-1.66,-1.11);
\draw (-1.66,-1.11)-- (0,-1.65);
\draw (0,-1.65)-- (1.66,-1.11);
\draw (1.66,-1.11)-- (2.69,0.3);
\draw (2.69,0.3)-- (2.69,2.05);
\draw (2.69,2.05)-- (1.66,3.46);
\draw (1.66,3.46)-- (0,4);
\draw(1.67,1.18) circle (0.44cm);
\draw(-1.67,1.17) circle (0.44cm);
\draw (-1.63,1.81)-- (1.67,1.81);
\end{tikzpicture}
\caption{Arcs of order $2$}
\label{fig:arc22}
\end{figure}

Here we draw the tube of size $2$ corresponding to the first picture of figure \ref{fig:arc22}.

\[ \xymatrix{
& & \vdots & & \\
& \ar[dr] & & \ar[dr] & \\
\begin{tikzpicture}[scale=0.4]
\fill[fill=black,fill opacity=0.1] (0,4) -- (-1.66,3.46) -- (-2.69,2.05) -- (-2.68,0.3) -- (-1.66,-1.11) -- (0,-1.65) -- (1.66,-1.11) -- (2.69,0.3) -- (2.69,2.05) -- (1.66,3.46) -- cycle;
\draw [fill=black,fill opacity=1.0] (1.66,1.61) circle (0.2cm);
\draw [fill=black,fill opacity=1.0] (-1.66,1.61) circle (0.2cm);
\draw (0,4)-- (-1.66,3.46);
\draw (-1.66,3.46)-- (-2.69,2.05);
\draw (-2.69,2.05)-- (-2.68,0.3);
\draw (-2.68,0.3)-- (-1.66,-1.11);
\draw (-1.66,-1.11)-- (0,-1.65);
\draw (0,-1.65)-- (1.66,-1.11);
\draw (1.66,-1.11)-- (2.69,0.3);
\draw (2.69,0.3)-- (2.69,2.05);
\draw (2.69,2.05)-- (1.66,3.46);
\draw (1.66,3.46)-- (0,4);
\draw(1.67,1.18) circle (0.44cm);
\draw(-1.67,1.17) circle (0.44cm);
\draw [shift={(-1.66,1.06)}] plot[domain=1.61:4.66,variable=\t]({1*0.75*cos(\t r)+0*0.75*sin(\t r)},{0*0.75*cos(\t r)+1*0.75*sin(\t r)});
\draw [shift={(-1.39,3.98)}] plot[domain=4.63:5.64,variable=\t]({1*3.68*cos(\t r)+0*3.68*sin(\t r)},{0*3.68*cos(\t r)+1*3.68*sin(\t r)});
\end{tikzpicture} \ar[ur] \ar@/_5pc/[rrrr]_{\text{objects identified}} & & \begin{tikzpicture}[scale=0.4]
\fill[fill=black,fill opacity=0.1] (0,4) -- (-1.66,3.46) -- (-2.69,2.05) -- (-2.68,0.3) -- (-1.66,-1.11) -- (0,-1.65) -- (1.66,-1.11) -- (2.69,0.3) -- (2.69,2.05) -- (1.66,3.46) -- cycle;
\draw [fill=black,fill opacity=1.0] (-1.66,1.61) circle (0.2cm);
\draw [fill=black,fill opacity=1.0] (1.66,1.61) circle (0.2cm);
\draw (0,4)-- (-1.66,3.46);
\draw (-1.66,3.46)-- (-2.69,2.05);
\draw (-2.69,2.05)-- (-2.68,0.3);
\draw (-2.68,0.3)-- (-1.66,-1.11);
\draw (-1.66,-1.11)-- (0,-1.65);
\draw (0,-1.65)-- (1.66,-1.11);
\draw (1.66,-1.11)-- (2.69,0.3);
\draw (2.69,0.3)-- (2.69,2.05);
\draw (2.69,2.05)-- (1.66,3.46);
\draw (1.66,3.46)-- (0,4);
\draw(1.67,1.18) circle (0.44cm);
\draw(-1.67,1.17) circle (0.44cm);
\draw [shift={(1.45,0.95)}] plot[domain=-2.42:1.18,variable=\t]({1*0.89*cos(\t r)+0*0.89*sin(\t r)},{0*0.89*cos(\t r)+1*0.89*sin(\t r)});
\draw [shift={(-2.78,-2.83)}] plot[domain=0.73:1.32,variable=\t]({1*4.78*cos(\t r)+0*4.78*sin(\t r)},{0*4.78*cos(\t r)+1*4.78*sin(\t r)});
\end{tikzpicture} \ar[ur] & & \begin{tikzpicture}[scale=0.4]
\fill[fill=black,fill opacity=0.1] (0,4) -- (-1.66,3.46) -- (-2.69,2.05) -- (-2.68,0.3) -- (-1.66,-1.11) -- (0,-1.65) -- (1.66,-1.11) -- (2.69,0.3) -- (2.69,2.05) -- (1.66,3.46) -- cycle;
\draw [fill=black,fill opacity=1.0] (1.66,1.61) circle (0.2cm);
\draw [fill=black,fill opacity=1.0] (-1.66,1.61) circle (0.2cm);
\draw (0,4)-- (-1.66,3.46);
\draw (-1.66,3.46)-- (-2.69,2.05);
\draw (-2.69,2.05)-- (-2.68,0.3);
\draw (-2.68,0.3)-- (-1.66,-1.11);
\draw (-1.66,-1.11)-- (0,-1.65);
\draw (0,-1.65)-- (1.66,-1.11);
\draw (1.66,-1.11)-- (2.69,0.3);
\draw (2.69,0.3)-- (2.69,2.05);
\draw (2.69,2.05)-- (1.66,3.46);
\draw (1.66,3.46)-- (0,4);
\draw(1.67,1.18) circle (0.44cm);
\draw(-1.67,1.17) circle (0.44cm);
\draw [shift={(-1.66,1.06)}] plot[domain=1.61:4.66,variable=\t]({1*0.75*cos(\t r)+0*0.75*sin(\t r)},{0*0.75*cos(\t r)+1*0.75*sin(\t r)});
\draw [shift={(-1.39,3.98)}] plot[domain=4.63:5.64,variable=\t]({1*3.68*cos(\t r)+0*3.68*sin(\t r)},{0*3.68*cos(\t r)+1*3.68*sin(\t r)});
\end{tikzpicture} \ar@/^5pc/[llll] } \]

The arc appearing at the leftmost second level is the one of figure \ref{fig:selfcross}.

\begin{figure}[!h]
	\centering
\begin{tikzpicture}[scale=0.4]

	\fill[fill=black,fill opacity=0.1] (0,4) -- (-1.66,3.46) -- (-2.685564174424477,2.0474073042320233) -- (-2.684961866287507,0.3017843102195975) -- (-1.6584231368257114,-1.110100329867886) -- (0.0019491100746363715,-1.6489546717109196) -- (1.6619491100746362,-1.1089546717109195) -- (2.687513284499113,0.30363802405705653) -- (2.6869109763621433,2.0492610180694824) -- (1.6603722469003486,3.4611456581569664) -- cycle;

	\draw [fill=black,fill opacity=1] (-1.6593495840332233,1.5749498112556668) circle (0.2cm);

	\draw [fill=black,fill opacity=1] (1.6610226628671236,1.5760954694126326) circle (0.2cm);

	\draw (0,4)-- (-1.66,3.46);

	\draw (-1.66,3.46)-- (-2.685564174424477,2.0474073042320233);

	\draw (-2.685564174424477,2.0474073042320233)-- (-2.684961866287507,0.3017843102195975);

	\draw (-2.684961866287507,0.3017843102195975)-- (-1.6584231368257114,-1.110100329867886);

	\draw (-1.6584231368257114,-1.110100329867886)-- (0.0019491100746363715,-1.6489546717109196);

	\draw (0.0019491100746363715,-1.6489546717109196)-- (1.6619491100746362,-1.1089546717109195);

	\draw (1.6619491100746362,-1.1089546717109195)-- (2.687513284499113,0.30363802405705653);

	\draw (2.687513284499113,0.30363802405705653)-- (2.6869109763621433,2.0492610180694824);

	\draw (2.6869109763621433,2.0492610180694824)-- (1.6603722469003486,3.4611456581569664);

	\draw (1.6603722469003486,3.4611456581569664)-- (0,4);

	\draw (-1.6592115684128554,1.174949835066057) circle (0.4cm);

	\draw (1.6611606784874913,1.1760954932230228) circle (0.4cm);

	\draw [shift={(-1.2464497165009731,0.9998737653768779)}]  plot[domain=-0.9077464429357116:3.5830214883250533,variable=\t]({-0.9303765628680883*1.072662228151896*cos(\t r)+-0.3666053071980307*0.7266871247086502*sin(\t r)},{0.3666053071980307*1.072662228151896*cos(\t r)+-0.9303765628680883*0.7266871247086502*sin(\t r)});

	\draw [shift={(1.139315552340147,1.1955137978439103)}]  plot[domain=-4.81123466701167:0.10108853883732456,variable=\t]({-0.9904461688879015*1.3998698833477312*cos(\t r)+0.13789991492121553*0.7031702256089931*sin(\t r)},{-0.13789991492121553*1.3998698833477312*cos(\t r)+-0.9904461688879015*0.7031702256089931*sin(\t r)});

	\draw [shift={(1.1490477674527368,3.982022013993753)}]  plot[domain=3.8478790053651766:4.6980801404512755,variable=\t]({1*3.4989694028365297*cos(\t r)+0*3.4989694028365297*sin(\t r)},{0*3.4989694028365297*cos(\t r)+1*3.4989694028365297*sin(\t r)});

\end{tikzpicture}
\caption{Second level: the arc does not correspond to an $m$-rigid object since it has some self-crossing}
\label{fig:selfcross}
\end{figure}

\vspace{20pt}

Note that in this tube, only the first line corresponds to rigid objects. The upper arcs cross themselves and thus no not correspond to rigid objects anymore.

The following picture gives the tube of size $5$, with the corresponding arcs. We can notice that in the first four ranks, the arcs do not cross since they match to a rigid object, and this is not the case anymore from rank $5$.

\cor{NB: In order to have clear pictures, we have replaced the inner boundary component inside of $P$, which should be a monogon, with a simple disk.}

\begin{landscape}
\begin{figure}[!h]
\[ \scalebox{0.7}{
 \xymatrix{
 & \ar[dr] & & \ar[dr] & & \ar[dr] & & \ar[dr] & & \\
 \begin{tikzpicture}[scale=0.3]
 \fill[fill=black,fill opacity=0.1] (0,4) -- (-1.78,3.42) -- (-2.88,1.9) -- (-2.88,0.03) -- (-1.78,-1.48) -- (0.01,-2.06) -- (1.79,-1.48) -- (2.88,0.04) -- (2.88,1.91) -- (1.78,3.42) -- cycle;
 \draw [fill=black,fill opacity=1.0] (-1.36,0.97) circle (0.4cm);
 \draw [fill=black,fill opacity=1.0] (1.36,0.97) circle (0.4cm);
 \draw (0,4)-- (-1.78,3.42);
 \draw (-1.78,3.42)-- (-2.88,1.9);
 \draw (-2.88,1.9)-- (-2.88,0.03);
 \draw (-2.88,0.03)-- (-1.78,-1.48);
 \draw (-1.78,-1.48)-- (0.01,-2.06);
 \draw (0.01,-2.06)-- (1.79,-1.48);
 \draw (1.79,-1.48)-- (2.88,0.04);
 \draw (2.88,0.04)-- (2.88,1.91);
 \draw (2.88,1.91)-- (1.78,3.42);
 \draw (1.78,3.42)-- (0,4);
 \draw [shift={(0.65,0.53)}] plot[domain=2.46:4.47,variable=\t]({1*2.67*cos(\t r)+0*2.67*sin(\t r)},{0*2.67*cos(\t r)+1*2.67*sin(\t r)});
 \draw [shift={(0.17,1.06)}] plot[domain=0.09:2.51,variable=\t]({1*1.97*cos(\t r)+0*1.97*sin(\t r)},{0*1.97*cos(\t r)+1*1.97*sin(\t r)});
 \draw [shift={(-0.82,1.33)}] plot[domain=4.39:6.26,variable=\t]({1*2.96*cos(\t r)+0*2.96*sin(\t r)},{0*2.96*cos(\t r)+1*2.96*sin(\t r)});
 \end{tikzpicture} \ar[ur] \ar[dr] & & \begin{tikzpicture}[scale=0.3]
 \fill[fill=black,fill opacity=0.1] (0,4) -- (-1.78,3.42) -- (-2.88,1.9) -- (-2.88,0.03) -- (-1.78,-1.48) -- (0.01,-2.06) -- (1.79,-1.48) -- (2.88,0.04) -- (2.88,1.91) -- (1.78,3.42) -- cycle;
 \draw [fill=black,fill opacity=1.0] (-1.36,0.97) circle (0.4cm);
 \draw [fill=black,fill opacity=1.0] (1.36,0.97) circle (0.4cm);
 \draw (0,4)-- (-1.78,3.42);
 \draw (-1.78,3.42)-- (-2.88,1.9);
 \draw (-2.88,1.9)-- (-2.88,0.03);
 \draw (-2.88,0.03)-- (-1.78,-1.48);
 \draw (-1.78,-1.48)-- (0.01,-2.06);
 \draw (0.01,-2.06)-- (1.79,-1.48);
 \draw (1.79,-1.48)-- (2.88,0.04);
 \draw (2.88,0.04)-- (2.88,1.91);
 \draw (2.88,1.91)-- (1.78,3.42);
 \draw (1.78,3.42)-- (0,4);
 \draw [shift={(-0.15,-0.01)}] plot[domain=1.03:3.13,variable=\t]({1*2.73*cos(\t r)+0*2.73*sin(\t r)},{0*2.73*cos(\t r)+1*2.73*sin(\t r)});
 \draw [shift={(0.34,0.92)}] plot[domain=-1.26:1,variable=\t]({1*1.68*cos(\t r)+0*1.68*sin(\t r)},{0*1.68*cos(\t r)+1*1.68*sin(\t r)});
 \draw [shift={(0.03,2.12)}] plot[domain=3.22:5,variable=\t]({1*2.92*cos(\t r)+0*2.92*sin(\t r)},{0*2.92*cos(\t r)+1*2.92*sin(\t r)});
 \end{tikzpicture} \ar[ur] \ar[dr] & & \begin{tikzpicture}[scale=0.3]
 \fill[fill=black,fill opacity=0.1] (0,4) -- (-1.78,3.42) -- (-2.88,1.9) -- (-2.88,0.03) -- (-1.78,-1.48) -- (0.01,-2.06) -- (1.79,-1.48) -- (2.88,0.04) -- (2.88,1.91) -- (1.78,3.42) -- cycle;
 \draw [fill=black,fill opacity=1.0] (-1.36,0.97) circle (0.4cm);
 \draw [fill=black,fill opacity=1.0] (1.36,0.97) circle (0.4cm);
 \draw (0,4)-- (-1.78,3.42);
 \draw (-1.78,3.42)-- (-2.88,1.9);
 \draw (-2.88,1.9)-- (-2.88,0.03);
 \draw (-2.88,0.03)-- (-1.78,-1.48);
 \draw (-1.78,-1.48)-- (0.01,-2.06);
 \draw (0.01,-2.06)-- (1.79,-1.48);
 \draw (1.79,-1.48)-- (2.88,0.04);
 \draw (2.88,0.04)-- (2.88,1.91);
 \draw (2.88,1.91)-- (1.78,3.42);
 \draw (1.78,3.42)-- (0,4);
 \draw [shift={(-0.65,0.65)}] plot[domain=0.31:1.96,variable=\t]({1*2.99*cos(\t r)+0*2.99*sin(\t r)},{0*2.99*cos(\t r)+1*2.99*sin(\t r)});
 \draw [shift={(0.15,0.95)}] plot[domain=-2.37:0.3,variable=\t]({1*2.15*cos(\t r)+0*2.15*sin(\t r)},{0*2.15*cos(\t r)+1*2.15*sin(\t r)});
 \draw [shift={(0.77,1.28)}] plot[domain=1.85:3.84,variable=\t]({1*2.83*cos(\t r)+0*2.83*sin(\t r)},{0*2.83*cos(\t r)+1*2.83*sin(\t r)});
 \end{tikzpicture} \ar[ur] \ar[dr] & & \begin{tikzpicture}[scale=0.3]
 \fill[fill=black,fill opacity=0.1] (0,4) -- (-1.78,3.42) -- (-2.88,1.9) -- (-2.88,0.03) -- (-1.78,-1.48) -- (0.01,-2.06) -- (1.79,-1.48) -- (2.88,0.04) -- (2.88,1.91) -- (1.78,3.42) -- cycle;
 \draw [fill=black,fill opacity=1.0] (-1.36,0.97) circle (0.4cm);
 \draw [fill=black,fill opacity=1.0] (1.36,0.97) circle (0.4cm);
 \draw (0,4)-- (-1.78,3.42);
 \draw (-1.78,3.42)-- (-2.88,1.9);
 \draw (-2.88,1.9)-- (-2.88,0.03);
 \draw (-2.88,0.03)-- (-1.78,-1.48);
 \draw (-1.78,-1.48)-- (0.01,-2.06);
 \draw (0.01,-2.06)-- (1.79,-1.48);
 \draw (1.79,-1.48)-- (2.88,0.04);
 \draw (2.88,0.04)-- (2.88,1.91);
 \draw (2.88,1.91)-- (1.78,3.42);
 \draw (1.78,3.42)-- (0,4);
 \draw [shift={(-0.46,1.64)}] plot[domain=-1.45:0.67,variable=\t]({1*2.86*cos(\t r)+0*2.86*sin(\t r)},{0*2.86*cos(\t r)+1*2.86*sin(\t r)});
 \draw [shift={(-0.45,0.89)}] plot[domain=1.82:4.88,variable=\t]({1*2.13*cos(\t r)+0*2.13*sin(\t r)},{0*2.13*cos(\t r)+1*2.13*sin(\t r)});
 \draw [shift={(-0.07,-1.34)}] plot[domain=0.83:1.78,variable=\t]({1*4.39*cos(\t r)+0*4.39*sin(\t r)},{0*4.39*cos(\t r)+1*4.39*sin(\t r)});
 \end{tikzpicture} \ar[ur] \ar[dr] & & \begin{tikzpicture}[scale=0.3]
 \fill[fill=black,fill opacity=0.1] (0,4) -- (-1.78,3.42) -- (-2.88,1.9) -- (-2.88,0.03) -- (-1.78,-1.48) -- (0.01,-2.06) -- (1.79,-1.48) -- (2.88,0.04) -- (2.88,1.91) -- (1.78,3.42) -- cycle;
 \draw [fill=black,fill opacity=1.0] (-1.36,0.97) circle (0.4cm);
 \draw [fill=black,fill opacity=1.0] (1.36,0.97) circle (0.4cm);
 \draw (0,4)-- (-1.78,3.42);
 \draw (-1.78,3.42)-- (-2.88,1.9);
 \draw (-2.88,1.9)-- (-2.88,0.03);
 \draw (-2.88,0.03)-- (-1.78,-1.48);
 \draw (-1.78,-1.48)-- (0.01,-2.06);
 \draw (0.01,-2.06)-- (1.79,-1.48);
 \draw (1.79,-1.48)-- (2.88,0.04);
 \draw (2.88,0.04)-- (2.88,1.91);
 \draw (2.88,1.91)-- (1.78,3.42);
 \draw (1.78,3.42)-- (0,4);
 \draw [shift={(0.28,1.98)}] plot[domain=3.88:5.64,variable=\t]({1*3.25*cos(\t r)+0*3.25*sin(\t r)},{0*3.25*cos(\t r)+1*3.25*sin(\t r)});
 \draw [shift={(-0.48,1.18)}] plot[domain=1.56:3.85,variable=\t]({1*2.14*cos(\t r)+0*2.14*sin(\t r)},{0*2.14*cos(\t r)+1*2.14*sin(\t r)});
 \draw [shift={(-0.58,0.34)}] plot[domain=-0.65:1.53,variable=\t]({1*2.98*cos(\t r)+0*2.98*sin(\t r)},{0*2.98*cos(\t r)+1*2.98*sin(\t r)});
 \end{tikzpicture} \ar[dr] \ar[ur] &  \\
 & \begin{tikzpicture}[scale=0.3]
 \fill[fill=black,fill opacity=0.1] (0,4) -- (-1.78,3.42) -- (-2.88,1.9) -- (-2.88,0.03) -- (-1.78,-1.48) -- (0.01,-2.06) -- (1.79,-1.48) -- (2.88,0.04) -- (2.88,1.91) -- (1.78,3.42) -- cycle;
 \draw [fill=black,fill opacity=1.0] (-1.36,0.97) circle (0.4cm);
 \draw [fill=black,fill opacity=1.0] (1.36,0.97) circle (0.4cm);
 \draw (0,4)-- (-1.78,3.42);
 \draw (-1.78,3.42)-- (-2.88,1.9);
 \draw (-2.88,1.9)-- (-2.88,0.03);
 \draw (-2.88,0.03)-- (-1.78,-1.48);
 \draw (-1.78,-1.48)-- (0.01,-2.06);
 \draw (0.01,-2.06)-- (1.79,-1.48);
 \draw (1.79,-1.48)-- (2.88,0.04);
 \draw (2.88,0.04)-- (2.88,1.91);
 \draw (2.88,1.91)-- (1.78,3.42);
 \draw (1.78,3.42)-- (0,4);
 \draw [shift={(0.22,-0.07)}] plot[domain=1.4:3.11,variable=\t]({1*3.1*cos(\t r)+0*3.1*sin(\t r)},{0*3.1*cos(\t r)+1*3.1*sin(\t r)});
 \draw [shift={(0.33,0.99)}] plot[domain=-0.68:1.36,variable=\t]({1*2.04*cos(\t r)+0*2.04*sin(\t r)},{0*2.04*cos(\t r)+1*2.04*sin(\t r)});
 \draw [shift={(-0.92,2.17)}] plot[domain=4.48:5.57,variable=\t]({1*3.75*cos(\t r)+0*3.75*sin(\t r)},{0*3.75*cos(\t r)+1*3.75*sin(\t r)});
 \end{tikzpicture} \ar[ur] \ar[dr] & & \begin{tikzpicture}[scale=0.3]
 \fill[fill=black,fill opacity=0.1] (0,4) -- (-1.78,3.42) -- (-2.88,1.9) -- (-2.88,0.03) -- (-1.78,-1.48) -- (0.01,-2.06) -- (1.79,-1.48) -- (2.88,0.04) -- (2.88,1.91) -- (1.78,3.42) -- cycle;
 \draw [fill=black,fill opacity=1.0] (-1.36,0.97) circle (0.4cm);
 \draw [fill=black,fill opacity=1.0] (1.36,0.97) circle (0.4cm);
 \draw (0,4)-- (-1.78,3.42);
 \draw (-1.78,3.42)-- (-2.88,1.9);
 \draw (-2.88,1.9)-- (-2.88,0.03);
 \draw (-2.88,0.03)-- (-1.78,-1.48);
 \draw (-1.78,-1.48)-- (0.01,-2.06);
 \draw (0.01,-2.06)-- (1.79,-1.48);
 \draw (1.79,-1.48)-- (2.88,0.04);
 \draw (2.88,0.04)-- (2.88,1.91);
 \draw (2.88,1.91)-- (1.78,3.42);
 \draw (1.78,3.42)-- (0,4);
 \draw [shift={(-0.73,0.44)}] plot[domain=0.41:1.91,variable=\t]({1*3.16*cos(\t r)+0*3.16*sin(\t r)},{0*3.16*cos(\t r)+1*3.16*sin(\t r)});
 \draw [shift={(0.08,0.96)}] plot[domain=-1.79:0.34,variable=\t]({1*2.22*cos(\t r)+0*2.22*sin(\t r)},{0*2.22*cos(\t r)+1*2.22*sin(\t r)});
 \draw [shift={(1.06,2.5)}] plot[domain=3.29:4.34,variable=\t]({1*3.98*cos(\t r)+0*3.98*sin(\t r)},{0*3.98*cos(\t r)+1*3.98*sin(\t r)});
 \end{tikzpicture} \ar[ur] \ar[dr] & & \begin{tikzpicture}[scale=0.3]
 \fill[fill=black,fill opacity=0.1] (0,4) -- (-1.78,3.42) -- (-2.88,1.9) -- (-2.88,0.03) -- (-1.78,-1.48) -- (0.01,-2.06) -- (1.79,-1.48) -- (2.88,0.04) -- (2.88,1.91) -- (1.78,3.42) -- cycle;
 \draw [fill=black,fill opacity=1.0] (-1.36,0.97) circle (0.4cm);
 \draw [fill=black,fill opacity=1.0] (1.36,0.97) circle (0.4cm);
 \draw (0,4)-- (-1.78,3.42);
 \draw (-1.78,3.42)-- (-2.88,1.9);
 \draw (-2.88,1.9)-- (-2.88,0.03);
 \draw (-2.88,0.03)-- (-1.78,-1.48);
 \draw (-1.78,-1.48)-- (0.01,-2.06);
 \draw (0.01,-2.06)-- (1.79,-1.48);
 \draw (1.79,-1.48)-- (2.88,0.04);
 \draw (2.88,0.04)-- (2.88,1.91);
 \draw (2.88,1.91)-- (1.78,3.42);
 \draw (1.78,3.42)-- (0,4);
 \draw [shift={(-0.23,1.64)}] plot[domain=-1.13:0.73,variable=\t]({1*2.69*cos(\t r)+0*2.69*sin(\t r)},{0*2.69*cos(\t r)+1*2.69*sin(\t r)});
 \draw [shift={(0.2,1.51)}] plot[domain=3.39:5.01,variable=\t]({1*2.41*cos(\t r)+0*2.41*sin(\t r)},{0*2.41*cos(\t r)+1*2.41*sin(\t r)});
 \draw [shift={(0.61,1.29)}] plot[domain=1.79:3.28,variable=\t]({1*2.78*cos(\t r)+0*2.78*sin(\t r)},{0*2.78*cos(\t r)+1*2.78*sin(\t r)});
 \end{tikzpicture} \ar[ur] \ar[dr] & & \begin{tikzpicture}[scale=0.3]
 \fill[fill=black,fill opacity=0.1] (0,4) -- (-1.78,3.42) -- (-2.88,1.9) -- (-2.88,0.03) -- (-1.78,-1.48) -- (0.01,-2.06) -- (1.79,-1.48) -- (2.88,0.04) -- (2.88,1.91) -- (1.78,3.42) -- cycle;
 \draw [fill=black,fill opacity=1.0] (-1.36,0.97) circle (0.4cm);
 \draw [fill=black,fill opacity=1.0] (1.36,0.97) circle (0.4cm);
 \draw (0,4)-- (-1.78,3.42);
 \draw (-1.78,3.42)-- (-2.88,1.9);
 \draw (-2.88,1.9)-- (-2.88,0.03);
 \draw (-2.88,0.03)-- (-1.78,-1.48);
 \draw (-1.78,-1.48)-- (0.01,-2.06);
 \draw (0.01,-2.06)-- (1.79,-1.48);
 \draw (1.79,-1.48)-- (2.88,0.04);
 \draw (2.88,0.04)-- (2.88,1.91);
 \draw (2.88,1.91)-- (1.78,3.42);
 \draw (1.78,3.42)-- (0,4);
 \draw [shift={(0.27,1.87)}] plot[domain=3.95:5.67,variable=\t]({1*3.19*cos(\t r)+0*3.19*sin(\t r)},{0*3.19*cos(\t r)+1*3.19*sin(\t r)});
 \draw [shift={(-0.56,0.91)}] plot[domain=1.75:3.91,variable=\t]({1*1.93*cos(\t r)+0*1.93*sin(\t r)},{0*1.93*cos(\t r)+1*1.93*sin(\t r)});
 \draw [shift={(-0.13,-2.33)}] plot[domain=0.95:1.72,variable=\t]({1*5.2*cos(\t r)+0*5.2*sin(\t r)},{0*5.2*cos(\t r)+1*5.2*sin(\t r)});
 \end{tikzpicture} \ar[ur] \ar[dr] & & \begin{tikzpicture}[scale=0.3]
 \fill[fill=black,fill opacity=0.1] (0,4) -- (-1.78,3.42) -- (-2.88,1.9) -- (-2.88,0.03) -- (-1.78,-1.48) -- (0.01,-2.06) -- (1.79,-1.48) -- (2.88,0.04) -- (2.88,1.91) -- (1.78,3.42) -- cycle;
 \draw [fill=black,fill opacity=1.0] (-1.36,0.97) circle (0.4cm);
 \draw [fill=black,fill opacity=1.0] (1.36,0.97) circle (0.4cm);
 \draw (0,4)-- (-1.78,3.42);
 \draw (-1.78,3.42)-- (-2.88,1.9);
 \draw (-2.88,1.9)-- (-2.88,0.03);
 \draw (-2.88,0.03)-- (-1.78,-1.48);
 \draw (-1.78,-1.48)-- (0.01,-2.06);
 \draw (0.01,-2.06)-- (1.79,-1.48);
 \draw (1.79,-1.48)-- (2.88,0.04);
 \draw (2.88,0.04)-- (2.88,1.91);
 \draw (2.88,1.91)-- (1.78,3.42);
 \draw (1.78,3.42)-- (0,4);
 \draw [shift={(0.74,0.86)}] plot[domain=2.73:4.47,variable=\t]({1*3.01*cos(\t r)+0*3.01*sin(\t r)},{0*3.01*cos(\t r)+1*3.01*sin(\t r)});
 \draw [shift={(-0.08,0.82)}] plot[domain=0.78:2.57,variable=\t]({1*2.32*cos(\t r)+0*2.32*sin(\t r)},{0*2.32*cos(\t r)+1*2.32*sin(\t r)});
 \draw [shift={(-1,0.33)}] plot[domain=-0.58:0.69,variable=\t]({1*3.33*cos(\t r)+0*3.33*sin(\t r)},{0*3.33*cos(\t r)+1*3.33*sin(\t r)});
 \end{tikzpicture} \\
 \begin{tikzpicture}[scale=0.3]
 \fill[fill=black,fill opacity=0.1] (0,4) -- (-1.78,3.42) -- (-2.88,1.9) -- (-2.88,0.03) -- (-1.78,-1.48) -- (0.01,-2.06) -- (1.79,-1.48) -- (2.88,0.04) -- (2.88,1.91) -- (1 .78,3.42) -- cycle;
 \draw [fill=black,fill opacity=1.0] (-1.36,0.97) circle (0.4cm);
 \draw [fill=black,fill opacity=1.0] (1.36,0.97) circle (0.4cm);
 \draw (0,4)-- (-1.78,3.42);
 \draw (-1.78,3.42)-- (-2.88,1.9);
 \draw (-2.88,1.9)-- (-2.88,0.03);
 \draw (-2.88,0.03)-- (-1.78,-1.48);
 \draw (-1.78,-1.48)-- (0.01,-2.06);
 \draw (0.01,-2.06)-- (1.79,-1.48);
 \draw (1.79,-1.48)-- (2.88,0.04);
 \draw (2.88,0.04)-- (2.88,1.91);
 \draw (2.88,1.91)-- (1.78,3.42);
 \draw (1.78,3.42)-- (0,4);
 \draw [shift={(-0.12,0.07)}] plot[domain=0.74:3.16,variable=\t]({1*2.75*cos(\t r)+0*2.75*sin(\t r)},{0*2.75*cos(\t r)+1*2.75*sin(\t r)});
 \draw [shift={(-0.36,0.31)}] plot[domain=-0.69:0.62,variable=\t]({1*2.79*cos(\t r)+0*2.79*sin(\t r)},{0*2.79*cos(\t r)+1*2.79*sin(\t r)});
 \end{tikzpicture} \ar[ur] \ar[dr] & & \begin{tikzpicture}[scale=0.3]
 \fill[fill=black,fill opacity=0.1] (0,4) -- (-1.78,3.42) -- (-2.88,1.9) -- (-2.88,0.03) -- (-1.78,-1.48) -- (0.01,-2.06) -- (1.79,-1.48) -- (2.88,0.04) -- (2.88,1.91) -- (1.78,3.42) -- cycle;
 \draw [fill=black,fill opacity=1.0] (-1.36,0.97) circle (0.4cm);
 \draw [fill=black,fill opacity=1.0] (1.36,0.97) circle (0.4cm);
 \draw (0,4)-- (-1.78,3.42);
 \draw (-1.78,3.42)-- (-2.88,1.9);
 \draw (-2.88,1.9)-- (-2.88,0.03);
 \draw (-2.88,0.03)-- (-1.78,-1.48);
 \draw (-1.78,-1.48)-- (0.01,-2.06);
 \draw (0.01,-2.06)-- (1.79,-1.48);
 \draw (1.79,-1.48)-- (2.88,0.04);
 \draw (2.88,0.04)-- (2.88,1.91);
 \draw (2.88,1.91)-- (1.78,3.42);
 \draw (1.78,3.42)-- (0,4);
 \draw [shift={(-0.71,0.88)}] plot[domain=-0.36:1.97,variable=\t]({1*2.75*cos(\t r)+0*2.75*sin(\t r)},{0*2.75*cos(\t r)+1*2.75*sin(\t r)});
 \draw [shift={(-0.68,1.12)}] plot[domain=4.31:5.84,variable=\t]({1*2.83*cos(\t r)+0*2.83*sin(\t r)},{0*2.83*cos(\t r)+1*2.83*sin(\t r)});
 \end{tikzpicture} \ar[ur] \ar[dr] & & \begin{tikzpicture}[scale=0.3]
 \fill[fill=black,fill opacity=0.1] (0,4) -- (-1.78,3.42) -- (-2.88,1.9) -- (-2.88,0.03) -- (-1.78,-1.48) -- (0.01,-2.06) -- (1.79,-1.48) -- (2.88,0.04) -- (2.88,1.91) -- (1.78,3.42) -- cycle;
 \draw [fill=black,fill opacity=1.0] (-1.36,0.97) circle (0.4cm);
 \draw [fill=black,fill opacity=1.0] (1.36,0.97) circle (0.4cm);
 \draw (0,4)-- (-1.78,3.42);
 \draw (-1.78,3.42)-- (-2.88,1.9);
 \draw (-2.88,1.9)-- (-2.88,0.03);
 \draw (-2.88,0.03)-- (-1.78,-1.48);
 \draw (-1.78,-1.48)-- (0.01,-2.06);
 \draw (0.01,-2.06)-- (1.79,-1.48);
 \draw (1.79,-1.48)-- (2.88,0.04);
 \draw (2.88,0.04)-- (2.88,1.91);
 \draw (2.88,1.91)-- (1.78,3.42);
 \draw (1.78,3.42)-- (0,4);
 \draw [shift={(-0.16,1.69)}] plot[domain=-1.22:0.73,variable=\t]({1*2.6*cos(\t r)+0*2.6*sin(\t r)},{0*2.6*cos(\t r)+1*2.6*sin(\t r)});
 \draw [shift={(0,2.03)}] plot[domain=3.19:4.97,variable=\t]({1*2.88*cos(\t r)+0*2.88*sin(\t r)},{0*2.88*cos(\t r)+1*2.88*sin(\t r)});
 \end{tikzpicture} \ar[ur] \ar[dr] & & \begin{tikzpicture}[scale=0.3]
 \fill[fill=black,fill opacity=0.1] (0,4) -- (-1.78,3.42) -- (-2.88,1.9) -- (-2.88,0.03) -- (-1.78,-1.48) -- (0.01,-2.06) -- (1.79,-1.48) -- (2.88,0.04) -- (2.88,1.91) -- (1.78,3.42) -- cycle;
 \draw [fill=black,fill opacity=1.0] (-1.36,0.97) circle (0.4cm);
 \draw [fill=black,fill opacity=1.0] (1.36,0.97) circle (0.4cm);
 \draw (0,4)-- (-1.78,3.42);
 \draw (-1.78,3.42)-- (-2.88,1.9);
 \draw (-2.88,1.9)-- (-2.88,0.03);
 \draw (-2.88,0.03)-- (-1.78,-1.48);
 \draw (-1.78,-1.48)-- (0.01,-2.06);
 \draw (0.01,-2.06)-- (1.79,-1.48);
 \draw (1.79,-1.48)-- (2.88,0.04);
 \draw (2.88,0.04)-- (2.88,1.91);
 \draw (2.88,1.91)-- (1.78,3.42);
 \draw (1.78,3.42)-- (0,4);
 \draw [shift={(0.49,1.72)}] plot[domain=3.85:5.66,variable=\t]({1*2.93*cos(\t r)+0*2.93*sin(\t r)},{0*2.93*cos(\t r)+1*2.93*sin(\t r)});
 \draw [shift={(0.72,1.25)}] plot[domain=1.83:3.67,variable=\t]({1*2.84*cos(\t r)+0*2.84*sin(\t r)},{0*2.84*cos(\t r)+1*2.84*sin(\t r)});
 \end{tikzpicture} \ar[ur] \ar[dr] & & \begin{tikzpicture}[scale=0.3]
 \fill[fill=black,fill opacity=0.1] (0,4) -- (-1.78,3.42) -- (-2.88,1.9) -- (-2.88,0.03) -- (-1.78,-1.48) -- (0.01,-2.06) -- (1.79,-1.48) -- (2.88,0.04) -- (2.88,1.91) -- (1.78,3.42) -- cycle;
 \draw [fill=black,fill opacity=1.0] (-1.36,0.97) circle (0.4cm);
 \draw [fill=black,fill opacity=1.0] (1.36,0.97) circle (0.4cm);
 \draw (0,4)-- (-1.78,3.42);
 \draw (-1.78,3.42)-- (-2.88,1.9);
 \draw (-2.88,1.9)-- (-2.88,0.03);
 \draw (-2.88,0.03)-- (-1.78,-1.48);
 \draw (-1.78,-1.48)-- (0.01,-2.06);
 \draw (0.01,-2.06)-- (1.79,-1.48);
 \draw (1.79,-1.48)-- (2.88,0.04);
 \draw (2.88,0.04)-- (2.88,1.91);
 \draw (2.88,1.91)-- (1.78,3.42);
 \draw (1.78,3.42)-- (0,4);
 \draw [shift={(0.65,0.59)}] plot[domain=2.58:4.47,variable=\t]({1*2.73*cos(\t r)+0*2.73*sin(\t r)},{0*2.73*cos(\t r)+1*2.73*sin(\t r)});
 \draw [shift={(0.56,0.13)}] plot[domain=0.65:2.43,variable=\t]({1*2.93*cos(\t r)+0*2.93*sin(\t r)},{0*2.93*cos(\t r)+1*2.93*sin(\t r)});
 \end{tikzpicture} \ar[ur] \ar[dr] &  \\
 & \begin{tikzpicture}[scale=0.3]
 \fill[fill=black,fill opacity=0.1] (0,4) -- (-1.66,3.46) -- (-2.69,2.05) -- (-2.68,0.3) -- (-1.66,-1.11) -- (0,-1.65) -- (1.66,-1.11) -- (2.69,0.3) -- (2.69,2.05) -- (1.66,3.46) -- cycle;
 \draw [fill=black,fill opacity=1.0] (-1.66,1.17) circle (0.4cm);
 \draw [fill=black,fill opacity=1.0] (1.66,1.18) circle (0.4cm);
 \draw (0,4)-- (-1.66,3.46);
 \draw (-1.66,3.46)-- (-2.69,2.05);
 \draw (-2.69,2.05)-- (-2.68,0.3);
 \draw (-2.68,0.3)-- (-1.66,-1.11);
 \draw (-1.66,-1.11)-- (0,-1.65);
 \draw (0,-1.65)-- (1.66,-1.11);
 \draw (1.66,-1.11)-- (2.69,0.3);
 \draw (2.69,0.3)-- (2.69,2.05);
 \draw (2.69,2.05)-- (1.66,3.46);
 \draw (1.66,3.46)-- (0,4);
 \draw [shift={(-0.64,0.71)}] plot[domain=-0.67:1.93,variable=\t]({1*2.93*cos(\t r)+0*2.93*sin(\t r)},{0*2.93*cos(\t r)+1*2.93*sin(\t r)});
 \end{tikzpicture} \ar[ur] \ar[dr] & & \begin{tikzpicture}[scale=0.3]
 \fill[fill=black,fill opacity=0.1] (0,4) -- (-1.66,3.46) -- (-2.69,2.05) -- (-2.68,0.3) -- (-1.66,-1.11) -- (0,-1.65) -- (1.66,-1.11) -- (2.69,0.3) -- (2.69,2.05) -- (1.66,3.46) -- cycle;
 \draw [fill=black,fill opacity=1.0] (-1.66,1.17) circle (0.4cm);
 \draw [fill=black,fill opacity=1.0] (1.66,1.18) circle (0.4cm);
 \draw (0,4)-- (-1.66,3.46);
 \draw (-1.66,3.46)-- (-2.69,2.05);
 \draw (-2.69,2.05)-- (-2.68,0.3);
 \draw (-2.68,0.3)-- (-1.66,-1.11);
 \draw (-1.66,-1.11)-- (0,-1.65);
 \draw (0,-1.65)-- (1.66,-1.11);
 \draw (1.66,-1.11)-- (2.69,0.3);
 \draw (2.69,0.3)-- (2.69,2.05);
 \draw (2.69,2.05)-- (1.66,3.46);
 \draw (1.66,3.46)-- (0,4);
 \draw [shift={(-0.56,1.58)}] plot[domain=-1.96:0.7,variable=\t]({1*2.91*cos(\t r)+0*2.91*sin(\t r)},{0*2.91*cos(\t r)+1*2.91*sin(\t r)});
 \end{tikzpicture} \ar[ur] \ar[dr] & & \begin{tikzpicture}[scale=0.3]
 \fill[fill=black,fill opacity=0.1] (0,4) -- (-1.66,3.46) -- (-2.69,2.05) -- (-2.68,0.3) -- (-1.66,-1.11) -- (0,-1.65) -- (1.66,-1.11) -- (2.69,0.3) -- (2.69,2.05) -- (1.66,3.46) -- cycle;
 \draw [fill=black,fill opacity=1.0] (-1.66,1.17) circle (0.4cm);
 \draw [fill=black,fill opacity=1.0] (1.66,1.18) circle (0.4cm);
 \draw (0,4)-- (-1.66,3.46);
 \draw (-1.66,3.46)-- (-2.69,2.05);
 \draw (-2.69,2.05)-- (-2.68,0.3);
 \draw (-2.68,0.3)-- (-1.66,-1.11);
 \draw (-1.66,-1.11)-- (0,-1.65);
 \draw (0,-1.65)-- (1.66,-1.11);
 \draw (1.66,-1.11)-- (2.69,0.3);
 \draw (2.69,0.3)-- (2.69,2.05);
 \draw (2.69,2.05)-- (1.66,3.46);
 \draw (1.66,3.46)-- (0,4);
 \draw [shift={(0.27,1.99)}] plot[domain=3.12:5.67,variable=\t]({1*2.95*cos(\t r)+0*2.95*sin(\t r)},{0*2.95*cos(\t r)+1*2.95*sin(\t r)});
 \end{tikzpicture} \ar[ur] \ar[dr] & & \begin{tikzpicture}[scale=0.3]
 \fill[fill=black,fill opacity=0.1] (0,4) -- (-1.66,3.46) -- (-2.69,2.05) -- (-2.68,0.3) -- (-1.66,-1.11) -- (0,-1.65) -- (1.66,-1.11) -- (2.69,0.3) -- (2.69,2.05) -- (1.66,3.46) -- cycle;
 \draw [fill=black,fill opacity=1.0] (-1.66,1.17) circle (0.4cm);
 \draw [fill=black,fill opacity=1.0] (1.66,1.18) circle (0.4cm);
 \draw (0,4)-- (-1.66,3.46);
 \draw (-1.66,3.46)-- (-2.69,2.05);
 \draw (-2.69,2.05)-- (-2.68,0.3);
 \draw (-2.68,0.3)-- (-1.66,-1.11);
 \draw (-1.66,-1.11)-- (0,-1.65);
 \draw (0,-1.65)-- (1.66,-1.11);
 \draw (1.66,-1.11)-- (2.69,0.3);
 \draw (2.69,0.3)-- (2.69,2.05);
 \draw (2.69,2.05)-- (1.66,3.46);
 \draw (1.66,3.46)-- (0,4);
 \draw [shift={(0.69,1.18)}] plot[domain=1.81:4.47,variable=\t]({1*2.91*cos(\t r)+0*2.91*sin(\t r)},{0*2.91*cos(\t r)+1*2.91*sin(\t r)});
 \end{tikzpicture} \ar[ur] \ar[dr] & & \begin{tikzpicture}[scale=0.3]
 \fill[fill=black,fill opacity=0.1] (0,4) -- (-1.66,3.46) -- (-2.69,2.05) -- (-2.68,0.3) -- (-1.66,-1.11) -- (0,-1.65) -- (1.66,-1.11) -- (2.69,0.3) -- (2.69,2.05) -- (1.66,3.46) -- cycle;
 \draw [fill=black,fill opacity=1.0] (-1.66,1.17) circle (0.4cm);
 \draw [fill=black,fill opacity=1.0] (1.66,1.18) circle (0.4cm);
 \draw (0,4)-- (-1.66,3.46);
 \draw (-1.66,3.46)-- (-2.69,2.05);
 \draw (-2.69,2.05)-- (-2.68,0.3);
 \draw (-2.68,0.3)-- (-1.66,-1.11);
 \draw (-1.66,-1.11)-- (0,-1.65);
 \draw (0,-1.65)-- (1.66,-1.11);
 \draw (1.66,-1.11)-- (2.69,0.3);
 \draw (2.69,0.3)-- (2.69,2.05);
 \draw (2.69,2.05)-- (1.66,3.46);
 \draw (1.66,3.46)-- (0,4);
 \draw [shift={(0.23,0.47)}] plot[domain=0.57:3.2,variable=\t]({1*2.92*cos(\t r)+0*2.92*sin(\t r)},{0*2.92*cos(\t r)+1*2.92*sin(\t r)});
 \end{tikzpicture} \\
 \begin{tikzpicture}[scale=0.3]
 \fill[fill=black,fill opacity=0.1] (0,4) -- (-1.66,3.46) -- (-2.69,2.05) -- (-2.68,0.3) -- (-1.66,-1.11) -- (0,-1.65) -- (1.66,-1.11) -- (2.69,0.3) -- (2.69,2.05) -- (1.66,3.46) -- cycle;
 \draw [fill=black,fill opacity=1.0] (-1.66,1.17) circle (0.4cm);
 \draw [fill=black,fill opacity=1.0] (1.66,1.18) circle (0.4cm);
 \draw (0,4)-- (-1.66,3.46);
 \draw (-1.66,3.46)-- (-2.69,2.05);
 \draw (-2.69,2.05)-- (-2.68,0.3);
 \draw (-2.68,0.3)-- (-1.66,-1.11);
 \draw (-1.66,-1.11)-- (0,-1.65);
 \draw (0,-1.65)-- (1.66,-1.11);
 \draw (1.66,-1.11)-- (2.69,0.3);
 \draw (2.69,0.3)-- (2.69,2.05);
 \draw (2.69,2.05)-- (1.66,3.46);
 \draw (1.66,3.46)-- (0,4);
 \draw [shift={(-0.57,-0.59)}] plot[domain=0.68:1.83,variable=\t]({1*4.19*cos(\t r)+0*4.19*sin(\t r)},{0*4.19*cos(\t r)+1*4.19*sin(\t r)});
 \end{tikzpicture} \ar[ur] & & \begin{tikzpicture}[scale=0.3]
 \fill[fill=black,fill opacity=0.1] (0,4) -- (-1.66,3.46) -- (-2.69,2.05) -- (-2.68,0.3) -- (-1.66,-1.11) -- (0,-1.65) -- (1.66,-1.11) -- (2.69,0.3) -- (2.69,2.05) -- (1.66,3.46) -- cycle;
 \draw [fill=black,fill opacity=1.0] (-1.66,1.17) circle (0.4cm);
 \draw [fill=black,fill opacity=1.0] (1.66,1.18) circle (0.4cm);
 \draw (0,4)-- (-1.66,3.46);
 \draw (-1.66,3.46)-- (-2.69,2.05);
 \draw (-2.69,2.05)-- (-2.68,0.3);
 \draw (-2.68,0.3)-- (-1.66,-1.11);
 \draw (-1.66,-1.11)-- (0,-1.65);
 \draw (0,-1.65)-- (1.66,-1.11);
 \draw (1.66,-1.11)-- (2.69,0.3);
 \draw (2.69,0.3)-- (2.69,2.05);
 \draw (2.69,2.05)-- (1.66,3.46);
 \draw (1.66,3.46)-- (0,4);
 \draw [shift={(-1.72,1.17)}] plot[domain=-0.59:0.59,variable=\t]({1*4.08*cos(\t r)+0*4.08*sin(\t r)},{0*4.08*cos(\t r)+1*4.08*sin(\t r)});
 \end{tikzpicture} \ar[ur] & & \begin{tikzpicture}[scale=0.3]
 \fill[fill=black,fill opacity=0.1] (0,4) -- (-1.66,3.46) -- (-2.69,2.05) -- (-2.68,0.3) -- (-1.66,-1.11) -- (0,-1.65) -- (1.66,-1.11) -- (2.69,0.3) -- (2.69,2.05) -- (1.66,3.46) -- cycle;
 \draw [fill=black,fill opacity=1.0] (-1.66,1.17) circle (0.4cm);
 \draw [fill=black,fill opacity=1.0] (1.66,1.18) circle (0.4cm);
 \draw (0,4)-- (-1.66,3.46);
 \draw (-1.66,3.46)-- (-2.69,2.05);
 \draw (-2.69,2.05)-- (-2.68,0.3);
 \draw (-2.68,0.3)-- (-1.66,-1.11);
 \draw (-1.66,-1.11)-- (0,-1.65);
 \draw (0,-1.65)-- (1.66,-1.11);
 \draw (1.66,-1.11)-- (2.69,0.3);
 \draw (2.69,0.3)-- (2.69,2.05);
 \draw (2.69,2.05)-- (1.66,3.46);
 \draw (1.66,3.46)-- (0,4);
 \draw [shift={(-0.39,2.39)}] plot[domain=4.39:5.68,variable=\t]({1*3.72*cos(\t r)+0*3.72*sin(\t r)},{0*3.72*cos(\t r)+1*3.72*sin(\t r)});
 \end{tikzpicture} \ar[ur] & & \begin{tikzpicture}[scale=0.3]
 \fill[fill=black,fill opacity=0.1] (0,4) -- (-1.66,3.46) -- (-2.69,2.05) -- (-2.68,0.3) -- (-1.66,-1.11) -- (0,-1.65) -- (1.66,-1.11) -- (2.69,0.3) -- (2.69,2.05) -- (1.66,3.46) -- cycle;
 \draw [fill=black,fill opacity=1.0] (-1.66,1.17) circle (0.4cm);
 \draw [fill=black,fill opacity=1.0] (1.66,1.18) circle (0.4cm);
 \draw (0,4)-- (-1.66,3.46);
 \draw (-1.66,3.46)-- (-2.69,2.05);
 \draw (-2.69,2.05)-- (-2.68,0.3);
 \draw (-2.68,0.3)-- (-1.66,-1.11);
 \draw (-1.66,-1.11)-- (0,-1.65);
 \draw (0,-1.65)-- (1.66,-1.11);
 \draw (1.66,-1.11)-- (2.69,0.3);
 \draw (2.69,0.3)-- (2.69,2.05);
 \draw (2.69,2.05)-- (1.66,3.46);
 \draw (1.66,3.46)-- (0,4);
 \draw [shift={(1.22,2.06)}] plot[domain=3.14:4.4,variable=\t]({1*3.9*cos(\t r)+0*3.9*sin(\t r)},{0*3.9*cos(\t r)+1*3.9*sin(\t r)});
 \end{tikzpicture} \ar[ur] & & \begin{tikzpicture}[scale=0.3]
 \fill[fill=black,fill opacity=0.1] (0,4) -- (-1.66,3.46) -- (-2.69,2.05) -- (-2.68,0.3) -- (-1.66,-1.11) -- (0,-1.65) -- (1.66,-1.11) -- (2.69,0.3) -- (2.69,2.05) -- (1.66,3.46) -- cycle;
 \draw [fill=black,fill opacity=1.0] (-1.66,1.17) circle (0.4cm);
 \draw [fill=black,fill opacity=1.0] (1.66,1.18) circle (0.4cm);
 \draw (0,4)-- (-1.66,3.46);
 \draw (-1.66,3.46)-- (-2.69,2.05);
 \draw (-2.69,2.05)-- (-2.68,0.3);
 \draw (-2.68,0.3)-- (-1.66,-1.11);
 \draw (-1.66,-1.11)-- (0,-1.65);
 \draw (0,-1.65)-- (1.66,-1.11);
 \draw (1.66,-1.11)-- (2.69,0.3);
 \draw (2.69,0.3)-- (2.69,2.05);
 \draw (2.69,2.05)-- (1.66,3.46);
 \draw (1.66,3.46)-- (0,4);
 \draw [shift={(1.06,0.41)}] plot[domain=1.86:3.17,variable=\t]({1*3.75*cos(\t r)+0*3.75*sin(\t r)},{0*3.75*cos(\t r)+1*3.75*sin(\t r)});
 \end{tikzpicture} \ar[ur] & \\
 }
 }
\]
\caption{A tube of size 5. We can see that from the 5th layer, the arcs cross themselves and do not correspond to an $m$-rigid object anymore}
\label{fig:tubu}
\end{figure}
\end{landscape}

\nocite{BZ}
\nocite{Kel04}

\bibliographystyle{alpha}
\bibliography{biblio}

\newcommand{\etalchar}[1]{$^{#1}$}
\def\ocirc#1{\ifmmode\setbox0=\hbox{$#1$}\dimen0=\ht0 \advance\dimen0
  by1pt\rlap{\hbox to\wd0{\hss\raise\dimen0
  \hbox{\hskip.2em$\scriptscriptstyle\circ$}\hss}}#1\else {\accent"17 #1}\fi}
  \def\ocirc#1{\ifmmode\setbox0=\hbox{$#1$}\dimen0=\ht0 \advance\dimen0
  by1pt\rlap{\hbox to\wd0{\hss\raise\dimen0
  \hbox{\hskip.2em$\scriptscriptstyle\circ$}\hss}}#1\else {\accent"17 #1}\fi}
  \def\ocirc#1{\ifmmode\setbox0=\hbox{$#1$}\dimen0=\ht0 \advance\dimen0
  by1pt\rlap{\hbox to\wd0{\hss\raise\dimen0
  \hbox{\hskip.2em$\scriptscriptstyle\circ$}\hss}}#1\else {\accent"17 #1}\fi}
\begin{thebibliography}{BMR{\etalchar{+}}06}

\bibitem[AS81]{AS}
M.~Auslander and O.~Smalo.
\newblock Almost split sequences in subcategories.
\newblock {\em Journal of Algebra}, 69(2):426--454, 1981.

\bibitem[ASS06]{ASS}
Ibrahim Assem, Daniel Simson, and Andrzej Skowro{\'n}ski.
\newblock {\em Elements of the representation theory of associative algebras.
  {V}ol. 1}, volume~65 of {\em London Mathematical Society Student Texts}.
\newblock Cambridge University Press, Cambridge, 2006.
\newblock Techniques of representation theory.

\bibitem[BM07]{BM02}
Karin Baur and Robert~J. Marsh.
\newblock A geometric description of the {$m$}-cluster categories of type
  {$D_n$}.
\newblock {\em Int. Math. Res. Not. IMRN}, (4):Art. ID rnm011, 19, 2007.

\bibitem[BM08]{BM01}
Karin Baur and Robert~J. Marsh.
\newblock A geometric description of {$m$}-cluster categories.
\newblock {\em Trans. Amer. Math. Soc.}, 360(11):5789--5803, 2008.

\bibitem[BMR{\etalchar{+}}06]{BMRRT}
Aslak~Bakke Buan, Robert Marsh, Markus Reineke, Idun Reiten, and Gordana
  Todorov.
\newblock Tilting theory and cluster combinatorics.
\newblock {\em Adv. Math.}, 204(2):572--618, 2006.

\bibitem[BT09]{BT}
Aslak~Bakke Buan and Hugh Thomas.
\newblock Coloured quiver mutation for higher cluster categories.
\newblock {\em Adv. Math.}, 222(3):971--995, 2009.

\bibitem[BT15]{BauTor}
Karin Baur and Edmund~André Torkildsen.
\newblock A geometric realization of tame categories.
\newblock 2015.

\bibitem[Bua11]{Buan}
Aslak~Bakke Buan.
\newblock An introduction to higher cluster categories.
\newblock {\em Bull. Iranian Math. Soc.}, 37(2):137--157, 2011.

\bibitem[BZ11]{BZ}
Thomas Br{\"u}stle and Jie Zhang.
\newblock On the cluster category of a marked surface without punctures.
\newblock {\em Algebra Number Theory}, 5(4):529--566, 2011.

\bibitem[CB]{WCB}
William Crawley-Boevey.
\newblock Lectures on representations of quivers.

\bibitem[CCS06]{CCS}
Philippe. Caldero, Frédéric. Chapoton, and Ralf. Schiffler.
\newblock Quivers with relations arising from clusters ({$A_n$} case).
\newblock {\em Trans. Amer. Math. Soc.}, 358(3):1347--1364, 2006.

\bibitem[CP15a]{CP01}
Cesar Ceballos and Vincent Pilaud.
\newblock Cluster algebras of type {$D$}: pseudotriangulations approach.
\newblock {\em Electron. J. Combin.}, 22(4):Paper 4.44, 27, 2015.

\bibitem[CP15b]{CP02}
Cesar Ceballos and Vincent Pilaud.
\newblock Denominator vectors and compatibility degrees in cluster algebras of
  finite type.
\newblock {\em Trans. Amer. Math. Soc.}, 367(2):1421--1439, 2015.

\bibitem[Fom10]{F}
Sergey Fomin.
\newblock Total positivity and cluster algebras.
\newblock In {\em Proceedings of the {I}nternational {C}ongress of
  {M}athematicians. {V}olume {II}}, pages 125--145. Hindustan Book Agency, New
  Delhi, 2010.

\bibitem[FR05]{FR}
Sergey Fomin and Nathan Reading.
\newblock Generalized cluster complexes and {C}oxeter combinatorics.
\newblock {\em Int. Math. Res. Not.}, (44):2709--2757, 2005.

\bibitem[FST08]{FST}
Sergey Fomin, Michael Shapiro, and Dylan Thurston.
\newblock Cluster algebras and triangulated surfaces. {I}. {C}luster complexes.
\newblock {\em Acta Math.}, 201(1):83--146, 2008.

\bibitem[FZ02]{FZ}
Sergey Fomin and Andrei Zelevinsky.
\newblock Cluster algebras. {I}. {F}oundations.
\newblock {\em J. Amer. Math. Soc.}, 15(2):497--529 (electronic), 2002.

\bibitem[FZ03]{FZ2}
Sergey Fomin and Andrei Zelevinsky.
\newblock Cluster algebras. {II}. {F}inite type classification.
\newblock {\em Invent. Math.}, 154(1):63--121, 2003.

\bibitem[IY08]{IY}
Osamu Iyama and Yuji Yoshino.
\newblock Mutation in triangulated categories and rigid {C}ohen-{M}acaulay
  modules.
\newblock {\em Invent. Math.}, 172(1):117--168, 2008.

\bibitem[JM]{JM}
Lucie Jacquet-Malo.
\newblock A bijection between $m$-cluster-tilting objects and
  $(m+2)$-angulations in $m$-cluster categories. arxiv:1706.06866.

\bibitem[Kel05]{Kel03}
Bernhard Keller.
\newblock On triangulated orbit categories.
\newblock {\em Doc. Math.}, 10:551--581, 2005.

\bibitem[Kel10]{Kel04}
Bernhard Keller.
\newblock Cluster algebras, quiver representations and triangulated categories.
\newblock In {\em Triangulated categories}, volume 375 of {\em London Math.
  Soc. Lecture Note Ser.}, pages 76--160. Cambridge Univ. Press, Cambridge,
  2010.

\bibitem[KR08]{KR}
Bernhard Keller and Idun Reiten.
\newblock Acyclic {C}alabi-{Y}au categories.
\newblock {\em Compos. Math.}, 144(5):1332--1348, 2008.
\newblock With an appendix by Michel Van den Bergh.

\bibitem[Lam15]{L}
Lisa Lamberti.
\newblock Combinatorial model for the cluster categories of type {E}.
\newblock {\em J. Algebraic Combin.}, 41(4):1023--1054, 2015.

\bibitem[Sch08]{Sch}
Ralf Schiffler.
\newblock A geometric model for cluster categories of type {$D_n$}.
\newblock {\em J. Algebraic Combin.}, 27(1):1--21, 2008.

\bibitem[Tho07]{Tho}
Hugh Thomas.
\newblock Defining an {$m$}-cluster category.
\newblock {\em J. Algebra}, 318(1):37--46, 2007.

\bibitem[Tor12]{Tor}
Hermund~André Torkildsen.
\newblock A geometric realization of the $m$-cluster category of type
  $\tilde{A}$, 2012.

\bibitem[Wr{\ocirc{a}}09]{W}
Anette Wr{\ocirc{a}}lsen.
\newblock Rigid objects in higher cluster categories.
\newblock {\em J. Algebra}, 321(2):532--547, 2009.

\bibitem[Zhu08]{Z}
Bin Zhu.
\newblock Generalized cluster complexes via quiver representations.
\newblock {\em J. Algebraic Combin.}, 27(1):35--54, 2008.

\bibitem[ZZ09]{ZZ}
Yu~Zhou and Bin Zhu.
\newblock Cluster combinatorics of {$d$}-cluster categories.
\newblock {\em J. Algebra}, 321(10):2898--2915, 2009.

\end{thebibliography}

\end{document}